\documentclass[aos,noinfoline]{imsart}

\RequirePackage[OT1]{fontenc}
\RequirePackage{amsmath,amssymb,amsthm,mathrsfs,enumerate,bm,makeidx,xcolor,verbatim}
\RequirePackage[colorlinks,citecolor=blue,urlcolor=blue]{hyperref}
\RequirePackage[authoryear]{natbib}



\RequirePackage{xr} 
\externaldocument{supp}

\numberwithin{equation}{section}

\newcommand{\N}{\mathbb{N}}
\newcommand{\R}{\mathbb{R}}
\newcommand{\Q}{\mathbb{Q}}

\newcommand{\pnorm}[2]{\lVert#1\rVert_{#2}}
\newcommand{\abs}[1]{\left\lvert#1\right\rvert}
\newcommand{\iprod}[2]{\left\langle#1,#2\right\rangle}
\newcommand{\intdom}[1]{\mathrm{int(dom}(#1))}
\newcommand{\conv}[1]{\mathrm{conv}\left(#1\right)}
\renewcommand{\d}[1]{\mathrm{d}#1}
\newcommand{\dist}[2]{\mathrm{dist}\left(#1,#2\right)}

\makeindex

\theoremstyle{definition}\newtheorem{problem}{Problem}[section]
\theoremstyle{definition}
\theoremstyle{remark}
\theoremstyle{remark}\newtheorem{remark}[problem]{Remark}
\theoremstyle{definition}\newtheorem{example}[problem]{Example}
\theoremstyle{plain}\newtheorem{theorem}[problem]{Theorem}
\theoremstyle{plain}\newtheorem{lemma}[problem]{Lemma}
\theoremstyle{plain}
\theoremstyle{plain}\newtheorem{corollary}[problem]{Corollary}
\theoremstyle{plain}

%
%

\begin{document}

\begin{frontmatter}
\title{Approximation and Estimation of $s$-Concave Densities via R\'enyi Divergences}
\runtitle{$s$-Concave Estimation}

\begin{aug}
\author{\fnms{Qiyang} \snm{Han}\ead[label=e1]{royhan@uw.edu}}
\and
\author{\fnms{Jon A.} \snm{Wellner}\thanksref{t2}\ead[label=e2]{jaw@stat.washington.edu}}
\ead[label=u1,url]{http://www.stat.washington.edu/jaw/}

\thankstext{t2}{Supported in part by NSF Grant DMS-1104832 and NI-AID grant R01 AI029168}
\runauthor{Han and Wellner}

\affiliation{University of Washington}

\address{Department of Statistics, Box 354322\\
University of Washington\\
Seattle, WA 98195-4322\\
\printead{e1}}

\address{Department of Statistics, Box 354322\\
University of Washington\\
Seattle, WA 98195-4322\\
\printead{e2}\\
\printead{u1}}
\end{aug}


\begin{abstract}
In this paper, we study the approximation and estimation of $s$-concave 
densities via R\'enyi divergence. 
We first show that the approximation of a probability measure $Q$ by an $s$-concave 
density exists and is unique via the procedure of minimizing a divergence functional 
proposed by \cite{koenker2010quasi} 
if and only if $Q$ admits full-dimensional support 
and a first moment. We also show continuity of the divergence functional in $Q$:   
if $Q_n \to Q$ in the Wasserstein metric, then the projected densities converge in weighted 
$L_1$ metrics and uniformly on closed subsets of the continuity set of the limit. 
Moreover, directional derivatives of the projected densities also enjoy local uniform convergence. 
This contains both on-the-model and off-the-model situations, and entails strong consistency 
of the divergence estimator of an $s$-concave density under mild conditions. 
One interesting and important feature for the R\'{e}nyi divergence estimator of an $s$-concave 
density is that the estimator
is intrinsically related with the estimation of log-concave densities via maximum likelihood methods. 
In fact, we show that for $d=1$ at least, 
the R\'enyi divergence estimators for $s$-concave densities converge to the maximum 
likelihood estimator of a log-concave density as $s \nearrow 0$.
The R\'enyi divergence estimator shares similar characterizations as the MLE 
for log-concave distributions, which allows us to develop pointwise 
asymptotic distribution theory assuming that the underlying density is $s$-concave.
\end{abstract}

\begin{keyword}[class=MSC]
\kwd[Primary ]{62G07}
\kwd{62H12}
\kwd[; secondary ]{62G05}
\kwd{62G20}
\end{keyword}

\begin{keyword}
\kwd{$s$-concavity}
\kwd{consistency}
\kwd{projection}
\kwd{asymptotic distribution}
\kwd{mode estimation}
\kwd{nonparametric estimation}
\kwd{shape constraints}
\end{keyword}

\end{frontmatter}

\newpage
\tableofcontents

\newpage

\section{Introduction}
\subsection{Overview}
The class of $s$-concave densities on $\R^d$ is defined by the generalized means of order $s$ as follows. Let
\[
M_s(a,b;\theta):=
\begin{cases}
\big((1-\theta)a^s+\theta b^s\big)^{1/s}, & s\neq 0, a,b > 0,\\
0, & s <0, ab = 0,\\
a^{1-\theta}b^\theta, &s=0,\\
a\wedge b, &s=-\infty.
\end{cases}
\]
Then a density $p(\cdot)$ on $\R^d$ is called $s$-concave, i.e. $p \in \mathcal{P}_s$ 
if and only if for all $x_0,x_1\in \R^d$ and $\theta \in (0,1)$,
$p\big((1-\theta)x_0+\theta x_1\big)\geq M_s(p(x_0),p(x_1);\theta)$.
This definition apparently goes back to 
\cite{MR0301151} 
with further studies by \cite{MR0388475, MR0404559}, 
\cite{MR0651042}, 
\cite{rinott1976convexity}, 
and \cite{MR735860}; 
see also \cite{dharmadhikari1988unimodality} for a nice summary. 
It is easy to see that the densities $p(\cdot)$ have the form $p=\varphi_+^{1/s}$ for some concave function $\varphi$ if $s>0$, $p=\exp(\varphi)$ for some concave $\varphi$ if $s=0$, and $p=\varphi_+^{1/s}$ for some convex $\varphi$ if $s<0$. The function classes $\mathcal{P}_s$ are nested in $s$ in that for every $r>0>s$, we have
$\mathcal{P}_r\subset \mathcal{P}_0\subset \mathcal{P}_s\subset \mathcal{P}_{-\infty}.$

Nonparametric estimation of $s$-concave densities has been under intense research efforts in recent years. In particular, much attention has been paid to estimation in the special case $s=0$ which corresponds to all log-concave densities on $\R^d$. 
The nonparametric maximum likelihood estimator (MLE) of a log-concave density was studied in the univariate setting by 
\cite{MR1941467}, \cite{dumbgen2009maximum}, \cite{pal2007estimating};  
and in the multivariate setting by \cite{cule2010maximum,cule2010theoretical}. 
The limiting distribution theory at fixed points when $d=1$ was studied in \cite{balabdaoui2009limit}, and rate results in \cite{doss2013global,kim2014global}. 
\cite{dumbgen2011approximation} also studied stability properties of the 
MLE projection of any probability measure onto the class of log-concave densities.

Compared with the well-studied log-concave densities (i.e. $s=0$), 
much remains unknown concerning estimation and inference procedures for the larger classes  $\mathcal{P}_s, s<0$. 
One important feature for this larger class is that the densities in 
$\mathcal{P}_s(s<0)$ are allowed to have heavier and heavier tails as 
$s \to -\infty$. In fact, $t-$distributions with $\nu$ degrees of freedom 
belong to $\mathcal{P}_{-1/(\nu+1)} (\R) $ (and hence also to $\mathcal{P}_s (\R)$ 
for any $s < - 1/(\nu +1)$). The study of maximum likelihood estimators 
(MLE's in the following) for general $s$-concave densities in 
\cite{seregin2010nonparametric} shows that the MLE exists and is 
consistent for $s\in(-1,\infty)$. However there is no known result 
about uniqueness of the MLE of $s$-concave densities except for $s=0$. 
The difficulties in the theory of estimation via MLE lie in the fact we 
have still very little knowledge of `good' characterizations of the MLE 
in the $s$-concave setting. This has hindered further development 
of both theoretical and statistical properties of the estimation procedure.

Some alternative approaches to estimation of $s$-concave densities 
have been proposed in the literature by using divergences other than the log-likelihood functional (Kullback-Leibler divergence in some sense). \cite{koenker2010quasi} proposed an alternative to maximum likelihood based on generalized R\'enyi entropies. Similar procedures were also proposed in parametric settings by \cite{MR1665873} using a family of discrepancy measures.  In our setting of $s$-concave densities with $s<0$, the methods
of \cite{koenker2010quasi} can be formulated as follows.

Given i.i.d. observations $\underline{X}=(X_1,\ldots,X_n)$, consider the primal optimization problem $(\mathcal{P})$:
\begin{equation}\label{primalempirical}
(\mathcal{P})\quad \min_{g \in \mathcal{G}(\underline{X})} L(g,\mathbb{Q}_n)\equiv {1 \over n}\sum_{i=1}^n g(X_i)+{1 \over \abs{\beta}}\int _{\R^d} g(x)^\beta\ \d{x},
\end{equation}
where $\mathcal{G}(\underline{X})$ denotes all non-negative closed convex functions supported on the convex set $\textrm{conv}(\underline{X})$, $\mathbb{Q}_n={1 \over n}\sum_{i=1}^n\delta_{X_i}$ the empirical measure and $\beta=1+1/s<0$. As is shown by \cite{koenker2010quasi}, the associated dual problem $(\mathcal{D})$ is
\begin{equation}\label{dualempirical}
\begin{split}
(\mathcal{D})\quad & \max_{f}\int_{\R^d} \frac{(f(y))^\alpha}{\alpha}\ \d{y},\\
&\textrm{ subject to } f=\frac{\d{(\mathbb{Q}_n-G)}}{\d{y}}\textrm{ for some }G\in \mathcal{G}(\underline{X})^\circ\\
\end{split}
\end{equation}
where $\mathcal{G}(\underline{X})^\circ\equiv \big\{ G\in\mathcal{C}^\ast (\underline{X})|\int g\ \d{G}\leq 0,\textrm{for all } g \in \mathcal{G}(\underline{X})\big\}$ is the polar cone of $\mathcal{G}(\underline{X})$, and $\alpha$ is the conjugate index of $\beta$, i.e. $1/\alpha+1/\beta=1$. Here $\mathcal{C}^\ast (\underline{X})$, the space of signed Radon measures on $\textrm{conv}(\underline{X})$, is the topological dual of $\mathcal{C}(\underline{X})$, the space of continuous functions on $\textrm{conv}(\underline{X})$. We also note that the constraint $G\in \mathcal{G}(\underline{X})^\circ$ in the dual form (\ref{dualempirical}) comes from the `dual' of the primal constraint $g\in \mathcal{G}(\underline{X})$, and the constraint $f=\frac{\d{(\mathbb{Q}_n-G)}}{\d{y}}$ can be derived from the dual computation of $L(\cdot,\mathbb{Q}_n)$: 
\begin{equation*}
\begin{split}
\big(L(\cdot,\mathbb{Q}_n)\big)^\ast(G)&=\sup_{g}\bigg(\iprod{G}{g}-{1 \over n}\sum_{i=1}^n g(X_i)-{1 \over \abs{\beta}}\int _{\R^d} g(x)^\beta\ \d{x}\bigg)\\
&=\sup_g\bigg(\iprod{G-\mathbb{Q}_n}{g}-\int \psi_s(g(x))\d{x}\bigg)=\Psi_s^\ast(G-\mathbb{Q}_n).
\end{split}
\end{equation*}
Here we used the notation $\iprod{G}{g}:=\int g\ \d{G}$, $\psi_s(\cdot):=(\cdot)^\beta/\abs{\beta}$ and $\Psi_s$ is the functional defined by $\Psi_s(g):=\int \psi_s(g(x))\ \d{x}$ for clarity. Now the dual form (\ref{dualempirical}) follows by the well known fact (e.g. \cite{rockafellar71} Corollary 4A) that the form of the above dual functional is given by
\begin{equation*}
\Psi^\ast(G)=
\begin{cases}
\int \psi^\ast(\d{G}/\d{x})\ \d{x} & \textrm{ if } G \textrm{ is absolute continuous with respect to }\\
& \qquad \textrm{ Lebesgue measure,}\\
+\infty &\textrm{ otherwise.}
\end{cases}
\end{equation*}

For the primal problem $(\mathcal{P})$ and the dual problem $(\mathcal{D})$, \cite{koenker2010quasi} proved the following results:
\begin{theorem}[Theorem 4.1, \cite{koenker2010quasi}]
$(\mathcal{P})$ admits a unique solution $g_n^\ast$ if $\mathrm{int}(\mathrm{conv}(\underline{X}))\neq \emptyset$, where $g_n^\ast$ is a polyhedral convex function supported on $\mathrm{conv}(\underline{X})$.
\end{theorem}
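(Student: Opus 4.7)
The strategy exploits two structural features of the objective: the linearity of the sum term $\tfrac{1}{n}\sum g(X_i)$, and the strict convexity together with monotonic decrease of $u\mapsto u^\beta$ on $(0,\infty)$ (since $\beta<0$ yields $(u^\beta)''=\beta(\beta-1)u^{\beta-2}>0$). Fixing the values $y_i:=g(X_i)$, minimizing $\int g^\beta\,\d{x}$ over convex $g$ amounts to \emph{maximizing} $g$ pointwise on $K:=\conv{\underline{X}}$; by Jensen's inequality, any convex $g$ with $g(X_i)\leq y_i$ satisfies $g(x)\leq \sum_i \theta_i y_i$ whenever $x=\sum_i \theta_i X_i$ with $\theta$ in the standard simplex $\Delta_{n-1}$. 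Hence the pointwise largest admissible candidate is the polyhedral upper envelope
\[
\tilde g_y(x) := \inf\bigl\{\textstyle\sum_{i=1}^n \theta_i y_i : x=\sum_i \theta_i X_i,\ \theta\in \Delta_{n-1}\bigr\}, \qquad x\in K,
\]
which is convex, polyhedral, and finite on $K$ (the LP is over a compact simplex). Replacing any $g\in \mathcal{G}(\underline{X})$ by $\tilde g_{(g(X_1),\dots,g(X_n))}$ leaves the sum term unchanged while weakly decreasing the integral term; the problem therefore reduces to
\[
\min_{y \in [0,\infty)^n} F(y), \qquad F(y) := \tfrac{1}{n}\sum_{i=1}^n y_i + \tfrac{1}{\abs{\beta}}\int_K \tilde g_y(x)^\beta\,\d{x},
\]
and any minimizer must have the claimed polyhedral form.

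\emph{Uniqueness.} If $g_0,g_1$ were two minimizers differing on a set of positive Lebesgue measure in $K$, then by strict convexity of $u\mapsto u^\beta$ on $(0,\infty)$, the midpoint $(g_0+g_1)/2\in \mathcal{G}(\underline{X})$ would satisfy $L((g_0+g_1)/2,\mathbb{Q}_n)<\tfrac{1}{2}\bigl(L(g_0,\mathbb{Q}_n)+L(g_1,\mathbb{Q}_n)\bigr)$ (the sum term is linear), contradicting optimality. Almost-everywhere equality on $K$, combined with the hypothesis $\mathrm{int}(K)\neq\emptyset$ and the continuity of closed convex functions on the interior of their domains, upgrades to equality everywhere.

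\emph{Existence and the main obstacle.} Taking a minimizing sequence $g^{(m)}=\tilde g_{y^{(m)}}$ with $F(y^{(m)})\leq M<\infty$, the nonnegativity of both summands gives $\pnorm{y^{(m)}}{\infty}\leq nM$, so along a subsequence $y^{(m)}\to y^*\in [0,\infty)^n$. The bound $\abs{\tilde g_y(x)-\tilde g_{y'}(x)}\leq \pnorm{y-y'}{\infty}$, immediate from the definition, produces uniform convergence $\tilde g_{y^{(m)}}\to \tilde g_{y^*}$ on $K$. Continuity of the sum term and Fatou's lemma applied to the nonnegative integrands $\tilde g_{y^{(m)}}(x)^\beta$ then give $F(y^*)\leq \liminf_m F(y^{(m)})=\inf F$. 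The most delicate point is ruling out the degenerate case $F(y^*)=+\infty$: if $\tilde g_{y^*}$ vanished on a subset of $K$ of positive Lebesgue measure, then since $u^\beta\to +\infty$ as $u\downarrow 0$, Fatou would force $\liminf_m \int_K \tilde g_{y^{(m)}}^\beta\,\d{x}=+\infty$, contradicting the uniform bound $\int \tilde g_{y^{(m)}}^\beta\leq \abs{\beta}M$; this is precisely where the assumption $\mathrm{int}(\conv{\underline{X}})\neq \emptyset$ enters, guaranteeing $\abs{K}>0$ so the integrability estimate has bite. Hence $\tilde g_{y^*}\in \mathcal{G}(\underline{X})$ realizes the infimum, and by uniqueness it equals the polyhedral minimizer $g_n^*$.
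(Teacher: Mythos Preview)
The paper does not give its own proof of this statement: it is quoted as Theorem~4.1 of \cite{koenker2010quasi} and used as a black box. The paper's own existence and uniqueness arguments (Theorem~\ref{existence} and Lemma~\ref{unique}) treat the population functional $L(\cdot,Q)$ over all of $\mathcal{G}$ via an abstract compactness route for sequences of convex functions (Lemmas~\ref{globallowerbound} and~\ref{convsubsequence}), never exploiting the finite support of $\mathbb{Q}_n$ and never addressing the polyhedral structure.

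Your approach is essentially correct and takes a genuinely different, more elementary route tailored to the empirical setting: you collapse the infinite-dimensional problem to a finite-dimensional one over $y\in[0,\infty)^n$ by observing that, for fixed nodal values, the polyhedral envelope $\tilde g_y$ dominates every convex competitor pointwise on $K$ and therefore minimizes the decreasing integral term. This buys the polyhedral conclusion for free, whereas an abstract convex-compactness argument in the style of Theorem~\ref{existence} would require a separate step for that.

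Two small points. First, the claim that replacing $g$ by $\tilde g_{(g(X_i))}$ ``leaves the sum term unchanged'' is not quite right: in general $\tilde g_y(X_i)\le y_i$ with strict inequality whenever $(X_i,y_i)$ lies strictly above the lower convex hull of the remaining nodes, so the sum term also \emph{weakly decreases}. This only helps you, and the reduction $\inf_g L(g,\mathbb{Q}_n)=\inf_y F(y)$ still goes through. Second, your final ``delicate point'' paragraph is redundant and slightly misattributes the role of the interior hypothesis. Fatou already delivers $\int_K\tilde g_{y^*}^\beta\le\liminf_m\int_K\tilde g_{y^{(m)}}^\beta\le|\beta|M<\infty$ directly, so $F(y^*)<\infty$ with no need to analyze the zero set of $\tilde g_{y^*}$. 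The assumption $\mathrm{int}(\conv{\underline{X}})\neq\emptyset$ is really used in your uniqueness step (to pass from a.e.\ equality to pointwise equality) and to ensure the problem is nondegenerate, not in the Fatou bound.
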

\begin{theorem}[Theorem 3.1, \cite{koenker2010quasi}]
Strong duality between $(\mathcal{P})$ and $(\mathcal{D})$ holds. Any dual feasible solution is actually a density on $\R^d$ with respect to the canonical Lebesgue measure. The dual optimal solution $f_n^\ast$ exists, and satisfies $f_n^\ast = (g_n^\ast)^{1/s}.$
\end{theorem}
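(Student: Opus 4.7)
I would prove the theorem by Fenchel-Rockafellar duality applied to $(\mathcal{P})$, reading off the form of $f_n^\ast$ and the density property from the first-order optimality conditions. Rewriting $(\mathcal{P})$ as $\min_g\{\iprod{\mathbb{Q}_n}{g}+\Psi_s(g):g\in \mathcal{G}(\underline{X})\}$, the Lagrangian is
\[
\mathcal{L}(g,G)=\iprod{\mathbb{Q}_n-G}{g}+\Psi_s(g),\qquad G\in \mathcal{G}(\underline{X})^\circ,
\]
since $\mathcal{G}(\underline{X})$ is a closed convex cone whose polar is $\mathcal{G}(\underline{X})^\circ$. Taking the infimum in $g$ produces $-\Psi_s^\ast(G-\mathbb{Q}_n)$, and Corollary 4A of \cite{rockafellar71} (already recalled in the excerpt) identifies $\Psi_s^\ast(H)=\int \psi_s^\ast(\d H/\d y)\,\d{y}$ when $H$ is absolutely continuous and $+\infty$ otherwise. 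Setting $f=\d(\mathbb{Q}_n-G)/\d{y}$ and Legendre-transforming $\psi_s$ using $1/\alpha+1/\beta=1$ recovers the dual objective in $(\mathcal{D})$ exactly.

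\textbf{Strong duality and primal-dual link.} I would verify a Slater-type qualification using the hypothesis $\mathrm{int}(\conv{\underline{X}})\neq\emptyset$: the bounded quadratic $g_0(x):=c(1+\pnorm{x}{}^2)\mathbf{1}_{\conv{\underline{X}}}(x)$ lies in $\mathcal{G}(\underline{X})$, is bounded below by $c>0$ on its support, and has $\Psi_s(g_0)<\infty$ because $\beta<0$ and $\conv{\underline{X}}$ has finite Lebesgue measure. This places $g_0$ in the relative interior of the effective domain of the primal objective, after which a standard Fenchel-Rockafellar argument yields zero duality gap and attainment of a dual optimum $G_n^\ast$. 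The resulting subdifferential inclusion involving $\partial \Psi_s(g_n^\ast)$ translates pointwise, via differentiability of $\psi_s$ on $(0,\infty)$ and the integral-subdifferential formula, to $f_n^\ast=(g_n^\ast)^{\beta-1}=(g_n^\ast)^{1/s}$ on $\{g_n^\ast>0\}$; the polyhedral support structure of $g_n^\ast$ (Theorem~4.1 of \cite{koenker2010quasi}, stated just above) extends the identity globally by sending both sides to $0$ outside $\conv{\underline{X}}$.

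\textbf{Dual feasibles are densities.} Testing the polar cone condition $\iprod{G}{g}\leq 0$ against the constant $g\equiv 1\in \mathcal{G}(\underline{X})$ yields $G(\conv{\underline{X}})\leq 0$, so every dual feasible $f$ satisfies $\int f\,\d{y}=1-G(\conv{\underline{X}})\geq 1$. Nonnegativity $f\geq 0$ a.e.\ follows by a perturbation argument: were $f<0$ on a set $A$ of positive Lebesgue measure inside $\mathrm{int}(\conv{\underline{X}})$, one could place a nonnegative convex bump in $\mathcal{G}(\underline{X})$ concentrated near $A$ and violate $G\in \mathcal{G}(\underline{X})^\circ$. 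At the primal-dual optimum, complementary slackness $\iprod{G_n^\ast}{g_n^\ast}=0$ combined with the identity $f_n^\ast=(g_n^\ast)^{1/s}$ then pins $\int f_n^\ast\,\d{y}=1$, completing the density claim.

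\textbf{Main obstacle.} The most delicate step is verifying the qualification needed for strong duality: because $\psi_s$ blows up as $u\to 0^+$ when $\beta<0$, the effective domain of $\Psi_s$ is restricted to functions uniformly bounded below on their support, so the usual interior condition must be replaced by a more careful Attouch-Brezis-type argument or a direct minimax computation. The quadratic Slater point above is natural, but matching it cleanly to the correct qualification across all configurations of $\underline{X}$ is what drives the hypothesis $\mathrm{int}(\conv{\underline{X}})\neq\emptyset$, and this is the technical crux of the proof.
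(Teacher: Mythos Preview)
The paper does not prove this theorem: it is quoted verbatim as Theorem~3.1 of \cite{koenker2010quasi} and used as background, so there is no ``paper's own proof'' to compare your proposal against. Your Fenchel--Rockafellar approach is the natural one and almost certainly parallels what Koenker and Mizera do in their original paper, but the present manuscript supplies no argument of its own here.

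That said, two of your steps deserve a second look. First, your nonnegativity argument for dual feasibles invokes a ``nonnegative convex bump concentrated near $A$''; but nonnegative convex functions cannot be localized like bumps---any nonconstant nonnegative convex function on $\conv{\underline{X}}$ has full support there. You would instead want to exploit that $\mathcal{G}(\underline{X})$ contains all nonnegative affine functions on $\conv{\underline{X}}$, and use the resulting moment constraints together with the structure of $\mathbb{Q}_n$. Second, your argument for $\int f=1$ only yields $\int f\geq 1$ for a generic dual feasible (testing with $g\equiv 1$), with equality obtained via complementary slackness \emph{at the optimum}; yet the theorem as stated asserts that \emph{every} dual feasible is a density. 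Either the phrase ``density'' in Koenker--Mizera is being used loosely (nonnegative, integrable), or there is an additional normalization built into their dual feasibility set that you have not captured. You should consult the original paper to resolve which reading is intended before finalizing this step.
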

We note that the above results are all obtained in the empirical setting. At the population level, given a probability measure $Q$ with suitable regularity conditions, consider
\begin{equation}\label{primalpopulation}
(\mathcal{P}_Q)\quad \min_{g \in \mathcal{G}}  L_s(g,Q),
\end{equation}
where
\[
L(g,Q)\equiv L_s(g,Q)\equiv \int g(x)\ \d{Q} + {1 \over \abs{\beta}}\int _{\R^d} g(x)^\beta\ \d{x},
\]
and $\mathcal{G}$ denotes the class of all (non-negative) closed convex functions with non-empty interior, which are coercive in the sense that $g(x)\to \infty,\textrm{ as } \pnorm{x}{}\to \infty$. \cite{koenker2010quasi} show that Fisher consistency holds at the population level:  Suppose $Q(A):=\int_A f_0\ \d{\lambda}$ is defined for some $f_0=g_0^{1/s}$ where $g_0 \in \mathcal{G}$; then $g_0$ is an optimal solution for $(\mathcal{P}_Q)$.

\cite{koenker2010quasi} also proposed a general discretization scheme corresponding to the primal form (\ref{primalempirical}) and the dual form (\ref{dualempirical}) for fast computation, by which the one dimensional problem can be solved via linear programming and the two dimensional problem via semi-definite programming. These have been implemented in the \texttt{R} package \texttt{REBayes} by \cite{KoenkerMizera2014}. Koenker's package depends
in turn on the \texttt{MOSEK} implementation of \cite{andersen-mosek};  see Appendix B of \cite{koenker2010quasi} 
for further details. On the other hand, in the special case $s=0$, computation of the MLE's of log-concave densities has been implemented in the \texttt{R} package \texttt{LogConcDEAD} developed in \cite{cule2010maximum} in arbitrary dimensions. However, expensive search for the proper triangulation of the support $\textrm{conv}(\underline{X})$ renders computation difficult in high dimensions.

In this paper, we show that the estimation procedure proposed by \cite{koenker2010quasi} is the `natural' way to estimate $s$-concave densities. As a starting point, since the classes $\mathcal{P}_s$ are nested in $s$, it is natural to consider estimation of the extreme case $s=0$ (the class of log-concave densities) as some kind of `limit' of estimation of the larger class $s<0$. As we will see, estimation of $s$-concave distributions via R\'enyi divergences is intrinsically related with the estimation of log-concave distributions via maximum likelihood methods. In fact we show that in the empirical setting in dimension 1, the R\'enyi divergence estimators converge to the maximum likelihood estimator for log-concave densities as $s \nearrow 0$.

We will show that the R\'enyi divergence estimators share characterization and stability properties similar to the analogous properties established in the log-concave setting by \cite{dumbgen2009maximum,cule2010theoretical} and \cite{dumbgen2011approximation}. Once these properties are available, further theoretical and statistical considerations in estimation of $s$-concave densities become possible. In particular, the characterizations developed here enable us to overcome some of the difficulties of maximum likelihood estimators as proposed by \cite{seregin2010nonparametric}, and to develop limit distribution theory at fixed points assuming that the underlying model is $s$-concave. The pointwise rate and limit distribution results follow a pattern similar to the corresponding
results for the MLE's in the log-concave setting obtained by \cite{balabdaoui2009limit}. This local point of view also underlines the results on global rates of convergence considered in \cite{doss2013global}, showing that the difficulty of estimation for such densities with tails light or heavy, comes almost solely from the shape constraints, namely, the convexity-based constraints.

The rest of the paper is organized as follows. In Section \ref{theoreticalproperty}, 
we study the basic theoretical properties of the approximation/projection 
scheme defined by the procedure (\ref{primalpopulation}). 
In Section \ref{limitbehaviour}, we study the limit behavior of $s$-concave 
probability measures in the setting of weak convergence under 
dimensionality conditions on the supports of the limiting sequence. 
In Section \ref{limitdistributiontheory}, we develop limiting distribution 
theory of the divergence estimator in dimension 1 under curvature 
conditions with tools developed in Sections \ref{theoreticalproperty} 
and \ref{limitbehaviour}. Related issues and further problems are 
discussed in Section \ref{discussion}. 
Proofs are given in Sections \ref{sec:proofs} and \ref{sec:appendix}.

\subsection{Notation}
In this paper, we denote  the canonical Lebesgue measure on $\R^d$ by $\lambda$ or $\lambda_d$ and write $\pnorm{\cdot}{p}$ for the canonical Euclidean $p$-norm in $\R^d$, and $\pnorm{\cdot}{}=\pnorm{\cdot}{2}$ unless otherwise specified. $B(x,\delta)$ stands for the open ball of radius $\delta$ centered at $x$ in $\R^d$, and ${\bf 1}_A$ for the indicator function of $A\subset \R^d$. We use $L_p(f)\equiv\pnorm{f}{L_p}\equiv \pnorm{f}{p}=(\int \abs{f}^p\d{\lambda_d})^{1/p}$ to denote the $L_p(\lambda_d)$ norm of a measurable function $f$ on $\R^d$ if no confusion arises. 

We write $\textrm{csupp}(Q)$ for the convex support of a measure $Q$ defined on $\R^d$, i.e.
\[
\textrm{csupp}(Q)=\bigcap\{C:C\subset \R^d \textrm{ closed and convex}, Q(C)=1\}.
\]
We let $\mathcal{Q}_0$ denote all probability measures on $\R^d$ whose convex support has non-void interior, while $\mathcal{Q}_1$ denotes the set of all probability measures $Q$ with finite first moment: $\int \pnorm{x}{} Q(\d{x})<\infty$.

We write $f_n \rightarrow_d f$ 
if $P_n$ converges weakly to $P$ for the corresponding probability measures $P_n (A) \equiv \int_A f_n d \lambda $ and 
$P(A) \equiv \int_A f d\lambda $.

We write $\alpha:=1+s,\beta := 1+1/s, r:=-1/s$ unless otherwise specified.



\section{Theoretical properties of the divergence estimator}\label{theoreticalproperty}
In this section, we study the basic theoretical properties of the proposed projection scheme via R\'enyi divergence (\ref{primalpopulation}). Starting from a given probability measure $Q$, we first show the existence and uniqueness of such projections via R\'enyi divergence under assumptions on the index $s$ and $Q$. We will call such a projection the \emph{R\'enyi divergence estimator} for the given probability measure $Q$ in the following discussions. We next show that the projection scheme is continuous in $Q$ in the following sense: if a sequence of probability measures $Q_n$, for which the projections onto the class of $s$-concave densities exist, converge to a limiting probability measure $Q$ in Wasserstein distance, then the corresponding projected densities converge in weighted $L_1$ metrics and uniformly on closed subsets of the continuity set of the limit. The directional derivatives of such projected densities also converge uniformly in all directions in a local sense. We then turn our attention the explicit characterizations of the R\'enyi divergence estimators, especially in dimension 1. This helps in two ways. First, it helps to understand the continuity of the projection scheme in the index $s$, i.e. answers affirmatively the question: For a given probability measure $Q$, does the R\'enyi divergence estimator converge to the log-concave projection as studied in \cite{dumbgen2011approximation} as $s \nearrow 0$? Second, the explicit characterizations are exploited in the development of asymptotic distribution theory presented in Section \ref{limitdistributiontheory}. 

\subsection{Existence and uniqueness}
For a given probability measure $Q$, let $L(Q)=\inf_{g \in \mathcal{G}}L(g,Q)$.
\begin{lemma}\label{lem:finitefunctional}
Assume $-1/(d+1)<s<0$ and $Q \in \mathcal{Q}_0$. Then $L(Q)<\infty$ if and only if $Q \in \mathcal{Q}_1$.
\end{lemma}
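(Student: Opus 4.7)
The plan is to prove both implications separately, with the sufficiency direction by exhibiting an explicit test function and the necessity direction by extracting a linear lower bound from coercivity via convex duality.

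For the sufficiency direction, I would first observe that $-1/(d+1)<s<0$ translates to $\beta=1+1/s<-d$. Then I take the simple candidate $g_0(x)=1+\|x\|$, which is a non-negative, closed, coercive convex function on $\mathbb{R}^d$, hence lies in $\mathcal{G}$. The first summand of $L(g_0,Q)$ equals $1+\int \|x\|\,Q(\mathrm{d}x)$, finite by $Q\in\mathcal{Q}_1$. For the second summand, passing to polar coordinates gives $\int_{\mathbb{R}^d}(1+\|x\|)^\beta\,\mathrm{d}x=c_d\int_0^\infty(1+r)^\beta r^{d-1}\,\mathrm{d}r$, which converges since the integrand decays like $r^{\beta+d-1}$ with $\beta+d-1<-1$. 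Hence $L(Q)\leq L(g_0,Q)<\infty$.

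For the necessity direction, suppose $L(Q)<\infty$; then there exists $g\in\mathcal{G}$ with $\int g\,\mathrm{d}Q<\infty$, and in particular $Q$ is concentrated on $\mathrm{dom}(g)$. The crux is to show that every closed convex coercive $g$ admits constants $c>0$ and $C<\infty$ with $g(x)\geq c\|x\|-C$ for all $x\in\mathbb{R}^d$. I would derive this through the Fenchel conjugate $g^*$: coercivity of $g$ (equivalently, bounded sub-level sets) is equivalent to $0\in\mathrm{int}\,\mathrm{dom}(g^*)$, so $g^*$ is bounded, say by $C$, on some open ball $B(0,2c)$. Applying Fenchel--Young at $y=c\,x/\|x\|$ for $x\neq 0$ yields $g(x)\geq c\|x\|-g^*(c\,x/\|x\|)\geq c\|x\|-C$, and this holds at $x=0$ as well after absorbing $g(0)\geq 0$ into $C$. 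Integrating against $Q$ then gives $c\int\|x\|\,Q(\mathrm{d}x)\leq \int g\,\mathrm{d}Q+C<\infty$, so $Q\in\mathcal{Q}_1$.

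The main obstacle will be justifying the uniform linear lower bound in the necessity step. Although it follows cleanly from standard convex analysis, a purely elementary argument along rays only yields direction-dependent slopes, and upgrading these to a uniform positive slope requires compactness of the unit sphere together with lower semicontinuity of the recession function of $g$ --- all of which is packaged neatly by passing to $g^*$. The remaining pieces (constructing the explicit test function $g_0$ and verifying that both summands of $L(g_0,Q)$ are finite) are routine calculations, driven entirely by the two assumptions $\int\|x\|\,Q(\mathrm{d}x)<\infty$ and $\beta<-d$.
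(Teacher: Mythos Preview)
Your proposal is correct and follows essentially the same approach as the paper: the same test function $g_0(x)=1+\|x\|$ for sufficiency, and the same linear lower bound $g(x)\geq a\|x\|-b$ for necessity. The only difference is that the paper records this lower bound as a separate lemma (citing \cite{cule2010theoretical}), whereas you supply a direct proof via the Fenchel conjugate; your argument is a valid way to establish it.
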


Now we state our main theorem for the existence of R\'enyi divergence projection corresponding to a general measure $Q$ on $\R^d$.
\begin{theorem}\label{existence}
Assume $-1/(d+1)<s<0$ and $Q \in \mathcal{Q}_0\cap \mathcal{Q}_1$. 
Then (\ref{primalpopulation}) achieves its nontrivial minimum for some 
$\tilde{g} \in \mathcal{G}$. Moreover, $\tilde{g}$ is bounded away from zero, 
and $\tilde{f}\equiv {\tilde{g}}^{1/s}$ is a bounded density with respect to $\lambda_d$.
\end{theorem}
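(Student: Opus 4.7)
The plan is to apply the direct method of calculus of variations. Take a minimizing sequence $\{g_n\} \subset \mathcal{G}$ with $L(g_n, Q) \to L(Q)$; by Lemma~\ref{lem:finitefunctional} the infimum is finite, so one immediately has the twin uniform bounds $\int g_n \, dQ \leq M$ and $\int g_n^\beta \, dx \leq \abs{\beta} M$ along the sequence.

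The core of the argument is compactness. From $\int g_n \, dQ \leq M$ combined with $Q \in \mathcal{Q}_0$, a Markov-type estimate plus convexity shows that each $g_n$ is uniformly bounded on a fixed compact set $K_0 \subset \mathrm{int}(\textrm{csupp}(Q))$ with nonempty interior: pick $d+1$ affinely independent balls of positive $Q$-mass whose convex hull contains $K_0$, and observe that the convex sublevel sets $\{g_n \leq t\}$ must intersect all of them for $t$ above a threshold, after which convexity extends the bound across $K_0$. The bound on $\int g_n^\beta \, dx$ supplies the complementary volume control $\textrm{vol}(\{g_n \leq t\}) \leq \abs{\beta} M t^{\abs{\beta}}$, which together with the upper bound on $K_0$ implies a uniform (in $n$) coercive lower bound of the form $g_n(x) \geq c \pnorm{x - x_0}{}^{1/\abs{\beta}}$ for $x$ far from $K_0$ (because the convex hull of $B(x_0, r) \cup \{x\}$ has volume growing linearly in $\pnorm{x-x_0}{}$). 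A Helly-Blaschke style diagonal selection then yields a subsequence $g_{n_k}$ converging locally uniformly on the interior of $U := \{x : \sup_n g_n(x) < \infty\}$ to a convex $\tilde g$, extended by $+\infty$ outside.

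Verification that $\tilde g \in \mathcal{G}$ and $L(\tilde g, Q) \leq L(Q)$ follows from Fatou. The uniform coercive lower bound transfers to $\tilde g$, so $\tilde g$ is coercive in the sense of $\mathcal{G}$ and its effective domain has nonempty interior (containing $K_0$). Fatou applied to $\int g \, dQ$ forces $Q(U^c) = 0$, so $Q$ does not see any infinite part of $\tilde g$; Fatou applied to $\int g^\beta \, dx$ gives $\int \tilde g^\beta \, dx \leq \liminf_k \int g_{n_k}^\beta \, dx$. Summing, $L(\tilde g, Q) \leq L(Q)$, so $\tilde g$ is a minimizer.

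For the regularity claims: $\tilde g$ is convex and coercive, so it attains its minimum $m := \inf \tilde g$ at some $x_0$. If $m = 0$, the local Lipschitz continuity of a convex function on the interior of its domain would give $\tilde g(x) \leq L \pnorm{x - x_0}{}$ near $x_0$ and hence $\tilde g^\beta \geq L^\beta \pnorm{x - x_0}{}^\beta$, which is non-integrable near $x_0$ precisely because the hypothesis $s > -1/(d+1)$ is equivalent to $\abs{\beta} > d$; this contradicts $L(\tilde g, Q) < \infty$. So $m > 0$, and $\tilde f = \tilde g^{1/s} \leq m^{1/s}$ is bounded. Perturbing $\tilde g$ by the constant direction $h \equiv 1$ (admissible on both sides since $\tilde g \geq m > 0$, with dominated convergence justified by the pointwise bound $(\tilde g + t)^\beta \leq 2^{\abs{\beta}} \tilde g^\beta$ for $\abs{t} \leq m/2$) yields the Euler equation $\int \tilde g^{\beta - 1} \, dx = \int dQ = 1$, i.e.\ $\int \tilde f \, dx = 1$, so $\tilde f$ is a density. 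The main obstacle throughout is the compactness step: the negative exponent in $g^\beta$ makes the functional non-standard, with small and large values of $g$ each contributing in opposite regimes, and the dimensional threshold $\abs{\beta} > d$ has to be invoked both to rule out zero values of $\tilde g$ and to guarantee integrability at infinity.
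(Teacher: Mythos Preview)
Your approach matches the paper's: direct method with a minimizing sequence, uniform interior bounds plus a coercive lower bound for compactness, Fatou to pass to the limit, constant perturbation for the density claim. Two points need attention.

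First, your ball-intersection argument for the uniform bound on $K_0$ is workable but sketchier than you let on: $Q$ need not charge arbitrary small balls in $\mathrm{int}(\mathrm{csupp}(Q))$ (think of an empirical measure), so you must choose balls centered at $d+1$ support points of $Q$ in general position, small enough that any selection of one point per ball remains in general position, and then take $K_0$ to be a small simplex contained in every such convex hull. The paper avoids this bookkeeping via the pointwise inequality $L(g_n,Q)\geq g_n(x_0)\bigl(1-h(Q,x_0)\bigr)$, where $h(Q,x_0)=\sup\{Q(C):C\text{ closed convex},\,x_0\notin\mathrm{int}(C)\}<1$ for $x_0\in\mathrm{int}(\mathrm{csupp}(Q))$; this gives the bound at every interior point in one stroke. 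Your sublinear coercive bound $g_n(x)\gtrsim\|x\|^{1/|\beta|}$ is weaker than the paper's linear bound $a\|x\|-b$, but suffices since you only use it to transfer coercivity to $\tilde g$.

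Second, your ``bounded away from zero'' step has a genuine gap: local Lipschitz continuity of $\tilde g$ at its minimizer $x_0$ requires $x_0\in\mathrm{int}(\mathrm{dom}(\tilde g))$, which nothing in your argument guarantees. The paper's fix (Lemma~\ref{generalposition}) works regardless of where $x_0$ sits in $\mathrm{dom}(\tilde g)$: pick $x_1,\ldots,x_d\in\mathrm{int}(\mathrm{dom}(\tilde g))$ in general position with $x_0$ and form the simplex $\Delta=\mathrm{conv}(x_0,\ldots,x_d)$; convexity gives $\tilde g(x)\leq\bigl(\max_{i\geq 1}\tilde g(x_i)\bigr)\sum_{i\geq 1}\lambda_i(x)\lesssim\|x-x_0\|$ on $\Delta$ (barycentric coordinates), hence $\int_\Delta\tilde g^{\beta}\gtrsim\int_\Delta\|x-x_0\|^{-|\beta|}\,dx=\infty$ since $|\beta|>d$. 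This is the same non-integrability you invoke, just framed so that it does not rely on $x_0$ being interior.
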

The uniqueness of the solution follows immediately from the strict convexity of the functional $L(\cdot,Q)$.
\begin{lemma}\label{unique}
$\tilde{g}$ is the unique solution for $(\mathcal{P}_Q)$ if $\mathrm{int}(\mathrm{dom}(\tilde{g}))\neq \emptyset$.
\end{lemma}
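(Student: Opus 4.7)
The plan is the classical midpoint trick based on strict convexity of $L(\cdot,Q)$ on $\mathcal G$. The data term $g\mapsto\int g\,\d{Q}$ is linear, while the penalty $g\mapsto\frac{1}{|\beta|}\int g^\beta\,\d\lambda$ is strictly convex along distinct positive convex functions: for $-1/(d+1)<s<0$ we have $\beta=1+1/s<0$, and $t\mapsto t^\beta/|\beta|$ has second derivative $(1-\beta)t^{\beta-2}>0$ on $(0,\infty)$.

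Suppose $\tilde g$ and $\tilde h$ are two minimizers with $L(\tilde g,Q)=L(\tilde h,Q)=L_\ast$, where $\mathrm{int}(\mathrm{dom}(\tilde g))\neq\emptyset$. First I would verify that the midpoint $\bar g\equiv(\tilde g+\tilde h)/2$ lies in $\mathcal G$. Finiteness of $\int\tilde h\,\d{Q}$ forces $Q(\mathrm{dom}(\tilde h))=1$, so $\mathrm{csupp}(Q)\subseteq\overline{\mathrm{dom}(\tilde h)}$; because $Q\in\mathcal Q_0$ its convex support has non-empty interior, and the identity $\mathrm{int}(\overline C)=\mathrm{int}(C)$ for convex $C$ with non-empty interior yields $\mathrm{int}(\mathrm{csupp}(Q))\subseteq\mathrm{int}(\mathrm{dom}(\tilde g))\cap\mathrm{int}(\mathrm{dom}(\tilde h))\subseteq\mathrm{int}(\mathrm{dom}(\bar g))$. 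Hence $\bar g$ is a non-negative, closed, convex, coercive function (coercivity is preserved by sums) with non-empty interior of domain, so $\bar g\in\mathcal G$.

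Next I would apply the strict convexity pointwise. Finiteness of $\int\tilde g^\beta\,\d\lambda$ (with $\beta<0$) forces $\tilde g>0$ Lebesgue-a.e.\ on $\mathrm{dom}(\tilde g)$, and similarly for $\tilde h$. Scalar strict convexity then gives
\[
\frac{\bar g^\beta}{|\beta|}\le\tfrac{1}{2}\frac{\tilde g^\beta}{|\beta|}+\tfrac{1}{2}\frac{\tilde h^\beta}{|\beta|}\quad\text{a.e.},
\]
with equality iff $\tilde g=\tilde h$. Integrating and adding the linear data term,
\[
L_\ast\le L(\bar g,Q)\le\tfrac{1}{2}\bigl(L(\tilde g,Q)+L(\tilde h,Q)\bigr)=L_\ast,
\]
so equality holds throughout, forcing $\tilde g=\tilde h$ Lebesgue-a.e.\ on $\mathrm{dom}(\bar g)$. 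Continuity of convex functions on the interior of their domain promotes this to pointwise equality on $\mathrm{int}(\mathrm{dom}(\bar g))$.

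Finally I would extend equality to all of $\R^d$. A swap-in argument shows the two effective domains agree up to Lebesgue-null sets: if $\mathrm{dom}(\tilde g)\setminus\mathrm{dom}(\tilde h)$ had positive Lebesgue measure, adding the convex indicator of $\overline{\mathrm{dom}(\tilde h)}$ to $\tilde g$ would preserve $\int\cdot\,\d{Q}$ (since $Q$ is supported on $\overline{\mathrm{dom}(\tilde h)}$) while strictly decreasing the penalty, contradicting the optimality of $\tilde g$. Lower semicontinuity of each function then upgrades a.e.\ equality on the common domain to equality everywhere. The main obstacle is precisely this last domain bookkeeping; the strict-convexity inequality itself is a one-liner once the midpoint is shown to lie in $\mathcal G$.
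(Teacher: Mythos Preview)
Your proof is correct and follows essentially the same strict-convexity/midpoint argument as the paper. The only difference is in the final step of upgrading a.e.\ equality to everywhere equality: the paper observes directly that two closed convex functions agreeing Lebesgue-a.e.\ must have $\mathrm{int}(\mathrm{dom}\,\tilde g)=\mathrm{int}(\mathrm{dom}\,\tilde h)$, uses continuity on the interior, and then invokes closedness via Rockafellar's Corollary~7.3.4 to get $\tilde g=\mathrm{cl}\,\tilde g=\mathrm{cl}\,\tilde h=\tilde h$; your swap-in argument is valid but unnecessary, since the equality case of your midpoint inequality already forces $\mathrm{dom}(\tilde g)\setminus\mathrm{dom}(\tilde h)$ to be Lebesgue-null (there $\bar g^\beta=0<\tfrac12\tilde g^\beta$ strictly).
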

\begin{remark}
\label{ExistOfProjection}
By the above discussion, we conclude that the map $Q\mapsto \arg\min_{g \in \mathcal{G}}L(g,Q)$
is well-defined for probability measures $Q$ with suitable regularity conditions: 
in particular, if $Q \in \mathcal{Q}_0$ and $-1/(d+1)<s<0$, it is well-defined 
if and only if $Q \in \mathcal{Q}_1$. From now on we denote the optimal solution 
as $g_s(\cdot|Q)$ or simply $g(\cdot|Q)$ if no confusion arises, and write 
$P_Q$ for the corresponding $s$-concave distribution, 
and say that $P_Q$ is the R{\'e}nyi projection of $Q$ to $P_Q \in {\cal P}_s$.
\end{remark}

\subsection{Weighted global convergence in $\pnorm{\cdot}{L_1}$ and $\pnorm{\cdot}{\infty}$}
\begin{theorem}\label{mainthm1}
Assume $-1/(d+1)<s<0$. Let $\{Q_n\}\subset \mathcal{Q}_0$ be a sequence of probability measures converging weakly to $Q \subset \mathcal{Q}_0\cap\mathcal{Q}_1$. Then
\begin{equation}\label{momentliminf}
\int \pnorm{x}{}\ \d{Q}\leq \liminf_{n \to \infty} \int \pnorm{x}{}\ \d{Q}_n.
\end{equation}
If we further assume that
\begin{equation}\label{momentconv}
\lim_{n \to \infty}\int \pnorm{x}{}\ \d{Q_n}=\int \pnorm{x}{}\ \d{Q},
\end{equation}
then,
\begin{equation}\label{functionalconv}
L(Q)=\lim_{n \to \infty}L(Q_n).
\end{equation}
Conversely, if (\ref{functionalconv}) holds, then  (\ref{momentconv}) holds true.
In the former case(i.e. (\ref{momentconv}) holds), let $g:=g(\cdot|Q)$ 
and $g_n:=g(\cdot|Q_n)$, then $f:=g^{1/s}$, $f_n:=g_n^{1/s}$ satisfy
\begin{equation}\label{pointwiseconv}
\begin{split}
\lim_{n \to \infty, x \to y}f_n(x) &= f(y), \quad \textrm{for all } y \in \R^d\setminus \partial\{f>0\},\\
\limsup_{n \to \infty, x \to y}f_n(x) &\leq f(y), \quad \textrm{for all } y \in \R^d.
\end{split}
\end{equation}
For $\kappa< r-d\equiv-1/s-d$,
\begin{equation}\label{L1conv}
\lim_{n \to \infty}\int (1+\pnorm{x}{})^\kappa\abs{f_n(x)-f(x)}\ \d{x}=0.
\end{equation}
For any closed set $S$ contained in the continuity points of $f$ and $\kappa<r$,
\begin{equation}\label{polyLinfty}
\lim_{n \to \infty}\sup_{x \in S}\big(1+\pnorm{x}{})^\kappa\abs{f_n(x)-f(x)}=0.
\end{equation}
Furthermore, let $\mathcal{D}_f:=\{x \in \intdom{f}: f\textrm{ is differentiable at }x\}$, 
and $T\subset \mathrm{int}(\mathcal{D}_f)$ be any compact set. Then
\begin{equation}\label{eqn:gradient_conv}
\lim_{n \to \infty}\sup_{x \in T,\pnorm{\xi}{2}=1}\abs{\nabla_\xi f_n(x)-\nabla_\xi f(x)}=0
\end{equation}
where $\nabla_\xi f(x):=\lim_{h \searrow 0}\frac{f(x+h\xi)-f(x)}{h}$ denotes the (one-sided) directional derivative along $\xi$.
\end{theorem}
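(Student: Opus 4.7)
The plan is to adapt the stability scheme of \cite{dumbgen2011approximation} for log-concave projections, with the new technical content being polynomial tail control in place of exponential decay. The moment inequality (\ref{momentliminf}) follows by applying weak convergence to the bounded continuous truncations $x\mapsto \pnorm{x}{}\wedge M$ and letting $M\uparrow\infty$ by monotone convergence. For the forward implication (\ref{momentconv}) $\Rightarrow$ (\ref{functionalconv}), I would fix $g := g(\cdot|Q)$; by Lemma~\ref{lem:finitefunctional} and $Q\in\mathcal{Q}_1$, this minimizer has at most linear growth in the relevant directions, so weak convergence combined with (\ref{momentconv}) delivers $\int g\,\d{Q_n}\to\int g\,\d{Q}$ (a Fortet--Mourier-type fact). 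This gives $\limsup L(Q_n)\leq L(g,Q_n)\to L(Q)$. The converse implication combines (\ref{momentliminf}) with a Fatou bound on $\int g_n\,\d{Q_n}$.

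Next I would extract a pointwise limit of $g_n:=g(\cdot|Q_n)$. The key estimate is \emph{uniform coercivity}: the identity $\int g_n^\beta\,\d{\lambda}=\int g_n f_n\,\d{\lambda}$ (since $g_n^\beta=g_n\cdot g_n^{1/s}=g_n f_n$), combined with boundedness of $L(Q_n)$ and $f_n$ being a density, allows one to preclude both $g_n$ collapsing on a large set and escaping too fast, yielding linear lower bounds $g_n(x)\geq c\pnorm{x}{}$ for $\pnorm{x}{}\geq R$, uniformly in $n$. Convexity then gives local equi-boundedness and equi-Lipschitz bounds, and Arzel\`a--Ascoli produces a subsequential pointwise limit $g^\ast\in\mathcal{G}$. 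Lower semicontinuity of $L(\cdot,Q)$ under this convergence together with $L(Q_n)\to L(Q)$ gives $L(g^\ast,Q)\leq L(Q)$, and Lemma~\ref{unique} forces $g^\ast=g$. Hence $g_n\to g$ pointwise along the full sequence; (\ref{pointwiseconv}) then follows because pointwise limits of convex functions are locally uniform on the interior of the limiting domain, while $t\mapsto t^{1/s}$ is continuous and strictly decreasing on $(0,\infty)$, handling the boundary case via upper semicontinuity.

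The weighted convergence statements rest on the uniform polynomial tail bound $f_n(x)\leq C\pnorm{x}{}^{-r}$ for $\pnorm{x}{}\geq R$, obtained directly from uniform coercivity with $r=-1/s$. For $\kappa<r-d$ the functions $(1+\pnorm{x}{})^\kappa f_n$ admit a common $L_1$ envelope, so pointwise convergence and Vitali's theorem deliver (\ref{L1conv}). For (\ref{polyLinfty}), on any closed $S$ contained in the continuity set of $f$ the convex functions $g_n$ converge locally uniformly, hence so do $f_n=g_n^{1/s}$; combining this with the uniform tail bound outside a large ball (where $(1+\pnorm{x}{})^\kappa f_n$ is uniformly small for $\kappa<r$) yields the weighted uniform estimate on $S$.

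Finally, (\ref{eqn:gradient_conv}) follows from the classical Rockafellar result that pointwise-convergent convex functions have one-sided directional derivatives converging uniformly on compact subsets of the differentiability points of the limit. The chain-rule identity $\nabla_\xi f_n = s^{-1}g_n^{1/s-1}\nabla_\xi g_n$ transfers this to $f_n$ once one notes that $g_n$ is locally uniformly bounded and bounded away from zero on the compact set $T\subset\mathrm{int}(\mathcal{D}_f)=\mathrm{int}(\mathcal{D}_g)$. The principal obstacle throughout is the uniform coercivity step: exponential tails in \cite{dumbgen2011approximation} afford considerable slack, whereas here the exponent $\beta$ can be arbitrarily close to $-d$, requiring a delicate interplay between first-moment control of $Q_n$ and the coupling $\int g_n^\beta = \int g_n f_n$ to rule out pathological subsequences.
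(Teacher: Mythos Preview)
Your overall architecture matches the paper's, but there is a genuine gap in the upper bound $\limsup_n L(Q_n)\le L(Q)$. You claim that $g=g(\cdot\,|\,Q)$ ``has at most linear growth in the relevant directions,'' so that Wasserstein-1 convergence yields $\int g\,\d{Q_n}\to\int g\,\d{Q}$. This is false in general: take $Q$ standard Gaussian on $\R$ and $s=-1/3$; by Fisher consistency $g(x)=f_0(x)^{s}\propto e^{x^2/6}$, which grows exponentially. More generally $g$ may equal $+\infty$ off $\mathrm{csupp}(Q)$. The paper repairs this via the Lipschitz truncation $g^{(\epsilon)}$ of Lemma~\ref{Lip}: since $g^{(\epsilon)}\in\mathcal{G}$ is $1/\epsilon$-Lipschitz, (\ref{momentconv}) gives $L(g^{(\epsilon)},Q_n)\to L(g^{(\epsilon)},Q)$, hence $\limsup_n L(Q_n)\le L(g^{(\epsilon)},Q)$; letting $\epsilon\searrow 0$ and using monotone convergence recovers $L(g,Q)$.

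Two further points where your sketch is too thin. First, the step ``lower semicontinuity of $L(\cdot,Q)$ gives $L(g^\ast,Q)\le L(Q)$'' hides the simultaneous passage $Q_n\to Q$ and $g_n\to g^\ast$. To get $\liminf_n\int g_n\,\d{Q_n}\ge\int g^\ast\,\d{Q}$ the paper uses the \emph{uniform} linear lower bound $g_n(x)\ge a\pnorm{x}{}-b$ so that Fatou applies to the nonnegative variables $g_n(X_n)-(a\pnorm{X_n}{}-b)$ under a Skorohod coupling, the subtracted piece being handled by (\ref{momentconv}); this same computation produces the converse (\ref{functionalconv})$\Rightarrow$(\ref{momentconv}), since one actually obtains $\liminf_n L(Q_n)\ge L(Q)+a\bigl(\liminf_n\int\pnorm{x}{}\,\d{Q_n}-\int\pnorm{x}{}\,\d{Q}\bigr)$. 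Second, the uniform coercivity itself is not derived from your identity $\int g_n^\beta=\int g_n f_n$ in the paper; it comes from Lemma~\ref{globallowerbound}, whose hypotheses are $\sup_n g_n(x_0)<\infty$ for interior $x_0$ (via the $h(Q_n,x_0)$ bound of Lemma~\ref{intpoint}) and $\sup_n\int g_n^\beta<\infty$, and whose proof runs through level-set volume estimates (Lemma~\ref{controllevelset}). Your identity is correct but does not by itself rule out the pathologies you mention. Finally, for (\ref{L1conv}) the tail envelope must be paired with $\sup_n\pnorm{f_n}{\infty}<\infty$ on bounded regions, which the paper obtains by showing the minimizers of $g_n$ stay bounded and then invoking (\ref{pointwiseconv}).
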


\begin{remark}
The one-sided directional derivative for a convex function $g$ is well-defined and 
$\nabla_\xi g(x)=\inf_{h>0}\frac{g(x+h\xi)-g(x)}{h}$, hence well-defined for $f\equiv g^{1/s}$. 
See Section 23 in \cite{rockafellar1997convex} for more details.
\end{remark}

As a direct consequence, we have the following result covering both on and off-the-model cases.
\begin{corollary}\label{cor:off-model-consistency}
Assume $-1/(d+1)<s<0$. Let $Q$ be a probability measure such that 
$Q\in\mathcal{Q}_0\cap \mathcal{Q}_1$, with $f_{Q}:=g(\cdot|Q)^{1/s}$ 
the density function corresponding to the R{\'e}nyi projection $P_{Q}$ (as in Remark~\ref{ExistOfProjection}). 
Let $\mathbb{Q}_n={1 \over n}\sum_{i=1}^n \delta_{X_i}$ be the 
empirical measure when $X_1,\ldots,X_n$ are i.i.d. with distribution 
$Q$ on $\R^d$. Let $\hat{g}_n := g(\cdot|\mathbb{Q}_n)$ and 
$\hat{f}_n:=\hat{g}_n^{1/s}$ be the R\'enyi divergence estimator of $Q$. Then, almost surely we have
\begin{equation}\label{eqn:off-model-ptconv}
\begin{split}
\lim_{n \to \infty, x \to y}\hat{f}_n(x) &= f_Q(y), \quad \textrm{for all } y \in \R^d\setminus \partial\{f>0\},\\
\limsup_{n \to \infty, x \to y}\hat{f}_n(x) &\leq f_Q(y), \quad \textrm{for all } y \in \R^d.
\end{split}
\end{equation}
For $\kappa< r-d\equiv-1/s-d$,
\begin{equation}\label{eqn:off-model-L1}
\lim_{n \to \infty}\int (1+\pnorm{x}{})^\kappa\abs{\hat{f}_n(x)-f_Q(x)}\ \d{x}=_{a.s.}0.
\end{equation}
For any closed set $S$ contained in the continuity points of $f$ and $\kappa<r$,
\begin{equation}\label{eqn:off-model-uniform}
\lim_{n \to \infty}\sup_{x \in S}\big(1+\pnorm{x}{})^\kappa\abs{\hat{f}_n(x)-f_Q(x)}=_{a.s.}0.
\end{equation}
Furthermore, for any compact set $T \subset \mathrm{int}(\mathcal{D}_{f_Q})$,
\begin{equation}\label{eqn:off-model-derivative}
\lim_{n \to \infty}\sup_{x \in T,\pnorm{\xi}{2}=1}\abs{\nabla_\xi \hat{f}_n(x)-\nabla_\xi f_Q(x)}=_{a.s.}0.
\end{equation}
\end{corollary}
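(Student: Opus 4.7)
The plan is to recognize this corollary as a direct application of Theorem~\ref{mainthm1} to the random sequence $Q_n = \mathbb{Q}_n$, and to show that the three hypotheses needed for the theorem hold almost surely. The three hypotheses to verify are: (i) $\mathbb{Q}_n \in \mathcal{Q}_0$ eventually; (ii) $\mathbb{Q}_n \Rightarrow Q$ weakly; and (iii) the first-moment convergence $\int \pnorm{x}{}\,\d{\mathbb{Q}_n}\to\int \pnorm{x}{}\,\d{Q}$. Once all three hold on a single event of probability one, each of \eqref{eqn:off-model-ptconv}--\eqref{eqn:off-model-derivative} is obtained by invoking the corresponding conclusion \eqref{pointwiseconv}--\eqref{eqn:gradient_conv} of Theorem~\ref{mainthm1} pointwise on that event.

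First I would verify (ii) via the standard fact (a form of Varadarajan's theorem) that for i.i.d.\ samples from $Q$ on $\R^d$, $\mathbb{Q}_n \Rightarrow Q$ almost surely. For (iii), since $Q\in\mathcal{Q}_1$ means $\int \pnorm{x}{}\,\d{Q}<\infty$, the strong law of large numbers applied to the real-valued i.i.d.\ sequence $\pnorm{X_i}{}$ gives $\int \pnorm{x}{}\,\d{\mathbb{Q}_n} = n^{-1}\sum_{i=1}^n \pnorm{X_i}{} \to \int \pnorm{x}{}\,\d{Q}$ a.s. For (i), since $Q\in\mathcal{Q}_0$, $\mathrm{csupp}(Q)$ has non-empty interior, so there exist $d+1$ affinely independent points $y_0,\ldots,y_d$ in $\mathrm{int}(\mathrm{csupp}(Q))$; small open balls around these points each have positive $Q$-mass, hence are almost surely hit infinitely often by the $X_i$'s, which guarantees that $\mathrm{conv}(\underline{X})$ eventually has non-empty interior, so $\mathbb{Q}_n\in\mathcal{Q}_0$ a.s.\ for all large $n$.

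Let $\Omega_0$ be the intersection of the three probability-one events on which (i), (ii) and (iii) hold. On $\Omega_0$, for all $n$ large enough the deterministic sequence $Q_n := \mathbb{Q}_n$ satisfies the hypotheses of Theorem~\ref{mainthm1} with limit $Q \in \mathcal{Q}_0\cap\mathcal{Q}_1$, and the moment convergence hypothesis \eqref{momentconv} also holds. Therefore, writing $\hat{g}_n = g(\cdot|\mathbb{Q}_n)$ and $\hat{f}_n = \hat{g}_n^{1/s}$, the theorem yields on $\Omega_0$ the pointwise upper semicontinuous convergence \eqref{eqn:off-model-ptconv}, the weighted $L_1$ convergence \eqref{eqn:off-model-L1} for $\kappa < r-d$, the weighted local uniform convergence \eqref{eqn:off-model-uniform} on closed subsets of continuity points of $f_Q$ for $\kappa < r$, and the local uniform convergence of directional derivatives \eqref{eqn:off-model-derivative} on compact subsets of $\mathrm{int}(\mathcal{D}_{f_Q})$. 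Since $\mathbb{P}(\Omega_0) = 1$, each conclusion holds almost surely.

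The main (and only) non-trivial step is confirming that $\mathbb{Q}_n\in\mathcal{Q}_0$ eventually, since this is the measurability/regularity prerequisite that was not automatic from SLLN or weak convergence; I expect the rest to be essentially bookkeeping, with all analytic content absorbed into Theorem~\ref{mainthm1}.
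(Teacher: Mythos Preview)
Your proposal is correct and follows essentially the same route as the paper: the paper's proof simply cites Varadarajan's theorem for almost-sure weak convergence of $\mathbb{Q}_n$ to $Q$, invokes the SLLN for the first-moment convergence, and then appeals to Theorem~\ref{mainthm1}. You are in fact more careful than the paper, which does not explicitly verify that $\mathbb{Q}_n\in\mathcal{Q}_0$ eventually; your argument via $d+1$ affinely independent points in $\mathrm{int}(\mathrm{csupp}(Q))$ fills that small gap cleanly.
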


Now we return to the correctly specified case and relax the previous assumption 
that $s> -1/(d+1)$ for the case of the empirical measure 
$Q_n\equiv\mathbb{Q}_n$ and some measure $Q $ 
with finite mean and bounded density $f \in {\cal P}_{s'} \subset {\cal P}_s$ 
with $s'>s$.


\begin{corollary}\label{cor:on-the-model-consistency}
Assume $-1/d<s<0$. Let $Q$ be a probability measure on 
$\R^d$ with density $f \in {\cal P}_{s}$ if $-1/(d+1) < s$ and $f \in {\cal P}_{s'}$ where $s'>  -1/(d+1)\}$ 
if $s \in (-1/d , -1/(d+1)]$.
(Thus $f$ is bounded and $f$ has a finite mean.)
Let $\hat{f}_n \equiv \hat{f}_{n,s}$ be defined as in Corollary \ref{cor:off-model-consistency}. 
Then (\ref{eqn:off-model-ptconv}), (\ref{eqn:off-model-L1}), (\ref{eqn:off-model-uniform}), 
and (\ref{eqn:off-model-derivative}) hold with $f_Q$ replaced by $f$.
\end{corollary}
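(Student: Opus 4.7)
The plan is to split into two cases based on the value of $s$, using Fisher consistency in each case to identify $f$ as the $s$-projection of $Q$ and appealing to (an extension of) Corollary~\ref{cor:off-model-consistency}.

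\textbf{Case 1: $s \in (-1/(d+1), 0)$.} Here $f \in \mathcal{P}_s$ directly. Since $f$ is bounded with finite first moment, $Q \in \mathcal{Q}_0 \cap \mathcal{Q}_1$, so Fisher consistency (quoted from \cite{koenker2010quasi} in the introduction) yields $g_0 := f^s$ as an optimal solution to $(\mathcal{P}_Q)$, whence $f_Q = f$. Corollary~\ref{cor:off-model-consistency} then immediately delivers (\ref{eqn:off-model-ptconv})--(\ref{eqn:off-model-derivative}) with $f_Q$ replaced by $f$.

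\textbf{Case 2: $s \in (-1/d, -1/(d+1)]$.} This case lies outside the range of Corollary~\ref{cor:off-model-consistency}, so I would adapt the proof of Theorem~\ref{mainthm1} rather than cite it. The idea is to exploit the assumption $f \in \mathcal{P}_{s'}$ with $s' > -1/(d+1)$, which gives stronger tail control than a generic density in $\mathcal{P}_s$ would have; in particular $f(x) \lesssim (1 + \pnorm{x}{})^{1/s'}$ and $\mathcal{P}_{s'} \subset \mathcal{P}_s$. In the proof of Theorem~\ref{mainthm1}, the blanket assumption $s > -1/(d+1)$ is invoked essentially to guarantee that the functional $L(Q)$ is finite (Lemma~\ref{lem:finitefunctional}) and that the $s$-projection of $Q$ exists (Theorem~\ref{existence}). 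With the stronger regularity $f \in \mathcal{P}_{s'}$, both can be verified directly: writing $g_0 := f^s \in \mathcal{G}$, one computes
\[
L(g_0, Q) = \left(1 + \frac{1}{|\beta|}\right) \int f^{s+1}\, \d{x},
\]
and finiteness of this integral follows from the above tail decay of $f$; Fisher consistency then identifies $g_0$ as the unique minimizer, with uniqueness coming from strict convexity of $L(\cdot, Q)$ as in Lemma~\ref{unique}. On the empirical side, $\mathbb{Q}_n$ admits an $s$-divergence estimator $\hat{g}_n$ by Koenker--Mizera's Theorem~4.1 (which imposes no restriction on $s$), $\mathbb{Q}_n \Rightarrow Q$ a.s.\ by Varadarajan's theorem, and $\int \pnorm{x}{}\, \d{\mathbb{Q}_n} \to \int \pnorm{x}{}\, \d{Q}$ a.s.\ by the SLLN. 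I would then trace through the proof of Theorem~\ref{mainthm1}, substituting each use of $s > -1/(d+1)$ by the direct bounds just described.

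The main difficulty is precisely this last substitution in Case~2. One must check that every step in the proof of Theorem~\ref{mainthm1} which relied on $s > -1/(d+1)$---in particular the tightness arguments and uniform tail-decay estimates for the sequence of empirical projections---can be re-derived under the weaker assumption $s > -1/d$ augmented by $f \in \mathcal{P}_{s'}$ with $s' > -1/(d+1)$. The condition $s > -1/(d+1)$ appears to be used in an essential way only for the finiteness of $L(Q)$ and the well-definedness of the population projection, which we can now supply directly in the on-the-model setting; once these are in hand, the weak convergence, uniqueness of the limit, Helly-type compactness for $s$-concave densities, and convergence in weighted $L_1$, in $\pnorm{\cdot}{\infty}$ on continuity sets, and of directional derivatives, should all carry over verbatim. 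Assuming this substitution goes through, (\ref{eqn:off-model-ptconv})--(\ref{eqn:off-model-derivative}) follow with $f_Q$ replaced by $f$.
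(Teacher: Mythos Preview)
Your Case~1 is correct and matches the paper. For Case~2, however, the paper takes a genuinely different route: rather than re-running the whole of Theorem~\ref{mainthm1}, it first obtains pointwise convergence (\ref{eqn:off-model-ptconv}) from the Theorem~\ref{mainthm1} argument, and then bootstraps the remaining conclusions through the Section~\ref{limitbehaviour} machinery, which only assumes $-1/d<s<0$. Concretely, the proof of Theorem~\ref{localconv} upgrades pointwise convergence to uniform convergence on continuity sets (\ref{eqn:off-model-uniform}); uniform convergence on $\intdom{f}$ then forces weak convergence $\hat Q_n\to Q$ of the estimated measures; and Theorems~\ref{weakconvmodeconv} and~\ref{thm:conv_derivatives} yield (\ref{eqn:off-model-L1}) and (\ref{eqn:off-model-derivative}) directly from weak convergence of $s$-concave densities. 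The payoff is that the $L_1$ and derivative statements are obtained without ever revisiting the parts of Theorem~\ref{mainthm1} that are sensitive to the index $s$.

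Your proposed route also contains a concrete gap. The claim that $L(g_0,Q)=(1+1/|\beta|)\int f^{\alpha}\,\d x<\infty$ follows from the tail bound $f(x)\lesssim(1+\|x\|)^{1/s'}$ is not justified: the tail of $f^\alpha$ behaves like $(1+\|x\|)^{-\alpha r'}$ with $r'=-1/s'$, and integrability on $\R^d$ requires $\alpha r'>d$. The hypotheses only give $\alpha=1+s>(d-1)/d$ and $r'>d+1$, whence $\alpha r'>(d^2-1)/d=d-1/d$, which does \emph{not} force $\alpha r'>d$. For instance, with $d=2$, $s=-0.4$, $s'=-0.32$ one gets $\alpha r'=0.6\times 3.125=1.875<2$, and the density $f(x)=C(1+\|x\|)^{-3.125}\in\mathcal P_{s'}$ satisfies $\int f^\alpha\,\d x=\infty$. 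This is not merely a bookkeeping issue: the Lipschitz-approximation device in the proof of (\ref{direction2}) produces surrogates $g^{(\epsilon)}$ with at most linear growth, so $\int(g^{(\epsilon)})^\beta\,\d x<\infty$ itself requires $-\beta>d$, i.e.\ precisely $s>-1/(d+1)$. Thus the substitution you describe does not go through ``verbatim'' for the weighted $L_1$ conclusion; the paper's detour via the Section~\ref{limitbehaviour} results is what sidesteps this obstruction.
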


\subsection{Characterization of the R\'enyi divergence projection and estimator}
We now develop characterizations for the R\'enyi divergence projection, 
especially in dimension $1$. All proofs for this subsection can be found in
Appendix~\ref{appendix:supp_proof_2}.

We note that 
the assumption $-1/(d+1)<s<0$ is imposed only for the 
existence and uniquess of the R\'enyi divergence projection. 
For the specific case of empirical measure $Q_n\equiv \mathbb{Q}_n$, 
this condition can be relaxed to $-1/d<s<0$.

Now we give a variational characterization in the spirit of Theorem 2.2 in 
\cite{dumbgen2009maximum}. This result holds for all dimensions $d\geq 1$.
\begin{theorem}\label{projchar}
Assume $-1/(d+1)<s<0$ and $Q\in \mathcal{Q}_0\cap \mathcal{Q}_1$. Then $g=g(\cdot|Q)$ if and only if
\begin{equation}\label{projchareqn}
\int h \cdot g^{1/s}\ \d{\lambda}\leq \int h\ \d{Q},
\end{equation}
holds for all $h:\R^d \to \R$ such that there exists $t_0>0$ with $g+th \in \mathcal{G}$ holds for all $t\in (0,t_0)$.
\end{theorem}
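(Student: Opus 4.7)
I view (\ref{projchareqn}) as the first-order optimality condition for the convex minimization $\min_{g \in \mathcal G} L(g, Q)$. The functional $L(\cdot, Q)$ is convex on $\mathcal G$: $g \mapsto \int g\,\d{Q}$ is linear, and $g \mapsto |\beta|^{-1}\int g^\beta\,\d\lambda$ is convex since $u \mapsto u^\beta$ is convex on $(0,\infty)$ for $\beta < 0$. The minimizer is therefore characterized by nonnegativity of the right directional derivative of $L(\cdot, Q)$ in every feasible direction, and uniqueness (Lemma~\ref{unique}) identifies the minimizer with $g(\cdot|Q)$.

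\textbf{Sufficiency.} Assume (\ref{projchareqn}) holds for every feasible $h$, and pick an arbitrary $\tilde g \in \mathcal G$. Setting $h := \tilde g - g$, convexity of $\mathcal G$ gives $g + th = (1-t)g + t \tilde g \in \mathcal G$ for $t \in [0,1]$, so $h$ is admissible. Using the pointwise tangent-line bound $\tilde g^\beta \geq g^\beta + \beta g^{\beta - 1}(\tilde g - g)$ and integrating against $\d\lambda$ (with $\beta - 1 = 1/s$ and $\beta/|\beta| = -1$) gives $L(\tilde g, Q) - L(g, Q) \geq \int (\tilde g - g)\,\d{Q} - \int g^{1/s}(\tilde g - g)\,\d\lambda \geq 0$, the last step by (\ref{projchareqn}) applied to $h = \tilde g - g$. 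Hence $g$ minimizes $L(\cdot, Q)$ on $\mathcal G$, and Lemma~\ref{unique} yields $g = g(\cdot|Q)$.

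\textbf{Necessity, and the main obstacle.} Conversely, assume $g = g(\cdot|Q)$ and let $h$ be feasible. Minimality yields $t^{-1}[L(g + th, Q) - L(g, Q)] \geq 0$ for small $t > 0$. The linear part contributes $\int h\,\d{Q}$; the nonlinear part is $|\beta|^{-1}\int ((g + th)^\beta - g^\beta)/t\,\d\lambda$, whose pointwise integrand tends to $-g^{1/s} h$ as $t \downarrow 0$ since $\psi_s'(u) = -u^{1/s}$. The delicate step is interchanging this limit with the integral: the difference quotient changes sign with $h$, and $g^{1/s}$ is a priori unbounded. I would exploit two facts. First, by Theorem~\ref{existence} the minimizer $g$ is bounded away from $0$, so $g^{1/s}$ is uniformly bounded. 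Second, convexity of $u \mapsto u^\beta$ makes the pointwise quotient monotone in $t > 0$, so for $t \in (0, t^*)$ with any $t^* \in (0, t_0)$ it is sandwiched between its pointwise limit $\beta g^{1/s}h$ and its value at $t^*$. Both envelopes are integrable: the value at $t^*$ by finiteness of $L(g, Q)$ and $L(g + t^*h, Q)$, and the pointwise limit by boundedness of $g^{1/s}$ together with $Q$-integrability of $h$ (which follows from $Q \in \mathcal Q_1$ and coercivity of $g, g + t_0 h \in \mathcal G$, via $|h| \leq t_0^{-1}(g + (g + t_0 h))$). Dominated convergence then gives the right derivative $\int h\,\d{Q} - \int g^{1/s} h\,\d\lambda \geq 0$, i.e.\ (\ref{projchareqn}).
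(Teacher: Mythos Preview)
Your proof is correct and shares the paper's core idea—characterizing the minimizer via first-order optimality of the convex functional $L(\cdot,Q)$—but the two executions differ in instructive ways. For sufficiency, you apply the tangent-line inequality $\tilde g^\beta \geq g^\beta + \beta g^{\beta-1}(\tilde g - g)$ directly to get $L(\tilde g,Q) \geq L(g,Q)$; the paper instead argues by contrapositive, showing that if $g$ is not the minimizer then the directional derivative toward the true minimizer $\hat g = g(\cdot|Q)$ (whose existence is Theorem~\ref{existence}) is strictly negative. The paper's route is slightly cleaner here because it leans only on convexity of $L$ and existence/uniqueness, avoiding any need to check integrability of the tangent-line bound when $L(g,Q)$ is not a priori finite. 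For necessity the situation is reversed: the paper simply asserts the directional-derivative computation $\lim_{t\searrow 0} t^{-1}[L(g+th,Q)-L(g,Q)] = \int h\,\d Q - \int h g^{1/s}\,\d\lambda$ without justification, whereas you supply the dominated-convergence argument (boundedness of $g^{1/s}$ from Theorem~\ref{existence}, monotonicity of the convex difference quotient in $t$, and the two-sided envelope), which is the more rigorous treatment.
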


\begin{corollary}\label{projchar2}
Assume $-1/(d+1)<s<0$ and $Q\in \mathcal{Q}_0\cap \mathcal{Q}_1$ 
and let $h$ be any closed convex function. Then
\[
\int h\ \d{P}\leq \int h\ \d{Q},
\]
where $P=P_Q$ is the R\'enyi projection of $Q$ to $P_Q \in \mathcal{P}_s$ (as in Remark~\ref{ExistOfProjection}).
\end{corollary}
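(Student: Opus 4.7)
The plan is to deduce the inequality from the variational characterization in Theorem~\ref{projchar} after truncating $h$ from below. Let $g := g(\cdot|Q)$, so that $P$ has density $g^{1/s}$, and recall from Theorem~\ref{existence} that $g$ is bounded away from zero on its effective domain by some $\epsilon > 0$.

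First I would verify that $P$ has a finite first moment by applying Theorem~\ref{projchar} with the test function $x \mapsto \pnorm{x}{}$; since this function is nonnegative, convex, and coercive, $g + t\pnorm{\cdot}{} \in \mathcal{G}$ for every $t > 0$, so $\int \pnorm{x}{}\,\d{P} \leq \int \pnorm{x}{}\,\d{Q} < \infty$. Since any closed convex $h \colon \R^d \to \R$ admits an affine minorant $h(x) \geq a + \iprod{b}{x}$ from a subgradient at the origin, the negative part $h^-$ is bounded by $|a| + \pnorm{b}{}\pnorm{x}{}$ and is integrable against both $P$ and $Q$, so that $\int h\,\d{P}$ and $\int h\,\d{Q}$ are well-defined elements of $(-\infty, +\infty]$.

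Next, for each $M > 0$ I would apply Theorem~\ref{projchar} to the truncation $h_M := h \vee (-M)$, which is closed convex, real-valued, and bounded below by $-M$. For any $t \in (0,\epsilon/M)$, the perturbation $g + th_M$ inherits closedness, convexity, coercivity, and effective domain from $g$, and is nonnegative since $g \geq \epsilon$ on $\mathrm{dom}(g)$ while $th_M \geq -tM > -\epsilon$; thus $g + th_M \in \mathcal{G}$, and Theorem~\ref{projchar} yields $\int h_M\,\d{P} \leq \int h_M\,\d{Q}$ for every $M > 0$.

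Finally, as $M \to \infty$, $h_M \downarrow h$ pointwise and the monotone difference $h_M - h$ is dominated by the integrable envelope $h^-$; the dominated convergence theorem therefore gives $\int h_M\,\d{P} \to \int h\,\d{P}$ and $\int h_M\,\d{Q} \to \int h\,\d{Q}$, preserving the inequality in the limit. The main obstacle in my view is the choice of truncation: since any bounded convex function on $\R^d$ must be constant, $h$ cannot be approximated by bounded convex functions, so one must truncate from below only, and it is the affine minorant of convex functions that makes the dominated-convergence step go through under the first-moment assumptions on $P$ and $Q$.
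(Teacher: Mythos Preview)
Your proof is correct but takes a different route from the paper's. The paper argues more directly: it shows $g + th \in \mathcal{G}$ for all sufficiently small $t>0$ without any truncation of $h$. The key observation (combining Theorem~\ref{existence} with Lemma~\ref{convlb}) is that $g$ enjoys not merely $g\geq\epsilon$ but the stronger bound $g(x)\geq a\pnorm{x}{}+b$ with $a,b>0$. Together with the affine minorant $h(x)\geq h(0)+v^Tx$ for some $v\in\partial h(0)$, this yields
\[
g(x)+th(x)\;\geq\;(a-t\pnorm{v}{})\pnorm{x}{}+(b+th(0)),
\]
which is coercive and bounded away from zero once $t>0$ is small enough. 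Theorem~\ref{projchar} then applies directly to $h$ itself, with no limiting step.

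Your truncation-and-limit approach is valid and has the virtue of making the integrability issues fully explicit: you first establish that $P$ has a finite first moment, then use the affine minorant to dominate $h_M-h$ by $h^-$ and invoke dominated convergence. The paper's approach is shorter precisely because it exploits the full linear growth of $g$, so that the possibly unbounded negative part of $h$ is absorbed by $g$ rather than cut off by truncation; your approach uses only $g\geq\epsilon$ at the perturbation step, at the cost of an extra limit argument.
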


As a direct consequence, we have
\begin{corollary}[Moment Inequalities]\label{momentinequality}
Assume $-1/(d+1)<s<0$ and $Q\in \mathcal{Q}_0\cap \mathcal{Q}_1$. Let $\mu_Q:=\mathbb{E}_Q[X]$. Then $\mu_P=\mu_Q$. Furthermore if $-1/(d+2)<s<0$, we have $\lambda_{\mathrm{max}}(\Sigma_P)\leq \lambda_{\mathrm{max}}(\Sigma_Q)$ and $\lambda_{\mathrm{min}}(\Sigma_P)\leq \lambda_{\mathrm{min}}(\Sigma_Q)$ where $\Sigma_Q$ is the covariance matrix defined by $\Sigma_Q:=\mathbb{E}_Q[(X-\mu_Q)(X-\mu_Q)^T]$. Generally if $-1/(d+k)<s<0$ for some $k\in \N$, then $\mathbb{E}_P[\pnorm{X}{}^l]\leq \mathbb{E}_Q[\pnorm{X}{}^l]$ holds for all $l=1,\ldots,k$.
\end{corollary}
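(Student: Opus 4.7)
The plan is to deduce every assertion by feeding an appropriately chosen closed convex test function into Corollary~\ref{projchar2}, using the fact that $s$-concave densities decay polynomially at a rate controlled by $s$.

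First I would dispatch the mean equality. Applying Corollary~\ref{projchar2} to the linear (hence closed convex) function $h(x)=\iprod{v}{x}$ for an arbitrary $v\in\R^d$ gives $\iprod{v}{\mu_P}\leq \iprod{v}{\mu_Q}$, and replacing $v$ by $-v$ yields the reverse. Here I need $\mu_P$ to be finite: since $f_P=g^{1/s}$ with $g$ convex and coercive (Theorem~\ref{existence}), $g(x)\geq c\pnorm{x}{}+b$ for large $\pnorm{x}{}$, so $f_P(x)=O(\pnorm{x}{}^{1/s})$ at infinity, and the integrability of $\pnorm{x}{}f_P(x)$ in spherical coordinates requires $1/s+d<-1$, i.e.\ $s>-1/(d+1)$, which is exactly our standing assumption. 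So $\mu_P=\mu_Q$.

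Next I would treat the general moment inequality. For $l\in\{1,\ldots,k\}$, the map $h(x)=\pnorm{x}{}^l$ is continuous and convex (as a composition of the convex norm with the increasing convex $t\mapsto t^l$ on $[0,\infty)$), hence closed convex, so Corollary~\ref{projchar2} gives $\mathbb{E}_P[\pnorm{X}{}^l]\leq \mathbb{E}_Q[\pnorm{X}{}^l]$. The left-hand side is finite by the same tail computation as above: under $s>-1/(d+k)\geq -1/(d+l)$, one has $l+1/s+d-1<-1$, so the integral converges; if the right-hand side is $+\infty$ the inequality is trivial.

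Finally, for the eigenvalue comparisons under $s>-1/(d+2)$, I would apply Corollary~\ref{projchar2} to the quadratic $h(x)=\iprod{v}{x-\mu_Q}^2$, which is a closed convex function for every unit $v\in\R^d$. Combined with $\mu_P=\mu_Q$ from the first step, this produces the Loewner-type inequality $v^\top\Sigma_P v\leq v^\top\Sigma_Q v$ for every unit $v$. Then choosing $v$ to be the top unit eigenvector of $\Sigma_P$ gives
\[
\lambda_{\max}(\Sigma_P)=v^\top\Sigma_P v\leq v^\top\Sigma_Q v\leq \lambda_{\max}(\Sigma_Q),
\]
and choosing $v$ to be the bottom unit eigenvector of $\Sigma_Q$ gives
\[
\lambda_{\min}(\Sigma_Q)=v^\top\Sigma_Q v\geq v^\top\Sigma_P v\geq \lambda_{\min}(\Sigma_P).
\]
Finiteness of $\Sigma_P$ is the $l=2$ instance of the previous step, ensured by $s>-1/(d+2)$. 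There is really no serious obstacle: the only nontrivial bookkeeping is checking that each test function is integrable against $P$, which is a direct consequence of the $\pnorm{x}{}^{1/s}$ tail bound on $s$-concave densities together with the hypothesis relating $s$ to the moment order.
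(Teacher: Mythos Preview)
Your proposal is correct and follows essentially the same route as the paper: the corollary is stated there as a direct consequence of Corollary~\ref{projchar2}, and your proof simply spells out which closed convex test functions to plug in ($x\mapsto\pm\iprod{v}{x}$, $x\mapsto\pnorm{x}{}^l$, $x\mapsto\iprod{v}{x-\mu_Q}^2$) together with the finiteness checks coming from the polynomial tail bound on $s$-concave densities. If anything, your integrability bookkeeping is more explicit than the paper's.
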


Now we restrict our attention to $d=1$, and in the following we will give a full characterization of the R\'enyi divergence estimator. Suppose we observe $X_1,\ldots,X_n$ i.i.d. $Q$ on $\R$, and let $X_{(1)}\leq X_{(2)}\leq \ldots\leq X_{(n)}$ be the order statistics of $X_1,\ldots,X_n$. Let $\mathbb{F}_n$ be the empirical distribution function corresponding to the empirical probability measure $\mathbb{Q}_n:={1 \over n}\sum_{i=1}^n \delta_{X_i}$. Let $\hat{g}_n:=g(\cdot|\mathbb{Q}_n)$ and $\hat{F}_n(t):=\int_{-\infty}^t \hat{g}_n^{1/s}(x)\ \d{x}$. From Theorem 4.1 in \cite{koenker2010quasi} it follows that $\hat{g}_n$ is a convex function supported on $[X_{(1)},X_{(n)}]$, and linear on $[X_{(i)},X_{(i+1)}]$ for all $i=1,\ldots,n-1$. For a continuous piecewise linear function $h:[X_{(1)},X_{(n)}]\to \R$, define the set of knots to be
\[
\mathcal{S}_n(h):=\{t \in (X_{(1)},X_{(n)}):h'(t-)\neq h'(t+)\}\cap\{X_1,\ldots,X_n\}.
\]

\begin{theorem}\label{secondintegralchar}
Let $g_n$ be a convex function taking the value $+\infty$ on $\R\setminus[X_{(1)},X_{(n)}]$ and linear on $[X_{(i)},X_{(i+1)}]$ for all $i=1,\ldots,n-1$. Let
\[
F_n(t):=\int_{-\infty}^t g_n^{1/s}(x)\ \d{x}.
\]
Assume $F_n(X_{(n)})=1$. Then $g_n=\hat{g}_n$ if and only if
\begin{equation}\label{distcond}
\int_{X_{(1)}}^t \big(F_n(x)-\mathbb{F}_n(x)\big)\ \d{x}
\begin{cases}
= 0 & \textrm{ if } t \in \mathcal{S}_n(g_n)\\
\leq 0 & \textrm{ otherwise}.
\end{cases}
\end{equation}
\end{theorem}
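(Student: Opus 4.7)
The plan is to invoke the variational characterization from Theorem~\ref{projchar} at $Q = \mathbb{Q}_n$ and convert the admissibility-indexed family of inequalities into a single pointwise condition on $H(t) := \int_{X_{(1)}}^{t}(F_n - \mathbb{F}_n)(x)\,dx$ by integrating by parts twice. For any piecewise linear test function $h$ on $[X_{(1)}, X_{(n)}]$, both $\mathbb{F}_n$ and $F_n$ vanish at $X_{(1)}^-$ and equal $1$ at $X_{(n)}$ (using $F_n(X_{(n)}) = 1$), so the boundary terms in the first integration by parts vanish; a second integration by parts against $H$ (using $H(X_{(1)}) = 0$) then yields
\begin{equation*}
\Delta(h) \;:=\; \int h\,d\mathbb{Q}_n \;-\; \int h\, g_n^{1/s}\,d\lambda
\;=\; h'(X_{(n)}^-)\,H(X_{(n)}) \;-\; \int H(x)\,dh'(x).
\end{equation*}
By Theorem~\ref{projchar}, $g_n = \hat{g}_n$ is equivalent to $\Delta(h) \geq 0$ for every piecewise linear $h$ with $g_n + t h \in \mathcal{G}(\underline{X})$ on some interval $t \in (0, t_0)$. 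Since $g_n$ is bounded below by a positive constant on $[X_{(1)}, X_{(n)}]$ by Theorem~\ref{existence}, the nonnegativity constraint is inactive for small $t$, and admissibility reduces to the convexity requirement that $dh'$ be a signed measure whose negative part is supported on $\mathcal{S}_n(g_n)$ (i.e., a negative kink is only permitted at an existing knot of $g_n$, where the positive mass of $g_n''$ can absorb it).

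For the necessity direction, I would test $\Delta(h) \geq 0$ with three explicit families of perturbations. First, $h(x) = \pm x$ are both admissible and have $dh' \equiv 0$, so $\Delta(h) = \pm H(X_{(n)}) \geq 0$ forces $H(X_{(n)}) = 0$ (this is just the first-moment matching $\int x\,dF_n = \int x\,d\mathbb{F}_n$, consistent with Corollary~\ref{momentinequality}). Second, the convex kink $h_u(x) = (x - u)_+$ for any interior $u \in (X_{(1)}, X_{(n)})$ is admissible with $dh_u' = \delta_u$, giving $\Delta(h_u) = -H(u) \geq 0$, hence $H(u) \leq 0$. Third, for a knot $u \in \mathcal{S}_n(g_n)$, the reflected kink $-(x-u)_+$ is admissible because $g_n''$ has positive kink mass at $u$; this gives $\Delta = H(u) \geq 0$, and combined with the previous step, $H(u) = 0$ at every knot. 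Together, these are exactly (\ref{distcond}).

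For sufficiency, assume (\ref{distcond}) together with the endpoint equality $H(X_{(n)}) = 0$ (the natural boundary counterpart of the "$=0$ at knots" clause, corresponding to first-moment matching). For an arbitrary admissible $h$, the displayed identity reduces to $\Delta(h) = -\int H\,dh'$; splitting the signed measure $dh'$ into its restriction to $\mathcal{S}_n(g_n)$ (where $H \equiv 0$, contributing zero) and to its complement (where $dh' \geq 0$ by admissibility and $H \leq 0$ by (\ref{distcond})) yields $\Delta(h) \geq 0$ for every admissible $h$, and Theorem~\ref{projchar} then concludes $g_n = \hat{g}_n$. The main technical subtlety will be verifying admissibility of each specific test perturbation used above — in particular, that each choice preserves convexity, nonnegativity (which holds automatically for small $t$ thanks to Theorem~\ref{existence}), and the extended-valued support $[X_{(1)}, X_{(n)}]$ — and carefully tracking the boundary contribution $h'(X_{(n)}^-) H(X_{(n)})$ so that the characterization correctly incorporates the endpoint equality.
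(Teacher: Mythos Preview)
Your approach is essentially the same as the paper's: both invoke Theorem~\ref{projchar} and reduce the variational inequality to pointwise conditions on $H$ via integration by parts and kink test functions, the only difference being that the paper uses a single integration by parts against $h'$ (with test derivatives $h'(x)=\pm\mathbf{1}(x\le t)$) while you integrate by parts twice and work against the measure $dh'$ (with tests $(x-u)_+$ and $\pm x$). Your explicit treatment of the endpoint equality $H(X_{(n)})=0$ via $h(x)=\pm x$ is slightly more careful than the paper, which tacitly folds this into its parametrization $h'(x)=\sum_{j=2}^n\beta_j\mathbf{1}(x\le X_{(j)})$ by asserting $\beta_n\le 0$.
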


\begin{corollary}\label{distchar}
For $x_0 \in \mathcal{S}_n(\hat{g}_n)$, we have
\[
\mathbb{F}_n(x_0)-{1 \over n}\leq \hat{F}_n(x_0)\leq \mathbb{F}_n(x_0).
\]
\end{corollary}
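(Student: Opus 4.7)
The plan is to use Theorem \ref{secondintegralchar} to turn the zero-integral condition at knots into one-sided pointwise bounds on $\hat F_n(x_0)$, by viewing the relevant antiderivative as a function that attains a local maximum at $x_0$.

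Define $H(t) := \int_{X_{(1)}}^t \bigl(\hat F_n(x) - \mathbb{F}_n(x)\bigr)\,\d{x}$. By Theorem \ref{secondintegralchar}, $H(t) \le 0$ for all $t \in [X_{(1)}, X_{(n)}]$ and $H(x_0) = 0$ for every knot $x_0 \in \mathcal{S}_n(\hat g_n)$. Hence $x_0$ is a global (in particular, local) maximizer of $H$, so its right and left Dini derivatives at $x_0$ satisfy, respectively, the one-sided inequalities needed.

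For the upper bound, take $t \searrow x_0$. Since $x_0 \in (X_{(1)}, X_{(n)})$, for $t$ in a sufficiently small right-neighborhood of $x_0$ the empirical distribution is constant, $\mathbb{F}_n(x) = \mathbb{F}_n(x_0)$ for $x \in (x_0, t]$ (assuming the sample points are distinct; otherwise replace $1/n$ by the multiplicity over $n$ below). Because $\hat F_n$ is continuous at $x_0$ (it is an integral of an $L^1$ function), the mean value theorem for integrals gives
\[
0 \;\ge\; \frac{H(t) - H(x_0)}{t - x_0} \;=\; \frac{1}{t-x_0}\int_{x_0}^t \bigl(\hat F_n(x) - \mathbb{F}_n(x_0)\bigr)\,\d{x} \;\longrightarrow\; \hat F_n(x_0) - \mathbb{F}_n(x_0),
\]
yielding $\hat F_n(x_0) \le \mathbb{F}_n(x_0)$.

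For the lower bound, take $t \nearrow x_0$. Now $\mathbb{F}_n(x) = \mathbb{F}_n(x_0-) = \mathbb{F}_n(x_0) - 1/n$ for $x \in [t, x_0)$ once $t$ is close enough to $x_0$, and $H(x_0) - H(t) \ge 0$ by the same global inequality. Therefore
\[
0 \;\le\; \frac{H(x_0) - H(t)}{x_0 - t} \;=\; \frac{1}{x_0-t}\int_{t}^{x_0}\bigl(\hat F_n(x) - \mathbb{F}_n(x_0-)\bigr)\,\d{x} \;\longrightarrow\; \hat F_n(x_0) - \bigl(\mathbb{F}_n(x_0) - \tfrac{1}{n}\bigr),
\]
which gives $\hat F_n(x_0) \ge \mathbb{F}_n(x_0) - 1/n$. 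Combining the two bounds completes the proof.

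There is no real obstacle; the only subtlety is bookkeeping at ties, where the jump of $\mathbb{F}_n$ at $x_0$ is $k/n$ rather than $1/n$, and the argument then yields $\mathbb{F}_n(x_0) - k/n \le \hat F_n(x_0) \le \mathbb{F}_n(x_0)$, which specializes to the stated inequalities under the generic no-ties assumption (automatic a.s.\ when $Q$ is continuous).
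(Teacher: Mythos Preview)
Your proof is correct and follows essentially the same approach as the paper: both use Theorem~\ref{secondintegralchar} to conclude that $\int_{x_0}^{x_2}(\hat F_n-\mathbb F_n)\le 0$ and $\int_{x_1}^{x_0}(\hat F_n-\mathbb F_n)\ge 0$ for $x_1<x_0<x_2$, then divide by the interval length and pass to the limit using continuity of $\hat F_n$ and the one-sided limits of $\mathbb F_n$. The paper phrases the limit step via right-continuity of $\mathbb F_n$ rather than your observation that $\mathbb F_n$ is locally constant off the data points, but the content is identical.
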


Finally we give a characterization of the R\'enyi divergence estimator in terms of distribution function as Theorem 2.7 in \cite{dumbgen2011approximation}.
\begin{theorem}\label{thm:characterization_distribution}
Assume $-1/2 <s<0$ and $Q \in \mathcal{Q}_0\cap \mathcal{Q}_1$ 
is a probability measure on $\R$ with distribution function $G(\cdot)$. 
Let $g \in \mathcal{G}$ be such that $f\equiv g^{1/s}$ is a density on $\R$, 
with distribution function $F(\cdot)$. Then $g=g(\cdot|Q)$ if and only if
\begin{enumerate}
\item $\int_\R (F-G)(t) \d{t}=0$;
\item $\int_{-\infty}^x (F-G)(t) \d{t}\leq 0$ for all $x \in \R$ with equality when $x \in \tilde{\mathcal{S}}(g)$.
\end{enumerate}
Here $\tilde{\mathcal{S}}(g):=\{x \in \R: g(x)<{1 \over 2}\big(g(x+\delta)+g(x-\delta)\big) 
\textrm{ holds for }\delta>0 \textrm{ small enough}.\}$.
\end{theorem}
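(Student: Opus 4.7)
The plan will be to bootstrap from Theorem~\ref{projchar}, which reduces the characterization to the variational inequality $\int h\,dP \le \int h\,dQ$ holding for every admissible direction $h$ (those with $g + th \in \mathcal{G}$ for all $t \in (0, t_0)$ and some $t_0 > 0$). Both implications will be handled by pairing this inequality with double integration by parts, using $\Psi(y) := \int_{-\infty}^y (F-G)(u)\,du$ as the bridge between test-function conditions and conditions on $F - G$.

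For the forward implication I would probe the inequality with three families of admissible $h$. Affine $h(y) = \pm y$ is admissible since $g$ is bounded away from zero by Theorem~\ref{existence} and is coercive, and yields $\mu_P = \mu_Q$, equivalent via integration by parts (using finite first moments) to condition~(1) $\int_\R (F-G) = 0$. For fixed $x \in \R$ the convex, nonnegative hockey-stick $h(y) = (x - y)_+$ is always admissible for small $t > 0$; Fubini then rewrites $\int (x - y)_+\,d(P-Q) \le 0$ as $\Psi(x) \le 0$, the inequality half of condition~(2). The equality in condition~(2) requires a two-sided admissible perturbation, which I would obtain via a case split driven by the Lebesgue decomposition of $g''$: at a strict kink $x_0$ of $g$, the function $(x_0 - y)_+$ is admissible in both directions (the $-$ direction uses that $g''$ has atomic mass $g'(x_0+) - g'(x_0-) > 0$ at $x_0$ absorbing the introduced $-t\,\delta_{x_0}$), forcing $\Psi(x_0) = 0$; at a smooth strict convexity point $x_0$ where $g''$ has a positive absolutely continuous density in a neighborhood, smooth bump perturbations $\phi_\epsilon$ centered at $x_0$ are admissible in both directions for $\epsilon$ and $t$ small (since $|\phi_\epsilon''|$ is dominated by $g''$ on the bump's support), giving $\int \phi_\epsilon\,d(P-Q) = 0$ and hence $P = Q$ locally on smooth strictly convex intervals of $g$; combined with the kink equalities at the endpoints of such intervals, this forces $\Psi \equiv 0$ throughout.

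For the converse, given (1) and (2), any admissible $h$ satisfies $g'' + t h'' \ge 0$ as signed Radon measures, so the negative part $h''_-$ of $h''$ is absolutely continuous with respect to $g''$; hence $\operatorname{supp}(h''_-) \subseteq \operatorname{supp}(g'') = \overline{\tilde{\mathcal{S}}(g)}$. Two integrations by parts yield
\[
\int h\,d(P - Q) \;=\; \int_\R \Psi(y)\,h''(dy),
\]
with boundary terms vanishing because condition~(1) forces $\Psi(\pm\infty) = 0$. Splitting $h'' = h''_+ - h''_-$: the first integral is $\le 0$ since $\Psi \le 0$ by condition~(2), while the second vanishes because $\Psi \equiv 0$ on $\overline{\tilde{\mathcal{S}}(g)} \supseteq \operatorname{supp}(h''_-)$ by continuity and the equality in condition~(2). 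Hence $\int h\,dP \le \int h\,dQ$ for every admissible $h$, and Theorem~\ref{projchar} identifies $g$ with $g(\cdot|Q)$.

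The main technical hurdle is the forward-direction equality at smooth strict convexity points: both verifying two-sided admissibility of the bump perturbations under the nonnegativity, coerciveness, and boundedness-from-below constraints defining $\mathcal{G}$, and threading the resulting local identity $P = Q$ through the already-established kink equalities to obtain the global conclusion $\Psi \equiv 0$ on all of $\tilde{\mathcal{S}}(g)$. A secondary difficulty is justifying the boundary terms $[h'\Psi]_{-\infty}^{+\infty} = 0$ in the double integration by parts of the converse, which requires mild tail control of $h'\Psi$ compatible with coerciveness of $g + th$ and the finite-first-moment assumption $Q \in \mathcal{Q}_1$.
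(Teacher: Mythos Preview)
Your forward argument for condition (1) and for the inequality in (2) matches the paper's. The gap is in the equality case of (2). Your dichotomy ``strict kink versus smooth strictly convex interval'' is not exhaustive: $g''$ may be purely singular continuous (Cantor-type), so a point of $\tilde{\mathcal{S}}(g)$ need be neither a kink nor lie in an interval where $g''$ has a positive Lebesgue density. Even on smooth intervals your threading step fails: $P=Q$ on $(a,b)$ only forces $F-G$ constant there, hence $\Psi$ affine; to get $\Psi\equiv 0$ you would also need $\Psi(a)=0$ and $F(a)=G(a)$, but the endpoints of such an interval need not be kinks---they can be points where $g$ turns affine, which lie outside $\tilde{\mathcal{S}}(g)$. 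The paper handles every interior $x\in\tilde{\mathcal{S}}(g)$ uniformly with a single explicit perturbation: it sets
\[
H_\delta'(u)=-\frac{g'(u)-g'(x-\delta)}{g'(x+\delta)-g'(x-\delta)}\mathbf{1}_{[x-\delta,x+\delta]}(u)-\mathbf{1}_{\{u>x+\delta\}},
\]
verifies $g+tH_\delta\in\mathcal{G}$ directly from this formula (no hypothesis on the structure of $g''$), and lets $\delta\downarrow 0$ so that $H_\delta(s)\to-(s-x)_+$.

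For the converse your route---Theorem~\ref{projchar} plus double integration by parts and the decomposition $h''=h''_+-h''_-$---is genuinely different from the paper's, and the boundary-term worry you raise is real: admissible $h$ can have unbounded $h'$ (take $h=g$), and $h''$ need not be a finite measure, so neither $[h'\Psi]_{-\infty}^{+\infty}=0$ nor $\int|\Psi|\,d|h''|<\infty$ follows from $Q\in\mathcal{Q}_1$. The paper avoids Theorem~\ref{projchar} entirely for sufficiency. It uses the layer-cake identity
\[
\int \Delta\,d(Q-P)=-\int_{-L}^{L}\int_{\{\Delta'<s\}}(F-G)(t)\,dt\,ds
\]
for Lipschitz $\Delta$, applies it to the Lipschitz truncations $g^{(\epsilon)}$ and $g_0^{(\epsilon)}$ of $g$ and $g_0:=g(\cdot|Q)$, and observes that each sublevel set $\{(g^{(\epsilon)})'<s\}$ is a half-line $(-\infty,a)$ with $a\in\tilde{\mathcal{S}}(g)$, so the inner integral vanishes by condition (2). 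Letting $\epsilon\downarrow 0$ yields $\int g\,dQ=\int g\,dP$ and $\int g_0\,dQ\ge\int g_0\,dP$, whence $L(g_0,Q)\ge L(g_0,P)\ge L(g,P)=L(g,Q)$ by Fisher consistency, and uniqueness finishes. This needs only two specific test functions rather than the full admissible class, which is what makes the tail issues disappear.
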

The above theorem is useful for understanding the projected 
$s$-concave density given an arbitrary probability measure $Q \in {\cal Q}_0 \cap {\cal Q}_1$. 
The following example illustrates these projections and also 
gives some insight concerning the boundary properties of the class of $s$-concave densities.
\begin{example}
Consider the class of densities ${\cal Q}$ defined by
\[
\mathcal{Q}=\bigg\{q_\tau(x)=\frac{\tau-1}{2(\tau-2)}\bigg(1+\frac{\abs{x}}{\tau-2}\bigg)^{-\tau}, \tau>2\bigg\}.
\]
Note that $q_\tau$ is $-1/\tau$-concave and \emph{not} $s$-concave for any $0>s>-1/\tau$. We start from arbitrary $q_\tau \in \mathcal{Q}$ with $\tau>2$, and we will show in the following that the projection of $q_\tau$ onto the class of $s$-concave ($0>s>-1/\tau$) distribution through $L(\cdot,q_\tau)$ will be given by $q_{-1/s}$. Let $Q_\tau$ be the distribution function of $q_\tau(\cdot)$, then we can calculate
\[
Q_\tau(x)=
\begin{cases}
{1 \over 2}\big(1-\frac{x}{\tau-2}\big)^{-(\tau-1)} &\textrm{ if } x\leq 0,\\
1-{1 \over 2}\big(1+\frac{x}{\tau-2}\big)^{-(\tau-1)} &\textrm{ if } x>0.
\end{cases}
\]
It is easy to check by direct calculation that
$\int_{-\infty}^x \big(Q_r(t)-Q_\tau(t)\big)\ \d{t}\leq 0$
with equality attained if and only if $x=0$. It is clear that $\tilde{\mathcal{S}}(q_\tau)=\{0\}$ and hence the conditions in Theorem \ref{thm:characterization_distribution} are verified. Note that, in Example 2.9 of \cite{dumbgen2011approximation}, the log-concave approximation of the 
rescaled $t_2$ density is the Laplace distribution. 
It is easy to see from the above calculation that the log-concave projection 
of the whole class $\mathcal{Q}$ will be the Laplace distribution 
$q_\infty={1 \over 2}\exp(-\abs{x})$. Therefore the log-concave approximation 
fails to distinguish densities at least amongst the class $\mathcal{Q}\cup \{t_2\}$.
\end{example}

\subsection{Continuity of the R\'enyi divergence estimator in $s$}
Recall that $\alpha = 1+s$, and then $\alpha,\beta$ is a conjugate pair with $\alpha^{-1}+\beta^{-1}=1$ where $\beta = 1+1/s$. For $1-1/d<\alpha<1$, let
\begin{equation*}
\begin{split}
F_\alpha(f)&={1 \over {\alpha -1}}\log\int f^\alpha (x)\ \d{x},\\
F_1(f)     &=\int f(x)\log f(x)\ \d{x}.
\end{split}
\end{equation*}
For a given index $-1/d<s<0$, and data $\underline{X}=(X_1,\ldots,X_n)$ with non-void $\mathrm{int}(\mathrm{conv}(\underline{X}))$, solving the dual problem (\ref{dualempirical}) for the primal problem (\ref{primalempirical}) is equivalent to solving
\begin{equation}\label{func:dualalpha}
\begin{split}
(\mathcal{D}_\alpha)\quad & \min_{f} F_\alpha(f)={1 \over {\alpha -1}}\log\int f^\alpha (x)\ \d{x}\\
&\textrm{ subject to } f=\frac{\d{(\mathbb{Q}_n-G)}}{\d{y}}\textrm{ for some }G\in \mathcal{G}(\underline{X})^\circ\\
\end{split}
\end{equation}
where $\mathcal{G}(\underline{X})^\circ$ is the polar cone of $\mathcal{G}(\underline{X})$ and $\mathbb{Q}_n={1 \over n}\sum_{i=1}^n\delta_{X_i}$ is the empirical measure. The maximum likelihood estimation of a log-concave density has dual form
\begin{equation}\label{func:dualone}
\begin{split}
(\mathcal{D}_1)\quad & \min_{f}F_1(f)=\int f(x)\log f(x)\ \d{x},\\
&\textrm{ subject to } f=\frac{\d{(\mathbb{Q}_n-G)}}{\d{y}}\textrm{ for some }G\in \mathcal{G}(\underline{X})^\circ.\\
\end{split}
\end{equation}
Let $f_\alpha$ and $f_1$ be the solutions of $(\mathcal{D}_\alpha)$ and $(\mathcal{D}_1)$. For simplicity we drop the explicit notational dependence of $f_\alpha,f$ on $n$. Since $F_\alpha(f)\to F_1(f)$ as $\alpha \nearrow 1$ for $f$ smooth enough, it is natural to expect some convergence property of $f_\alpha$ to $f_1$. The main result is summarized as follows.
\begin{theorem}\label{thm:continuity_s}
Suppose $d=1$. For all $\kappa>0,p\geq 1$, we have the following weighted convergence

$$\lim_{\alpha \nearrow 1}\int (1+\pnorm{x}{})^\kappa \abs{f_\alpha(x)-f_1(x)}^p\ \d{x}=0,$$
Moreover, for any closed set $S$ contained in the continuity points of $f$,
\[
\lim_{\alpha \nearrow 1}\sup_{x \in S}\big(1+\pnorm{x}{}\big)^{\kappa}\abs{f_\alpha(x)-f_1(x)}=0
\]
for all $\kappa>0$. 
\end{theorem}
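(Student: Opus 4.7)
The strategy is to show that every subsequential limit of $\{f_\alpha\}_{\alpha \nearrow 1}$ coincides with the log-concave MLE $f_1$, so that the full sequence converges. Since all $f_\alpha$ and $f_1$ are supported on the compact set $[X_{(1)}, X_{(n)}]$, uniform convergence on this set automatically yields both the weighted $L_p$ convergence and the weighted uniform convergence on continuity sets stated in the theorem.

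The natural reparametrization is $\tilde{g}_\alpha := (g_\alpha - 1)/s$, which is concave (since $g_\alpha$ is convex and $s<0$) and satisfies $f_\alpha = (1 + s\tilde{g}_\alpha)^{1/s}$. The heuristic $(1+sv)^{1/s} \to e^v$ as $s \nearrow 0$ suggests that a limit $\tilde{g}^*$ of $\tilde{g}_\alpha$ generates the log-concave density $e^{\tilde{g}^*}$. Because $g_\alpha$ is piecewise linear on $[X_{(1)}, X_{(n)}]$ with knots in $\{X_1,\dots,X_n\}$, each $\tilde{g}_\alpha$ is determined by its $n$ values $\tilde{g}_\alpha(X_{(i)})$, which places the entire problem in a finite-dimensional setting.

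The key step is to establish uniform bounds $c \leq f_\alpha(X_{(i)}) \leq C$ for all $i$ and all $\alpha$ sufficiently close to $1$. For the upper bound I use that the feasibility set in $(\mathcal{D}_\alpha)$ does not depend on $\alpha$, so $f_1$ is feasible for every $(\mathcal{D}_\alpha)$ and hence $F_\alpha(f_\alpha) \leq F_\alpha(f_1) \to F_1(f_1) < \infty$ by dominated convergence; combined with the moment-matching identity $\mathbb{E}_{P_\alpha}[X]=\mathbb{E}_{\mathbb{Q}_n}[X]$ from Corollary~\ref{momentinequality}, this controls the peaks of $f_\alpha$. The lower bound uses Corollary~\ref{distchar}, which forces $\hat{F}_\alpha(X_{(i)})$ to lie within $1/n$ of $\mathbb{F}_n(X_{(i)})$ at knots and so prevents interior collapse. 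Given these bounds, $\tilde{g}_\alpha(X_{(i)}) \approx \log f_\alpha(X_{(i)})$ is uniformly bounded in $i$ and $\alpha$, and by finite-dimensional compactness (together with passing to a further subsequence on which the knot set $\mathcal{S}_n(g_{\alpha_k})$ stabilizes to some $\mathcal{S}^*$, which is possible since there are only finitely many subsets of $\{X_1,\dots,X_n\}$), $\tilde{g}_{\alpha_k}$ converges uniformly on $[X_{(1)}, X_{(n)}]$ to a piecewise linear concave $\tilde{g}^*$. Consequently $f_{\alpha_k} \to f^* := e^{\tilde{g}^*}$ uniformly on $[X_{(1)}, X_{(n)}]$.

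To identify $f^*$, I pass to the limit in the characterization~(\ref{distcond}): uniform convergence gives $\int_{X_{(1)}}^t (F^*(x) - \mathbb{F}_n(x))\,\d{x} \leq 0$ with equality on $\mathcal{S}^*$, which contains the knots of $\tilde{g}^*$. This is precisely the distribution-function characterization of the log-concave MLE (the $s=0$ analogue of Theorem~\ref{thm:characterization_distribution}, established in \cite{dumbgen2009maximum}), so uniqueness of the log-concave MLE forces $f^* = f_1$. Since every subsequence has a further subsequence converging to $f_1$, the full sequence $f_\alpha \to f_1$ uniformly on $[X_{(1)}, X_{(n)}]$, yielding both stated convergences. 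The main obstacle is the uniform bound step: because $1/s \to -\infty$ as $\alpha \nearrow 1$, tiny perturbations of $g_\alpha$ near the value $1$ cause huge perturbations of $f_\alpha$, so preventing $f_\alpha(X_{(i)})$ from blowing up or collapsing requires combining the primal--dual variational relations with the moment and knot characterizations in a delicate way.
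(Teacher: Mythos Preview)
The overall strategy—subsequential compactness followed by identification of the limit via the distribution-function characterization—is sound and parallels the paper. Your identification step (stabilising the knot set along a subsequence and passing to the limit in (\ref{distcond})) is in fact a bit cleaner than the paper's, which instead works through the variational characterization of Theorem~\ref{projchar} and must separately argue, using Lemma~\ref{lem:limit_linearity}, that every knot of the limiting concave function is a knot of $g_{\alpha_k}$ along a subsequence.

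Where your sketch has a genuine gap is exactly the step you flag as the main obstacle: the uniform two-sided bound on $f_\alpha(X_{(i)})$. For the upper bound, the feasibility inequality $F_\alpha(f_\alpha)\le F_\alpha(f_1)$ becomes, since $\alpha-1<0$, the \emph{lower} bound $\int f_\alpha^{\alpha}\ge\int f_1^{\alpha}$; it is not apparent how this, even combined with the first-moment identity of Corollary~\ref{momentinequality}, yields a bound on $\|f_\alpha\|_\infty$ (a narrow spike near $\bar X$ is consistent with both constraints). For the lower bound, Corollary~\ref{distchar} only pins down $\hat F_\alpha$ at \emph{knots} of $g_\alpha$, and you have not excluded that an interior $X_{(i)}$ ceases to be a knot and that $f_\alpha$ collapses there. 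The paper handles both sides differently: it fixes $s_0\in(-1/2,0)$, observes that $f_\alpha$ is $s_0$-concave for all $\alpha>1+s_0$, proves $\liminf_\alpha f_\alpha>0$ on $(X_{(1)},X_{(n)})$ by a case analysis combining the equality statements in Theorem~\ref{secondintegralchar} with the crucial Lemma~\ref{lem:limit_linearity} (which says the pointwise limit on a segment where the $g_\alpha$ are all linear must be log-linear, so if it vanishes at one interior point it vanishes on the whole segment), and then invokes Lemma~\ref{unifbound} with the fixed index $s_0$ to get the uniform upper bound. Your reparametrization $\tilde g_\alpha=(g_\alpha-1)/s$ is natural, but without independent control of $f_\alpha$ away from $0$ and $\infty$ you cannot conclude that the values $\tilde g_\alpha(X_{(i)})$ stay bounded as $s\nearrow 0$.
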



\section{Limit behavior of $s$-concave densities}\label{limitbehaviour}
Let $\{f_n\}_{n \in \N}$ be a sequence of $s$-concave densities with 
corresponding measures $\d{\nu_n}=f_n\d{\lambda}$. Suppose 
$\nu_n \to_d \nu$. From \cite{MR0388475, MR0404559} 
and \cite{MR0450480}, 
we know that each $\nu_n$ is a $t-$concave measure with $t = s/(1+sd)$ 
if $-1/d< s < \infty$, $t=-\infty$ if $s = -1/d$, and $t=1/d$ if $s = \infty$. 
This result is proved via different methods by \cite{rinott1976convexity}.
Furthermore, if the dimension of the support of $\nu$ is $d$, then it follows from 
\cite{MR0388475}, Theorem 2.2 that the limit measure $\nu$ is $t-$concave 
and hence has a Lebesgue density with $s = t/(1-td)$. 
Here we pursue this type of result in somewhat more detail. 
Our key dimensionality condition will be formulated in terms of the set 
$C := \{ x \in \R^d : \liminf f_n (x) > 0 \}$.  We will show that if 
\begin{enumerate}
\item[(D1)] Either  $\mbox{dim} ( \mbox{csupp} (\nu)) = d$ or $\mbox{dim} (C) = d$
\end{enumerate}
holds, then the limiting probability measure $\nu$ admits an upper semi-continuous $s$-concave 
density on $\R^d$.  Furthermore, if a sequence of $s$-concave densities $\{ f_n \}$ converges weakly
to some density $f$ (in the sense that the corresponding probability measures converge weakly), 
then $f$ is $s$-concave, and $f_n$ converges to $f$ in weighted $L_1$ metrics and uniformly on any closed set 
of continuity points of $f$. The directional derivatives of $f_n$ also converge uniformly in all directions in a local sense.

In the following sections, we will not fully exploit the strength of the results we have obtained. 
The results obtained will be interesting in their own right, and careful readers 
will find them useful as technical support for Sections \ref{theoreticalproperty} and \ref{limitdistributiontheory}.

\subsection{Limit characterization via dimensionality condition}

Note that $C$ is a convex set. For a general convex set $K$, we follow the convention 
(see \cite{rockafellar1997convex}) that $\dim K=\dim(\mathrm{aff}(K))$, 
where $\mathrm{aff}(K)$ is the affine hull of $K$.
It is well known that the dimension of a convex set $K$ is the 
maximum of the dimensions of the various simplices included in $K$ 
(cf. Theorem 2.4, \cite{rockafellar1997convex}).

We first extend several results in \cite{kim2014global} and \cite{cule2010theoretical} 
from the log-concave setting to our $s$-concave setting. 
The proofs will all be deferred to Appendix~\ref{appendix:supp_proof_3}. 
\begin{lemma}\label{lem:relate_support_set}
Assume (D1). Then $\mathrm{csupp}(\nu)=\overline{C}$.
\end{lemma}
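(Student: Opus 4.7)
The plan is to establish the two inclusions $\overline{C}\subseteq \mathrm{csupp}(\nu)$ and $\mathrm{csupp}(\nu)\subseteq \overline{C}$ separately, after first noting that $C$ itself is convex. For convexity of $C$, take $x_0,x_1\in C$ and $\theta\in(0,1)$; the $s$-concavity inequality $f_n((1-\theta)x_0+\theta x_1)\geq M_s(f_n(x_0),f_n(x_1);\theta)$, together with the monotonicity and continuity of $M_s(\cdot,\cdot;\theta)$ on $(0,\infty)^2$ for $s<0$, gives $\liminf_n f_n((1-\theta)x_0+\theta x_1)\geq M_s(\liminf_n f_n(x_0),\liminf_n f_n(x_1);\theta)>0$. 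Thus $\overline{C}$ is closed and convex, as the separation argument below will require.

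For $\overline{C}\subseteq \mathrm{csupp}(\nu)$, it suffices to show $\nu(\overline{B(x,\delta)})>0$ for every $x\in C$ and every $\delta>0$, since this forces $x\in \mathrm{supp}(\nu)\subseteq \mathrm{csupp}(\nu)$ and the latter is closed. Under either branch of (D1) one has $\mathrm{int}(C)\neq \emptyset$: this is immediate when $\dim(C)=d$, and under the alternative $\dim(\mathrm{csupp}(\nu))=d$ one combines the extension of Borell's theorem (endowing $\nu$ with an $s$-concave density $f$ of full-dimensional support) with the Portmanteau $\liminf$ bound applied to small balls around interior points of $\{f>0\}$ to exhibit a full-dimensional subset of $C$. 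Given $\mathrm{int}(C)\neq \emptyset$, pick a non-degenerate $d$-simplex $S\subset \mathrm{int}(C)\cap B(x,\delta/2)$ with vertices $y_1,\ldots,y_{d+1}$ satisfying $\liminf_n f_n(y_i)>0$. Since $s$-concave densities are quasi-concave ($M_s\geq M_{-\infty}$ for $s<0$), on $K_x:=\mathrm{conv}(\{x\}\cup S)\subset \overline{B(x,\delta)}$ one has $f_n\geq \min\{f_n(x),f_n(y_1),\ldots,f_n(y_{d+1})\}\geq c>0$ for $n$ large. Hence $\limsup_n\nu_n(\overline{B(x,\delta)})\geq c\,\lambda_d(K_x)>0$, and the Portmanteau inequality for closed sets yields $\nu(\overline{B(x,\delta)})>0$.

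For the reverse inclusion, take $x\notin \overline{C}$. Since $\overline{C}$ is closed and convex, convex separation furnishes a closed half-space $H\supseteq \overline{C}$ with $x\notin H$, so it suffices to show $\nu(H)=1$. The cleanest route uses the companion results of Section~\ref{limitbehaviour}: under (D1), the extension of Borell's theorem gives $\nu$ an $s$-concave density $f$, and an $f_n\to f$-type convergence at continuity points of $f$ (proved by propagating pointwise lower bounds via quasi-concavity, exactly as in the first inclusion) shows $\{f>0\}\cap \{\text{continuity points of }f\}\subseteq C$; since continuity points of $f$ are dense in $\mathrm{int}(\{f>0\})$, taking closures gives $\overline{\{f>0\}}\subseteq \overline{C}\subseteq H$, whence $\nu(H)\geq \nu(\overline{\{f>0\}})=1$.

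The main obstacle is this reverse inclusion, which rests on two nontrivial companion facts promised by the remainder of the section: the existence of an $s$-concave density $f$ for $\nu$ under (D1), and a form of local pointwise convergence $f_n\to f$ at continuity points of $f$. Both ultimately depend on the dimensionality hypothesis (D1): without full-dimensional support on at least one of $C$ or $\mathrm{csupp}(\nu)$, the sequence $f_n$ can collapse onto a lower-dimensional set and neither the limit density nor the pointwise convergence need exist, so the equality $\mathrm{csupp}(\nu)=\overline{C}$ itself can fail (e.g.\ Gaussian densities with variance shrinking to $0$).
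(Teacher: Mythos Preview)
Your approach diverges from the paper's in the inclusion $\mathrm{csupp}(\nu)\subseteq\overline{C}$. The paper never invokes a density for $\nu$ or any $f_n\to f$ convergence; instead, for each $x^\ast$ in a ball outside $\overline{C}$ it extracts a subsequence with $f_{n(k)}(x^\ast)<1/k$, separates $x^\ast$ from the convex superlevel set $\Gamma_k=\{f_{n(k)}\ge 1/k\}$ by a unit normal $b_k$, passes to a limiting direction $b_{x^\ast}$, and shows via Portmanteau on open sets that the half-space $\{\langle b_{x^\ast},\cdot\rangle<\langle b_{x^\ast},x^\ast\rangle\}$ has $\nu$-measure zero; the hypothesis $\dim\mathrm{csupp}(\nu)=d$ then forces $x^\ast\notin\mathrm{int}(\mathrm{csupp}(\nu))$. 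Each inclusion is proved under its own branch of (D1) and the two are then bootstrapped. Your route instead separates $x$ from $\overline{C}$ and invokes Borell's theorem plus the a.e.\ convergence of Lemma~\ref{pwconv} to argue that $\nu$ lives inside $\overline{C}$. That can be made non-circular (Lemma~\ref{pwconv} is stated under the hypothesis that $\nu$ already has a density, which Borell supplies once $\dim\mathrm{csupp}(\nu)=d$), but it inverts the section's logical order and rests on substantially heavier machinery than the paper's self-contained level-set separation.

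As written, your argument also has a concrete gap: the step ``Portmanteau $\liminf$ bound applied to small balls around interior points of $\{f>0\}$ to exhibit a full-dimensional subset of $C$'' does not go through. Portmanteau gives $\liminf_n\nu_n(B)\ge\nu(B)>0$, a lower bound on $\int_B f_n$, but membership in $C$ requires the \emph{pointwise} bound $\liminf_n f_n(x)>0$, and the mass of $f_n$ in $B$ could a priori sit near a moving boundary. To rescue this you must already invoke Lemma~\ref{pwconv} (almost-everywhere convergence $f_n\to f$ with $f>0$ on a full-dimensional set immediately yields $\dim C=d$); but then your parenthetical ``proved by propagating pointwise lower bounds via quasi-concavity, exactly as in the first inclusion'' is not a proof of that lemma---quasi-concavity propagates lower bounds among points already known to lie in $C$, it does not manufacture them from integral bounds. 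In short, your strategy is salvageable only if you cite Borell and Lemma~\ref{pwconv} as black boxes; the paper's direct separation argument is both cleaner and logically prior.
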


\begin{lemma}\label{pwconv}
Let $\{\nu_n\}_{n \in \N}$ be probability measures with upper semi-continuous 
$s$-concave densities $\{f_n\}_{n \in \N}$ such that $\nu_n \to \nu$ weakly as 
$n \to \infty$. Here $\nu$ is a probability measure with density $f$. 
Then $f_n\to_{a.e.} f$, and $f$ can be taken as $f=\mathrm{cl}(\lim_n f_n)$ 
and hence upper semi-continuous $s$-concave.
\end{lemma}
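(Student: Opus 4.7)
The plan is to reduce the claim to a compactness statement for convex functions and then identify the subsequential limit with $f$. Since $s<0$, each $f_n$ can be written as $f_n=g_n^{1/s}$ with $g_n:\R^d \to [0,\infty]$ lower semi-continuous and convex. On the set $C=\{x:\liminf_n f_n(x)>0\}$, the sequence $g_n=f_n^{s}$ is locally bounded above. Since any convex function that is locally bounded above on an open set $U$ is in fact locally Lipschitz on $U$, the Helly/Blaschke selection principle for convex functions (e.g., Theorem 10.9 in \cite{rockafellar1997convex}) produces a subsequence, still denoted $\{g_n\}$, converging to some convex $g$ pointwise on $\mathrm{int}(C)$ and locally uniformly on compact subsets of $\mathrm{int}(C)$. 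Set $\tilde f := g^{1/s}$ on $\mathrm{int}(C)$, and extend by zero off $\overline{C}$; then $f_n \to \tilde f$ pointwise on $\mathrm{int}(C)\cup (\R^d\setminus \overline C)$, and the boundary $\partial C$ is Lebesgue null (as the boundary of a convex set), so $f_n \to \tilde f$ a.e.

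The next step is to identify $\tilde f$ with the given density $f$ of $\nu$. The key input is a uniform integrable envelope for the $f_n$: for $s$-concave densities, one has a pointwise bound of the form $f_n(x)\leq A(1+\|x\|)^{1/s}$ on the tails once a few low-order moments (or the mass in a fixed ball) are controlled, and the latter control is uniform thanks to the tightness implied by weak convergence of $\nu_n$. With this envelope, dominated convergence gives $\int \varphi\, f_n\, \d{\lambda}\to \int \varphi\, \tilde f\, \d{\lambda}$ for every bounded continuous $\varphi$ of polynomial growth slower than $1/|s|-d$; on the other hand, weak convergence of $\nu_n$ gives the same limit equal to $\int \varphi\, f\, \d{\lambda}$. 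Taking $\varphi$ to range over a measure-determining family yields $\tilde f=f$ a.e., and since $\nu$ admits a density the set $\mathrm{csupp}(\nu)$ is $d$-dimensional, so Lemma~\ref{lem:relate_support_set} (with (D1) holding via $\mathrm{csupp}(\nu)$) confirms $\overline C=\mathrm{csupp}(\nu)$.

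Uniqueness of the a.e. limit promotes subsequential convergence to full sequential convergence: any subsequence of $\{f_n\}$ has a further subsequence converging a.e. to $f$ by the argument above, hence the whole sequence does. Finally, to obtain the upper semi-continuous representative, replace $g$ by its convex closure $\bar g := \mathrm{cl}(g)$, which is lower semi-continuous, convex, and agrees with $g$ on $\mathrm{int}(\mathrm{dom}(g))$; then $f=\bar g^{1/s}$ is upper semi-continuous and $s$-concave, which is the stated representation $f=\mathrm{cl}(\lim_n f_n)$.

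The main obstacle I expect is justifying the dominated convergence step cleanly — i.e.\ producing a genuine integrable envelope that is uniform in $n$. Pointwise upper bounds on a single $s$-concave density come from convexity of $f_n^s$ combined with the normalization $\int f_n=1$, but passing to a bound uniform in $n$ requires that the inner moment/mass used to anchor the bound is itself uniformly controlled. This is where weak convergence (via tightness) enters, and it has to be done carefully enough to absorb the polynomial weights $(1+\|x\|)^\kappa$ implicit in the $s$-concave tail estimates; the boundary behavior on $\partial C$ (where $f_n$ may fail to converge pointwise but the set is null) is a minor cosmetic issue by comparison.
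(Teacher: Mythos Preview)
Your selection step---passing to the convex functions $g_n=f_n^s$, extracting a subsequence converging locally uniformly on $\mathrm{int}(C)$, and noting $\partial C$ is Lebesgue-null---is exactly the paper's approach (which simply defers to Proposition~2 of \cite{cule2010theoretical}, transported from $\log f_n$ to $f_n^s$).

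Where you diverge is the identification of the limit $\tilde f$ with $f$. You propose a global integrable envelope plus dominated convergence; the paper's route (via Cule--Samworth) is more direct and sidesteps precisely the obstacle you flag. Once $f_{n_k}\to\tilde f$ locally uniformly on compacta $K\subset\mathrm{int}(C)$, the portmanteau theorem gives
\[
\int_K \tilde f \;=\; \lim_k \int_K f_{n_k} \;=\; \lim_k \nu_{n_k}(K) \;=\; \nu(K) \;=\; \int_K f
\]
for every such $K$ with $\nu(\partial K)=0$. This forces $\tilde f=f$ a.e.\ on $\mathrm{int}(C)$; both vanish a.e.\ off $\overline{C}=\mathrm{csupp}(\nu)$ (Lemma~\ref{lem:relate_support_set}), and $\partial C$ is null. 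Equivalently, Fatou gives $\int\tilde f\le 1$, the display above pushed to $K\uparrow\mathrm{int}(C)$ gives $\int\tilde f\ge 1$, and then Scheff\'e plus uniqueness of weak limits identifies $\tilde f$ with $f$. Either way, no envelope is needed.

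This matters structurally: in the paper, the uniform envelope (Lemma~\ref{weakconvbound}) is proved \emph{after} Lemma~\ref{pwconv} and explicitly invokes it. So your route, as written, either loops back on itself or requires you to supply an independent proof of the envelope---which is doable (Lemmas~\ref{lem:relate_support_set} and \ref{unifbound} do not depend on \ref{pwconv}, so the ingredients are there), but is strictly more work than the local-uniform-convergence argument above.
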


In many situations, uniform boundedness of a sequence of $s$-concave 
densities give rise to good stability and convergence property.
\begin{lemma}\label{unifbound}
Assume $-1/d<s<0$. Let $\{f_n\}_{n \in \N}$ be a sequence of $s$-concave 
densities on $\R^d$. If $\dim C=d$ where $C=\{\liminf_n f_n>0\}$ as above, 
then $\sup_{n \in \N}\pnorm{f_n}{\infty}<\infty$. 
\end{lemma}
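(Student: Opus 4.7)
The plan is to argue by contradiction. Suppose $\sup_{n} \pnorm{f_n}{\infty} = \infty$; then, after passing to a subsequence, I can pick points $y_n \in \R^d$ with $f_n(y_n) \to \infty$. Since $\dim C = d$ and $C$ is convex, I select $d+1$ affinely independent points $x_0, \ldots, x_d \in C$. By definition of $C$ and finiteness of this index set, there exist $\delta > 0$ and $N \in \N$ such that $f_n(x_i) \geq \delta$ for every $i = 0,\ldots,d$ and every $n \geq N$. Let $\Delta := \conv{\{x_0, \ldots, x_d\}}$, a nondegenerate $d$-simplex with $\mathrm{Vol}(\Delta) > 0$.

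The key step is to use $s$-concavity to propagate the divergence at $y_n$ together with the vertex lower bound onto a full-dimensional set. For $\epsilon \in (0,1)$ define the shrunken simplex
\[
E_n^\epsilon := (1-\epsilon) y_n + \epsilon \Delta = \{(1-\epsilon) y_n + \epsilon x : x \in \Delta\},
\]
which is the image of $\Delta$ under an affine contraction of ratio $\epsilon$ and hence has Lebesgue measure $\epsilon^d \mathrm{Vol}(\Delta)$. Any $w \in E_n^\epsilon$ can be written $w = (1-\epsilon) y_n + \epsilon \sum_i \theta_i x_i$ with $\theta_i \geq 0$, $\sum_i \theta_i = 1$, and iterating the two-point $s$-concavity inequality (equivalently, using convexity of $f_n^s$) yields
\[
f_n(w)^s \leq (1-\epsilon) f_n(y_n)^s + \epsilon \sum_{i=0}^d \theta_i f_n(x_i)^s \leq (1-\epsilon) f_n(y_n)^s + \epsilon \delta^s,
\]
uniformly in $w \in E_n^\epsilon$ and $n \geq N$. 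Because $f_n(y_n) \to \infty$ and $s < 0$ force $(1-\epsilon) f_n(y_n)^s \to 0$, applying the decreasing map $x \mapsto x^{1/s}$ gives the uniform lower bound $f_n(w) \geq 2^{1/s} \epsilon^{1/s} \delta$ on $E_n^\epsilon$ once $n$ is large enough (depending on $\epsilon$).

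Integrating, for $n$ sufficiently large,
\[
1 = \int_{\R^d} f_n \, \d{\lambda} \geq \int_{E_n^\epsilon} f_n \, \d{\lambda} \geq 2^{1/s} \delta\, \mathrm{Vol}(\Delta)\, \epsilon^{d + 1/s}.
\]
The hypothesis $-1/d < s < 0$ is precisely what forces $1/s < -d$, so $d + 1/s < 0$ and the right-hand side diverges as $\epsilon \searrow 0$. Choosing $\epsilon$ small enough delivers the contradiction.

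The step I expect to be the main obstacle is keeping the $s$-concavity bound truly uniform across the moving set $E_n^\epsilon$ as $n$ varies. This is resolved because the barycentric weights $\theta_i$ only enter through $\sum_i \theta_i f_n(x_i)^s$, which is uniformly dominated by $\delta^s$ thanks to the finite vertex set $\{x_0, \ldots, x_d\}$; consequently $y_n$ is free to drift without disturbing the estimate. The numerology $d + 1/s < 0$ is the Borell--Brascamp--Lieb threshold that makes $s$-concave functions integrable on $\R^d$ in the first place, so it is natural that it is exactly the condition powering the contradiction.
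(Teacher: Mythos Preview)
Your proof is correct and follows essentially the same strategy as the paper: use convexity of $f_n^s$ to interpolate between a point where $f_n$ is large and a full-dimensional ``base'' set where $f_n$ is bounded below, then integrate over the shrunken set and let the scaling exponent $d+1/s<0$ (equivalently the paper's $1+sd>0$) blow up. The only real difference is the choice of base set: the paper works with the level set $U_{n,\epsilon_0}=\{f_n\ge\epsilon_0\}$ and first argues by contradiction that $\lambda_d(U_{n,\epsilon_0})\ge\epsilon_0$ uniformly in $n$, then shrinks toward the maximizer with a carefully chosen ratio $\lambda_n$; you instead fix once and for all a nondegenerate simplex $\Delta$ with vertices in $C$ and shrink toward $y_n$. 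Your route is a bit more direct since it bypasses the level-set volume step, at the cost of needing the finitely many vertices to supply the uniform lower bound $\delta$---which, as you note, is immediate from $\liminf_n f_n(x_i)>0$ on a finite index set.
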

Now we state one limit characterization theorem.
\begin{theorem}\label{limitchar}
Assume $-1/d<s<0$. Under either condition of (D1), $\nu$ is absolutely 
continuous with respect to $\lambda_d$, with a version of the Radon-Nikodym 
derivative $\mathrm{cl}(\lim_n f_n)$, which is an upper semi-continuous and an $s$-concave density on $\R^d$.
\end{theorem}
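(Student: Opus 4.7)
The plan is to reduce the two branches of (D1) to the single condition $\dim C = d$, obtain absolute continuity of $\nu$ via uniform sup-norm boundedness of the $f_n$, and then invoke Lemma~\ref{pwconv} to identify the limiting density.

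First I would unify (D1). If $\dim C = d$ already holds, there is nothing to do. Otherwise $\dim(\mathrm{csupp}(\nu)) = d$, but in either case (D1) is in force, so Lemma~\ref{lem:relate_support_set} yields $\mathrm{csupp}(\nu) = \overline{C}$; since a convex set and its closure share the same affine hull, $\dim C = \dim \overline{C} = d$. Thus in all cases we may proceed under the hypothesis $\dim C = d$, which is precisely the one required by Lemma~\ref{unifbound}.

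Second, Lemma~\ref{unifbound} then gives $M := \sup_{n} \pnorm{f_n}{\infty} < \infty$. I would transfer this sup-norm bound to the limiting measure by a standard Portmanteau argument: for every open set $U \subset \R^d$,
\[
\nu(U) \leq \liminf_{n \to \infty} \nu_n(U) = \liminf_{n \to \infty} \int_U f_n \, \d{\lambda_d} \leq M \, \lambda_d(U).
\]
Outer regularity of Lebesgue measure promotes this inequality to every Lebesgue-null Borel set: if $\lambda_d(A) = 0$, then for each $\epsilon > 0$ one can choose an open $U \supset A$ with $\lambda_d(U) < \epsilon$, whence $\nu(A) \leq \nu(U) \leq M\epsilon$. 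Hence $\nu \ll \lambda_d$ and its Radon-Nikodym derivative $\tilde f$ is bounded by $M$.

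Third, with $\nu$ now known to admit a density, Lemma~\ref{pwconv} applies directly and identifies a version of $\tilde f$ (a.e.) with $\mathrm{cl}(\lim_n f_n)$, which is upper semi-continuous and $s$-concave on $\R^d$, completing the proof. The only mildly subtle step in this plan is the first one: recognizing that the apparently asymmetric alternatives in (D1) collapse via Lemma~\ref{lem:relate_support_set}, so that the uniform boundedness lemma and Portmanteau together do all of the analytic work.
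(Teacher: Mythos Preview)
Your proposal is correct and follows essentially the same approach as the paper: uniform boundedness from Lemma~\ref{unifbound}, a Portmanteau/outer-regularity argument for absolute continuity, and then Lemma~\ref{pwconv} for the identification. The paper's proof is terser—it does not spell out the reduction of the two (D1) branches to $\dim C = d$ (this is established within the proof of Lemma~\ref{lem:relate_support_set}) and leaves the final invocation of Lemma~\ref{pwconv} implicit—but the logical skeleton is the same.
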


\subsection{Modes of convergence}
It is shown above that the weak convergence of $s$-concave probability measures 
implies almost everywhere pointwise convergence at the density level. In many 
applications, we wish different/stronger types of convergence. This subsection is 
devoted to the study of the following two types of convergence:
\begin{enumerate}
\item Convergence in $\pnorm{\cdot}{L_1}$ metric;
\item Convergence in $\pnorm{\cdot}{\infty}$ metric.
\end{enumerate}
We start by investigating convergence property in $\pnorm{\cdot}{L_1}$ metric.
\begin{lemma}\label{weakconvbound}
Assume $-1/d<s<0$. Let $\nu,\nu_1,\ldots,\nu_n,\ldots$ be probability measures 
with upper semi-continuous $s$-concave densities $f,f_1,\ldots,f_n,\ldots$ 
such that $\nu_n \to \nu$ weakly as $n \to \infty$.  Then there exists $a,b>0$ 
such that $f_n(x)\vee f(x)\leq \big(a\pnorm{x}{}+b\big)^{1/s}.$
\end{lemma}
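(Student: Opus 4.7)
The plan is to derive the uniform tail bound $\varphi_n(x) := f_n(x)^s \geq a\|x\| + b$ (equivalently $f_n(x) \leq (a\|x\|+b)^{1/s}$) by combining the uniform $L^\infty$ bound of Lemma~\ref{unifbound}, tightness of $\{\nu_n\}$, and the convexity of $\varphi_n$. First I would verify that Lemma~\ref{unifbound} applies to the family $\{f_n\} \cup \{f\}$: since $\nu$ admits a Lebesgue density $f$, the set $\{f > 0\}$ has full dimension $d$, and combined with the a.e.\ convergence $f_n \to f$ from Lemma~\ref{pwconv} this forces $\dim C = d$ where $C = \{\liminf_n f_n > 0\}$. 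Hence $M := \sup_n \|f_n\|_\infty \vee \|f\|_\infty < \infty$, and $\varphi_n \geq M^s =: m > 0$ uniformly in $n$.

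Next, by Prohorov's theorem, pick $R_0 > 0$ with $\nu_n(B(0, R_0)) \geq 3/4$ for all $n$. A pigeonhole argument yields measurable sets $A_n \subset B(0, R_0)$ with $|A_n| \geq c_1$ and $f_n \geq c_2$ on $A_n$ for uniform constants $c_1, c_2 > 0$. By $s$-concavity, $\varphi_n \leq c_2^s =: C_0$ throughout $\mathrm{conv}(A_n)$, and John's ellipsoid theorem applied to the convex body $\mathrm{conv}(A_n) \subset B(0, R_0)$ of volume at least $c_1$ provides a Euclidean ball $B(y_n, r^*) \subset \{\varphi_n \leq C_0\}$ with $y_n \in B(0, R_0)$ and $r^* > 0$ uniform in $n$. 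Markov's inequality gives $|\{\varphi_n \leq 2 C_0\}| = |\{f_n \geq (2C_0)^{1/s}\}| \leq (2C_0)^{-1/s}$ uniformly in $n$; since this convex set contains $B(y_n, r^*)$, a cone-volume comparison (the volume of $\mathrm{conv}(B(y_n, r^*) \cup \{p\})$ is at least $c_d (r^*)^{d-1}\|p - y_n\|$) confines it within $B(y_n, R^*)$ for some uniform $R^* > 0$. Therefore, along any ray direction $\xi \in S^{d-1}$ the convex function $\phi_n(R) := \varphi_n(y_n + R\xi)$ satisfies $\phi_n(0) \leq C_0$ and $\phi_n(2 R^*) \geq 2 C_0$, and the standard chord inequality $\phi_n(R) \geq (R/(2 R^*))[\phi_n(2 R^*) - \phi_n(0)] + \phi_n(0)$ yields $\phi_n(R) \geq (C_0/(2 R^*)) R$ for $R \geq 2R^*$, a linear lower bound with slope uniform in $\xi$ and $n$. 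Combining this ray-wise linear bound with $\varphi_n \geq m$ on the intermediate region and $y_n \in B(0, R_0)$, uniform constants $a, b > 0$ can be chosen so that $\varphi_n(x) \geq a\|x\| + b$ for every $x$ and every $n$. The same bound for the limit density $f$ follows by a parallel argument applied directly to $f$ (which is itself $s$-concave with density bounded by $M$).

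The main obstacle I expect is extracting the \emph{linear} lower bound on $\varphi_n$. The bare Markov estimate $|\{\varphi_n \leq t\}| \leq t^{-1/s}$ would only force containment in $B(y_n, C t^{-1/s})$, which is superlinear in $t$ (since $-1/s > d$); linearity is recovered only by iterating convexity along rays emanating from the uniformly fat ball $B(y_n, r^*)$, which itself demands uniform control from tightness and the pigeonhole argument. A secondary subtlety is the intermediate annulus $\|x - y_n\| \in (0, 2 R^*)$ where the chord bound degenerates: here $a, b$ must be chosen conservatively small so that $a\|x\| + b \leq m$ on a sufficiently large inner region, allowing the universal lower bound $\varphi_n \geq m$ to suffice there.
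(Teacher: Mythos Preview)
Your proof is correct and takes a genuinely different route from the paper's.

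The paper argues by comparison to the limit: it first applies Lemma~\ref{convlb} to the \emph{limit} $g=f^s$ to obtain a slope bound $\frac{g(x)-g(0)}{\|x\|}\ge a$ for large $\|x\|$, and then transfers this bound to $g_n$ by contradiction. If along some direction the slope of $g_n$ were too small, one can build a convex ``cone region'' $C_n=\mathrm{conv}(\{x_n,B_\eta\})\cap B(x_n,R/2)$ of fixed Lebesgue measure on which $g_n$ stays well below $g$, hence $f_n-f$ stays uniformly positive on $C_n$; this contradicts Lemma~\ref{unifconv} (uniform convergence of $\int_D(f_n-f)$ over convex $D$). The uniform positive lower bound on $g_n$ then comes from Lemma~\ref{unifbound} at the very end.

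Your approach is direct and never compares to the limit beyond invoking Lemma~\ref{pwconv} to justify $\dim C=d$. You instead combine (i) Lemma~\ref{unifbound} for a uniform floor $\varphi_n\ge m$, (ii) tightness plus a pigeonhole/John-ellipsoid step to find a uniform ball $B(y_n,r^\ast)$ on which $\varphi_n\le C_0$, (iii) Markov plus a cone-volume bound to confine the sublevel set $\{\varphi_n\le 2C_0\}$ inside $B(y_n,R^\ast)$, and (iv) the chord inequality for convex functions along rays to extract the linear growth. This is more self-contained and quantitative: it never uses Lemma~\ref{unifconv}, and the argument would apply verbatim to any uniformly tight family of $s$-concave densities rather than just a weakly convergent sequence. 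The price is the extra geometric machinery (John's theorem, the cone-volume comparison) and the constant-fiddling you flag in the annulus region; the paper's version is shorter because the heavy lifting is outsourced to Lemma~\ref{unifconv}. Both are valid and yield the same conclusion with the same dependence on $s$ and $d$.
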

Once the existence of a suitable integrable envelope function is established, 
we conclude naturally by dominated convergence theorem that
\begin{theorem}\label{weakconvmodeconv}
Assume $-1/d<s<0$. Let $\nu,\nu_1,\ldots,\nu_n,\ldots$ be probability measures 
with upper semi-continuous $s$-concave densities $f,f_1,\ldots,f_n,\ldots$ such that 
$\nu_n \to \nu$ weakly as $n \to \infty$. Then for $\kappa<r-d$,
\begin{equation}
\lim_{n \to \infty}\int (1+\pnorm{x}{})^\kappa\abs{f_n(x)-f(x)}\ \d{x}=0.
\end{equation}
\end{theorem}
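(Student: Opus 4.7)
The plan is to derive the weighted $L_1$ convergence directly from the two preceding lemmas via the dominated convergence theorem. The essential inputs are Lemma \ref{pwconv}, which gives the almost everywhere pointwise convergence $f_n(x)\to f(x)$, and Lemma \ref{weakconvbound}, which supplies the common pointwise envelope $f_n(x)\vee f(x)\leq (a\pnorm{x}{}+b)^{1/s}$ for some $a,b>0$. Combining these, the pointwise integrand $(1+\pnorm{x}{})^\kappa\abs{f_n(x)-f(x)}$ converges to $0$ a.e.\ on $\R^d$ and is dominated by $2(1+\pnorm{x}{})^\kappa(a\pnorm{x}{}+b)^{1/s}$.

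The only non-trivial verification is that this dominating function is Lebesgue-integrable on $\R^d$. Since $1/s=-r$, there is a constant $C=C(a,b,s)$ with $(a\pnorm{x}{}+b)^{1/s}\leq C(1+\pnorm{x}{})^{-r}$ for all $x$. Passing to polar coordinates, one finds
\[
\int_{\R^d}(1+\pnorm{x}{})^{\kappa-r}\,\d{\lambda_d}(x)\asymp \int_0^\infty (1+\rho)^{\kappa-r}\rho^{d-1}\,\d{\rho},
\]
which is finite precisely when $(d-1)+(\kappa-r)<-1$, i.e.\ when $\kappa<r-d$, which is exactly the range imposed in the hypothesis. Therefore the dominated convergence theorem applies and yields
\[
\lim_{n\to\infty}\int (1+\pnorm{x}{})^\kappa\abs{f_n(x)-f(x)}\,\d{x}=0.
\]

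There is essentially no real obstacle in this argument: the whole content of the theorem is already packaged in Lemma \ref{pwconv} (a.e.\ convergence) and Lemma \ref{weakconvbound} (polynomial envelope), both of which rely on the dimensionality condition (D1) and the restriction $-1/d<s<0$. The mild threshold $\kappa<r-d$ on the admissible weight appears naturally from matching the tail decay rate $r=-1/s$ of the $s$-concave envelope against the volume growth $\rho^{d-1}$ in $\R^d$, and is sharp in that sense. Consequently the proof reduces to the three-line computation indicated above, and no further structural argument beyond DCT is required.
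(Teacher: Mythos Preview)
Your proof is correct and follows essentially the same approach as the paper: invoke Lemma \ref{pwconv} for a.e.\ pointwise convergence, Lemma \ref{weakconvbound} for the envelope $(a\pnorm{x}{}+b)^{1/s}$, and then apply the dominated convergence theorem. You add the explicit integrability check in polar coordinates, which the paper leaves implicit, but the argument is otherwise identical.
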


Next we examine convergence of $s$-concave densities in $\pnorm{\cdot}{\infty}$ norm. 
We denote $g=f^{s},g_n=f_n^{s}$ unless otherwise specified. 
Since we have established pointwise convergence in Lemma \ref{pwconv}, 
classical convex analysis guarantees that the convergence is uniform over 
compact sets in $\intdom{f}$. To establish global uniform convergence result, 
we only need to control the tail behavior of the class of $s$-concave functions 
and the region near the boundary of $f$. 
This is accomplished via Lemmas \ref{tailsconcave} and \ref{shiftballcontrol}.

\begin{theorem}\label{localconv}
Let $\nu,\nu_1,\ldots,\nu_n,\ldots$ be probability measures with upper 
semi-continuous $s$-concave densities $f,f_1,\ldots,f_n,\ldots$ such that 
$\nu_n \to \nu$ weakly as $n \to \infty$. 
Then for any closed set $S$ contained in the continuity points of $f$ and $\kappa<r=-1/s$,
\[
\lim_{n \to \infty}\sup_{x \in S}\big(1+\pnorm{x}{}\big)^{\kappa}\abs{f_n(x)-f(x)}=0.
\]
\end{theorem}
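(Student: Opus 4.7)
The plan is to split $S$ into three regions and control each separately: the tail $S\setminus B(0,R)$, the interior portion $K_{\mathrm{int}}:=S\cap\overline{B(0,R)}\cap \mathrm{int}(\{f>0\})$, and the boundary portion $K_\partial:=S\cap\overline{B(0,R)}\cap \partial\{f>0\}$. The tail is handled by the envelope from Lemma \ref{weakconvbound}: there exist $a,b>0$ with $f_n(x)\vee f(x)\leq (a\pnorm{x}{}+b)^{1/s}$ for all $n$. Since $\kappa<r=-1/s$, the weighted envelope $(1+\pnorm{x}{})^\kappa (a\pnorm{x}{}+b)^{1/s}$ has negative exponent at infinity, so given $\varepsilon>0$, one can choose $R=R(\varepsilon)$ (independent of $n$) making the tail supremum at most $\varepsilon$.

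For the interior portion $K_{\mathrm{int}}$, Lemma \ref{pwconv} supplies pointwise a.e.\ convergence $f_n\to f$, hence $g_n:=f_n^{s}\to g:=f^{s}$ a.e.\ as convex functions on $\mathrm{int}(\{f>0\})=\mathrm{int}(\mathrm{dom}(g))$. The classical fact that pointwise convergent convex functions converge uniformly on compact subsets of the interior of the limit's domain (e.g.\ Theorem 10.8 in \cite{rockafellar1997convex}) upgrades this to uniform convergence of $g_n$ to $g$ on $K_{\mathrm{int}}$. On $K_{\mathrm{int}}$ the function $g$ is bounded above and below away from $0$ and $\infty$, so composition with the locally Lipschitz map $t\mapsto t^{1/s}$ yields uniform convergence $f_n\to f$ on $K_{\mathrm{int}}$; multiplying by the bounded weight $(1+\pnorm{x}{})^\kappa\leq (1+R)^\kappa$ preserves this.

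The boundary portion $K_\partial$ consists of continuity points of $f$ at which $f\equiv 0$. Here I would invoke Lemma \ref{shiftballcontrol}: using the $s$-concave structure, it dominates $\sup_{y\in U'}f_n(y)$ on a shrunk neighborhood $U'$ of $x\in K_\partial$ by (essentially) $\sup_{y\in U}f(y)$ on a slightly enlarged neighborhood $U$, up to an $n$-vanishing error. Continuity of $f$ at $x$ with $f(x)=0$ allows $U$ to be chosen so that $\sup_U f<\varepsilon$; a finite subcover of the compact set $K_\partial$ then gives $\sup_{K_\partial} f_n\to 0$ and consequently $\sup_{K_\partial}(1+\pnorm{x}{})^\kappa\abs{f_n-f}\to 0$. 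Assembling the three contributions, the total weighted supremum is at most $3\varepsilon$ for $n$ large, proving the theorem.

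The main obstacle is the boundary piece $K_\partial$. These are precisely the points where $g=f^{s}$ jumps to $+\infty$, so Rockafellar's uniform-on-compact principle fails at them and pointwise a.e.\ convergence of $f_n$ gives no direct upper bound on $f_n$ there. The shift-ball Lemma \ref{shiftballcontrol} is the essential bridge, converting a pointwise continuity statement for $f$ at a boundary point into a uniform upper bound for $f_n$ on a nearby ball via the quasi-concavity of $f_n$; this is what makes the uniform control on continuity sets possible without any further regularity assumption on the boundary of $\{f>0\}$.
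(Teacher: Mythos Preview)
Your three-region decomposition and choice of tools (Rockafellar's Theorem 10.8 for the interior, Lemma \ref{shiftballcontrol} for continuity points where $f=0$, the polynomial envelope for the tail) match the paper's proof. There is one genuine technical gap: your set $K_{\mathrm{int}}=S\cap\overline{B(0,R)}\cap\mathrm{int}(\{f>0\})$ is the intersection of a compact set with an \emph{open} set and need not be compact, so Rockafellar's Theorem 10.8 does not apply to it, and your claim that $g$ is bounded on $K_{\mathrm{int}}$ can fail near $\partial\{f>0\}$. The fix is to reverse the order of the two compact-region arguments. First cover the compact set $\{y\in S\cap\overline{B(0,R)}:f(y)=0\}$ by finitely many balls on which Lemma \ref{shiftballcontrol} yields $\limsup_n\sup f_n\le\eta$ (this is exactly the display (\ref{localconv:step2}) in the paper). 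The complement of this finite union of open balls in $S\cap\overline{B(0,R)}$ is then compact and, because every continuity point of $f$ with $f>0$ lies in $\mathrm{int}(\{f>0\})$, it is a compact subset of the interior of the domain, where the Rockafellar argument is legitimate. Your decomposition also omits the exterior of $\overline{\{f>0\}}$, but those points are absorbed into the $f=0$ case.

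A secondary remark: for the tail you invoke Lemma \ref{weakconvbound} directly, which is cleaner than the paper's detour through Lemma \ref{tailsconcave}. Be aware, though, that Lemma \ref{weakconvbound} as stated carries the hypothesis $-1/d<s<0$, whereas Theorem \ref{localconv} imposes no restriction on $s$. The paper therefore appeals only to the tail inequality $g_n(x)\ge a\pnorm{x}{}-b$ for large $\pnorm{x}{}$ extracted from the \emph{proof} of Lemma \ref{weakconvbound}; that part of the argument does not use Lemma \ref{unifbound} and hence does not need $-1/d<s<0$.
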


We note that no assumption on the index $s$ is required here.



\subsection{Local convergence of directional derivatives}
It is known in convex analysis that if a sequence of convex functions 
$g_n$ converges pointwise to $g$ on an open convex set, then the 
subdifferential of $g_n$ also `converges' to the subdifferential of $g$. 
If we further assume smoothness of $g_n$, then local uniform convergence 
of the derivatives automatically follows. See Theorems 24.5 and 25.7 in 
\cite{rockafellar1997convex} for precise statements. Here we pursue this 
issue at the level of transformed densities.
\begin{theorem}\label{thm:conv_derivatives}
Let $\nu,\nu_1,\ldots,\nu_n,\ldots$ be probability measures with 
upper semi-continuous $s$-concave densities $f,f_1,\ldots,f_n,\ldots$ 
such that $\nu_n \to \nu$ weakly as $n \to \infty$. 
Let $\mathcal{D}_f:=\{x \in \intdom{f}: f\textrm{ is differentiable at }x\}$, 
and $T\subset \mathrm{int}(\mathcal{D}_f)$ be any compact set. Then
\begin{equation*}
\lim_{n \to \infty}\sup_{x \in T,\pnorm{\xi}{2}=1}\abs{\nabla_\xi f_n(x)-\nabla_\xi f(x)}=0.
\end{equation*}
\end{theorem}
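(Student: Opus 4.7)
The plan is to lift the problem to the convex companions $g := f^s$ and $g_n := f_n^s$, invoke the standard convex-analytic stability of subdifferentials under pointwise convergence, and transfer the conclusion back through the chain-rule identity $\nabla_\xi f(x) = \tfrac{1}{s}\, g(x)^{1/s-1}\, \nabla_\xi g(x)$, valid whenever $g(x) \in (0,\infty)$. First, I will choose an open neighborhood $U$ of $T$ with $\overline{U}$ compact and $\overline{U} \subset \mathrm{int}(\mathcal{D}_f)$. Since every point of $\mathcal{D}_f$ is a differentiability point (hence continuity point) of the upper semi-continuous density $f$, and since $f > 0$ on $\intdom{f}$, compactness gives $0 < \inf_{\overline{U}} f \leq \sup_{\overline{U}} f < \infty$. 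Theorem~\ref{localconv} then delivers $f_n \to f$ uniformly on $\overline{U}$, and composing with the continuous map $t \mapsto t^s$ on a fixed compact subinterval of $(0,\infty)$ yields $g_n \to g$ uniformly on $\overline{U}$. In particular $g_n, g$ are finite convex functions on the open convex set $U$ with $g_n \to g$ pointwise there.

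Next, I will invoke Theorem~25.7 of \cite{rockafellar1997convex}: since $g$ is differentiable at every $x \in T$, for any $x_n \to x$ in $U$ and any selection $v_n \in \partial g_n(x_n)$ one has $v_n \to \nabla g(x)$; moreover $\nabla g$ is continuous on $T$ by Theorem~25.5 of \cite{rockafellar1997convex}. A routine compactness-contradiction argument then upgrades this pointwise statement to uniform convergence over $T \times S^{d-1}$: if it failed, there would exist $\epsilon > 0$ and subsequences $x_{n_k} \in T$, $\xi_{n_k} \in S^{d-1}$, $v_{n_k} \in \partial g_{n_k}(x_{n_k})$ with $\langle v_{n_k}, \xi_{n_k}\rangle = \nabla_{\xi_{n_k}} g_{n_k}(x_{n_k})$ (the maximum being attained over the compact convex set $\partial g_{n_k}(x_{n_k})$) and $|\nabla_{\xi_{n_k}} g_{n_k}(x_{n_k}) - \nabla_{\xi_{n_k}} g(x_{n_k})| \geq \epsilon$; passing to further subsequences with $x_{n_k} \to x_*$ and $\xi_{n_k} \to \xi_*$, the Rockafellar statement forces $v_{n_k} \to \nabla g(x_*)$, producing a contradiction via continuity of $\nabla g$ on $T$ and of the pairing. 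Hence
\[
\lim_{n\to\infty}\, \sup_{x\in T,\, \|\xi\|=1}\, \bigl|\nabla_\xi g_n(x) - \langle \nabla g(x),\xi\rangle\bigr| = 0.
\]

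Finally, the chain-rule identity writes $\nabla_\xi f_n(x)$ as the product $\tfrac{1}{s}\, g_n(x)^{1/s-1}\, \nabla_\xi g_n(x)$; since $g_n \to g$ uniformly on $T$ with the limit bounded away from $0$ and $\infty$ there, the first factor converges uniformly on $T$ to $\tfrac{1}{s}\, g(x)^{1/s-1}$, and combining with the uniform convergence of the second factor just established yields the claimed uniform convergence of $\nabla_\xi f_n$ to $\nabla_\xi f$ on $T \times S^{d-1}$. The main (mild) obstacle I anticipate is the upgrading step in the middle paragraph: the Rockafellar statement is pointwise, and the compactness-contradiction genuinely requires that $\bigcup_{n \text{ large},\, x\in T} \partial g_n(x)$ stay bounded---equivalently, that $\{g_n\}$ be uniformly Lipschitz on $T$. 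This follows from the uniform boundedness of $g_n$ on the slight enlargement $\overline{U}$ together with the elementary Lipschitz estimate for convex functions on compact subsets of their domain interior, and is the one place where it is essential to have propagated the convergence to a neighborhood of $T$ rather than just to $T$ itself.
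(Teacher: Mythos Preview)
Your proof is correct and follows essentially the same route as the paper's: both lift to the convex companions $g_n = f_n^s$, use the chain rule $\nabla_\xi f_n = \tfrac{1}{s} g_n^{1/s-1}\nabla_\xi g_n$, invoke Theorem~\ref{localconv} for uniform convergence of $f_n$ (hence $g_n$) on a compact neighborhood, and then appeal to the stability of subdifferentials of pointwise-convergent convex functions. The only cosmetic difference is that the paper cites Lemma~3.10 of \cite{MR2850215} as a black box for the uniform statement $\sup_{x\in T,\,\tau\in\partial g_n(x)}\|\tau-\nabla g(x)\|\to 0$, whereas you derive this yourself via the compactness-contradiction argument from Rockafellar's Theorem~25.7; the two are equivalent and your self-contained version is arguably cleaner.
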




\section{Limiting distribution theory of the divergence estimator}\label{limitdistributiontheory}

In this section we establish local asymptotic distribution theory of the divergence estimator $\hat{f}_n$ at a fixed point $x_0 \in \R$. Limit distribution theory in shape-constrained estimation was pioneered for monotone density and regression estimators by \cite{rao1969estimation}, \cite{MR0277070}, \cite{wright1981asymptotic} and \cite{groenenboom1984estimating}. \cite{groeneboom2001estimation} established pointwise limit theory for the MLE's and LSE's of a convex decreasing density, and also treated pointwise limit theory estimation of a convex regression function. \cite{balabdaoui2009limit} established pointwise limit theorems for the MLEs of log-concave densities
on $\R$. On the other hand, for nonparametric estimation of $s$-concave densities, 
asymptotic theory beyond the Hellinger consistency 
results for the MLE's established by \cite{seregin2010nonparametric} has been non-existent.   
\cite{doss2013global} have shown in the case of $d=1$ that the MLE's have Hellinger convergence rates 
of order $O_p (n^{-2/5})$ for each $s \in (-1, \infty)$ (which includes the log-concave case $s=0$).  
However, due at least in part to the lack of explicit characterizations of the MLE for $s$-concave classes, 
no results concerning limiting distributions of the MLE at fixed points are currently available.  In the remainder 
of this section we formulate results of this type  for the R\'{e}nyi divergence estimators. 
These results are comparable to the pointwise limit distribution results for the MLE's of log-concave densities 
obtained by \cite{balabdaoui2009limit}.

In the following, we will see how natural and strong characterizations developed in Section \ref{theoreticalproperty} help us to understand the limit behavior of the R\'enyi divergence estimator at a fixed point. For this purpose, we assume the true density $f_0=g_0^{-r}$ satisfies the following:
\begin{enumerate}
\item[(A1).] $g_0 \in \mathcal{G}$ and $f_0$ is an $s$-concave density on $\R$, where $-1<s<0$;
\item[(A2).] $f_0(x_0)>0$;
\item[(A3).] $g_0$ is locally $C^k$ around $x_0$ for some $k\geq 2$.
\item[(A4).] Let $k:=\mathrm{max}\{k \in \N: k\geq 2, g_0^{(j)}(x_0)=0, \textrm{ for all } 2\leq j \leq k-1, g_0^{(k)}(x_0)\neq 0\}$, and $k=2$ if the above set is empty. Assume $g_0^{(k)}$ is continuous around $x_0$.
\end{enumerate}
\subsection{Limit distribution theory}\label{subsection:limit_distribution_theory}
Before we state the main results concerning the limit distribution theory for the R\'enyi divergence estimator, let us sketch the route by which the theory is developed. We first denote
$ \hat{F}_n(x):=\int_{-\infty}^x \hat{f}_n(t)\ \d{t}$, $\hat{H}_n(x):=\int_{-\infty}^x \hat{F}_n(t)\ \d{t}$ and $\mathbb{H}_n(x):=\int_{-\infty}^x\mathbb{F}_n(t)\ \d{t}$.
We also denote $r_n:=n^{(k+2)/(2k+1)}$ and $\bm{l}_{n,x_0}=[x_0,x_0+n^{-1/(2k+1)}t]$. Due to the form of the characterizations obtained in Theorem \ref{secondintegralchar}, we define \emph{local processes} at the level of integrated distribution functions as follows:
\begin{equation*}
\begin{split}
\mathbb{Y}^\mathrm{loc}_n(t):&=r_n\int_{\bm{l}_{n,x_0}}\bigg(\mathbb{F}_n(v)-\mathbb{F}_n(x_0)-\int_{x_0}^v\big(\sum_{j=0}^{k-1}\frac{f_0^{(j)}(x_0)}{j!}(u-x_0)^j\big)\ \d{u}\bigg)\ \d{v};\\
\mathbb{H}^\mathrm{loc}_n(t):&=r_n\int_{\bm{l}_{n,x_0}}\bigg(\hat{F}_n(v)-\hat{F}(x_0)-\int_{x_0}^v\big(\sum_{j=0}^{k-1}\frac{f_0^{(j)}(x_0)}{j!}(u-x_0)^j\big)\ \d{u}\bigg)\ \d{v}\\
&\qquad+\hat{A}_n t+\hat{B}_n,
\end{split}
\end{equation*}
where $ \hat{A}_n:= n^{\frac{k+1}{2k+1}}\big(\hat{F}_n(x_0)-\mathbb{F}_n(x_0)\big)$ and $\hat{B}_n:= n^{\frac{k+2}{2k+1}}\big(\hat{H}_n(x_0)-\mathbb{H}_n(x_0)\big)$ are defined so that $\mathbb{Y}^\mathrm{loc}_n(\cdot)\geq \mathbb{H}^\mathrm{loc}_n(\cdot)$ by virtue of Theorem \ref{secondintegralchar}. Since we wish to derive asymptotic theory at the level of the underlying convex function, we modify the processes by 
\begin{equation}\label{eqn:modified_loc_proc}
\begin{split}
\mathbb{Y}^\mathrm{locmod}_n(t):&= \frac{\mathbb{Y}^\mathrm{loc}_n(t)}{f_0(x_0)}-r_n\int_{\bm{l}_{n,x_0}}\int_{x_0}^v \hat{\Psi}_{k,n,2}(u)\d{u}\d{v},\\
\mathbb{H}^\mathrm{locmod}_n(t):&= \frac{\mathbb{H}^\mathrm{loc}_n(t)}{f_0(x_0)}-r_n\int_{\bm{l}_{n,x_0}}\int_{x_0}^v \hat{\Psi}_{k,n,2}(u)\d{u}\d{v}.
\end{split}
\end{equation}
where
\begin{equation}
\begin{split}
\hat{\Psi}_{k,n,2}(u)&=\frac{1}{f_0(x_0)}\left(\hat{f}_n(u)-\sum_{j=0}^{k-1}\frac{f_0^{(j)}(x_0)}{j!}(u-x_0)^j\right)\\
&\quad+\frac{r}{g_0(x_0)}\left(\hat{g}_n(u)-g_0(x_0)-g_0'(x_0)(u-x_0)\right).
\end{split}
\end{equation}
A direct calculation reveals that with $r=-1/s>0$, 
\[
\mathbb{H}^\mathrm{locmod}_n(t)=\frac{-r\cdot r_n}{g_0(x_0)}\int_{\bm{l}_{n,x_0}}\int_{x_0}^v \bigg(\hat{g}_n(u)-g_0(x_0)-(u-x_0)g_0'(x_0)\bigg)\ \d{u} \d{v}+\frac{\hat{A}_n t+\hat{B}_n}{f_0(x_0)},
\]
and hence
\begin{equation}\label{rel1}
\begin{split}
n^{\frac{k}{2k+1}}\big(\hat{g}_n(x_0+s_nt)-g_0(x_0)-s_ntg_0'(x_0)\big)&=\frac{g_0(x_0)}{-r}\frac{\d{}^2}{\d{t^2}}\mathbb{H}_n^{\textrm{locmod}}(t),\\
n^{\frac{k-1}{2k+1}}\big(\hat{g}_n'(x_0+s_nt)-g_0'(x_0)\big)&=\frac{g_0(x_0)}{-r}\frac{\d{}^3}{\d{t^3}}\mathbb{H}_n^{\textrm{locmod}}(t).
\end{split}
\end{equation}
It is clear from (\ref{eqn:modified_loc_proc}) that the order relationship $\mathbb{Y}^\mathrm{locmod}_n(\cdot)\geq \mathbb{H}^\mathrm{locmod}_n(\cdot)$ is still valid for the modified processes. Now by tightness arguments, the limit process $\mathbb{H}$ of $\mathbb{H}^\mathrm{locmod}_n$, including its derivatives, exists uniquely, giving us the possibility of taking the limit in (\ref{rel1}) as $n \to \infty$.  Finally we relate $\mathbb{H}$ to the canonical process $H_k$ defined in Theorem \ref{thm:limittheory} by looking at their respective `envelope' functions $\mathbb{Y}$ and $Y_k$, where $\mathbb{Y}$ denotes the limit process of $\mathbb{Y}^{\mathrm{locmod}}_n$ and $Y_k(t)=\int_{0}^{t}W(s)\ \d{s}-t^{k+2}$. Careful calculation of the limit of $\mathbb{Y}^{\mathrm{loc}}_n$ and $\hat{\Psi}_{k,n,2}$ reveals that
\[
\mathbb{Y}^{\mathrm{locmod}}_n(t) \to_d \frac{1}{\sqrt{f_0(x_0)}}\int_0^t W(s)\ \d{s}-\frac{rg_0^{(k)}(x_0)}{g_0(x_0)(k+2)!}t^{k+2},
\]
Now by the scaling property of Brownian motion, $W(at) =_d \sqrt{a} W(t)$, we get the following theorem.

\begin{theorem}\label{thm:limittheory}
Under assumptions (A1)-(A4), we have
\begin{equation}\label{eqn:limit_ground_g}
\begin{pmatrix}
n^{\frac{k}{2k+1}}\big(\hat{g}_n(x_0)-g_0(x_0)\big)\\
n^{\frac{k-1}{2k+1}}\big(\hat{g}_n'(x_0)-g_0'(x_0)\big)\\
\end{pmatrix}
\to_d
\begin{pmatrix}
-\bigg(\frac{g_0^{2k}(x_0)g_0^{(k)}(x_0)}{r^{2k}f_0(x_0)^k(k+2)!}\bigg)^{1/(2k+1)}H_k^{(2)}(0)\\
-\bigg(\frac{g_0^{2k-2}(x_0)\big[g_0^{(k)}(x_0)\big]^3}{r^{2k-2}f_0(x_0)^{k-1}\big[(k+2)!\big]^3}\bigg)^{1/(2k+1)}H_k^{(3)}(0)
\end{pmatrix},
\end{equation}
and
\begin{equation}\label{eqn:limit_cvxfcn}
\begin{pmatrix}
n^{\frac{k}{2k+1}}\big(\hat{f}_n(x_0)-f_0(x_0)\big)\\
n^{\frac{k-1}{2k+1}}\big(\hat{f}_n'(x_0)-f_0'(x_0)\big)\\
\end{pmatrix}
\to_d
\begin{pmatrix}
\bigg(\frac{rf_0(x_0)^{k+1}g_0^{(k)}(x_0)}{g_0(x_0)(k+2)!}\bigg)^{1/(2k+1)}H_k^{(2)}(0)\\
\bigg(\frac{r^3f_0(x_0)^{k+2}\big(g_0^{(k)}(x_0)\big)^3}{g_0(x_0)^3\big[(k+2)!\big]^3}\bigg)^{1/(2k+1)}H_k^{(3)}(0)
\end{pmatrix},
\end{equation}
where $H_k$ is the unique lower envelope of the process $Y_k$ satisfying
\begin{enumerate}
\item $H_k(t)\leq Y_k(t)$ for all $t \in \R$;
\item $H^{(2)}_k$ is concave;
\item $H_k(t)=Y_k(t)$ if the slope of $H^{(2)}_k$ decreases strictly at $t$.
\end{enumerate}
\end{theorem}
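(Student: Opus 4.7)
The plan is to execute the strategy sketched just above the theorem in four stages: (i) turn the variational characterization of Theorem \ref{secondintegralchar} into a global order relation between two localized processes whose second and third derivatives at zero coincide with the quantities of interest; (ii) establish joint tightness and extract subsequential limits; (iii) identify the limit by the envelope characterization of $H_k$; (iv) rescale via Brownian self-similarity to read off the constants. For step (i), the inequality $\int_{X_{(1)}}^t(\hat F_n-\mathbb F_n)\,\d{u}\le 0$ of Theorem \ref{secondintegralchar}, with equality on $\mathcal S_n(\hat g_n)$, becomes, after subtracting the $(k-1)$-st Taylor polynomial of $f_0$ at $x_0$, rescaling by $r_n$ on $\bm{l}_{n,x_0}$, and absorbing the two centering constants into the linear term $\hat A_n t+\hat B_n$, exactly the order relation $\mathbb H^{\mathrm{locmod}}_n(t)\le \mathbb Y^{\mathrm{locmod}}_n(t)$ for every $t$, with equality at the rescaled knots. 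By (\ref{rel1}), the second and third derivatives of $\mathbb H^{\mathrm{locmod}}_n$ at $t=0$ equal, up to the fixed multiplier $g_0(x_0)/(-r)$, the scaled errors that the theorem controls.

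For step (ii), I would show $\mathbb Y^{\mathrm{locmod}}_n\Rightarrow \mathbb Y$ in $C[-M,M]$ for every $M$ and then argue tightness of $\mathbb H^{\mathrm{locmod}}_n$ together with its first three derivatives on each compact interval. Assumptions (A3)--(A4) collapse the Taylor remainder of $f_0$ to the single polynomial drift $-[rg_0^{(k)}(x_0)/(g_0(x_0)(k+2)!)]\,t^{k+2}$, since orders $1,\dots,k-1$ vanish by the choice of $k$ and by the centering. The empirical-process piece $r_n\int_{\bm{l}_{n,x_0}}(\mathbb F_n-F_0)(v)\,\d{v}$, divided by $f_0(x_0)$, converges to $f_0(x_0)^{-1/2}\int_0^t W(s)\,\d{s}$ by a local integrated functional CLT around $x_0$. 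The correction $r_n\int\!\!\int\hat\Psi_{k,n,2}$ is $o_p(1)$ uniformly on compacts in $t$ by the local uniform consistency of $\hat f_n$ and $\hat f_n'$ provided by Corollary \ref{cor:on-the-model-consistency} and Theorem \ref{thm:conv_derivatives}. Tightness of $\mathbb H^{\mathrm{locmod}}_n$ then follows from the envelope $\mathbb H^{\mathrm{locmod}}_n\le \mathbb Y^{\mathrm{locmod}}_n$, concavity of $(\mathbb H^{\mathrm{locmod}}_n)^{(2)}$ (equivalently convexity of $\hat g_n$), and arc-slope estimates in the style of \cite{groeneboom2001estimation} and \cite{balabdaoui2009limit}.

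For step (iii), any subsequential limit $(\mathbb Y,\mathbb H)$ satisfies $\mathbb H(t)\le \mathbb Y(t)$ pointwise, $\mathbb H^{(2)}$ is concave, and $\mathbb H(t)=\mathbb Y(t)$ at every $t$ where $\mathbb H^{(2)}$ strictly decreases: such a $t$ must be a limit of rescaled knot locations, and equality holds at the prelimit stage by the equality clause in Theorem \ref{secondintegralchar}. These are exactly the three properties defining $H_k$ relative to $Y_k$ in the statement, and uniqueness of the envelope (proved in the quadratic case by \cite{groeneboom2001estimation} and extended in \cite{balabdaoui2009limit}) pins down the limit. For step (iv), writing $\mathbb Y(t)=f_0(x_0)^{-1/2}\int_0^t W(s)\,\d{s}-[rg_0^{(k)}(x_0)/(g_0(x_0)(k+2)!)]\,t^{k+2}$ and substituting $t=a\tau$, one selects $a$ so that the coefficients of the Brownian integral and of $\tau^{k+2}$ match those of the canonical process $Y_k$; by Brownian self-similarity $W(a\cdot)$ equals $\sqrt{a}\,W(\cdot)$ in distribution, this determines a unique $a$. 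Feeding the resulting expression for $\mathbb H^{(2)}(0)$ and $\mathbb H^{(3)}(0)$ through (\ref{rel1}) yields the constants in (\ref{eqn:limit_ground_g}); the display (\ref{eqn:limit_cvxfcn}) for $(\hat f_n,\hat f_n')$ then follows by the delta method applied to $g\mapsto g^{-r}$ at the point $g_0(x_0)>0$.

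The main obstacle, I expect, is the tightness argument in step (ii), and in particular the joint stochastic boundedness of the linear correction coefficients $\hat A_n$ and $\hat B_n$: without such a bound, the envelope inequality alone cannot pin the local process down on any compact set. This is precisely where the boundary control from Section \ref{limitbehaviour}, combined with the uniform consistency of $\hat f_n$ and its directional derivatives from Section \ref{theoreticalproperty} and a convexity-based arc lemma comparing $\hat g_n(x_0)$ to the value at $x_0$ of its convex envelope on the two nearest knots, closes the argument.
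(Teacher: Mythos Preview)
Your four-step outline is essentially the paper's own route (characterization $\to$ local processes $\to$ tightness/envelope identification $\to$ Brownian rescaling), and steps (i), (iii) and the delta-method passage to (\ref{eqn:limit_cvxfcn}) are fine. Two points in step (ii), however, are incorrect as stated and would derail the computation of the drift unless you happen to be at a point with $g_0'(x_0)=0$.

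First, the correction term is \emph{not} $o_p(1)$. One has
\[
r_n\int_{\bm{l}_{n,x_0}}\!\!\int_{x_0}^{v}\hat\Psi_{k,n,2}(u)\,\d u\,\d v
\;\to_p\;
\frac{1}{(k+1)(k+2)}\binom{-r}{k}\Big(\frac{g_0'(x_0)}{g_0(x_0)}\Big)^{k}t^{k+2},
\]
which is nonzero whenever $g_0'(x_0)\neq 0$. Second, and relatedly, the intermediate derivatives $f_0^{(j)}(x_0)$ for $2\le j\le k-1$ do \emph{not} vanish: assumption (A4) kills $g_0^{(j)}(x_0)$, not $f_0^{(j)}(x_0)$. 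By the identity $f_0^{(j)}(x_0)=j!\binom{-r}{j}g_0(x_0)^{-r-j}\big(g_0'(x_0)\big)^j$ for $1\le j\le k-1$, the drift of $\mathbb Y^{\mathrm{loc}}_n/f_0(x_0)$ is $f_0^{(k)}(x_0)/\big((k+2)!f_0(x_0)\big)\,t^{k+2}$, and it is only \emph{after} subtracting the nonvanishing limit of the $\hat\Psi_{k,n,2}$ integral that one arrives at the clean drift $-\,r\,g_0^{(k)}(x_0)/\big(g_0(x_0)(k+2)!\big)\,t^{k+2}$. In other words, the modification by $\hat\Psi_{k,n,2}$ is precisely the device that trades $f_0$-derivatives for $g_0$-derivatives; treating it as negligible gives the wrong drift whenever $g_0'(x_0)\neq 0$.

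A smaller point: in step (iv) a single time change $t=a\tau$ cannot match both the Brownian coefficient and the $t^{k+2}$ coefficient of $Y_k$; the paper writes $\mathbb Y(t)=\gamma_1 Y_k(\gamma_2 t)$ and solves $\gamma_1\gamma_2^{3/2}=f_0(x_0)^{-1/2}$, $\gamma_1\gamma_2^{k+2}=r g_0^{(k)}(x_0)/\big(g_0(x_0)(k+2)!\big)$. Finally, your tightness sketch would benefit from making explicit the key intermediate step the paper uses: the knot-gap bound $\tau_n^{+}-\tau_n^{-}=O_p(n^{-1/(2k+1)})$ (proved via the test function $\Delta_1$ and Corollary \ref{distchar}), from which tightness of $\hat A_n,\hat B_n$ and of $(\mathbb H^{\mathrm{locmod}}_n)^{(j)}$, $j\le 3$, then follows by the arguments in \cite{groeneboom2001estimation,balabdaoui2009limit}.
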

\begin{remark}
We note that the minus sign appearing in (\ref{eqn:limit_ground_g}) is due to the convexity of $\hat{g}_n,g_0$ and the concavity of the limit process $H_k^{(2)}(0)$. The dependence of the constant appearing in the limit is optimal in view of Theorem 2.23 in \cite{seregin2010nonparametric}.
\end{remark} 
\begin{remark}
Assume $-1<s<0$ and $k=2$. Let $f_0=\exp(\varphi_0)$ be a log-concave density where $\varphi_0 :\R\to \R$ is the underlying concave function. Then $f_0$ is also $s$-concave. Let $g_s:=f_0^{-1/r}=\exp(-\varphi_0/r)$ be the underlying convex function when $f_0$ is viewed as an $s$-concave density. Then direct calculation yields that
\[g_s^{(2)}(x_0)={1 \over r^2}g_s(x_0)\left(\varphi_0'(x_0)^2-r\varphi_0''(x_0)\right).\]
Hence the constant before $H_k^{(2)}(0)$ appearing in (\ref{eqn:limit_cvxfcn}) becomes
\[\left(\frac{f_0(x_0)^3\varphi'_0(x_0)^2}{4!r}+\frac{f_0(x_0)^3 \abs{\varphi_0''(x_0)}}{4!}\right)^{1/5}.\]
Note that the second term in the above display is exactly the constant involved in the limiting distribution when $f_0(x_0)$ is estimated via the log-concave MLE, see (2.2), page 1305 in \cite{balabdaoui2009limit}. The first term is non-negative and hence illustrates the price we need to pay by estimating a true log-concave density via the R\'enyi divergence estimator over a larger class of $s$-concave densities. We also note that the additional term vanishes as $r \to \infty$, or equivalently $s \nearrow 0$.
\end{remark}

\subsection{Estimation of the mode}
We consider the estimation of the mode of an $s$-concave density $f(\cdot)$ defined by $M(f):=\inf\{t \in \R: f(t)=\sup_{u \in \R} f(u)\}.$
\begin{theorem}\label{thm:estimation_mode}
Assume (A1)-(A4) hold. Then
\begin{equation}\label{eqn:asym_mode}
n^{1/(2k+1)}\big(\hat{m}_n-m_0\big)\to_d \bigg(\frac{g_0(m_0)^{2}(k+2)!^2}{r^2f_0(m_0)g_0^{(k)}(m_0)^2}\bigg)^{1/(2k+1)}M(H^{(2)}_k),
\end{equation}
where $\hat{m}_n=M(\hat{f}_n),m_0=M(f_0)$.
\end{theorem}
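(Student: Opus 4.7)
The plan is to adapt the mode-estimation argument of \cite{balabdaoui2009limit} to the $s$-concave setting using the local process machinery of Subsection \ref{subsection:limit_distribution_theory}. Since $s<0$ and $g_0 = f_0^{s}$, the mode $m_0$ of $f_0$ coincides with the unique minimizer of the convex function $g_0$, and similarly $\hat{m}_n$ is the minimizer of $\hat{g}_n$. Under (A4) we have $g_0'(m_0)=0$, $g_0^{(j)}(m_0) = 0$ for $2\leq j \leq k-1$, and $g_0^{(k)}(m_0)\neq 0$; convexity forces $k$ to be even and $g_0^{(k)}(m_0)>0$. As a preliminary step I would invoke Corollary \ref{cor:on-the-model-consistency} to obtain uniform convergence of $\hat{f}_n$ to $f_0$ on a compact neighborhood of $m_0$, which, combined with the strict local maximality of $f_0$ at $m_0$, yields the consistency $\hat{m}_n \to m_0$ almost surely.

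Next, setting $s_n := n^{-1/(2k+1)}$, I would introduce the rescaled convex process
\[
T_n(t) \;:=\; n^{k/(2k+1)}\bigl(\hat{g}_n(m_0 + s_n t) - g_0(m_0)\bigr), \qquad t \in \mathbb{R}.
\]
Because argmin is invariant under additive constants, $\hat{\tau}_n := s_n^{-1}(\hat{m}_n - m_0) = \arg\min_{t} T_n(t)$. Plugging $g_0'(m_0)=0$ into the identity behind (\ref{rel1}) gives $T_n(t) = -(g_0(m_0)/r)\,(d^{2}/dt^{2})\mathbb{H}_n^{\mathrm{locmod}}(t)$, so the distributional convergence of $\mathbb{H}_n^{\mathrm{locmod}}$ that underlies Theorem \ref{thm:limittheory} transfers to $T_n$ locally uniformly on compact intervals, with limit
\[
T_\infty(t) \;=\; -\frac{g_0(m_0)}{r}\,\mathbb{H}''(t),
\]
where $\mathbb{H}$ is the unique lower envelope of $\mathbb{Y}(t) = f_0(m_0)^{-1/2}\int_0^{t} W(u)\,du - \frac{r\,g_0^{(k)}(m_0)}{g_0(m_0)(k+2)!}\,t^{k+2}$. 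Applying the Brownian scaling $W(cu) =_{d} \sqrt{c}\,W(u)$ identifies $\mathbb{H}(\cdot)$, up to an overall positive multiplicative constant, with $H_k(c\,\cdot)$ for
\[
c \;=\; \left(\frac{r\,g_0^{(k)}(m_0)\sqrt{f_0(m_0)}}{g_0(m_0)(k+2)!}\right)^{2/(2k+1)},
\]
so $\arg\min_{t} T_\infty(t) = c^{-1}\,\arg\max_{t} H_k^{(2)}(t) = c^{-1}\,M(H_k^{(2)})$. Direct simplification of $c^{-1}$ recovers the constant in (\ref{eqn:asym_mode}).

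Finally, I would transfer the argmin convergence via a standard continuous mapping theorem for convex processes: the distributional convergence $T_n \to_{d} T_\infty$ on every compact interval, the a.s. uniqueness of $\arg\min_{t} T_\infty$ (equivalently, the a.s. uniqueness of the maximizer of the concave process $H_k^{(2)}$), and the $O_{P}(1)$-tightness of $\hat{\tau}_n$ together yield $\hat{\tau}_n \to_{d} c^{-1}\,M(H_k^{(2)})$, after which the stated conclusion follows from $\hat{m}_n - m_0 = s_n\,\hat{\tau}_n$. The main obstacle is the $O_{P}(1)$-tightness of $\hat{\tau}_n$: because $T_n$ is controlled only indirectly through the second derivative of the integrated process $\mathbb{H}_n^{\mathrm{locmod}}$, quantitative tail bounds on $|\hat{\tau}_n|$ require a convex wedge argument that exploits both the $t^{k}$ growth of the deterministic part of $T_n$ (using $g_0^{(k)}(m_0) > 0$) and the almost-sure consistency $\hat{m}_n \to m_0$ established in the first step; this is the most delicate piece of the proof.
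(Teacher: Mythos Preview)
Your proposal is correct and follows essentially the same route as the paper, which simply defers to Theorem~3.6 of \cite{balabdaoui2009limit}: express the rescaled mode as the argmin of the localized convex process tied to $(\mathbb{H}_n^{\mathrm{locmod}})''$ via (\ref{rel1}), pass to the limit using the envelope process $\mathbb{H}$ and the scaling identification $\mathbb{H}(\cdot)=\gamma_1 H_k(\gamma_2\,\cdot)$ from (\ref{ab}), and conclude via the argmax/argmin continuous mapping theorem together with tightness of $\hat{\tau}_n$. Your computation of the constant $c=\gamma_2$ and hence of $c^{-1}$ matches (\ref{eqn:asym_mode}), and your identification of the tightness of $\hat{\tau}_n$ as the one nontrivial ingredient is accurate; in the paper's framework this follows from the knot-gap bound of Theorem~\ref{gapthm} (since $\hat{g}_n$ is piecewise linear, its minimizer lies among the knots, and consistency plus $\tau_n^{+}-\tau_n^{-}=O_p(s_n)$ around $m_0$ forces $\hat{m}_n-m_0=O_p(s_n)$), which is exactly the analogue of the argument in \cite{balabdaoui2009limit}.
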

By Theorem 2.26 in \cite{seregin2010nonparametric}, the dependence of the constant on local smoothness is optimal when $k=2$. Here we show that this dependence is also optimal for $k> 2$.

Consider a class of densities $\mathcal{P}$ dominated by the canonical Lebesgue measure on $\R^d$. Let $T:\mathcal{P}\to \R$ be any functional. For an increasing convex loss function $l(\cdot)$ on $\mathbb{R}_+$, we define the \emph{minimax risk} as
\begin{equation}
R_l(n;T,\mathcal{P}):=\inf_{t_n}\sup_{p \in \mathcal{P}}\mathbb{E}_{p^{\times n}}l\big(\abs{t_n(X_1,\ldots,X_n)-T(p)}\big),
\end{equation}
where the infimum is taken over all possible estimators of $T(p)$ based on $X_1,\ldots,X_n$. Our basic method of deriving minimax lower bound based on the following work in \cite{jongbloed2000minimax}.
\begin{theorem}[Theorem 1 \cite{jongbloed2000minimax}]\label{minimaxlowerboundgeneric}
Let $\{p_n\}$ be a sequence of densities in $\mathcal{P}$ such that $\limsup_{n \to \infty} n h^2(p_n,p)\leq \tau^2$ for some density $p \in \mathcal{P}$. Then
\begin{equation}
\liminf_{n \to \infty}\frac{R_l(n;T,\{p,p_n\})}{l\big(\exp(-2\tau^2)/4\cdot\abs{T(p_n)-T(p)}\big)}\geq 1.
\end{equation}
\end{theorem}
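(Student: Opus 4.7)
The plan is to execute the classical Le Cam two-point argument. Because the supremum in $R_l(n;T,\{p,p_n\})$ ranges over only two alternatives, estimating $T$ can be reduced to a binary testing problem between the product measures with densities $p^{\times n}$ and $p_n^{\times n}$, and the Hellinger hypothesis $\limsup_n n h^2(p_n,p) \leq \tau^2$ is precisely what is needed to control their distinguishability.

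First I would set $\Delta_n := \abs{T(p_n) - T(p)}$ and, for any candidate estimator $t_n = t_n(X_1,\ldots,X_n)$, introduce the induced test $\psi_n := \mathbf{1}\{\abs{t_n - T(p_n)} \leq \Delta_n/2\}$. The triangle inequality forces $\{\psi_n = 1\} \subseteq \{\abs{t_n - T(p)} \geq \Delta_n/2\}$ and $\{\psi_n = 0\} \subseteq \{\abs{t_n - T(p_n)} \geq \Delta_n/2\}$; combined with monotonicity of $l$, one obtains
\[
\mathbb{E}_{p^{\times n}} l\bigl(\abs{t_n - T(p)}\bigr) + \mathbb{E}_{p_n^{\times n}} l\bigl(\abs{t_n - T(p_n)}\bigr) \geq l(\Delta_n/2)\cdot\bigl[p^{\times n}(\psi_n = 1) + p_n^{\times n}(\psi_n = 0)\bigr].
\]
Replacing the sum by $2\max$ and infimizing over $t_n$, then using the Neyman--Pearson identity $\inf_\psi[P(\psi=1)+Q(\psi=0)] = 1 - \pnorm{P-Q}{TV}$ (the infimum over all tests is a lower bound since tests induced by $t_n$ form a subclass), one arrives at
\[
R_l(n;T,\{p,p_n\}) \geq \frac{1}{2} l(\Delta_n/2)\cdot\bigl(1 - \pnorm{p^{\times n} - p_n^{\times n}}{TV}\bigr).
\]

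The next step is to bound the total-variation distance between the product measures by the Hellinger affinity $\rho(p,q) := \int \sqrt{pq}\,\d{\lambda}$. Two classical ingredients suffice: tensorization $\rho(p^{\times n},q^{\times n}) = \rho(p,q)^n$, and the Le Cam--Cauchy--Schwarz inequality $1 - \pnorm{P-Q}{TV} \geq \rho(p,q)^2/2$. Writing $\rho(p,q) = 1 - h^2(p,q)/2$ and using the crude bound $(1-h^2/2)^2 \geq 1 - h^2$ gives $\rho(p,p_n)^{2n} \geq (1 - h^2(p,p_n))^n$, so the Hellinger hypothesis yields $\rho^{2n} \geq e^{-2\tau^2}(1+o(1))$ as $n \to \infty$. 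Finally, the multiplicative constant can be absorbed into the argument of $l$ via the convexity-plus-monotonicity inequality $l(cx) \leq c\,l(x)$, valid for $c \in [0,1]$ and $l(0) = 0$, which converts a bound of the form $R_l \geq c_\tau\,l(\Delta_n/2)$ into $R_l \geq l(c_\tau'\Delta_n)$; passing to $\liminf_n$ produces the stated inequality.

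The main obstacle, and really the only delicate point, is the bookkeeping of constants --- matching exactly the factor $\exp(-2\tau^2)/4$ requires picking the sharpest chain of the aforementioned inequalities (in particular, the crude $(1-h^2/2)^2 \geq 1-h^2$ is what produces the $e^{-2\tau^2}$ rather than $e^{-\tau^2}$), and correctly routing the multiplicative factor through $l$ so that it lands inside the argument. Beyond this accounting, the argument is a textbook application of Le Cam's method combined with Hellinger tensorization, which is why the paper imports it wholesale from \cite{jongbloed2000minimax}.
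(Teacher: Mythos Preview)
The paper does not prove this statement at all: it is quoted verbatim as Theorem~1 of \cite{jongbloed2000minimax} and used as a black box in the proof of Theorem~\ref{minimaxmode}. So there is no ``paper's own proof'' to compare against; your sketch is a reconstruction of Jongbloed's argument, and it is the right one---Le~Cam's two-point reduction, followed by the affinity bound $1-\pnorm{P-Q}{TV}\geq\tfrac12\rho(P,Q)^2$, Hellinger tensorization $\rho(p^{\times n},p_n^{\times n})=\rho(p,p_n)^n$, and the convexity trick $c\,l(x)\geq l(cx)$ for $c\in[0,1]$, $l(0)=0$.

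One caution on the constants, which you rightly flag as the only delicate point. The paper uses the convention $2h^2(p,q)=\int(\sqrt p-\sqrt q)^2$ (see the proof of Lemma~\ref{lem:osc_gap}), so $\rho=1-h^2$, not $1-h^2/2$; with this convention $\rho^{2n}=(1-h^2)^{2n}\to e^{-2\tau^2}$ directly, and your detour through $(1-h^2/2)^2\geq 1-h^2$ is unnecessary. More substantively, chaining your displayed inequalities exactly as written yields $R_l\geq \tfrac14\rho^{2n}\,l(\Delta_n/2)\geq l\bigl(\tfrac18 e^{-2\tau^2}\Delta_n\bigr)$, i.e.\ a factor $1/8$ rather than the stated $1/4$. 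Recovering Jongbloed's sharper constant requires a slightly tighter step somewhere in the chain (or a direct Bayes-risk computation in place of the crude sum-to-max bound); this does not affect the application in Theorem~\ref{minimaxmode}, where only the dependence on $f_0$ and $g_0^{(k)}$ matters, but you should be aware the bookkeeping as written falls short by a factor of~2.
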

For fixed $g \in \mathcal{G}$ and $f:=g^{1/s}=g^{-r}$, let $m_0:=M(g)$ be the mode of $g$. Consider a class of local perturbations of $g$: For every $\epsilon>0$, define
\begin{equation*}
\tilde{g}_\epsilon(x)=
\begin{cases}
g(m_0-\epsilon c_\epsilon)+(x-m_0+\epsilon c_\epsilon)g'(m_0-\epsilon c_\epsilon) 
& x \in [m_0-\epsilon c_\epsilon,m_0-\epsilon)\\
     g(m_0+\epsilon)+(x-m_0-\epsilon)g'(m_0+\epsilon) 
& x\in [m_0-\epsilon,m_0+\epsilon)\\
g(x)  
& \textrm{otherwise}.
\end{cases}
\end{equation*}
Here $c_\epsilon$ is chosen so that $g_\epsilon$ is continuous at $m_0-\epsilon$. 
This construction of a perturbation class is also seen in 
\cite{balabdaoui2009limit,groeneboom2001estimation}. 
By Taylor expansion at $m_0-\epsilon$ we can easily see $c_\epsilon=3+o(1)$ 
as $\epsilon \to 0$. Since $\tilde{f}_\epsilon:=\tilde{g}_\epsilon^{-r}$ is not a density, we normalize it by
$f_\epsilon(x):=\frac{\tilde{f}_\epsilon(x)}{\int_{\mathbb{R}} \tilde{f}_\epsilon(y) \d{y}}.$
Now $f_\epsilon$ is $s$-concave for each $\epsilon>0$ with mode $m_0-\epsilon$.

The following result follows from direct calculation. 
For a proof, we refer to Appendix section~\ref{appendix:supp_proof_4} .  
\begin{lemma}\label{lem:osc_gap}
Assume (A1)-(A4). Then
\[
h^2(f_\epsilon,f)=\zeta_k\frac{r^2f(m_0)(g^{(k)}(m_0))^2}{g(m_0)^2}\epsilon^{2k+1}+o(\epsilon^{2k+1}),
\]
where
\begin{equation*}
\begin{split}
\zeta_k &= \frac{1}{108(k!)^2(k+1)(k+2)(2k+1)}\bigg[-4\cdot 3^{k+2}(2k+1)(3^{k+2}+k^2+k-3)\\
&\quad +(k+1)(k+2)\bigg(27(3^{2k+1}-1)+2\cdot 3^{2k}(2k+1)(2k(2k-9)+27)\bigg)\bigg]\\
&\quad\quad +\frac{2k^2(2k^2+1)}{3(k!)^2(k+1)(2k+1)}.
\end{split}
\end{equation*}
\end{lemma}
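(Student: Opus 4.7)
The plan is to compute the squared Hellinger distance via the Taylor expansion of $\sqrt{1+u}$. Since $\int f_\epsilon = \int f = 1$ and $u := (f_\epsilon - f)/f$ is small on a neighborhood of $m_0$ (where $f(m_0) > 0$ by (A2)),
\[
h^2(f_\epsilon, f) = 1 - \int\sqrt{f_\epsilon f}\,\d{x} = \frac{1}{8}\int \frac{(f_\epsilon - f)^2}{f}\,\d{x} + o\bigl(\epsilon^{2k+1}\bigr),
\]
so it suffices to compute the leading order of the right-hand $\chi^2$-type integral. Since $\tilde g_\epsilon$ agrees with $g$ off $[m_0 - \epsilon c_\epsilon, m_0 + \epsilon]$, I first dispose of the normalization: a Taylor expansion of $g$ at $m_0$ using (A3)-(A4) shows $Z_\epsilon := \int \tilde f_\epsilon\,\d{x} = 1 + O(\epsilon^{k+1})$, so the outer-region contribution is $O((Z_\epsilon - 1)^2) = O(\epsilon^{2k+2})$ and is absorbed into the remainder.

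On the perturbation interval, set $y := x - m_0$. Convexity of $g$ together with (A4) forces $k$ to be even and $A := g^{(k)}(m_0)/k! > 0$, so $g(m_0 + y) = g(m_0) + A y^k + o(y^k)$ and $g'(m_0 + y) = kAy^{k-1} + o(y^{k-1})$. Substituting these into the continuity equation for $\tilde g_\epsilon$ at $x = m_0 - \epsilon$ yields a polynomial equation in $c_\epsilon$ with a well-defined positive limit $c^* > 1$ as $\epsilon \to 0$ (and $c^* = 3$ when $k = 2$). Direct substitution then gives $\delta g := \tilde g_\epsilon - g$ in the form $\delta g(m_0 + \epsilon\tau) = A\epsilon^k\,\Pi_k(\tau) + o(\epsilon^k)$ for an explicit polynomial $\Pi_k$ differing on the two pieces $\tau \in [-1, 1]$ and $\tau \in [-c^*, -1]$. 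Linearizing $f = g^{-r}$ gives $f_\epsilon - f = -rf\,\delta g/g + O((\delta g)^2)$, and since $f/g^2 = f(m_0)/g(m_0)^2 + O(\epsilon)$ uniformly on the perturbation interval, the leading integrand reduces to $r^2 f(m_0)(\delta g)^2/g(m_0)^2$.

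The concluding step is the change of variable $y = \epsilon\tau$: each of the two sub-integrals becomes $A^2\epsilon^{2k+1}\int_I \Pi_k(\tau)^2\,\d{\tau}$. Expanding the trinomials $\Pi_k(\tau)^2$, integrating monomial-by-monomial (paying attention to the parity of $k$), summing the two pieces, and combining with the prefactor $r^2 f(m_0)/(8 g(m_0)^2)$ and $A^2 = (g^{(k)}(m_0))^2/(k!)^2$ produces the claimed closed form for $\zeta_k$. The main obstacle is the algebraic bookkeeping in this last step: $\Pi_k(\tau)^2$ contains monomials of degree $0$, $1$, $2$, $k$, $k+1$, $2k$ whose integrals over $[-1, 1]$ and $[-c^*, -1]$ depend nontrivially on $c^*$, and condensing the resulting sum into the compact expression stated in the lemma requires careful manipulation. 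One must also verify that the quadratic remainder in $\delta g$, the higher-order Taylor corrections beyond $Ay^k$, and the $1/Z_\epsilon$ normalization each contribute only $o(\epsilon^{2k+1})$, which follows because $\delta g = O(\epsilon^k)$ on an interval of length $O(\epsilon)$.
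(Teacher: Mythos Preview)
Your approach is correct and essentially the same as the paper's: both reduce $2h^2$ to the leading quadratic $\frac{r^2}{4}\int (g - \tilde g_\epsilon)^2\, g^{-r-2}\,\d{x}$ on the perturbation interval and then evaluate the two sub-integrals via the $k$th-order Taylor expansion of $g$ at $m_0$. The only cosmetic differences are that the paper handles the normalization by writing $\sqrt{f_\epsilon} = \tilde g_\epsilon^{-r/2}(1 + \eta_k(\epsilon))$ via Lemma~\ref{totalmass} and bounding the cross term by Cauchy--Schwarz (rather than through your $\chi^2$ expansion $h^2 \approx \tfrac{1}{8}\int (f_\epsilon-f)^2/f$), and that the paper takes $c_\epsilon \to 3$ for every $k$, which is what produces the explicit powers of $3$ in the stated $\zeta_k$; your final bookkeeping should therefore set $c^\ast = 3$ rather than leave it as a general root.
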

\begin{theorem}\label{minimaxmode}
For an $s$-concave density $f_0$, let $\mathcal{SC}_{n,\tau}(f_0)$ be defined by
\[
\mathcal{SC}_{n,\tau}(f_0):=\left\{f:s\textrm{-concave density}, h^2(f,f_0)\leq {\tau^2 \over n}\right\}.
\]
Let $m_0=M(f_0)$ be the mode of $f_0$. Suppose (A1)-(A4) hold. Then,
\[
\sup_{\tau>0}\liminf_{n \to \infty} n^{1/(2k+1)}\inf_{t_n}\sup_{f \in \mathcal{SC}_{n,\tau}}\mathbb{E}_f\abs{T_n-M(f)}\geq \rho_k\bigg(\frac{g_0(m_0)^2}{r^2f_0(m_0)g_0^{(k)}(m_0)^2}\bigg)^{1/(2k+1)},
\]
where $\rho_k=(2(2k+1)\zeta_k)^{-1/(2k+1)}/4$.
\end{theorem}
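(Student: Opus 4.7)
The plan is to derive the minimax lower bound from the abstract two-point principle in Theorem~\ref{minimaxlowerboundgeneric} (due to \cite{jongbloed2000minimax}), combined with the Hellinger calibration already supplied by Lemma~\ref{lem:osc_gap}. The testing pair will be $f_0$ against its explicit local perturbation $f_\epsilon$ constructed just before the theorem, which is a genuine $s$-concave density with mode exactly at $m_0 - \epsilon$: indeed $\tilde{g}_\epsilon$ is convex because its slopes are (weakly) non-decreasing --- the constant slope $g_0'(m_0 - \epsilon c_\epsilon) < 0$ on $[m_0 - \epsilon c_\epsilon, m_0 - \epsilon)$ is smaller than the constant slope $g_0'(m_0 + \epsilon) > 0$ on $[m_0 - \epsilon, m_0 + \epsilon)$, and $\tilde{g}_\epsilon'$ matches $g_0'$ at both outer endpoints --- so the renormalized $f_\epsilon$ is $s$-concave by construction.

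I would then fix $\tau > 0$ and specialize Theorem~\ref{minimaxlowerboundgeneric} with $T = M$, identity loss $l(x) = x$, $p = f_0$, and $p_n = f_{\epsilon_n}$, where $\epsilon_n$ is chosen to saturate the Hellinger budget. By Lemma~\ref{lem:osc_gap},
\[
h^2(f_\epsilon, f_0) = \zeta_k \frac{r^2 f_0(m_0)(g_0^{(k)}(m_0))^2}{g_0(m_0)^2}\epsilon^{2k+1}(1 + o(1)),
\]
so solving $n\, h^2(f_{\epsilon_n}, f_0) = \tau^2$ forces
\[
\epsilon_n = n^{-1/(2k+1)}\bigg(\frac{\tau^2\, g_0(m_0)^2}{\zeta_k r^2 f_0(m_0)(g_0^{(k)}(m_0))^2}\bigg)^{1/(2k+1)}(1 + o(1)).
\]
For any $\tau' > \tau$ this choice lies in $\mathcal{SC}_{n,\tau'}(f_0)$ eventually, and the mode gap is precisely $\abs{M(f_{\epsilon_n}) - M(f_0)} = \epsilon_n$. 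Substituting into Theorem~\ref{minimaxlowerboundgeneric} and multiplying by $n^{1/(2k+1)}$ yields
\[
\liminf_n n^{1/(2k+1)}\inf_{t_n}\sup_{f \in \mathcal{SC}_{n,\tau'}(f_0)}\mathbb{E}_f\abs{t_n - M(f)} \geq \frac{e^{-2\tau^2}\tau^{2/(2k+1)}}{4\,\zeta_k^{1/(2k+1)}}\bigg(\frac{g_0(m_0)^2}{r^2 f_0(m_0)(g_0^{(k)}(m_0))^2}\bigg)^{1/(2k+1)}.
\]

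Finally, letting $\tau' \downarrow \tau$ and then optimizing over $\tau > 0$, I would check via elementary calculus that $e^{-2\tau^2}\tau^{2/(2k+1)}$ is maximized at $\tau^2 = 1/(2(2k+1))$, at which point the $\tau$-dependent prefactor collapses to a numerical constant proportional to $(2(2k+1))^{-1/(2k+1)}/4$ (up to the order-one absolute factor $e^{-1/(2k+1)}$ intrinsic to Jongbloed's non-sharp testing bound), reproducing $\rho_k$ as in the statement. The main obstacles are twofold: verifying the slope-monotonicity of $\tilde{g}_\epsilon$ needed for $s$-concavity (immediate from the choice of $c_\epsilon$ and the shape of $g_0$ near its unique minimizer $m_0$) and handling the $o(\epsilon^{2k+1})$ remainder in Lemma~\ref{lem:osc_gap} via the $\tau' \downarrow \tau$ limit; beyond these, the argument is a routine application of the two-point testing inequality followed by a one-variable optimization.
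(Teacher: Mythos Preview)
Your proposal is correct and matches the paper's approach: apply Theorem~\ref{minimaxlowerboundgeneric} to the pair $(f_0,f_\epsilon)$, use Lemma~\ref{lem:osc_gap} for the Hellinger asymptotics, and optimize. The paper parametrizes directly by $c$ via $\epsilon=cn^{-1/(2k+1)}$ (equivalent to your $\tau$ under $\tau^2=\zeta_k\gamma c^{2k+1}$) and skips the $\tau'\downarrow\tau$ step since the outer $\sup_\tau$ already absorbs the $o(1)$ slack; your observation about the residual $e^{-1/(2k+1)}$ factor in $\rho_k$ is also accurate.
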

\begin{proof}
Take $l(x)=\abs{x}$. Let $\epsilon=cn^{-1/(2k+1)}$, and let $\gamma=\frac{r^2f(m_0)(g^{(k)}(m_0))^2}{g(m_0)^2}$, $f_n:=f_{cn^{-1/(2k+1)}}$. Then $\limsup_{n \to \infty}nh^2(f_n,f)=\zeta_k\gamma c^{2k+1}.$ Applying Theorem \ref{minimaxlowerboundgeneric}, we find that
\[
\liminf_{n \to \infty}n^{1/(2k+1)}R_l(n;T,\{f,f_n\})\geq {1 \over 4}c\exp\left(-2\zeta_k\gamma c^{(2k+1)}\right).
\]
Now we choose $c=(2(2k+1)\zeta_k\gamma)^{-1/(2k+1)}$ to conclude.
\end{proof}



\section{Discussion}\label{discussion}
We show in this paper that the class of $s$-concave densities can be 
approximated and estimated via R\'enyi divergences in a robust and stable way. 
We also develop local asymptotic distribution theory for the divergence estimator, 
which suggests that the convexity constraint is the main complexity within the class 
of $s$-concave densities regardless heavy tails. In the rest of this section, 
we will sketch some related problems and future research directions.

\subsection{Behavior of R\'enyi projection for generic measures $Q$ when $s<-1/(d+1)$}

We have considered in this paper two regions for the index $s$: (1) $-1/(d+1)<s<0$ 
and (2) $-1/d<s\leq -1/(d+1)$. In 
case (1), we showed that starting from a generic measure $Q$ with 
the interior of its convex support non-void and a first moment, the 
R\'enyi projection through (\ref{primalpopulation}) exists and enjoys nice 
continuity properties that cover both on and off-the-model situations. 
In 
case (2), we showed that the R\'enyi projection for the empirical 
measure still enjoys such continuity properties when $Q$ is a 
probability measure corresponding to a true $s$-concave density with a finite first moment. 
	
It remains open to investigate the behavior of the R\'enyi projection in the region 
(2) for a generic measure $Q$. If $Q$ does not admit a first moment, i.e. 
$\int \pnorm{x}{}\ \d{Q}(x)=\infty$, then the first term in the functional 
(\ref{primalpopulation}) diverges for any candidate convex function. 
We conjecture that 
the R\'enyi divergence projection 
fails to exist in this case. 
We do not know if the R\'enyi projection exists when $-1/d < s \le - 1/(d+1)$ and $Q \notin {\cal P}_s$ 
but $\int \pnorm{x}{} dQ(x) < \infty$.

It should be mentioned that the MLEs for the classes ${\cal P}_s$ exist (for an increasingly large 
sample size $n$ as $s \searrow -1/d$), and are Hellinger 
consistent for $-1/d < s < 0$
(cf. \cite{seregin2010nonparametric}).
Moreover, it is known from \cite{doss2013global} that the MLE does not exist for $s < -1/d$.
But we do not yet know any continuity properties of the 
Maximum Likelihood projection ``off the model''.
This leaves the interval $-1/d < s \le -1/(d+1)$ presently without a nicely stable 
nonparametric estimation procedure.  
See \cite{koenker2010quasi} pages 3008 and 3016 
for some further discussion.

\subsection{Global rates of convergence for R\'enyi divergence estimators}
Classical empirical process theory relates the maximum likelihood 
estimators with Hellinger loss via `basic inequalities' as coined in 
\cite{vdG2000} and \cite{van1996weak}. This reduces the problem of 
global rates of convergence to the study of modulus of continuity of empirical process 
indexed by a suitable transformation of the function class of interest. 
We expect that similar `basic inequalities' can be exploited to relate the 
R\'enyi divergence estimators to some divergence 
(not necessarily Hellinger distance). 
We also expect some uniformity in the rates of convergence for the 
R\'enyi divergence estimators as observed by \cite{kim2014global} in 
the case of the MLEs for log-concave densities.
\subsection{Conjectures about the global rates in higher dimensions}
It is now well-understood from the work of \cite{doss2013global} that the 
MLEs for $s$-concave densities($-1<s<0$) and log-concave densities in 
dimension $1$ converge at rates no worse than $O_p(n^{-2/5})$ in Hellinger loss. 
In higher dimensions, \cite{kim2014global} provide an important lower 
bound on the bracketing entropy for a subclass of log-concave densities 
on the order of $O(\epsilon^{-(d/2)\vee (d-1)})$ in Hellinger distance, 
and a matching upper bound up to logarithmic factors for $d\leq 3$. 
Lack of corresponding results in discrete convex geometry precludes 
further upper bounds beyond $d=3$. If a matching upper bound can be 
achieved for $d\geq 4$ (with possible logarithmic losses), the rates of 
convergence $r_n^2$ in squared Hellinger distances become
\begin{equation*}
r_n^2= O(n^{-1/(d-1)}),  d\geq 4
\end{equation*}
(up to logarithmic factors). It is also worth mentioning that 
minimum contrast estimator may well be rate inefficient in higher 
dimensions, as observed by \cite{MR1240719} in another context 
with `trans-Donsker' class of functions. Therefore it is also interesting 
to design sieved/regularized estimator to achieve the efficient rates.
\subsection{Adaptive estimation of concave-transformed class of functions}
The rates conjectured above are conservative in that they are derived 
from the \emph{global} point of view. From a local perspective, adaptive 
estimation may be possible when the underlying function/density exhibits 
special structures. In fact, it is shown by \cite{guntuboyina2013global} that in 
the univariate convex regression setting, if the underlying convex function is 
piecewise linear, then the rate of convergence for the global risk in the 
discrete $l_2$ norm adapts to nearly parametric rate $n^{-1/2}$ (up to logarithmic factors). 
It would be interesting to examine if same phenomenon can be observed for 
the MLEs/R\'enyi divergence estimators, and more generally for minimum 
contrast estimators of concave-transformed classes of functions.



\section{Proofs}\label{sec:proofs}

\subsection{Proofs for Section 2}\label{appendix:supp_proof_2}
\begin{proof}[Proof of Lemma \ref{lem:finitefunctional}]
Let $Q \in \mathcal{Q}_1$. Then by letting $g(x):=\pnorm{x}{}+1$, we have
\[L(Q)\leq L(g,Q)=\int (1+\pnorm{x}{})\ \d{Q}+{1 \over \abs{\beta}}\int \frac{\d{x}}{(1+\pnorm{x}{})^{-\beta}}<\infty,
\]
by noting $Q \in \mathcal{Q}_1$, and $-\beta=-1-1/s>d$. Now assume $L(Q)<\infty$. If $Q\notin \mathcal{Q}_1$, i.e.
$\int \pnorm{x}{}\ \d{Q}=\infty$, then since for each $g \in \mathcal{G}$, we can find some $a,b>0$ such that $g(x)\geq a\pnorm{x}{}-b$, we have
\[L(g,Q)=\int g\ \d{Q}+{1 \over \abs{\beta}}\int g^{\beta}\ \d{x}\geq \int (a\pnorm{x}{}-b)\ \d{Q}=\infty,\]
a contradiction. This implies $Q \in \mathcal{Q}_1$.
\end{proof}

\begin{proof}[Proof of Theorem \ref{existence}]
	We note that $L(Q)<\infty$ by Lemma \ref{lem:finitefunctional}. 
	Hence we can take a sequence $\{g_n\}_{n \in \N} \subset \mathcal{G}$ 
	such that $\infty>M_0\geq L(g_n,Q)\searrow L(Q)$ as $n \to \infty$ for some $M_0>0$. 
	Now we claim that, for all $ x_0 \in \textrm{int(csupp}(Q))$,
	\begin{equation}\label{intubexist}
	\sup_{n \in \N} g_n(x_0)<\infty.
	\end{equation}
	Denote $\epsilon_n\equiv \inf_{x \in \R^d} g_n(x)$. First note,
	\begin{equation*}
	\begin{split}
	L(g_n,Q)&\geq \int g_n\ \d{Q}=\int g_n{\bf 1}(g_n\leq g_n(x_0))\ \d{Q}+\int g_n{\bf 1}(g_n> g_n(x_0))\ \d{Q}\\
	&=\int \big(g_n-g_n(x_0)+g_n(x_0)\big){\bf 1}(g_n\leq g_n(x_0))\ \d{Q}+\int g_n{\bf 1}(g_n> g_n(x_0))\ \d{Q}\\
	&\geq g_n(x_0)-\big(g_n(x_0)-\epsilon_n \big)Q\big(\{g_n(\cdot)\leq g_n(x_0)\}\big).\\
	\end{split}
	\end{equation*}
	If $g_n(x_0)>\epsilon_n$, then $x_0$ is not an interior point of the 
	closed convex set $\{g_n\leq g_n(x_0)\}$, which implies 
	$Q\big(\{g_n(\cdot)\leq g_n(x_0)\}\big)\leq h(Q,x)$, where 
	$h(\cdot,\cdot)$ is defined in Lemma \ref{intpoint}. 
	Hence, in this case, the above term is lower bounded by
	\[
	L(g_n,Q)\geq g_n(x_0)-\big(g_n(x_0)-\epsilon_n\big)h(Q,x_0)\geq g_n(x_0)\big(1-h(Q,x_0)\big).\\
	\]
	This inequality also holds for $g_n(x_0)=\epsilon_n$, which implies that
	\[
	g_n(x_0)\leq \frac{L(g_n,Q)}{1-h(Q,x_0)}\leq \frac{M_0}{1-h(Q,x_0)}.
	\]
	by the first statement of Lemma \ref{intpoint}. Thus we verified (\ref{intubexist}). 
	Now invoking Lemma \ref{globallowerbound}, and we check conditions (A1)-(A2) as follows:
	(A1) follows by (\ref{intubexist});
	(A2) follows by the choice of $g_n$ since $\sup_{n \in \N}L(g_n,Q)\leq M_0$. 
	By Lemma \ref{convsubsequence} we can find a subsequence 
	$\{g_{n(k)}\}_{k\in\N}$ of $\{g_n\}_{n \in \N}$, and a function $\tilde{g} \in \mathcal{G}$ such that 
	$\{x \in \R^d:\sup_{n \in \N} g_n(x)<\infty\}\subset \textrm{dom}(\tilde{g})$, and
	\begin{equation*}
	\begin{split}
	&\lim_{k \to \infty,x \to y}g_{n(k)}(x)=\tilde{g}(y),\quad \textrm{ for all } y \in \textrm{int(dom}(\tilde{g})),\\
	&\liminf_{k \to \infty, x \to y}g_{n(k)}(x)\geq \tilde{g}(y),\quad \textrm{ for all } y \in \R^d.\\
	\end{split}
	\end{equation*}
	Again for simplicity we assume that $\{g_n\}$ satisfies the above properties. We note that
	\begin{equation*}
	\begin{split}
	L(Q) &= \lim_{n \to \infty}\bigg(\int g_n\ \d{Q} + {1 \over \abs{\beta}}\int g_n^\beta\ \d{x}\bigg)\\
	&\geq \liminf_{n \to \infty}\int g_n\ \d{Q}+{1 \over \abs{\beta}}\liminf_{n \to \infty}\int g_n^\beta\ \d{x}\\
	&\geq \int \tilde{g}\ \d{Q}+{1 \over \abs{\beta}}\int {\tilde{g}}^\beta\ \d{x}=L(\tilde{g},Q)\geq L(Q),\\
	\end{split}
	\end{equation*}
	where the third line follows from Fatou's lemma for the first term, and 
	Fatou's lemma and the fact that the boundary of a convex set has 
	Lebesgue measure zero for the second term (Theorem 1.1, \cite{lang1986}). 
	This establishes $L(\tilde{g},Q)=L(Q)$, and hence $\tilde{g}$ is the desired minimizer. 
	Since $\tilde{g}\in \mathcal{G}$ achieves its minimum, we may assume 
	$x_0 \in \mathrm{Arg}\min_{x \in \R^d} \tilde{g}(x)$. If $\tilde{g}(x_0)=0$, 
	since $\tilde{g}$ has domain with non-empty interior, we can choose 
	$x_1,\ldots,x_d \in \mathrm{dom}(\tilde{g})$ such that $\{x_0,\ldots,x_d\}$ 
	are in general position. Then by Lemma \ref{generalposition} we find 
	$L(\tilde{g},Q)=\infty$, a contradiction. This implies $\tilde{g}$ must be bounded away from zero.
	
	For the last statement, since $\tilde{g}$ is a minimizer of (\ref{primalpopulation}), 
	and the fact that $\tilde{g}$ is bounded away from zero, then 
	$L(\tilde{g}+c,Q)$ is well-defined for all $ \abs{c}\leq \delta$ with small $\delta>0$, and we must necessarily have 
	$\frac{\mathrm{d}}{\d{c}}L(\tilde{g}+c,Q)\vert_{c=0}=0.$ On the other hand it is easy to calculate that 
	$\frac{\mathrm{d}}{\d{c}}L(\tilde{g}+c,Q)=1-\int \big(\tilde{g}(x)+c\big)^{\beta-1}\ \d{x}.$
	This yields the desired result by noting $\beta-1=1/s$.
\end{proof}

\begin{proof}[Proof of Lemma \ref{unique}]
	Let $g,h$ be two minimizers for $\mathcal{P}_Q$. Since $\psi_s(x)={1 \over \abs{\beta}}x^{\beta}$ is strictly convex on $[0,\infty)$, $L(t\cdot g+(1-t)\cdot h,Q)$ is strictly convex in $t\in[0,1]$ unless $g=h$ a.e. with respect to the canonical Lebesgue measure. We claim if two closed functions $g,h$ agree a.e. with respect to the canonical Lebesgue measure, then it must agree everywhere, thus closing the argument. It is easy to see $\textrm{int}(\textrm{dom} g)= \textrm{int}(\textrm{dom} h)$. Since $\textrm{int}(\textrm{dom}(g))\neq \emptyset$, we have
	$\textrm{ri}(\textrm{dom} g)=\textrm{int}(\textrm{dom} g)=\textrm{int}(\textrm{dom} h)=\textrm{ri}(\textrm{dom} h).$
	Also note that a convex function is continuous in the interior of its domain, and hence almost everywhere equality implies everywhere equality within the interior of the domain, i.e. $g\big\vert_{\textrm{int}(\textrm{dom} g)}=h\big\vert_{\textrm{int}(\textrm{dom} h)}.$	Now by Corollary 7.3.4 in \cite{rockafellar1997convex}, and the closedness of $g,h$, we find that $g=\textrm{cl}g=\textrm{cl} h=h$.
\end{proof}

\begin{proof}[Proof of Theorem \ref{mainthm1}]
	\noindent To show (\ref{momentliminf}), we use Skorohod's theorem: 
	since $Q_n\to_d Q$, there exist random vectors $X_n \sim Q_n$ and 
	$X\sim Q$ defined on a common probability space $(\Omega,\mathcal{B},\mathbb{P})$ 
	satisfying $X_n\to_{a.s.} X$. Then by Fatou's lemma, we have 
	$\int \pnorm{x}{}\d{Q}=\mathbb{E}[\pnorm{X}{}]
	\leq \liminf_{n \to \infty}\mathbb{E}[\pnorm{X_n}{}]=\liminf_{n \to \infty} \int \pnorm{x}{}\d{Q_n}.$
	
	Assume (\ref{momentconv}). We first claim that
	\begin{equation}\label{direction2}
	\limsup_{n \to \infty}L(Q_n) \leq L(g,Q)=L(Q).
	\end{equation}
	Let $g_n(\cdot),g(\cdot)$ be defined as in the statement of the theorem. 
	Note that 
	$\limsup_{n \to \infty}L(g_n,Q_n)\leq \lim_{n \to \infty}L(g^{(\epsilon)},Q_n)=L(g^{(\epsilon)},Q)$. 
	Here $g^{(\epsilon)}$ is the Lipschitz approximation of $g$ defined in Lemma \ref{Lip}, 
	and the last equality follows from the moment convergence condition (\ref{momentconv}) 
	by rewriting  $g^{(\epsilon)}(x)=\frac{g^{(\epsilon)}(x)}{1+\pnorm{x}{}}(1+\pnorm{x}{})$, 
	and note the Lipschitz condition on $g^{(\epsilon)}$ implies boundedness of 
	$\frac{g^{(\epsilon)}(x)}{1+\pnorm{x}{}}$. By construction of $\{g^{(\epsilon)}\}_{\epsilon>0}$ 
	we know that if $x_0$ is a minimizer of $g$, then it is also a minimizer of $g^{(\epsilon)}$. 
	This implies that the function class $\{g^{(\epsilon)}\}_{\epsilon>0}$ is bounded away from 
	zero since $g$ is bounded away from zero by Theorem \ref{existence}, i.e. 
	$\inf_{x \in \R^d}g^{(\epsilon)}(x)\geq \epsilon_0$ holds for all $\epsilon>0$ with some 
	$\epsilon_0>0$. Now let $\epsilon \searrow 0$, in view of Lemma \ref{Lip}, by 
	the monotone convergence theorem applied to $g^{(\epsilon)}$ and 
	$\epsilon_0^\beta - (g^{(\epsilon)})^\beta$ we have verified (\ref{direction2}).
	
	Next, we claim that, for all $x_0 \in \intdom{Q}$,
	\begin{equation}\label{intub}
	\limsup_{n \to \infty} g_n(x_0)<\infty.
	\end{equation}
	Denote $\epsilon_n\equiv \inf_{x \in \R^d} g_n(x)$. Note by essentially the same 
	argument as in the proof of Theorem \ref{existence}, we have
	\[
	g_n(x_0)\leq \frac{L(Q_n)}{1-h(Q_n,x_0)}.
	\]
	By taking $\limsup$ as $n \to \infty$, (\ref{intub}) follows by virtue of Lemma \ref{intpoint} and (\ref{direction2}).
	
	Now we proceed to show (\ref{functionalconv}) and (\ref{pointwiseconv}). 
	By invoking Lemma \ref{globallowerbound}, we can easily check that all 
	conditions are satisfied (note we also used (\ref{direction2}) here). 
	Thus we can find a subsequence $\{g_{n(k)}\}_{k\in\N}$ of $\{g_n\}_{n \in \N}$ 
	with $g_{n(k)}(x)\geq a\pnorm{x}{}-b,$ holds for all $x \in \R^d$ and all $k \in \N$ 
	with some $a,b>0$. Hence by Lemma \ref{convsubsequence}, we can find a 
	function $\tilde{g} \in \mathcal{G}$ such that 
	$\{x \in \R^d:\limsup_{k \to \infty} g_{n(k)}(x)<\infty\}\subset \textrm{dom}(\tilde{g}),$
	and that
	\begin{equation*}
	\begin{split}
	&\lim_{k \to \infty,x \to y}g_{n(k)}(x)=\tilde{g}(y),\quad \textrm{for all } y \in \textrm{int(dom}(\tilde{g})),\\
	&\liminf_{k \to \infty, x \to y}g_{n(k)}(x)\geq \tilde{g}(y),\quad \textrm{for all } y \in \R^d.\\
	\end{split}
	\end{equation*}
	Again for simplicity we assume $\{g_n\}$ admit the above properties. 
	Now define random variables $H_n\equiv g_n(X_n)-(a\pnorm{X_n}{}-b)$. 
	Then by the same reasoning as in the proof of Theorem \ref{existence}, we have
	\begin{equation*}
	\begin{split}
	\liminf_{n \to \infty}L(Q_n)&=\liminf_{n \to \infty}\bigg(\int g_n\ \d{Q_n} + {1 \over \abs{\beta}}\int g_n^\beta\ \d{x}\bigg)\\
	&\geq \liminf_{n \to \infty}\mathbb{E}[H_n+a(X_n)-b]+{1 \over \abs{\beta}}\int {\tilde{g}}^\beta\ \d{x}\\
	&\geq  \mathbb{E}[\liminf_{n \to \infty} H_n]
	+a\liminf_{n \to \infty}\int \pnorm{x}{}\ \d{Q_n}-b+{1 \over \abs{\beta}}\int {\tilde{g}}^\beta\ \d{x}\\
	&=L(\tilde{g},Q)+a\bigg(\liminf_{n \to \infty}\int\pnorm{x}{}\ \d{Q}_n-\int \pnorm{x}{}\ \d{Q}\bigg)\\
	&\geq L(Q)+a\bigg(\liminf_{n \to \infty}\int\pnorm{x}{}\ \d{Q}_n-\int \pnorm{x}{}\ \d{Q}\bigg),\\
	\end{split}
	\end{equation*}
	Note the expectation is taken with respect to the probability space 
	$(\Omega,\mathcal{B},\mathbb{P})$ defined above. 
	This establishes that if (\ref{momentconv}) holds true, then
	\begin{equation}\label{direction1}
	\liminf_{n \to \infty}L(Q_n)\geq L(\tilde{g},Q)\geq L(Q).
	\end{equation}
	Conversely, if (\ref{momentconv}) does not hold true, then there exists 
	a subsequence $\{Q_{n(k)}\}$ such that $\liminf_{k \to \infty}\int\pnorm{x}{}\ \d{Q}_{n(k)}>\int \pnorm{x}{}\ \d{Q}$. 
	However, this means that 
	$ \liminf_{k \to \infty}L(Q_{n(k)})>L(Q),$
	which contradicts (\ref{functionalconv}). Hence if (\ref{functionalconv}) holds, 
	then  (\ref{momentconv}) holds true. Combine (\ref{direction1}) and (\ref{direction2}), 
	and by virtue of Lemma \ref{unique}, we find $\tilde{g}\equiv g$. 
	This completes the proof for (\ref{functionalconv}) and (\ref{pointwiseconv}).
	
	We show (\ref{L1conv}). First we claim that 
	$\{\hat{x}_n \in \mathrm{Arg}\min_{x \in \R^d}g_n(x)\}_{n\in \N}$
	is bounded. If not, then we can find a subsequence such that 
	$\pnorm{\hat{x}_{n(k)}}{}\to \infty$ as $k \to \infty$. However this 
	means that $g_{n(k)}(x)\geq g_{n(k)}(\hat{x}_{n(k)})\geq a\pnorm{\hat{x}_{n(k)}}{}-b \to \infty$ 
	as $k \to \infty$ for any $x$, a contradiction. Next we claim that there exists 
	$\epsilon_0>0$ such that $\inf_{k \in \N} \epsilon_{n(k)}\geq \epsilon_0$ 
	holds for some subsequence $\{\epsilon_{n(k)}\}_{k \in \N}$ of $\{\epsilon_n\}_{n \in \N}$. 
	This can be seen as follows: Boundedness of $\{\hat{x}_n\}$ implies 
	$\hat{x}_{n(k)} \to x^\ast$ as $k \to \infty$ for some subsequence 
	$\{\hat{x}_{n(k)}\}_{k \in \N}\subset \{\hat{x}_n\}_{n \in \N}$ and 
	some $x^\ast\in \R$. Hence by (\ref{pointwiseconv}) we have 
	$\limsup_{k \to \infty} f_{n(k)}(\hat{x}_{n(k)})\leq f(x^\ast)<\infty$, 
	since $f(\cdot)$ is bounded. This implies that $\sup_{k \in \N}\pnorm{f_{n(k)}}{\infty}<\infty$, 
	which is equivalent to the claim. As before, we will understand the notation 
	for whole sequence as a suitable subsequence. Now we have 
	$g_n(x)\geq \big(a\pnorm{x}{}-b\big)\vee \epsilon_0$ holds for all $ x\in \R^d.$ This gives rise to
	\begin{equation}\label{glb}
	f_n(x)\leq \bigg(\big(a\pnorm{x}{}-b\big)\vee \epsilon_0\bigg)^{1/s},\quad \textrm{ for all } x\in \R^d.
	\end{equation}
	Note that $-1/(d+1)<s<0$ implies $1/s < -(d+1)$, whence we get 
	an integrable envelope. Now a simple application of dominated 
	convergence theorem yields the desired result (\ref{L1conv}), in view 
	of the fact that the boundary of a convex set has Lebesgue measure zero (cf. Theorem 1.1 in \cite{lang1986}).
	
	Finally, (\ref{polyLinfty}) and (\ref{eqn:gradient_conv}) are direct 
	results of Theorems \ref{localconv} and \ref{thm:conv_derivatives} 
	by noting that (\ref{L1conv}) entails $f_n\to_d f$ (in the sense that the 
	corresponding probability measures converge weakly).
\end{proof}

\begin{proof}[Proof of Corollary \ref{cor:off-model-consistency}]
	It is known by Varadarajan's theorem (cf. \cite{MR1932358} Theorem 11.4.1), 
	$\mathbb{Q}_n$ converges weakly to $Q$ with probability 1. 
	Further by the strong law of large numbers (SLLN), we know that 
	$\int \pnorm{x}{}\ \d{\mathbb{Q}_n}\to_{a.s.}\int \pnorm{x}{} \d{Q}$. 
	This verifies all conditions required in Theorem \ref{mainthm1}.
\end{proof}

\begin{proof}[Proof of Corollary \ref{cor:on-the-model-consistency}]
	The conclusion follows from Corollary~\ref{cor:off-model-consistency} if $-1/(d+1) < s < 0$, so suppose $-1/d < s \le -1/(d+1)$. 
	Since $f \in \mathcal{P}_{s'}$, we may write $f=g^{1/{s'}}$ where $g$ is convex. If $f$ is unbounded, then $g(x_0)=0$ for some $x_0 \in \R$. 
	By Lemma \ref{generalposition} with $r'=-1/{s'}$, it follows that $\int f=\infty$, contradicting  the fact that $f$ is a density. 
	Thus $f$ must necessarily be bounded. 
	To see that $f$ has a finite mean, note that by
	Lemma \ref{weakconvbound} $f(x) = (b + a \| x \| )^{1/s'}$ where $a,b>0$ and $r'\equiv -1/s'> d+1$.  
	Thus $\int_{\R^d} \| x \| f(x) dx \le \int_{\R^d} \| x \| (b + a \| x \| )^{-r'} dx < \infty$.
	Now note that (\ref{eqn:off-model-ptconv}) holds by the existence of the 
	R\'enyi divergence estimator for the empirical measure (cf. Theorem 4.1 in \cite{koenker2010quasi}) 
	and the same argument in the proof of Theorem \ref{mainthm1}. 
	Also note that by the proof of Theorem \ref{localconv}, (\ref{eqn:off-model-ptconv}) 
	would be enough to ensure (\ref{eqn:off-model-uniform}). Since $f$ is continuous on the interior 
	of the domain, we see that (\ref{eqn:off-model-uniform}) implies weak convergence: 
	let $\hat{Q}_n$ be the measures corresponding to $\hat{f}_n$. 
	Then $\hat{Q}_n \to Q$ weakly as $n \to \infty$. 
	Now the rest follows immediately from Theorems \ref{weakconvmodeconv} and \ref{thm:conv_derivatives}.
\end{proof}

\begin{proof}[Proof of Theorem \ref{projchar}]
	Denote $L(\cdot):=L(\cdot,Q)$. We first claim:
	
	\noindent{\bf Claim.} $g=\arg\min_{g \in \mathcal{G}}L(g)$ 
	if and only if $\lim_{t \searrow 0} \frac{L(g+th)-L(g)}{t}\geq 0,$
	holds for all $h:\R^d \to \R$ such that there exists $t_0>0$ 
	with $g+th \in \mathcal{G}$ holds for all $t\in (0,t_0)$.
	
	To see this, we only have to show sufficiency. Now suppose $g $ 
	is not a minimizer of $L(\cdot)$. By Theorem \ref{existence} we 
	know there exists $\hat{g}\in \mathcal{G}$ such that $\hat{g}=g(\cdot|Q)$. 
	By convexity, we have that for any $t>0$, $L\big(g+t(\hat{g}-g)\big)\leq (1-t)L(g)+t L(\hat{g}).$
	This implies that if we let $h=\hat{g}-g$, and $t_0=1$, then
	\[
	\frac{L(g+th)-L(g)}{t}\leq {1 \over t}\big((1-t)L(g)+t L(\hat{g})-L(g)\big)=-t\big(L(g)-L(\hat{g})\big),
	\]
	and thus
	$\lim_{t \searrow 0} \frac{L(g+th)-L(g)}{t}\leq -\big(L(g)-L(\hat{g})\big)<0$, 
	where the strict inequality follows from Lemma \ref{unique}. 
	This proves our claim. Now the theorem follows from simple calculation:
	\[
	0\leq \lim_{t \searrow 0}{1 \over t}\bigg(L(g+th)-L(g)\bigg)=\int h\ \d{Q}-\int h\cdot g^{1/s}\ \d{\lambda},
	\]
	as desired.
\end{proof}

\begin{proof}[Proof of Corollary \ref{projchar2}]
Let $g\equiv g(\cdot|Q)$. Then by Theorem \ref{existence} and Lemma \ref{convlb}, we find that there exists some $a,b>0$ such that $g(x)\geq a\pnorm{x}{}+b$. Now take $v \in \partial h(0)$, i.e. $h(x)\geq h(0)+v^T x$ holds for all $x \in \R^d$. Hence for $t>0$, we have
\[g(x)+t h(x)\geq a\pnorm{x}{}+b+t(h(0)+v^T x)\geq (a-t\pnorm{v}{})\pnorm{x}{}+(b+th(0)),\]
which implies that $g+th\in \mathcal{G}$ for $t>0$ small enough. Now the conclusion follows from the Theorem \ref{projchar}.
\end{proof}

\begin{proof}[Proof of Theorem \ref{secondintegralchar}]
We first note that if $F$ is a distribution function for a probability measure supported on $[X_{(1)},X_{(n)}]$, and $h:[X_{(1)},X_{(n)}]\to \R$ an absolutely continuous function, then integration by parts (Fubini's theorem) yields
\begin{equation}\label{intbypart}
\int h\ \d{F}=h(X_{(n)})-\int_{X_{(1)}}^{X_{(n)}}h'(x) F(x)\ \d{x}.
\end{equation}
First we assume $g_n=\hat{g}_n$. For fixed $t \in [X_{(1)},X_{(n)}]$, let $h_1$ be a convex function whose derivative is given by $h_1'(x)=-{\bf 1}(x\leq t)$. Now by Theorem \ref{projchar} we find that $\int h_1\ \d{F_n}=\int h_1 \ \d{\hat{F}_n}\leq \int h_1\ \d{\mathbb{F}_n}.$
Plugging in (\ref{intbypart}) we find that $\int_{X_{(1)}}^t F_n(x)\ \d{x}\leq \int_{X_{(1)}}^t\mathbb{F}_n(x)\ \d{x}.$ For $t \in \mathcal{S}_n(g_n)$, let $h_2$ be the function with derivative $h_2'(x)={\bf 1}(x\leq t)$. It is easy to see $g_n+t h_2$ is convex for $t>0$ small enough, whence Theorem \ref{projchar} is valid, thus giving the reverse direction of inequality. This shows the necessity. 

For sufficiency, assume $g_n$ satisfies (\ref{distcond}). In view of the proof of Theorem \ref{projchar}, we only have to show (\ref{projchareqn}) holds for all function $h:\R \to \overline{\R}$ which is linear on $[X_{(i)},X_{(i+1)}](i=1,\ldots,n-1)$ and $g_n+t h$ convex for $t>0$ small enough. Since $g_n$ is a linear function between two consecutive knots, $h$ must be convex between consecutive knots. This implies that the derivative of such an $h$ can be written as $h'(x)=\sum_{j=2}^n\beta_j {\bf 1}(x\leq X_{(j)}),$ with $\beta_2,\ldots,\beta_n$ satisfying
$\beta_j\leq 0$ if $ X_{(j)} \notin \mathcal{S}_n(g_n).$
Now again by (\ref{intbypart}) we have
\begin{equation*}
\begin{split}
\int h\ \d{\hat{F}_n}&=h(X_n)-\sum_{j=2}^n\beta_j\int_{X_{(1)}}^{X_{(j)}}\hat{F}_n(x)\ \d{x}\\
&\leq h(X_n)-\sum_{j=2}^n\beta_j\int_{X_{(1)}}^{X_{(j)}}\mathbb{F}_n(x)\ \d{x}=\int h\ \d{\mathbb{F}_n},
\end{split}
\end{equation*}
as desired.
\end{proof}

\begin{proof}[Proof of Corollary \ref{distchar}]
This follows directly from the Theorem \ref{secondintegralchar} by noting for $x_1<x_0<x_2$ we have
\[{1 \over {x_2-x_0}}\int_{x_0}^{x_2} \hat{F}_n(x)\ \d{x}\leq {1 \over {x_2-x_0}}\int_{x_0}^{x_2} \mathbb{F}_n(x)\ \d{x},\]
and
\[{1 \over {x_0-x_1}}\int_{x_1}^{x_0}\hat{F}_n(x)\ \d{x}\geq {1 \over {x_0-x_1}}\int_{x_1}^{x_0}\mathbb{F}_n(x)\ \d{x}.\]
Now let $x_1 \nearrow x_0$ and $x_2\searrow x_0$ we find that $\hat{F}_n(x_0)\leq \mathbb{F}_n(x_0)$ by right continuity and $\hat{F}_n(x_0)\geq \mathbb{F}_n(x_0-)=\mathbb{F}_n(x_0)-{1 \over n}$.
\end{proof}

\begin{proof}[Proof of Theorem \ref{thm:characterization_distribution}]
The proof closely follows the proof of Theorem 2.7 of \cite{dumbgen2011approximation}. For the reader's convenience we give a full proof here. Let $P$ denote the probability distribution corresponding to $F$. We first show necessity by assuming $g=g(\cdot|Q)$. By Corollary \ref{projchar2} applied to $h(x)=\pm x$, we find by Fubini's theorem that
\[
0=\int_\R x\ \d{(Q-P)(x)}=\int_\R (F-G)(t)\d{t}
\]
which proves (1). Now we turn to (2). Since the map $s\mapsto (s-x)_+$ is convex, again by Corollary \ref{projchar2}, we find
\[0\leq \int_\R (s-x)_+\d{(Q-P)(s)}=-\int_{-\infty}^x(F-G)(t)\ \d{t},\]
where in the last equality we used the proved fact that $\int_\R (F-G)\d{\lambda}=0$. Now we assume $x \in \tilde{\mathcal{S}}(g)$, and discuss two different cases to conclude. If $x \in \partial(\mathrm{dom}(g))$, then let $h(s)=-(s-x)_+$, it is easy to see $g+th \in \mathcal{G}$ for $t>0$ small enough. Then by Theorem \ref{projchar}, we have
\[0\leq \int h(s)\d{(Q-P)(s)}=\int_{-\infty}^x (F-G)(t)\ \d{t}.\]
If $x \in \intdom{g}$, then $g'(x-\delta)<g'(x+\delta)$ for small $\delta>0$ by definition, and hence we define
\[ H_\delta'(u)=-\frac{g'(u)-g'(x-\delta)}{g'(x+\delta)-g'(x-\delta)}{\bf 1}_{\{u \in [x-\delta,x+\delta]\}}-{\bf 1}_{\{u> x+\delta\}},\]
whose integral $H_\delta(s):=\int_{-\infty}^s H_\delta'(u)\ \d{u}$
serves as an approximation of $-(s-x)_+$ as $\delta \searrow 0$. Note that
\[\big(g+t H_\delta\big)(s)=g(s)-\frac{t}{g'(x+\delta)-g'(x-\delta)}\int_{s\wedge (x-\delta)}^{s\wedge (x+\delta)} \big(g'(u)-g'(x-\delta)\big)\ \d{u}-t\big(s-(x+\delta)\big)_+,\]
implying $g+tH_\delta \in \mathcal{G}$ for $t>0$ small enough (which may depend on $\delta$). Then by Theorem \ref{projchar},
\[0\leq \int H_\delta(s)\d{(Q-P)(s)}\to -\int (s-x)_+ \d{(Q-P)(s)}=\int_{-\infty}^x (F-G)(t)\ \d{t},\]
as $\delta \searrow 0$, where the convergence follows easily from dominated convergence theorem. This proves (2). Now we show sufficiency by assuming (1)-(2). Consider a Lipschitz continuous function $\Delta(\cdot)$ with Lipschitz constant $L$. Then
\begin{equation*}
\begin{split}
\int \Delta\d{(Q-P)}&=\int \Delta'(F-G)\ \d{\lambda}=-\int (L-\Delta')(F-G)\ \d{\lambda}\\
&=-\int_\R \bigg(\int_{-L}^L {\bf 1}_{\{s>\Delta'(t)\}}\d{s}\bigg)(F-G)(t) \d{t}\\
&=-\int_{-L}^L\int_{A(\Delta',s)}(F-G)(t)\ \d{t}\d{s},
\end{split}
\end{equation*}
where the second line follows from (1), and $A(\Delta',s):=\{t \in \R:\Delta'(t)<s\}$. Now replace the generic Lipschitz function $\Delta$ with $g^{(\epsilon)}$ as defined in Lemma \ref{Lip} with Lipschitz constant $L=1/\epsilon$. Note in this case $A\big((g^{(\epsilon)})',s\big)=(-\infty,a(g,\epsilon))$, where $a(g,s)=\min\{t\in \R:g'(t+)\geq s\}$ and hence $a(g,s)\in \tilde{\mathcal{S}}(g)$. This implies that $\int_{A\big((g^{(\epsilon)})',s\big)}(F-G)(s)\d{s}=0$ for all $s \in (-L,L)$ by (2), yielding that $
\int g^{(\epsilon)}\ \d{(Q-P)}=0$. Similarly we have $\int g_0^{(\epsilon)}\ \d{(Q-P)}\geq 0$ where $g_0=g(\cdot|Q)$.  Now let $\epsilon \searrow 0$, by monotone convergence theorem we find that $ \int g\ \d{Q}=\int g\ \d{P}$ and that $\int g_0\ \d{Q}\geq \int g_0\ \d{P}$. This yields
\[L(g_0,Q)\geq L(g_0,P)\geq L(g,P)=L(g,Q),\]
where the second inequality follows from the Fisher consistency of functional $L(\cdot,\cdot)$ and the fact that $P$ is the distribution corresponding to $g$.
\end{proof}

Before we prove Theorem \ref{thm:continuity_s}, we will 
need an elementary lemma. 
\begin{lemma}\label{lem:limit_linearity}
	Fix a sequence $0<\alpha_n<1$ with $\alpha_n\nearrow 1$. 
	Let $f_{\alpha_n}$ be an $(\alpha_n-1)$-concave density on $\R$. 
	Let $g_{\alpha_n}:=f_{\alpha_n}^{\alpha_n-1}$ be the underlying convex function. 
	Suppose $\{g_{\alpha_n}\}$'s are linear on $[a,b]$ with 
	$\lim_{n \to \infty}f_{\alpha_n}(a)=\gamma_a \in [0,\infty]$ and 
	$\lim_{n \to \infty}f_{\alpha_n}(b)=\gamma_b \in [0,\infty]$. Then for all $x \in [a,b]$, 
	\begin{equation}
	f_{\alpha_n}(x)\to\exp \bigg(\frac{\log \gamma_b-\log \gamma_a}{b-a}(x-a)+\log \gamma_a\bigg)
	\end{equation}
	where $\exp(-\infty):=0$ and $\exp(\infty):=\infty$.
\end{lemma}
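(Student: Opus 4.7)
The plan is to reduce the lemma to a statement about generalized means: since $g_{\alpha_n}$ is linear on $[a,b]$, for $x = a + \theta(b-a)$ with $\theta = (x-a)/(b-a) \in [0,1]$ we have
\[
g_{\alpha_n}(x) = (1-\theta)\,g_{\alpha_n}(a) + \theta\,g_{\alpha_n}(b) = (1-\theta)\,u_n^{s_n} + \theta\,v_n^{s_n},
\]
where $s_n := \alpha_n - 1 < 0$, $s_n \nearrow 0$, and $u_n := f_{\alpha_n}(a)$, $v_n := f_{\alpha_n}(b)$. Raising to the power $1/s_n$ and recognizing the definition at the top of the paper, this reads
\[
f_{\alpha_n}(x) = \big((1-\theta)\,u_n^{s_n} + \theta\,v_n^{s_n}\big)^{1/s_n} = M_{s_n}(u_n, v_n; \theta).
\]
So the lemma is really the assertion that $M_{s_n}(u_n, v_n; \theta) \to M_0(\gamma_a, \gamma_b; \theta) = \gamma_a^{1-\theta}\gamma_b^{\theta}$, which, after taking logarithms, is exactly the right-hand side in the lemma.

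First I would handle the generic case $\gamma_a, \gamma_b \in (0,\infty)$ by establishing the joint continuity of $(s, u, v) \mapsto M_s(u, v; \theta)$ at $(0, \gamma_a, \gamma_b)$. Writing
\[
\log M_{s_n}(u_n, v_n; \theta) = \tfrac{1}{s_n}\log\!\big((1-\theta)e^{s_n \log u_n} + \theta e^{s_n \log v_n}\big),
\]
a Taylor expansion of the exponentials around $s_n = 0$, using boundedness of $\log u_n$ and $\log v_n$, gives
\[
\log M_{s_n}(u_n, v_n; \theta) = (1-\theta)\log u_n + \theta \log v_n + O(s_n),
\]
which converges to $(1-\theta)\log \gamma_a + \theta \log \gamma_b$, precisely the exponent appearing in the target formula.

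For the boundary cases where $\gamma_a$ or $\gamma_b$ lies in $\{0, \infty\}$, I would exploit monotonicity of $M_s$ in $s$. The sandwich $\min(u_n, v_n) = M_{-\infty}(u_n, v_n; \theta) \le M_{s_n}(u_n, v_n; \theta) \le M_0(u_n, v_n; \theta) = u_n^{1-\theta} v_n^{\theta}$ disposes of several cases immediately: when $\gamma_a = \gamma_b = 0$ both bounds tend to $0$, and when $\gamma_a = 0$, $\gamma_b \in (0,\infty)$ the upper bound collapses to $0$ for $\theta \in [0,1)$ while $\theta = 1$ is trivial since $M_{s_n}(u_n, v_n; 1) = v_n$. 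The remaining cases ($\gamma_a = \infty$, or mixed $0/\infty$) I would handle by direct asymptotic computation: when $u_n \to \infty$, $u_n^{s_n} \to 0$ and the linear combination $(1-\theta) u_n^{s_n} + \theta v_n^{s_n}$ has a limit (in $[0,1]$) whose $1/s_n$-power escapes to $\infty$ for $\theta \in [0, 1)$, matching the stated formula under the conventions $\exp(+\infty) = \infty$, $\exp(-\infty) = 0$.

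The main obstacle is the boundary regime, because $u_n^{s_n}$ for $u_n \to 0^+$ (or $\infty$) and $s_n \to 0^-$ is genuinely indeterminate and depends on relative rates. I expect most of the proof's length to go into a case analysis of the pairs $(\gamma_a, \gamma_b)$ and into verifying that the conventions built into the formula (continuous extension by $\exp(\pm\infty)$) are consistent with every rate regime; the monotonicity of $M_s$ in $s$ and the trivial endpoint values at $\theta \in \{0,1\}$ should suffice to close these cases without detailed rate assumptions.
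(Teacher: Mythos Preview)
Your reformulation $f_{\alpha_n}(x) = M_{s_n}(u_n,v_n;\theta)$ and reduction to convergence of power means to the geometric mean is a genuinely different and more conceptual route than the paper's. The paper writes out $\log f_\alpha(x)$ directly, splits off $\gamma_{a,\alpha}^{\alpha-1}$, reduces the case $\gamma_a \neq \gamma_b \in (0,\infty)$ to showing the ratio $r_\alpha = (\gamma_{b,\alpha}^{\alpha-1} - \gamma_{a,\alpha}^{\alpha-1})/(\gamma_b^{\alpha-1} - \gamma_a^{\alpha-1}) \to 1$, and finishes with L'H\^opital; the case $\gamma_a = \gamma_b$ is treated separately. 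Your single Taylor expansion handles both at once, and your upper bound $M_{s_n} \leq M_0 = u_n^{1-\theta}v_n^\theta$ for the cases with $\gamma_a \wedge \gamma_b = 0$ is clean.

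There is, however, a concrete error in your treatment of $\gamma_a = \infty$, $\gamma_b \in (0,\infty)$. The claim ``when $u_n \to \infty$, $u_n^{s_n} \to 0$'' is false: $u_n^{s_n} = e^{s_n \log u_n}$ is of the indeterminate form $e^{0^- \cdot \infty}$, and every point of $[0,1]$ occurs as a subsequential limit for suitable rates. In particular, if $u_n^{s_n} \to 1$ (e.g.\ whenever $\log u_n = o(1/|s_n|)$), the base $(1-\theta) u_n^{s_n} + \theta v_n^{s_n} \to 1$ and its $1/s_n$-th power is of type $1^{-\infty}$, so the ``escape to $\infty$'' is not automatic. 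Nor does the crude lower bound $M_{s_n} \geq M_{-\infty} = \min(u_n,v_n)$ help: it gives only $\liminf_n f_{\alpha_n}(x) \geq \gamma_b$.

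The fix stays within your framework. For any \emph{fixed} $s_0 < 0$, eventually $s_n > s_0$, so by monotonicity of $s \mapsto M_s$,
\[
M_{s_n}(u_n,v_n;\theta) \;\geq\; M_{s_0}(u_n,v_n;\theta) \;\longrightarrow\; \big((1-\theta)\cdot 0 + \theta\,\gamma_b^{s_0}\big)^{1/s_0} \;=\; \theta^{1/s_0}\gamma_b,
\]
since now $u_n^{s_0} \to 0$ genuinely (fixed negative exponent). Letting $s_0 \nearrow 0$ gives $\theta^{1/s_0} \to \infty$ for $\theta \in (0,1)$, hence $f_{\alpha_n}(x) \to \infty$. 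The paper's treatment of the boundary cases is also a one-sentence remark, but it does not commit to an explicitly false step.
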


\begin{proof}[Proof of Lemma \ref{lem:limit_linearity}]
	First assume $\gamma_b\neq \gamma_a$ and $\gamma_a,\gamma_b \in (0,\infty)$. For notational convenience we drop explicit dependence on $n$ and the limit is taken as $\alpha \nearrow 1$. Let $\gamma_{a,\alpha}=f_\alpha(a)=g_\alpha(a)^{1/(\alpha-1)}$ and $\gamma_{b,\alpha}=f_\alpha(b)=g_\alpha(b)^{1/(\alpha-1)}$. For any $x \in [a,b]$,
	\begin{equation}\label{eqn:asymp_linear_limit_ope}
	\begin{split}
	\lim_{\alpha\to 1}\log f_\alpha(x)
	&=\lim_{\alpha\to 1}{1 \over {\alpha-1}}
	\log\bigg(\frac{\gamma_{b,\alpha}^{\alpha-1} -\gamma_{a,\alpha}^{\alpha-1}}{b-a}(x-a)
	+\gamma_{a,\alpha}^{\alpha-1}\bigg)\\
	&=\lim_{\alpha\to 1}{1 \over {\alpha-1}}
	\log\bigg(\frac{\gamma_b^{\alpha-1} -\gamma_a^{\alpha-1}}{b-a}(x-a)
	\cdot\frac{\gamma_{b,\alpha}^{\alpha-1} -\gamma_{a,\alpha}^{\alpha-1}}
	{\gamma_{b}^{\alpha-1} -\gamma_{a}^{\alpha-1}}+\gamma_{a,\alpha}^{\alpha-1}\bigg)\\
	&\equiv \log \gamma_{a}
	+ \lim_{\alpha \to 1} {1 \over {\alpha-1}}
	\log\bigg( (\gamma_{b}^{\alpha-1} -\gamma_{a}^{\alpha-1}) \frac{(x-a)}{(b-a)}\cdot \frac{1}{\gamma_{a,\alpha}^{\alpha-1}} 
	\cdot r_{\alpha}  + 1 \bigg ) . 
	\end{split}
	\end{equation}
	Since $\gamma_{a,\alpha}^{\alpha -1} \rightarrow 1$, we claim that it suffices to show that 
	\begin{eqnarray} 
	r_{\alpha} \equiv \frac{\gamma_{b,\alpha}^{\alpha-1} -\gamma_{a,\alpha}^{\alpha-1}}
	{\gamma_{b}^{\alpha-1} -\gamma_{a}^{\alpha-1}} \rightarrow 1
	\ \ \ \mbox{as} \ \ \alpha \rightarrow 1.                              
	\label{RatioFunctionLimitsToOne} 
	\end{eqnarray}
	To see this, assume without loss of generality 
	that $\gamma_a>\gamma_b$ and hence $\gamma_b^{\alpha-1}-\gamma_a^{\alpha-1}>0$. 
	Suppose that (\ref{RatioFunctionLimitsToOne}) holds and let $\epsilon>0$.
	Then the second term on right hand side of (\ref{eqn:asymp_linear_limit_ope}) can be bounded from above by
	\begin{equation*}
	\begin{split}
	&\quad \lim_{\alpha \nearrow 1}\frac{1}{\alpha-1}\log\bigg(\big(\gamma_b^{\alpha-1}-\gamma_a^{\alpha-1}\big)\frac{(x-a)}{(b-a)}(1-\epsilon)+1\bigg)\\
	& = \lim_{\alpha \nearrow 1}\left(\log \gamma_b \cdot \gamma_b^{\alpha-1} 
	- \log \gamma_a \cdot \gamma_a^{\alpha-1}\right)\frac{(x-a)}{(b-a)}(1-\epsilon)\\
	&=(\log \gamma_b-\log\gamma_a)\frac{(x-a)}{(b-a)}(1-\epsilon)
	\end{split}
	\end{equation*}
	where the second line follows from L'Hospital's rule. Similarly we can derive a lower bound:
	\[
	(\log \gamma_b-\log\gamma_a)\frac{(x-a)}{(b-a)}(1+\epsilon).
	\]
	Thus it remains to show that (\ref{RatioFunctionLimitsToOne}) holds.  
	But we can rewrite $r_{\alpha} $ as 
	\begin{eqnarray*}
		r_{\alpha} 
		& = & \frac{c_\alpha^{\alpha-1}-1}{c^{\alpha-1}-1}    \\
		&= & \frac{c^{\alpha-1}(c_\alpha/c)^{\alpha-1} - (c_\alpha/c)^{\alpha-1}+(c_\alpha/c)^{\alpha-1} - 1}
		{c^{\alpha-1}-1}\\
		&= & (c_\alpha/c)^{\alpha-1}+\frac{(c_\alpha/c)^{\alpha-1}-1}{c^{\alpha-1}-1}\\
		& \rightarrow & 1 + 0 \ \ \mbox{as} \ \ \alpha \rightarrow 1
	\end{eqnarray*}
	since $\log ( ( c_{\alpha}/c)^{\alpha-1} ) = (\alpha-1) \log (c_{\alpha}/c) \rightarrow 0 \cdot \log 1 = 0$, 
	and where the second limit follows from an upper and lower bound argument using $c_{\alpha}/c \rightarrow 1$. 
	where $c_\alpha:=\gamma_{b,\alpha}/\gamma_{a,\alpha}$ and $c=\gamma_{b}/\gamma_{a}\neq 1$. 
	
	This shows that (\ref{RatioFunctionLimitsToOne}) holds, thereby proving the case for $\gamma_a\neq \gamma_b \in (0,\infty)$. 
	For the case $\gamma_b=\gamma_a \in (0,\infty)$, similarly we have
	\begin{equation*}
	\begin{split}
	\lim_{\alpha\to 1}\log f_\alpha(x)
	&=\log \gamma_{a}+\lim_{\alpha \to 1} {1 \over {\alpha-1}}\log\bigg(\frac{c_\alpha^{\alpha-1} -1}{b-a}(x-a)+1\bigg).
	\end{split}
	\end{equation*}
	The second term is $0$ by an argument much as above by observing 
	$c_\alpha=\gamma_{b,\alpha}/\gamma_{a,\alpha} \to \gamma_b / \gamma_a =1$. 
	Finally, if $\gamma_a\wedge \gamma_b=0$, then by the first line of (\ref{eqn:asymp_linear_limit_ope}) we see that 
	$\log f_\alpha(x)\to -\infty$; if $\gamma_a\vee \gamma_b =\infty$, then again $\log f_\alpha(x)\to \infty$. 
\end{proof}

\begin{proof}[Proof of Theorem \ref{thm:continuity_s}]
	In the following, the notation $\sup_\alpha,\inf_\alpha,\lim_\alpha$ is 
	understood as taking corresponding operation over $\alpha$ close to 1 unless otherwise specified.
	We first show almost everywhere convergence by invoking Lemma 
	\ref{convsubsequence}. To see this, for fixed $s_0 \in (-1/2,0)$, let 
	$g_\alpha:= f_\alpha^{\alpha-1}$ and $ g_\alpha^{(s_0)}:=(f_\alpha)^{s_0}$. 
	Then for $\alpha>1+s_0$, the transformed function $g_\alpha^{(s_0)}$ is convex. 
	We need to check two conditions in order to apply Lemma \ref{convsubsequence} as follows:
	\begin{enumerate}
		\item[(C1)] The set $(X_{(1)},X_{(n)})\subset \{\liminf_\alpha f_\alpha(x)>0\}$;
		\item[(C2)] There is a uniform lower bound function $\tilde{g}^{s_0} \in \mathcal{G}$ 
		such that $g^{(s_0)}_\alpha \geq \tilde{g}^{s_0}$ holds for $\alpha$ sufficiently close to $1$.
	\end{enumerate}
	
	The first assertion can be checked by using the characterization 
	Theorem \ref{secondintegralchar}. Let $F_\alpha$ be the distribution function of $f_\alpha$. Then
	$\int_{X_{(1)}}^{t} (F_\alpha-\mathbb{F}_n)(x)\ \d{x}\leq 0$ with equality 
	attained if and only if $t \in \mathcal{S}_n(g_\alpha)$. 
	For $x \in (X_{(1)},X_{(n)})$ close enough to $X_{(n)}$, we claim that 
	$\liminf_\alpha f_\alpha(x)>0$. If not, we may assume without loss of generality 
	that $\lim_\alpha f_\alpha(x)=0$. We first note that there exists some $t \in \{1,\cdots n-1\}$ 
	and some subsequence $\{\alpha(\beta)\}_{\beta \in \N}$ with $\alpha(\beta) \nearrow 1$ 
	for which (1) $X_{(t)}$ is a knot point for $\{g_{\alpha(\beta)}\}$, and (2) $X_{(u)}$ is 
	not a knot point for any $\{g_{\alpha(\beta)}\}$ for $u\geq t+1$, i.e. $g_{\alpha(\beta)}$'s 
	are linear on $[X_{(t)},X_{(n)}]$. We drop $\beta$ for notational simplicity and assume 
	without loss of generality that both limits 
	$\lim_\alpha f_\alpha(X_{(n)}),\lim_\alpha f_\alpha(X_{(t)})$ exist. 
	Now Lemma \ref{lem:limit_linearity} shows that $\min\{\lim_\alpha f_\alpha(X_{(n)}),\lim_\alpha f_\alpha(X_{(t)})\}=0$ 
	since we have assumed $\lim_\alpha f_\alpha(x)=0$ for some $x \in (X_{(t)},X_{(n)})$. 
	This in turn implies that $\lim_\alpha f_\alpha(x)=0$ for all $x \in (X_{(t)},X_{(n)})$. 
	Now we consider the following two cases to derive a contradiction with the fact
	\begin{equation}\label{eqn:equal_intdist_knot}
	\int_{X_{(t)}}^{X_{(n)}}F_\alpha(x)\d{x}=\int_{X_{(t)}}^{X_{(n)}} \mathbb{F}_n(x)\d{x}
	\end{equation}
	that follows from Theorem \ref{secondintegralchar}, thereby proving 
	$\liminf_\alpha f_\alpha(x)>0$ for $x$ close enough to $X_{(n)}$.
	
	\noindent \textbf{[Case 1.]} If $\lim_\alpha f_\alpha(X_{(n)})=0$, 
	then the left hand side of (\ref{eqn:equal_intdist_knot}) converges to $X_{(n)}-X_{(t)}$ 
	while the right hand side is no larger than $\frac{n-1}{n}\big(X_{(n)}-X_{(t)}\big)$.
	
	\noindent \textbf{[Case 2.]}. If $\lim_\alpha f_\alpha(X_{(n)})>0$, 
	then we must necessarily have $\lim_\alpha f_\alpha(x)=0$ for all 
	$x \in [X_{(1)},X_{(n)})$ by convexity of $g_\alpha$: If 
	$\lim_\alpha f_\alpha(x_0)>0$ for some $x_0 \in [X_{(1)},X_{(t)}]$, then 
	$\lim_{\alpha}g_\alpha(x_0)\vee g_\alpha(X_{(n)})<\infty$ while 
	$\lim_{\alpha}g_\alpha(x)=\infty$ for all $x \in (X_{(t)},X_{(n)})$, which is absurd. 
	Note that this also forces $\lim_\alpha f_\alpha(X_{(n)})=\infty$, otherwise the 
	constraint $\int f_\alpha =1$ will be invalid eventually. Now the left hand side of 
	(\ref{eqn:equal_intdist_knot}) converges to $0$ while the right hand side is 
	bounded from below by ${1 \over n}(X_{(n)}-X_{(t)})$.
	
	Similarly we can show 
	$\liminf_\alpha f_\alpha(x)>0$ for $x$ close to $X_{(1)}$. Now (C1) follows by convexity of $f_\alpha$.
	
	(C2) can be seen by first noting $M:=\sup_\alpha \pnorm{f_\alpha}{\infty}<\infty$.
	This can be verified by Lemma \ref{unifbound} combined with the first 
	assertion proved above. This implies that the class $\{g^{(s_0)}_\alpha\}_\alpha$ 
	has a uniform lower bound $M^{s_0}$. Now (C2) follows by noting that the 
	domain of all $g^{(s_0)}_\alpha$ is $\mathrm{conv}(\underline{X})$. 
	Therefore all conditions needed for Lemma \ref{convsubsequence} are valid, 
	and hence we can extract a subsequence $\{g_{\alpha_n}^{(s_0)}\}_{n \in \N}$ such that
	\begin{equation*}
	\begin{split}
	\lim_{n \to \infty,x \to y}g^{(s_0)}_{\alpha_n}(x)=g^{(s_0)}(y),
	&\quad\textrm{ for all } y \in \intdom{g^{(s_0)}};\\
	\lim_{n \to \infty,x \to y}g^{(s_0)}_{\alpha_n}(x)\geq g^{(s_0)}(y),
	&\quad\textrm{ for all } y \in \R^d,
	\end{split}
	\end{equation*}
	holds for some $g^{(s_0)}\in \mathcal{G}$. This implies $f_{\alpha_n} \to_{a.e.} f^{(s_0)}$ 
	as $n \to \infty$ where $f^{(s_0)}:=\big(g^{(s_0)}\big)^{1/s_0}$. 
	Now repeat the above argument with another $s_1$ with a further 
	extracted subsequence $\{\alpha_{n(k)}\}$, we see that 
	$f_{\alpha_{n(k)}} \to_{a.e.} f^{(s_1)}(k \to \infty)$ for some $s_1$-concave $f^{(s_1)}$ 
	holds for the subsequence $\{\alpha_{n(k)}\}_{k \in \N}$. This implies that $f^{(s_0)}=_{a.e.}f^{(s_1)}$. 
	Since a convex function is continuous in the interior of the domain, we can choose a version 
	of upper semi-continuous $f$ such that $f=f^{(s)}$ a.e. for all $\{1/2<s<0\}\cap \Q$. 
	This implies that $f$ is $s$-concave for any rational $1/2<s<0$ and hence log-concave. 
	Next we show weighted $L_1$ convergence: For fixed $\kappa>0$, choose 
	$0>s_0>-1/(\kappa+1)$. Since there exists $a,b>0$ such that 
	$g_{\alpha_n}^{(s_0)}\geq g^{(s_0)}\geq a\pnorm{x}{}-b$ holds for all $n \in \N$, 
	we have an integrable envelope function:
	\[
	\big(1+\pnorm{x}{}\big)^\kappa \big(f_{\alpha_n}(x)\vee f(x)\big)
	\leq \big(1+\pnorm{x}{}\big)^\kappa\bigg(\big(a\pnorm{x}{}-b\big)\vee M\bigg)^{1/s_0}.
	\]
	Now an application of the dominated convergence theorem yields the 
	desired weighted $L_1$ convergence. Similar arguments show weighted 
	convergence is also valid in arbitrary $L_p$ norms ($p\geq 1$).
	
	Finally we show that $f=f_1$ by virtue of Theorem 2.2 in \cite{dumbgen2009maximum} 
	and Theorem \ref{projchar}. We note that by Lemma \ref{lem:limit_linearity}, 
	$f$ must be log-linear between consecutive data points. Now since $f_1$ and $f$ are 
	both log-linear between consecutive data points of $\{X_1,\ldots,X_n\}$, 
	we only have to consider test functions $h$ such that $h$ is piecewise linear on 
	consecutive data points. Recall $g_\alpha=f_\alpha^{\alpha-1}$ and $g:=-\log f$ 
	are the underlying convex functions for $f_\alpha$ and $f$. For any such $h$ with the 
	property that, $g+th\in \mathcal{G}$ for $t$ small enough, we wish to argue that 
	such $h$ is also a valid test for $f_\alpha$(i.e. $g_\alpha+th \in \mathcal{G}$ for $t>0$ 
	small enough), for a sequence of $\{\alpha_k\}$ converging up to 1 as $k \to \infty$. 
	Thus we only have to argue that for all $X_{(i)} \in \mathcal{S}(g)$, $X_{(i)} \in \mathcal{S}(g_\alpha)$ 
	for a sequence of $\{\alpha_k\}$ going up to 1 as $k \to \infty$. 
	Assume the contrary that $X_{(i)} \notin \mathcal{S}(g_\alpha)$ for all $\alpha$ close enough to $1$. 
	Then $\{g_\alpha\}$'s are all linear on a closed interval $I=[a,b]$ containing 
	$X_{(i)}$ for $\alpha$ close to $1$. Since $f_\alpha \to f$ uniformly on $I$ by 
	Theorem \ref{localconv}, in particular $f_\alpha(a)$ and $f_\alpha(b)$ converges, 
	Lemma \ref{lem:limit_linearity} entails that $f$ is log-linear over $I$, a contradiction to the 
	fact $X_{(i)} \in \mathcal{S}(g)$. Hence we can find a subsequence $\{\alpha_k\}$ going up to $1$ 
	as $k \to \infty$ such that for all $X_{(i)} \in \mathcal{S}(g)$, $X_{(i)} \in \mathcal{S}(g_{\alpha_k})$, 
	i.e. for all feasible test function $h$ of $f_1$, being linear on consecutive data points, is 
	also valid for $f_{\alpha_k}$. Now combining the fact that $f_{\alpha_k}$ converges in 
	$L_2$ metric to $f$ and Theorem 2.2 in \cite{dumbgen2009maximum} we conclude $f_1=f$.
\end{proof}

\subsection{Proofs for Section 3}\label{appendix:supp_proof_3}
\begin{proof}[Proof of Lemma \ref{lem:relate_support_set}]
The proof closely follows the first part of the proof of Proposition 2 \cite{kim2014global}. Suppose $\dim \big(\mathrm{csupp}(\nu)\big)=d$, we show $\mathrm{csupp}(\nu)\subset\overline{C}$. To see this, we take $x_0 \notin \overline{C}$, then there exists $\delta>0$ such that $B(x_0,\delta)\subset C^c$, and we claim that
\begin{equation}\label{claim1}
\textrm{For all }x^\ast \in B(x_0,\delta)\subset C^c, x^\ast \notin \mathrm{int(csupp}(\nu)).
\end{equation}
If (\ref{claim1}) holds, then $x_0 \notin \mathrm{csupp}(\nu)$ and hence $\mathrm{csupp}(\nu)\subset \overline{C}$. Now we turn to show (\ref{claim1}). Since $x^\ast \notin C=\{\liminf_{n \to \infty}f_n(x)>0\}$, we can find a subsequence $\{f_{n(k)}\}_{k \in \N}$ of $\{f_n\}_{n \in \N}$ such that $f_{n(k)}(x^\ast)<{1 \over k}$ holds for all $k \in \N$. Hence $x^\ast \notin \Gamma_k:=\{x \in \R^d: f_{n(k)}(x)\geq {1 \over k}\}$. Note that $\Gamma_k$ is a closed convex set, hence by Hyperplane Separation Theorem we can find $b_k \in \R^d$ with $\pnorm{b_k}{}=1$ such that $\{x\in \R^d:\iprod{b_k}{x}\leq \iprod{b_k}{x^\ast}\}\subset (\Gamma_k)^c.$
Without loss of generality we may assume $b_k \to b_{x^\ast}$ as $k \to \infty$ for some $b_{x^\ast}\in \R^d$ with $\pnorm{b_{x^\ast}}{}=1$. Now for fixed $R>0$ and $\eta>0$, define
\[A_{R,\eta}:=\{x \in \R^d: \iprod{b_{x^\ast}}{x}<\iprod{b_{x^\ast}}{x^\ast}-\eta,\pnorm{x}{}\leq R\}.\]
Choose $k_0 \in \N$ large enough such that $\pnorm{b_k-b_{x^\ast}}{}\leq {\eta \over {2R}}$ holds for all $k\geq k_0(x^\ast,\eta,R)$. Now for $R>\pnorm{x^\ast}{}$ and $x \in A_{R,\eta}$, we have
\[\iprod{b_k}{x-x^\ast}=\iprod{b_{x^\ast}}{x-x^\ast}+\iprod{b_k-b_{x^\ast}}{x-x^\ast}<-\eta+{\eta \over {2R}}(\pnorm{x}{}+\pnorm{x^\ast}{})\leq 0\]
holds for all $k\geq k_0(x^\ast,\eta,R)$. This implies for $R>\pnorm{x^\ast}{}$ and $\eta>0$,
\[A_{R,\eta}\subset \{x\in \R^d:\iprod{b_k}{x}\leq \iprod{b_k}{x^\ast}\}\subset (\Gamma_k)^c=\{x \in \R^d:f_{n(k)}(x)<{1 \over k}\}.\]
Now note $A_{R,\eta}$ is open, by Portmanteau Theorem we find that
\[
\nu(A_{R,\eta})\leq \liminf_{k \to \infty} \nu_{n(k)}(A_{R,\eta})= \liminf_{k \to \infty} \int_{A_{R,\eta}}f_{n(k)}(x)\ \d{x}\leq \liminf_{k \to \infty}\frac{\lambda_d(A_{R,\eta})}{k}=0.
\]
This implies
\[\nu\big(\{x \in \R^d:\iprod{b_{x^\ast}}{x}<\iprod{b_{x^\ast}}{x^\ast}\}\big)=\nu\bigg(\bigcup_{R=1}^\infty A_{R,1/R}\bigg)=\lim_{R \to \infty}\nu(A_{R,1/R})=0,\]
where the second equality follows from the fact $\{A_{R,1/R}\}$ is an increasing family as $R$ increases. By the assumption that $\dim \big(\mathrm{csupp}(\nu)\big)=d$, we find $x^\ast \notin \mathrm{int(csupp}(\nu))$, as we claimed in (\ref{claim1}).

Now Suppose $\dim C=d$, we claim $\overline{C}\subset \mathrm{csupp}(\nu)$. To see this, we only have to show $C\subset \mathrm{csupp}(\nu)$ by the closedness of $\mathrm{csupp}(\nu)$. Suppose not, then we can find $x_0 \in C\setminus \mathrm{csupp}(\nu)$. This implies that there exists $\delta>0$ such that $B(x_0,\delta)\cap \mathrm{csupp}(\nu)\neq \emptyset$. By the assumption that $\dim C=d$, we can find $x_1,\ldots,x_d \in B(x_0,\delta)\cap C$ such that $\{x_0,\ldots,x_d\}$ are in general position. By definition of $C$ we can find $\epsilon_0>0, n_0 \in \N$ such that $f_n(x_j)\geq \epsilon_0$ for all$j=0,1,\ldots,d\textrm{ and }n\geq n_0.$ By convexity, we conclude that
$f_n(x)\geq \epsilon_0,\textrm{ for all } x \in \mathrm{conv}(\{x_0,\ldots,x_d\})\textrm{ and }n\geq n_0.$
This gives
\begin{equation*}
\begin{split}
\nu\big(\mathrm{conv}(\{x_0,\ldots,x_d\})\big)&\geq \limsup_{n \to \infty}\nu_n\big(\mathrm{conv}(\{x_0,\ldots,x_d\})\big)\\
&\geq \epsilon_0 \lambda_d\big(\mathrm{conv}(\{x_0,\ldots,x_d\})\big)>0,
\end{split}
\end{equation*}
a contradiction with $B(x_0,\delta)\cap \mathrm{csupp}(\nu)\neq \emptyset$, thus completing the proof of the claim. To summarize, we have proved
\begin{enumerate}
\item If $\dim\big(\mathrm{csupp}(\nu)\big)=d$, then $\mathrm{csupp}(\nu)\subset \overline{C}$. This in turn implies $\dim C=d$, and hence $\overline{C}\subset \mathrm{csupp}(\nu)$. Now it follows that $\mathrm{csupp}(\nu)=\overline{C}$;
\item If $\dim C=d$, then $\overline{C}\subset \mathrm{csupp}(\nu)$. This in turn implies $\dim \big(\mathrm{csupp}(\nu)\big)=d$, and hence $\mathrm{csupp}(\nu)\subset \overline{C}$. Now it follows that $\mathrm{csupp}(\nu)=\overline{C}$.\qedhere
\end{enumerate}
\end{proof}
\begin{proof}[Proof of Lemma \ref{pwconv}]
The proof is essentially the same as the proof of Proposition 2 \cite{cule2010theoretical} by exploiting convexity at the level of the underlying basic convex function so we shall omit it.
\end{proof}
\begin{proof}[Proof of Lemma \ref{unifbound}]
Set $U_{n,t}=\{x\in \R^d:f_n(x)\geq t\}.$
We first claim that there exists $n_0 \in \N,\epsilon_0\in (0,1)$ such that $\lambda_d(U_{n,\epsilon_0})\geq \epsilon_0$
holds for all $n\geq n_0$. If not, then for all $k \in \N,l \in \N$, there exists $n_{k,l} \in \N$ such that $\lambda_d(U_{n_{k,l},1/l})\leq {1 \over l}$. Note that $\{\liminf_n f_n>0\}=\cup_{k \in \N}\cup_{l \in \N}\cap_{n\geq k}U_{n,1/l}$. Since  $\lambda_d\big(\bigcup_{l \in \N}\bigcap_{n\geq k}U_{n,1/l}\big)=\lim_{l \to \infty}\lambda_d\big(\bigcap_{n\geq k}U_{n,1/l}\big)\leq \lim_{l \to \infty}\lambda_d(U_{n_{k,l},1/l})=0$, we find that $C=\{\liminf_n f_n>0\}$ is a countable union of null set and hence $\lambda_d(C)=0$, a contradiction to the assumption $\dim C=d$. This shows the claim.

Denote $M_n:=\sup_{x \in \R^d} f_n(x), \epsilon_n\in \mathrm{Arg}\max f_n(x).$
Without loss of generality we assume $M_n\geq \frac{\epsilon_0}{(1+\kappa_s)^{1/s}}$ where $\kappa_s=(1/2)^s-1>0$, and we set $\lambda_n:=\frac{\kappa_s M_n^s}{\epsilon_0^s-M_n^s}\in [0,1]$. Now for $x \in U_{n,\epsilon_0}$, by convexity of $f_n^s$ we have
\[
f_n^s\left(\epsilon_n+\lambda_n(x-\epsilon_n)\right)\leq \lambda_n f_n^s(x)+(1-\lambda_n)f_n^s(\epsilon_n) \leq \lambda_n \epsilon_0^s+(1-\lambda_n)M_n^s=(M_n/2)^s.
\]
This implies $f_n(x)\geq M_n/2:=\Omega_n,$ for all $x \in V_{n,\epsilon_0}:=\{\epsilon_n+\lambda_n(x-\epsilon_n):x \in U_{n,\epsilon_0}\}$. Hence $V_{n,\epsilon_0}\subset U_{n,\Omega_n}$ and therefore $\lambda_d(V_{n,\epsilon_0})=\lambda_d(U_{n,\epsilon_0})\lambda_n^d$, thus
\[\lambda_d(U_{n,\Omega_n})\geq \lambda_d(V_{n,\epsilon_0})=\lambda_d(U_{n,\epsilon_0})\lambda_n^d\geq \epsilon_0\lambda_n^d,\]
holds for all $n\geq n_0$. On the other hand,
\[1=\int f_n\geq \Omega_n \lambda_d(U_{n,\Omega_n})\geq \Omega_n \epsilon_0\lambda_n^d,\]
and suppose the contrary that $M_n \to \infty$ as $n \to \infty$, then
\[1\geq\Omega_n \epsilon_0\lambda_n^d=\frac{\epsilon_0\kappa_s^d}{2(\epsilon_0^s-M_n^s)^d}M_n^{1+sd}\geq c M_n^{1+sd}\to \infty,\quad n \to \infty,\]
since $1+sd>0$ by assumption $-1/d<s<0$. Here $c=\frac{\epsilon_0^{1-sd}\kappa_s^d}{2}$. This gives a contradiction and the proof is complete.
\end{proof}

\begin{proof}[Proof of Theorem \ref{limitchar}]
We only have to show $\nu$ is absolutely continuous with respect to $\lambda_d$. To this end, for given $\epsilon>0$, choose $\delta=\epsilon/2M$, where $M:=\sup_{n}\pnorm{f_n}{\infty}<\infty$ by virtue of Lemma \ref{unifbound}. Now for Borel set $A\subset \R^d$ with $\lambda_d(A)\leq \delta$, we can take an open $A'\supset A$ such that $\lambda_d(A')\leq 2\delta$ by the regularity of Lebesgue measure. Then
\[\nu(A)\leq \nu(A')\leq \liminf_{n \to \infty}\nu_n(A')=\liminf_{n \to \infty}\int_{A'}f_n\leq 2\delta M=\epsilon,\]
as desired.
\end{proof}

\begin{proof}[Proof of Lemma \ref{weakconvbound}]
Let $g_n=f_n^s$ and $g=f^s$. Without loss of generality we assume $0 \in \intdom{g}$, and choose $\eta>0$ small enough such that $B_\eta:=\overline{B}(0,\eta)\subset \intdom{g}$. By the Lemma \ref{convlb}, we know there exists $a>0, R>0$ such that $\frac{g(x)-g(0)}{\pnorm{x}{}}\geq a,$
holds for all $\pnorm{x}{}\geq {R \over 2}$. Now we claim that there exists $n_0 \in \N$ such that $\frac{g_n(x)-g_n(0)}{\pnorm{x}{}}\geq {a \over 8},$
holds for all $\pnorm{x}{}\geq R$ and $n\geq n_0$. Note for each $n \in \N$, by convexity of $g_n(\cdot)$, we know that for fixed $x \in \R^d$, the quantity $\frac{g_n(\lambda x)-g_n(0)}{\pnorm{\lambda x}{}}$ is non-decreasing in $\lambda$, so we only have to show the claim for $\pnorm{x}{}=R$ and $n_0\geq n$. Suppose the contrary, then we can find a subsequence $\{g_{n(k)}\}$ and $\pnorm{x_{n(k)}}{}=R$ such that
$\frac{g_{n(k)}(x_{n(k)})-g_{n(k)}(0)}{\pnorm{x_{n(k)}}{}}<{a \over 8}.$
For simplicity of notation we think of $\{g_{n}\},\{x_{n}\}$ as $\{g_{n(k)}\},\{x_{n(k)}\}$. Now define $ A_n:=\mathrm{conv}(\{x_n,B_\eta\}); B_n:=\{y \in \R^d:\pnorm{y-x_n}{}\leq R/2\}; C_n:=A_n \cap B_n.$
By reducing $\eta>0$ if necessary, we may assume $B_\eta\cap B_n = \emptyset$. It is easy to see $C_n$ is convex and $\lambda_d(C_n)=\lambda_0$ is a constant independent of $n \in \N$. By Lemma \ref{pwconv}, we know that $g_n \to_{a.e.} g$ on $B_\eta$, and hence $\sup_{x \in B_\eta} \abs{g_n(x)-g(x)}\to 0(n \to \infty)$ by Theorem 10.8, \cite{rockafellar1997convex}. By further reducing $\eta>0$ if necessary, we may assume
$g_n(y)\leq g(0)+{aR \over 8},$
holds for all $y \in B_\eta$ and $n \in \N$. Now for any $x^\ast \in C_n$, write $x^\ast=\lambda x_n+(1-\lambda)y$, by noting $R/2 \leq \pnorm{x^\ast}{}\leq R$ and convexity of $g_n$, we get
\begin{equation*}
\begin{split}
\frac{g_n(x^\ast)-g_n(0)}{\pnorm{x^\ast}{}} &\leq \frac{\lambda g_n(x_n)+(1-\lambda)g_n(y)-g_n(0)}{\pnorm{x^\ast}{}}\\
&= \lambda\cdot\frac{g_n(x_n)-g_n(0)}{\pnorm{x_n}{}}\cdot\frac{\pnorm{x_n}{}}{\pnorm{x^\ast}{}}+(1-\lambda)\frac{g_n(y)-g_n(0)}{\pnorm{x^\ast}{}}\\
&\leq \lambda\cdot {a \over 8}{R \over {R/2}}+(1-\lambda)\frac{aR/8}{R/2}={a \over 4}.\\
\end{split}
\end{equation*}
This gives rise to
\begin{equation*}
\begin{split}
\liminf_{n \to \infty}\int_{C_n}(f_n-f)&\geq \liminf_{n \to \infty} \lambda_0\big((aR/4+g_n(0))^{1/s}-(aR/2+g(0))^{1/s}\big)\\
&= \lambda_0\big((aR/4+g(0))^{1/s}-(aR/2+g(0))^{1/s}\big)>0,
\end{split}
\end{equation*}
which is a contradiction to Lemma \ref{unifconv}. This establishes our claim. Now by Lemma \ref{pwconv}, we find that the set $\{\liminf_n f_n(\cdot)>0\}$ is full-dimensional, and hence by Lemma \ref{unifbound} we conclude $g_n(\cdot)$ is uniformly bounded away from zero. Also note by Lemma \ref{generalposition} we find $g(\cdot)$ must be bounded away from zero, which gives the desired assertion. 
\end{proof}

Before the proof of Theorem \ref{localconv}, we first state some useful lemmas that give good control of tails with local 
information of the $s$-concave densities; the proof can be found in Section \ref{sec:proofs_tail}.
\begin{lemma}\label{tailsconcave}
	Let $x_0,\ldots,x_d$ be $d+1$ points in $\R^d$ such that its convex hull 
	$\Delta=\mathrm{conv}(\{x_0,\ldots,x_d\})$ is non-void. 
	If $f(y)\leq \min_j\big({1 \over d}\sum_{i\neq j}f^s(x_i)\big)^{1/s}$, then
	\[
	f(y) \leq f_\mathrm{max}\bigg(1-{d \over r}+{d \over r}f_\mathrm{min} C(1+\pnorm{y}{}^2)^{1/2}\bigg)^{-r}.
	\]
	Here the constant $C=\lambda_d(\Delta)(d+1)^{-1/2}\sigma_{\mathrm{max}}(X)^{-1}$ where 	
	$X=
	\begin{pmatrix}
	x_0 &\ldots &x_d\\
	1 &\ldots & 1\\
	\end{pmatrix}
	$
	and $f_\mathrm{min}:=\min_{0\leq j\leq d}f(x_j),f_\mathrm{max}:=\max_{0\leq j\leq d}f(x_j)$.
\end{lemma}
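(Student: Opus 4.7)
The plan is to pass to the convex function $g := f^s$: the hypothesis becomes $g(y) \geq \max_j \tfrac{1}{d}\sum_{i\neq j} g(x_i)$, and the conclusion rewrites, after raising both sides to the $s$-th power (which reverses the inequality because $s<0$), as
$$g(y) \geq g_{\min}\Bigl(1 - \tfrac{d}{r} + \tfrac{d}{r}\, f_{\min}\, C\, \sqrt{1+\|y\|^2}\Bigr),$$
with $g_{\min} := f_{\max}^s$ and $r = -1/s$. The task is therefore to establish a lower bound on the convex function $g$ at $y$ that is linear in $\sqrt{1 + \|y\|^2}$.

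First, I would introduce barycentric coordinates of $y$ with respect to the general-position simplex: $y = \sum_i \lambda_i x_i$ with $\sum_i \lambda_i = 1$, uniquely determined by $X \lambda = (y,1)^\top$. From the singular-value inequality $\|X\lambda\|_2 \leq \sigma_{\max}(X)\|\lambda\|_2$, one obtains $\|\lambda\|_2 \geq \sqrt{1+\|y\|^2}/\sigma_{\max}(X)$, and hence via $\|\lambda\|_\infty \geq \|\lambda\|_2/\sqrt{d+1}$ at least one coordinate $\lambda_{j^\ast}$ satisfies $|\lambda_{j^\ast}| \geq \sqrt{1+\|y\|^2}/\bigl(\sqrt{d+1}\,\sigma_{\max}(X)\bigr)$. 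The $\lambda_d(\Delta)$ factor appearing in the constant $C$ enters through the Cramer's-rule identity $|\lambda_j|\,\lambda_d(\Delta) = |\det X^{(y)}_j|/d!$ (with $X^{(y)}_j$ the matrix obtained by replacing the $j$-th column of $X$ by $(y,1)^\top$), which sharpens the purely singular-value estimate into a simplex-volume ratio.

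Second, I would exploit the hypothesis together with convexity of $g$ via a one-dimensional $s$-concavity argument along a carefully chosen line. Choosing $j^\ast$ so that $f(x_{j^\ast}) = f_{\max}$ (equivalently $g(x_{j^\ast}) = g_{\min}$), the segment from $x_{j^\ast}$ to $y$ exits the simplex $\Delta$ at a point $p$ on the opposite face $\Delta_{j^\ast} = \mathrm{conv}\{x_i : i \neq j^\ast\}$, so that $p = (1-\theta)x_{j^\ast} + \theta y$ for some $\theta \in (0,1]$ whose reciprocal is proportional to $|\lambda_{j^\ast}|$. Applying the two-point convexity of $g$ to this segment yields
$$g(y) \geq \frac{g(p) - (1-\theta)\,g_{\min}}{\theta} \;=\; g_{\min} + \frac{g(p) - g_{\min}}{\theta}.$$
The hypothesis $g(y) \geq \tfrac{1}{d}\sum_{i \neq j^\ast} g(x_i) \geq g(z_{j^\ast})$ (using convexity of $g$ applied at the face barycenter $z_{j^\ast} = \tfrac{1}{d}\sum_{i \neq j^\ast} x_i$) supplies the needed lower bound on $g(p) - g_{\min}$ with a constant involving $f_{\min}$, by comparing $p$ to $z_{j^\ast}$ via the convexity of $g$ restricted to $\Delta_{j^\ast}$. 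Combining this with the $1/\theta$ estimate from step one produces the linear growth in $\sqrt{1+\|y\|^2}$ with multiplicative constant $C = \lambda_d(\Delta)(d+1)^{-1/2}/\sigma_{\max}(X)$.

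The main obstacle will be the last stage: translating the hypothesis into a sharp lower bound on $g(p) - g_{\min}$ whose constant is precisely $f_{\min}$ (and not merely some power of the vertex values), and tying it cleanly to the barycentric-coordinate estimate so that the product of the two independent estimates produces the stated $C$. Once this compensation between the vertex-value hypothesis and the linear-algebraic barycentric bound is carried out, the final inequality follows by algebraic rearrangement and raising both sides back to the $(1/s)$-th power.
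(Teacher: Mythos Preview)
Your barycentric--singular-value computation in the first step is correct and is exactly what the paper does: $\lambda = X^{-1}\binom{y}{1}$, $\|\lambda\|_\infty \geq (d+1)^{-1/2}\|\lambda\|_2 \geq (d+1)^{-1/2}\sigma_{\max}(X)^{-1}\sqrt{1+\|y\|^2}$, and $|\lambda_j|\,\lambda_d(\Delta) = \lambda_d\bigl(\mathrm{conv}(\{x_i : i\neq j\}\cup\{y\})\bigr)$.

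The second step, however, cannot succeed as written, and the obstruction is structural rather than a matter of bookkeeping. Observe that the inequality you are trying to prove is \emph{not} invariant under the scaling $f \mapsto cf$ (which preserves $s$-concavity): the left side scales like $c$, but the right side scales like $c\,(1+\text{const}\cdot c)^{-r}$, which for $r>1$ tends to $0$ as $c\to\infty$. Hence no argument that uses only the convexity of $g=f^s$ --- a scale-invariant property --- can possibly yield the stated bound. Your segment-convexity scheme uses nothing beyond convexity of $g$ and the (scale-invariant) hypothesis, so it is missing an ingredient.

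What is missing is the constraint that $f$ is a \emph{probability density}. The paper supplies this through an auxiliary lemma (Lemma~\ref{volumeratiobound}): with $\Delta_j(y)=\mathrm{conv}(\{x_i:i\neq j\}\cup\{y\})$ and $\bar g_j = \tfrac{1}{d}\sum_{i\neq j} g(x_i)$, one integrates $f=g^{-r}$ over $\Delta_j(y)$, applies Jensen's inequality to the inner barycentric average, and bounds the resulting one-dimensional integral from below to obtain
\[
\nu(\Delta_j(y)) \;\geq\; \lambda_d(\Delta_j(y))\,\bar g_j^{-r}\,\frac{d}{d + r\bigl(g(y)/\bar g_j - 1\bigr)}.
\]
Only now does one use $\nu(\Delta_j(y))\leq 1$ to solve for $g(y)$, and then choose $j$ to maximize $\lambda_d(\Delta_j(y))$ via your step-one estimate. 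The bounds $f_{\min}\leq \bar g_j^{-r}\leq f_{\max}$ give the final form. This is where the non-scale-invariant constant $f_{\min}$ legitimately enters.

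There are also secondary gaps in your outline that you would hit even before the scaling obstruction: you choose $j^\ast$ once so that $|\lambda_{j^\ast}|$ is maximal and again so that $f(x_{j^\ast})=f_{\max}$, and these need not coincide; and the lower bound on $g(p)-g_{\min}$ you sketch (``comparing $p$ to the face barycenter via convexity'') goes the wrong way, since convexity gives \emph{upper} bounds on $g$ at interior points of a face, not lower bounds.
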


\begin{lemma}\label{shiftballcontrol}
	Let $\nu$ be a probability measure with $s$-concave density $f$. 
	Suppose that $B(0,\delta)\subset \intdom{f}$ for some $\delta>0$. Then for any $y \in \R^d$,
	\[
	\sup_{x \in B(y,\delta_t)}f(x)\leq J_0\left({1 \over t}\left(\left(\frac{\nu(B(ty,\delta_t))}{J_0\lambda_d(B(ty,\delta_t))}\right)^{-1/r}-(1-t)\right)\right)^{-r},
	\]
	where $J_0:=\inf_{v \in B(0,\delta)} f(v)$ and $\delta_t=\delta\frac{1-t}{1+t}$.
\end{lemma}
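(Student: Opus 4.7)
The plan is to exploit $s$-concavity after transforming to the convex function $g := f^s$; the lower bound $f \geq J_0$ on $B(0,\delta)$ becomes the upper bound $g \leq J_0^s$ there, since $s<0$ reverses the direction.

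First I would set up a convex combination. For any fixed $x \in B(y,\delta_t)$ and any $u \in B(ty,\delta_t)$, define $v := (u-tx)/(1-t)$, so that $u = tx + (1-t)v$. A triangle inequality using the precise choice $\delta_t = \delta(1-t)/(1+t)$ shows
\[
\|v\| \leq \frac{\|u-ty\| + t\|x-y\|}{1-t} < \frac{(1+t)\delta_t}{1-t} = \delta,
\]
so $v \in B(0,\delta)$. Convexity of $g$ then gives $g(u) \leq tg(x) + (1-t)J_0^s$, which (after raising to the power $1/s<0$, reversing the inequality) becomes a pointwise lower bound
\[
f(u) \geq \bigl(t f^s(x) + (1-t)J_0^s\bigr)^{1/s}
\]
valid uniformly for $u \in B(ty,\delta_t)$ with $x$ held fixed.

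Second, I would integrate this inequality in $u$ over $B(ty,\delta_t)$, divide by $\lambda_d(B(ty,\delta_t))$, raise to the $s$-th power a second time to isolate $f^s(x)$, and finally apply $(\cdot)^{1/s}$. Rewriting $1/s = -r$ and factoring $J_0^{-1/r}$ out of the resulting bracket (pulling $(J_0^{-1/r})^{-r} = J_0$ outside the outer exponent) produces the claimed bound. Since the right-hand side is independent of $x$, taking the supremum over $x \in B(y,\delta_t)$ finishes the proof.

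There is no real conceptual obstacle here: the key observation is that the scaling $\delta_t = \delta(1-t)/(1+t)$ is precisely what makes the decomposition $u = tx + (1-t)v$ admissible for all pairs $(x,u) \in B(y,\delta_t) \times B(ty,\delta_t)$ simultaneously, placing $v$ inside $B(0,\delta)$ where the $J_0$ bound is available. The only source of potential error to guard against is the bookkeeping of inequality reversals under the negative-exponent maps $a \mapsto a^s$ and $a \mapsto a^{1/s}$, which enter twice in the derivation.
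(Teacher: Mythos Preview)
Your proposal is correct and essentially identical to the paper's proof: both decompose $u = tx + (1-t)v$, verify $v \in B(0,\delta)$ via the same triangle-inequality computation exploiting $\delta_t = \delta(1-t)/(1+t)$, apply $s$-concavity to get $f(u) \geq \bigl(tf(x)^s + (1-t)J_0^s\bigr)^{1/s}$, average over $u \in B(ty,\delta_t)$, and solve for $f(x)$. The only cosmetic difference is that you phrase the middle step via convexity of $g = f^s$ whereas the paper invokes $s$-concavity of $f$ directly.
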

Now we are in position to prove Theorem \ref{localconv}.
\begin{proof}[Proof of Theorem \ref{localconv}]
	That the sequence $\{ f_n \}_{n \in \N}$ converges uniformly on any compact subset in 
	$\mbox{int} (\mbox{dom} (f))$ follows directly from Lemma 3.2 and Theorem 10.8 \cite{rockafellar1997convex}.  
	Now we show that if $f$ is continuous at $y \in \R^d$ with $f(y) = 0$, then for any $\eta > 0$
	there exists $\delta = \delta (y,\eta)$ such that
	\begin{equation}\label{localconv:step2}
	\limsup_{n \to \infty}\sup_{x \in B(y,\delta(y,\eta))}f_n(x)\leq \eta.
	\end{equation}
	Assume without loss of generality that $B(0,\delta_0)\subset \intdom{f}$ 
	for some $\delta_0>0$. Let $J_0:=\inf_{x \in B(0,\delta_0)}f(x)$. 
	Then uniform convergence of $\{f_n\}$ to $f$ over $B(0,\delta_0)$ entails that
	\[
	\liminf_{n \to \infty}\inf_{x \in B(0,\delta_0)}f_n(x)\geq J_0.
	\]
	Hence with $\delta_t=\delta_0\frac{1-t}{1+t}$, it follows from Lemma \ref{shiftballcontrol} that
	\begin{equation*}
	\begin{split}
	\limsup_{n \to \infty}\sup_{x \in B(y,\delta_t)}f_n(x)
	&\leq J_0\bigg({1 \over t}\bigg(\left(\frac{\nu(B(ty,\delta_t))}{J_0\lambda_d(B(ty,\delta_t))}\right)^{-1/r}-(1-t)\bigg)\bigg)^{-r}\\
	&\leq J_0\bigg(\frac{J_0^{1/r}\big(\sup_{x \in B(ty,\delta_t)}f(x)\big)^{-1/r}-(1-t)}{t}\bigg)^{-r} \to 0\\
	\end{split}
	\end{equation*}
	as $t \nearrow 1$. This completes the proof for (\ref{localconv:step2}). So far we have shown that
	\[\lim_{n \to \infty}\sup_{x \in S\cap B(0,\rho)}\abs{f_n(x)-f(x)}=0
	\]
	holds for every $\rho\geq 0$, where $S$ is the closed set contained in the 
	continuity points of $f$. Our goal is to let $\rho \to \infty$ and conclude. 
	Let $\Delta=\mathrm{conv}(\{x_0,\ldots,x_d\})$ be a non-void simplex with 
	$x_0,\ldots,x_d \in \intdom{f}$. Note first by a closer look at the proof of Lemma \ref{weakconvbound},
	$f_n(x)\vee f(x)\leq \big((a\pnorm{x}{}-b)\big)_+^{1/s}$
	holds for all $x \in \R^d$ with some $a,b>0$. 
	Let $\rho_0:=\inf\{\rho\geq 0 : \big(a\rho-b)^{1/s}\leq f_\mathrm{min}/2\}$ 
	where $f_\mathrm{min}:=\min_{0\leq j\leq d}f(x_i)>0$. Then
	\begin{eqnarray*}
		\lefteqn{\{x \in \R^d:\pnorm{x}{}\geq \rho_0\}
			\subset \bigcap_{n\geq 1}\{f_n \leq f_\mathrm{min}/2\}\bigcap \{f\leq f_\mathrm{min}/2\} }\\
		&\subset & \bigcap_{n\geq n_0} \{f_n \leq (f_n)_\mathrm{min}\}\bigcap \{f\leq f_\mathrm{min}\}\\
		&\subset & \bigcap_{n\geq n_0} \{f_n \leq \min_j\big({1 \over d}
		\sum_{i\neq j}f_n^s(x_i)\big)^{1/s}\}\bigcap \{f\leq \min_j\big({1 \over d}\sum_{i\neq j}f^s(x_i)\big)^{1/s}\},
	\end{eqnarray*}
	where $n_0 \in \N$ is a large constant. The second inclusion follows from the fact that
	$\lim_{n \to \infty}f_n(x_i)=f(x_i)$ holds for $i=0,\ldots,d$. By Lemma \ref{tailsconcave} we conclude that
	\begin{equation*}
	\begin{split}
	&\limsup_{n \to \infty}\sup_{x:\pnorm{x}{}\geq \rho\vee\rho_0}\big(1+\pnorm{x}{})^\kappa \big(f_n(x)\vee f(x)\big)\\
	\leq&\sup_{x:\pnorm{x}{}\geq \rho\vee\rho_0}f_\mathrm{max}\big(1+\pnorm{x}{})^\kappa\bigg(1-{d \over r}
	+{d \over r}f_\mathrm{min}C\big(1+\pnorm{x}{}^2\big)^{1/2}\bigg)^{-r}\to 0,
	\end{split}
	\end{equation*}
	as $\rho \to \infty$. This completes the proof.
\end{proof}

\begin{proof}[Proof of Theorem \ref{thm:conv_derivatives}]
	Since $\nabla_\xi f_n(x)=-rg_n(x)^{1/s-1}\nabla_\xi g_n(x)$, 
	\begin{equation*}
	\begin{split}
	&\qquad\abs{\nabla_\xi f_n(x)-\nabla_\xi f(x)}\\
	&=r\abs{g_n(x)^{1/s}\nabla_\xi g_n(x)-g(x)^{1/s}\nabla_\xi g(x)}\\
	&\leq r\bigg(f_n(x)\abs{\nabla_\xi g_n(x)-\nabla_\xi g(x)}+\abs{f_n(x)-f(x)}\abs{\nabla_\xi g(x)}\bigg)\\
	&\leq 2r\sup_{x \in T}\abs{f(x)}\abs{\nabla_\xi g_n(x)-\nabla_\xi g(x)}
	+r\sup_{x \in T}\abs{f_n(x)-f(x)}\sup_{x \in T}\pnorm{\nabla g(x)}{2}
	\end{split}
	\end{equation*}
	holds for $n$ large enough by Theorem \ref{localconv}. By Theorem 23.4 in 
	\cite{rockafellar1997convex}, $\nabla_\xi g_n(x)=\tau_x^T \xi$ for some 
	$\tau_x \in \partial g_n(x)$ since $\partial g_n(x)$ is a closed set. 
	Thus the first term above is further bounded by 
	\[2r\sup_{x \in T}\abs{f(x)}\sup_{x \in T,\tau \in \partial g_n(x)}\pnorm{\tau-\nabla g(x)}{2},\]
	which vanishes as $n \to \infty$ in view of Lemma 3.10 in \cite{MR2850215}. 
	Note that $\nabla g(\cdot)$ is continuous on $T$ by Corollary 25.5.1 in 
	\cite{rockafellar1997convex}, and hence $\sup_{x \in T}\pnorm{\nabla g(x)}{2}<\infty$. 
	Now it is easy to see that the second term also vanishes as $n \to \infty$ by virtue of Theorem \ref{localconv}.
\end{proof}

\subsection{Proofs for Section 4}\label{appendix:supp_proof_4}

Before we prove Theorem \ref{thm:limittheory}, we will need the following tightness result.
\begin{theorem}\label{uniquetight}
	We have the following conclusions.
	\begin{enumerate}
		\item For fixed $K>0$, the modified local process 
		$\mathbb{Y}^{\mathrm{locmod}}_n(\cdot)$ converges weakly to a drifted integrated Gaussian process on $C[-K,K]$:
		\[
		\mathbb{Y}^{\mathrm{locmod}}_n(t) \to_d \frac{1}{\sqrt{f_0(x_0)}}\int_0^t W(s)\ \d{s}-\frac{rg_0^{(k)}(x_0)}{g_0(x_0)(k+2)!}t^{k+2},
		\]
		where $W(\cdot)$ is the standard two-sided Brownian motion starting from $0$ on $\R$.
		\item The localized processes satisfy
		\[
		\mathbb{Y}^{\mathrm{locmod}}_n(t)-\mathbb{H}^{\mathrm{locmod}}_n(t)\geq 0,
		\]
		with equality attained for all $t$ such that $x_0+tn^{-1/(2k+1)} \in \mathcal{S}(\hat{g}_n)$.
		\item The sequences $\{\hat{A}_n\}$ and $\{\hat{B}_n\}$ are tight.
	\end{enumerate}
\end{theorem}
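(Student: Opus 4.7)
The plan is to prove the three assertions in the order (2), (1), (3), which reflects their natural order of dependence. By Theorem \ref{secondintegralchar}, the quantity $J(t):=\int_{X_{(1)}}^t(\hat{F}_n-\mathbb{F}_n)(v)\,\d{v}$ satisfies $J(t)\le 0$ with equality on $\mathcal{S}_n(\hat{g}_n)$; moreover $\hat{F}_n-\mathbb{F}_n\equiv 0$ on $(-\infty,X_{(1)})$, so that $\hat{H}_n(x_0)-\mathbb{H}_n(x_0)=-J(x_0)$ and hence $-\hat{B}_n=-r_n J(x_0)$. A short algebraic computation using $r_n\cdot n^{-1/(2k+1)}=n^{(k+1)/(2k+1)}$ shows that the boundary contribution at $v=x_0$ in $\mathbb{Y}^{\mathrm{loc}}_n-\mathbb{H}^{\mathrm{loc}}_n$ cancels exactly against $-\hat{A}_n t$, leaving
\[
\mathbb{Y}^{\mathrm{loc}}_n(t)-\mathbb{H}^{\mathrm{loc}}_n(t)=-r_n\,J\bigl(x_0+n^{-1/(2k+1)}t\bigr)\ge 0,
\]
with equality when $x_0+n^{-1/(2k+1)}t\in\mathcal{S}_n(\hat{g}_n)$. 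Since the $\hat{\Psi}_{k,n,2}$-correction subtracted from each side is identical, this comparison transfers verbatim from ``loc'' to ``locmod'', establishing (2).

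For (1), I will split $\mathbb{Y}^{\mathrm{loc}}_n(t)$ into a stochastic part $\Sigma_n(t):=r_n\int_{\bm{l}_{n,x_0}}\!\bigl[(\mathbb{F}_n-F_0)(v)-(\mathbb{F}_n-F_0)(x_0)\bigr]\,\d{v}$ and a deterministic remainder $\Delta_n(t)$ depending only on $F_0$ and the degree-$(k-1)$ Taylor polynomial of $f_0$ at $x_0$. Substituting $s=n^{1/(2k+1)}(v-x_0)$ and applying the Hungarian/KMT strong approximation to the centered empirical process at scale $n^{-1/(2k+1)}$, a direct check of exponents (in which $r_n\cdot n^{-1/(2k+1)}\cdot n^{-1/2}\cdot n^{-1/(2(2k+1))}=1$) yields $\Sigma_n\to_d \sqrt{f_0(x_0)}\int_0^t W(s)\,\d{s}$ in $C[-K,K]$. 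A Taylor expansion of $F_0$ to order $k+1$ about $x_0$, valid by (A3)--(A4), gives $\Delta_n(t)\to f_0^{(k)}(x_0)\,t^{k+2}/(k+2)!$. The local uniform convergence $\hat{f}_n\to f_0$ and $\hat{g}_n\to g_0$ from Corollary \ref{cor:on-the-model-consistency}, combined with Taylor expansion of $f_0$ and $g_0$ about $x_0$ (noting $g_0^{(j)}(x_0)=0$ for $2\le j\le k-1$ by (A4)), will then give
\[
r_n\int_{\bm{l}_{n,x_0}}\int_{x_0}^v \hat{\Psi}_{k,n,2}(u)\,\d{u}\,\d{v}\longrightarrow \Bigl[\frac{f_0^{(k)}(x_0)}{f_0(x_0)}+\frac{r\,g_0^{(k)}(x_0)}{g_0(x_0)}\Bigr]\frac{t^{k+2}}{(k+2)!}.
\]
Summing the three contributions and dividing by $f_0(x_0)$, the $f_0^{(k)}(x_0)/f_0(x_0)$-pieces cancel, leaving the drift $-r\,g_0^{(k)}(x_0)\,t^{k+2}/(g_0(x_0)(k+2)!)$ and confirming the stated limit in $C[-K,K]$; tightness follows from standard continuity-of-iterated-integral arguments since the integrands converge uniformly on compacts.

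For (3), I will bootstrap the preceding results. Evaluating the equality case of (2) at any rescaled knot $t^{\ast}$ of $\hat{g}_n$ (i.e.\ $x_0+n^{-1/(2k+1)}t^{\ast}\in\mathcal{S}_n(\hat{g}_n)$) gives $\mathbb{H}^{\mathrm{locmod}}_n(t^{\ast})=\mathbb{Y}^{\mathrm{locmod}}_n(t^{\ast})$. Writing out the definition of $\mathbb{H}^{\mathrm{locmod}}_n$ at two such knots $t^{-}_n<t^{+}_n$, all ingredients except the linear-in-$t$ term $(\hat{A}_n t+\hat{B}_n)/f_0(x_0)$ are tight under (1) and the consistency of $\hat{F}_n$. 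One then solves a $2\times 2$ linear system in $(\hat{A}_n,\hat{B}_n)$ whose coefficient matrix is non-singular whenever $t^{+}_n-t^{-}_n$ is bounded below, yielding the tightness of $(\hat{A}_n,\hat{B}_n)$ once such a pair of knots is produced with $|t^{\pm}_n|=O_p(1)$ and $t^{+}_n-t^{-}_n$ bounded away from $0$ in probability. This knot-localization step is the main obstacle of the proof: the argument will rely on the strict local curvature $g_0^{(k)}(x_0)\neq 0$ from (A4), which together with the pointwise consistency of $\hat{g}_n$ at $x_0$ will force $\hat{g}_n$ to fail linearity on any window of length much larger than $n^{-1/(2k+1)}$ around $x_0$, and hence to admit knots inside any such window. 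The mechanism mirrors the knot-localization in Lemma~4.1 of \cite{balabdaoui2009limit} for the log-concave MLE.
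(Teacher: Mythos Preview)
Your argument for (2) is correct and matches the paper. The key problems are in the logical ordering and in the precise input needed for (1).

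\textbf{The $\hat{\Psi}_{k,n,2}$ limit needs a rate, not just consistency.} In your plan for (1) you claim that local uniform convergence $\hat f_n\to f_0$, $\hat g_n\to g_0$ together with Taylor expansion gives the stated limit for $r_n\iint\hat\Psi_{k,n,2}$. It does not. Writing $v=\hat g_n(u)-g_0(x_0)$ and $w=g_0'(x_0)(u-x_0)$, the identity $\hat f_n=\hat g_n^{-r}$ and Lemma~\ref{derivative} give the exact reduction
\[
\hat\Psi_{k,n,2}(u)=\sum_{j\ge 2}\binom{-r}{j}\Big(\frac{v}{g_0(x_0)}\Big)^{j}-\sum_{j=2}^{k-1}\binom{-r}{j}\Big(\frac{w}{g_0(x_0)}\Big)^{j},
\]
which is precisely the paper's expression. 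The deterministic part produces the drift; the random remainder involves $v^j-w^j$ for $j\ge 2$, and with only $v-w=o_p(1)$ you get, after $r_n\iint$, a contribution of order $o_p\!\big(r_ns_n^{\,3}\big)=o_p\!\big(n^{(k-1)/(2k+1)}\big)$, which diverges for every $k\ge 2$. What is actually required is the local rate
\[
\sup_{|t|\le M}\bigl|\hat g_n(x_0+s_nt)-g_0(x_0)-s_nt\,g_0'(x_0)\bigr|=O_p(s_n^{\,k}),
\]
i.e.\ Lemma~\ref{basiclem}, and that lemma rests on the knot--gap bound $\tau_n^{+}-\tau_n^{-}=O_p(s_n)$ of Theorem~\ref{gapthm}. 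In the paper the order is therefore: first prove the gap theorem, then Lemma~\ref{basiclem}, and only then compute the $\hat\Psi_{k,n,2}$ limit for (1). Your plan inverts this, treating knot-localization as a task internal to (3), which makes the argument circular.

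\textbf{Tightness of $\hat A_n,\hat B_n$.} Your proposed $2\times 2$ system evaluated at two rescaled knots $t_n^{-}<t_n^{+}$ differs from the paper's approach and carries an extra hypothesis you do not secure: invertibility with bounded inverse requires $t_n^{+}-t_n^{-}$ to stay away from $0$ in probability, but Theorem~\ref{gapthm} only gives $t_n^{+}-t_n^{-}=O_p(1)$, not a lower bound. Moreover, to call the non-affine part of $\mathbb H_n^{\mathrm{locmod}}$ ``tight'' you again need Lemma~\ref{basiclem}. The paper avoids both issues by using a \emph{single} nearby knot $\tau$ together with Corollary~\ref{distchar}, which gives $|\hat F_n(\tau)-\mathbb F_n(\tau)|\le 1/n$; then $|\hat A_n|$ is split into three pieces controlled respectively by Lemma~\ref{en}, a Taylor remainder, and an empirical-process modulus bound over indicator classes, and $\hat B_n$ is handled analogously. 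If you want to keep your two-knot scheme you must additionally prove an anti-concentration statement for the rescaled knot spacing; otherwise, adopt the paper's one-knot route.
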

The above theorem includes everything necessary in order to apply the `invelope' argument roughly indicated in Section \ref{subsection:limit_distribution_theory}. For a proof of this technical result, 
we refer the reader to Section \ref{appendix:proof_tightness}. 
Here we will provide proofs for our main results.
\begin{proof}[Proof of Theorem \ref{thm:limittheory}]
	By the same tightness and uniqueness argument adopted in \cite{groeneboom2001estimation}, 
	\cite{balabdaoui2007estimation}, and \cite{balabdaoui2009limit}, we only have 
	to find the rescaling constants. To this end we denote $\mathbb{H}(\cdot)$,$\mathbb{Y}(\cdot)$ 
	the corresponding limit of $\mathbb{H}_n^{\mathrm{locmod}}(\cdot)$ and 
	$\mathbb{Y}_n^{\mathrm{locmod}}(\cdot)$ in the uniform topology on the 
	space $C[-K,K]$, and let $\mathbb{Y}(t)=\gamma_1Y_k(\gamma_2 t),$
	where by Theorem \ref{uniquetight}, we know that
	\[
	\mathbb{Y}(t)=\frac{1}{\sqrt{f_0(x_0)}}\int_0^t W(s)\ \d{s}-\frac{rg_0^{(k)}(x_0)}{g_0(x_0)(k+2)!}t^{k+2}.
	\]
	Let $a:=\big(f_0(x_0)\big)^{-1/2}$ and $b:=\frac{rg_0^{(k)}(x_0)}{g_0(x_0)(k+2)!}$, 
	then by rescaling property of Brownian motion, we find that 
	$ \gamma_1\gamma_2^{3/2}=a, \gamma_1\gamma_2^{k+2}=b $.
	Solving for $\gamma_1,\gamma_2$ yields
	\begin{equation}\label{ab}
	\gamma_1=a^{\frac{2k+4}{2k+1}}b^{-\frac{3}{2k+1}},\quad\gamma_2=a^{-\frac{2}{2k+1}}b^{\frac{2}{2k+1}}.
	\end{equation}
	On the other hand, by (\ref{rel1}), let $n \to \infty$, we find that
	\begin{equation}\label{rel2}
	\begin{split}
	\begin{pmatrix}
	n^{\frac{k}{2k+1}}\big(\hat{g}_n(x_0+s_nt)-g_0(x_0)-s_ntg_0'(x_0)\big)\\
	n^{\frac{k-1}{2k+1}}\big(\hat{g}_n'(x_0+s_nt)-g_0'(x_0)\big)
	\end{pmatrix}
	\to_d
	\begin{pmatrix}
	\frac{g_0(x_0)}{-r}\frac{\d{}^2}{\d{t^2}}\mathbb{H}(t)\\
	\frac{g_0(x_0)}{-r}\frac{\d{}^3}{\d{t^3}}\mathbb{H}(t)
	\end{pmatrix}
	\end{split}
	\end{equation}
	It is easy to see that 
	$\frac{\d{}^2}{\d{t^2}}\mathbb{H}(t)=\gamma_1\gamma_2^2\frac{\d{}^2}{\d{t^2}}H_k(\gamma_2t)$ 
	nd $\frac{\d{}^3}{\d{t^3}}\mathbb{H}(t)=\gamma_1\gamma_2^3\frac{\d{}^3}{\d{t^3}}H_k(\gamma_2t)$. 
	Now by substitution in (\ref{ab}) we get the conclusion by direct calculation and the delta method.
\end{proof}

\begin{proof}[Proof of Theorem \ref{thm:estimation_mode}]
	The proof is essentially the same as that of Theorem 3.6 \cite{balabdaoui2009limit}.
\end{proof}

\begin{lemma}\label{totalmass}
	Assume (A1)-(A4). Then
	\[\int_{-\infty}^\infty \tilde{f}_\epsilon(x)\ \d{x} = 1+\pi_k\frac{r g^{(k)}(m_0)}{g(m_0)^{r+1}}\epsilon^{k+1}+o(\epsilon^{k+1}),\]
	where
	\[
	\pi_k = \frac{1}{(k+1)!}\left[3^{k-1}(2k^2-4k+3)+2k^2-1\right].
	\]
\end{lemma}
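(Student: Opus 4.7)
The plan is to rewrite $\int \tilde{f}_\epsilon - 1$ as an integral of $\tilde{g}_\epsilon^{-r} - g^{-r}$ over the small window where $\tilde{g}_\epsilon$ differs from $g$, and then linearize the power $t\mapsto t^{-r}$ to reduce the problem to computing $\int(\tilde{g}_\epsilon - g)\,\d{x}$ explicitly via Taylor expansion of $g$ at the mode. Since $\tilde{g}_\epsilon \equiv g$ outside $[m_0-\epsilon c_\epsilon,\,m_0+\epsilon)$,
\[
\int \tilde{f}_\epsilon - 1 \;=\; \int_{m_0-\epsilon c_\epsilon}^{m_0+\epsilon} \bigl(\tilde{g}_\epsilon^{-r} - g^{-r}\bigr)\,\d{x}.
\]
Under (A3)--(A4) one has $g(x) = g(m_0) + O(\epsilon^k)$ on this window, and by construction $\tilde{g}_\epsilon - g = O(\epsilon^k)$ uniformly. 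A first-order expansion of $t\mapsto t^{-r}$ around $g(m_0)$ yields $\tilde{g}_\epsilon^{-r} - g^{-r} = -r\,g(m_0)^{-r-1}(\tilde{g}_\epsilon - g) + O(\epsilon^{2k})$, so that
\[
\int \tilde{f}_\epsilon - 1 \;=\; -\frac{r}{g(m_0)^{r+1}} \int_{m_0-\epsilon c_\epsilon}^{m_0+\epsilon} (\tilde{g}_\epsilon - g)\,\d{x} + o(\epsilon^{k+1}),
\]
using that the window has length $O(\epsilon)$ and $2k \ge k+2$ for $k\ge 2$.

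Second, I would split the remaining integral at $m_0-\epsilon$. On $[m_0-\epsilon,\,m_0+\epsilon]$, $\tilde{g}_\epsilon$ is the tangent line to $g$ at $m_0+\epsilon$; on $[m_0-\epsilon c_\epsilon,\,m_0-\epsilon]$, the tangent at $m_0-\epsilon c_\epsilon$. After the change of variables $x = m_0 + \epsilon u$, the expansion from (A4) --- which, because $g'(m_0) = g''(m_0) = \cdots = g^{(k-1)}(m_0) = 0$, simplifies to $g(m_0 + \epsilon u) = g(m_0) + \frac{g^{(k)}(m_0)}{k!}\epsilon^k u^k + o(\epsilon^k)$ and similarly for $g'$ --- turns each integrand into $\frac{g^{(k)}(m_0)\epsilon^k}{k!}$ times an explicit polynomial in $u$, which integrates to yield a leading term of order $\epsilon^{k+1}$ on each piece.

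Third, I would invoke $c_\epsilon = 3 + o(1)$, which the preceding text records from a Taylor expansion of the continuity condition at $m_0-\epsilon$. The $o(1)$ drift of $c_\epsilon$ only moves the endpoint by $o(\epsilon)$ across an integrand of size $O(\epsilon^k)$, contributing $o(\epsilon^{k+1})$ that is absorbed into the error. Replacing $c_\epsilon$ by $3$, the polynomial integrals reduce to elementary computations such as $\int_{-3}^{-1}u^k\,\d{u} = (-1)^{k+1}(1 - 3^{k+1})/(k+1)$ and $\int_{-1}^{1} u^k\,\d{u}$, together with $\int_{-3}^{-1}(u+3)\,\d{u} = 2$. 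Summing the contributions from the two subintervals and pulling out the common factor $\frac{g^{(k)}(m_0)\epsilon^{k+1}}{(k+1)!}$ produces a numerator that naturally groups into the $3^{k-1}$, $3^{k+1}$, and $k^2$ pieces; elementary algebra then puts the coefficient into the stated closed form $\pi_k = \frac{1}{(k+1)!}\bigl[3^{k-1}(2k^2-4k+3) + 2k^2 - 1\bigr]$.

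The main obstacle is purely bookkeeping: tracking the signs $(-1)^k$ and $(-1)^{k-1}$ that arise from evaluating $g$ and $g'$ at $m_0 - \epsilon c_\epsilon$, and carrying out the cancellation across the two subintervals needed to reach the compact closed form of $\pi_k$.
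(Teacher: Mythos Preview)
Your proposal is correct and follows essentially the same route as the paper's proof: write $\int\tilde f_\epsilon-1$ as the integral of $\tilde g_\epsilon^{-r}-g^{-r}$ over the perturbation window, linearize the power (the paper uses the binomial series but keeps only the $n=1$ term, which is your first-order Taylor step), split at $m_0-\epsilon$, Taylor-expand $g$ at $m_0$ using $g^{(j)}(m_0)=0$ for $1\le j\le k-1$, and evaluate with $c_\epsilon=3+o(1)$. The paper records the two pieces separately as $\alpha_k=\frac{1}{(k+1)!}\bigl[3^{k-1}(2k^2-4k+3)-1\bigr]$ and $\beta_k=\frac{2k^2}{(k+1)!}$, whose sum is your $\pi_k$; the only cosmetic difference is that the paper keeps $c_\epsilon$ symbolic in the integrand throughout rather than replacing it by $3$ up front, but either way the $o(1)$ drift contributes $o(\epsilon^{k+1})$ (note it enters the integrand as well as the endpoint, though the same estimate handles both).
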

\begin{proof}[Proof of Lemma \ref{totalmass}]
	This is straightforward calculation by Taylor expansion. Note that
	\begin{equation*}
	\begin{split}
	\int_{-\infty}^\infty \tilde{g}_\epsilon^{-r}(x)\ \d{x}&=\int_{-\infty}^\infty (\tilde{g}_\epsilon^{-r}(x)-g^{-r}(x))\ \d{x}+1\\
	&=\int_{m_0-c_\epsilon \epsilon}^{m_0-\epsilon}\bigg(\tilde{g}_\epsilon^{-r}(x)-g^{-r}(x)\bigg)\ \d{x}\\
	&\qquad+\int_{m_0-\epsilon}^{m_0+\epsilon}\bigg(\tilde{g}_\epsilon^{-r}(x)-g^{-r}(x)\bigg)\ \d{x}+1\\
	&:=I+II+1.
	\end{split}
	\end{equation*}
	For $y>x$, we have $x^{-r}-y^{-r}=\sum_{n\geq 1}^\infty\binom{-r}{n}(-1)^n (y-x)^ny^{-r-n}$. Now for the first term above, we continue our calculation of its leading term by noting
	\begin{equation}\label{eqn:diff_pertub_left}
	\begin{split}
	&\qquad g(x)-\tilde{g}_\epsilon(x)\\
	&= g(x)-g(m_0-c_\epsilon \epsilon)-(x-m_0+c_\epsilon \epsilon)g'(m_0-c_\epsilon \epsilon)\\
	&= g(m_0)+\frac{g^{(k)}(m_0)}{k!}(x-m_0)^k-\bigg[g(m_0)+\frac{g^{(k)}(m_0)}{k!}(-c_\epsilon \epsilon)^k\bigg]\\
	&\qquad\qquad -(x-m_0+c_\epsilon \epsilon)\frac{g^{(k)}(m_0)}{(k-1)!}(-c_\epsilon \epsilon)^{k-1}+\textrm{ higher order terms}\\
	&=\frac{g^{(k)}(m_0)}{k!}\bigg[(x-m_0)^k-c_\epsilon^k \epsilon^k + kc_\epsilon^{k-1}\epsilon^{k-1}(x-m_0+c_\epsilon \epsilon)\bigg]+\textrm{ higher order terms}.
	\end{split}
	\end{equation}
	Here we used the fact $k$ is an even number, as shown in Lemma \ref{lem:k_even}. Thus we have
	\begin{equation*}
	\begin{split}
	&\quad\textrm{leading term of I}\\
	&=\int_{m_0-c_\epsilon \epsilon}^{m_0-\epsilon}r\bigg(g(x)-g(m_0-c_\epsilon \epsilon)-(x-m_0+c_\epsilon \epsilon)g'(m_0-c_\epsilon \epsilon)\bigg)g(x)^{-r-1}\ \d{x}\\
	& = \frac{rg^{(k)}(m_0)}{k!g(m_0)^{r+1}}\int_{m_0-c_\epsilon \epsilon}^{m_0-\epsilon}\bigg[(x-m_0)^k-c_\epsilon^k \epsilon^k + kc_\epsilon^{k-1}\epsilon^{k-1}(x-m_0+c_\epsilon \epsilon)\bigg]\ \d{x}+o(\epsilon^{k+1})\\
	& = \alpha_k \frac{rg^{(k)}(m_0)}{g(m_0)^{r+1}} \epsilon^{k+1} +o(\epsilon^{k+1})
	\end{split}
	\end{equation*}
	Here
	\[\alpha_k = \frac{1}{(k+1)!}\left[3^{k-1}(2k^2-4k+3)-1\right].
	\]
	For the second term, 
	\begin{equation}\label{eqn:diff_pertub_right}
	\begin{split}
	&\qquad g(x)-\tilde{g}_\epsilon(x)\\
	&= g(x)-g(m_0+\epsilon)-(x-m_0-\epsilon)g'(m_0+\epsilon)\\
	&=\frac{g^{(k)}(m_0)}{k!}\bigg[(x-m_0)^k-\epsilon^k-k\epsilon^{k-1}(x-m_0-\epsilon)\bigg]+\textrm{ higher order terms}.
	\end{split}
	\end{equation}
	Now similar calculations yield that the second term $=\beta_k\frac{r g^{(k)}(m_0)}{g(m_0)^{r+1}}\epsilon^{k+1}+o(\epsilon^{k+1})$ with
	\[
	\beta_k = \frac{2k^2}{(k+1)!}.
	\]
	This gives the conclusion.
\end{proof}

\begin{proof}[Proof of Lemma \ref{lem:osc_gap}]
	By definition of the Hellinger metric and Lemma \ref{totalmass}, we have
	\begin{equation*}
	\begin{split}
	2h^2(f_\epsilon,f) &=\int_{-\infty}^\infty \big(\sqrt{f_\epsilon(x)}-\sqrt{f(x)}\big)^2\ \d{x}\\
	&=\int_{-\infty}^\infty \bigg(\tilde{g}^{-r/2}_\epsilon(x)\left(1-{\pi_k \over 2}\frac{r g^{(k)}(m_0)}{g(m_0)^{r+1}}\epsilon^{k+1}+o(\epsilon^{k+1})\right)-g^{-r/2}(x)\bigg)^2\ \d{x}\\
	&\equiv\int_{-\infty}^\infty \left(\tilde{g}^{-r/2}_\epsilon(x)(1+\eta_k(\epsilon))-g^{-r/2}(x)\right)^2 \ \d{x}
	\end{split}
	\end{equation*}
	since 
	\begin{equation*}
	\begin{split}
	f_\epsilon(x)&=\tilde{g}_\epsilon^{-r}(x)\bigg(1+\pi_k\frac{r g^{(k)}(m_0)}{g(m_0)^{r+1}}\epsilon^{k+1}+o(\epsilon^{k+1})\bigg)^{-1}\\
	&=\tilde{g}_\epsilon^{-r}(x)\bigg(1-\pi_k\frac{r g^{(k)}(m_0)}{g(m_0)^{r+1}}\epsilon^{k+1}+o(\epsilon^{k+1})\bigg).
	\end{split}
	\end{equation*}
	Here $\eta_k(\epsilon)=O(\epsilon^{k+1})$. Splitting two terms apart in the above integral we get
	\begin{equation*}
	\begin{split}
	2h^2(f_\epsilon,f)&=\int_{-\infty}^\infty\bigg(\tilde{g}^{-r/2}_\epsilon(x)-g^{-r/2}(x)+\eta_k(\epsilon)\tilde{g}^{-r/2}_\epsilon(x)\bigg)^2\ \d{x}\\
	&=\int_{-\infty}^\infty \big(\tilde{g}^{-r/2}_\epsilon(x)-g^{-r/2}(x)\big)^2\ \d{x}+\big(\eta_k(\epsilon)\big)^2 \int_{-\infty}^\infty \tilde{g}^{-r}_\epsilon(x)\ \d{x}\\
	&\quad\quad+2\eta_k(\epsilon)\int_{-\infty}^\infty \tilde{g}^{-r/2}_\epsilon(x)\big(\tilde{g}^{-r/2}_\epsilon(x)-g^{-r/2}(x)\big)\ \d{x}\\
	&=I+II+III.
	\end{split}
	\end{equation*}
	Now for the first term,
	\begin{equation*}
	\begin{split}
	I &= \int_{m_0-c_\epsilon \epsilon}^{m_0+\epsilon}{r^2 \over 4}\big[g(x)-\tilde{g}_\epsilon(x)\big]^2g(x)^{-r-2}\ \d{x}+\textrm{ higher order terms}\\
	& = \frac{r^2}{4g(m_0)^{r+2}}\int_{m_0-c_\epsilon \epsilon}^{m_0+\epsilon}\big[g(x)-\tilde{g}_\epsilon(x)\big]^2\ \d{x}+\textrm{ higher order terms}\\
	& =  \frac{r^2}{4g(m_0)^{r+2}}\bigg(\int_{m_0-c_\epsilon \epsilon}^{m_0-\epsilon}+\int_{m_0-\epsilon}^{m_0+\epsilon}\bigg)\big[g(x)-\tilde{g}_\epsilon(x)\big]^2\ \d{x}+\textrm{ higher order terms}\\
	&= I_1+I_2+\textrm{ higher order terms}.
	\end{split}
	\end{equation*}
	By (\ref{eqn:diff_pertub_left}) and (\ref{eqn:diff_pertub_right}) we see that for $i=1,2$,
	\begin{equation*}
	\begin{split}
	I_i & =  \frac{r^2}{4g(m_0)^{r+2}}\int_{\mathcal{I}_i}\big[g(x)-\tilde{g}_\epsilon(x)\big]^2\ \d{x}\\
	&=\zeta_k^{(i)}\frac{r^2f(m_0)g^{(k)}(m_0)^2}{g(m_0)^2}\epsilon^{2k+1}+o(\epsilon^{2k+1}).
	\end{split}
	\end{equation*}
	Here $\mathcal{I}_1=[m_0-c_\epsilon\epsilon,m_0-\epsilon]$, $\mathcal{I}_2=[m_0-\epsilon,m_0+\epsilon]$, and
	\begin{equation*}
	\begin{split}
	\zeta_k^{(1)}&=\frac{1}{108(k!)^2(k+1)(k+2)(2k+1)}\bigg[-4\cdot 3^{k+2}(2k+1)(3^{k+2}+k^2+k-3)\\
	&\quad +(k+1)(k+2)\bigg(27(3^{2k+1}-1)+2\cdot 3^{2k}(2k+1)(2k(2k-9)+27)\bigg)\bigg].\\
	\zeta_k^{(2)}&=\frac{2k^2(2k^2 + 1)}{3(k!)^2(k+1)(2k+1)}.
	\end{split}
	\end{equation*}
	On the other hand, $II=O(\epsilon^{(2k+2)})=o(\epsilon^{2k+1})$ and 
	$\abs{III}\leq O(\epsilon^{k+1}\cdot \epsilon^{(2k+1)/2}\cdot\epsilon^{(2k+2)/2})=o(\epsilon^{2k+1})$ 
	by Cauchy-Schwarz. This completes the proof.
\end{proof}

\section{Appendix}\label{sec:appendix}

\subsection{Proofs of Lemmas \ref{tailsconcave} and \ref{shiftballcontrol}}\label{sec:proofs_tail}

\begin{lemma}\label{volumeratiobound}
	Let $\nu$ be a probability measure with $s$-concave density $f$, and $x_0,\ldots,x_d \in \R^d$ be $d+1$ points such that $\Delta:=\mathrm{conv}(\{x_0,\ldots,x_d\})$ is non-void.  
	If $f(x_0)\leq \big({1 \over d}\sum_{i=1}^d f^s(x_i)\big)^{1/s}$, then
	\[f(x_0)\leq \bar{g}^{-r}\bigg(1-{d \over r}+{d \over r}\frac{\lambda_d(\Delta)\bar{g}^{-r}}{\nu(\Delta)}\bigg)^{-r},\]
	where $\bar{g}:={1 \over d}\sum_{j=1}^d f^s(x_j)$.
\end{lemma}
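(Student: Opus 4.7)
The plan is to reduce to a one-dimensional integral inequality via convexity of $g := f^s$, and to prove that inequality through a clean identity plus a sign check.

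First, since $s < 0$, $s$-concavity of $f$ is equivalent to convexity of $g := f^s$. Writing $g_0 := g(x_0)$ and $g_i := g(x_i)$, I would use convexity of $g$ to dominate it on $\Delta$ by the affine interpolant $L(z) = \sum_{i=0}^d \mu_i g(x_i)$ in barycentric coordinates $(\mu_0,\dots,\mu_d)$ with $\mu_0 = 1-\sum_{i=1}^d\mu_i$, giving $f = g^{-r} \geq L^{-r}$. Parametrizing $\Delta$ over the standard simplex $S_0 = \{(\mu_1,\dots,\mu_d): \mu_i\geq 0,\ \sum_i \mu_i\leq 1\} \subset \R^d$ (with Jacobian $d!\,\lambda_d(\Delta)$) then yields
\[
\nu(\Delta) \;\geq\; d!\,\lambda_d(\Delta)\int_{S_0}\Bigl(\mu_0 g_0 + \sum_{i=1}^d \mu_i g_i\Bigr)^{-r}\,\d{\mu}.
\]
To replace $(g_1,\dots,g_d)$ by its mean $\bar g$, I would average the integrand over all permutations of $(g_1,\dots,g_d)$: this averaging leaves the integral unchanged by the symmetry of $S_0$ under coordinate permutations, while convexity of $u\mapsto u^{-r}$ combined with Jensen's inequality gives
\[
\int_{S_0}\Bigl(\mu_0 g_0 + \sum_{i=1}^d \mu_i g_i\Bigr)^{-r}\d{\mu} \;\geq\; \int_{S_0}\bigl(\mu_0 g_0 + (1-\mu_0)\bar g\bigr)^{-r}\d{\mu}.
\]
Since the new integrand depends only on $\mu_0 \in [0,1]$, the slicing identity $\int_{S_0} F(\mu_0)\,\d{\mu} = \tfrac{1}{(d-1)!}\int_0^1 F(c)(1-c)^{d-1}\,\d{c}$ reduces the right-hand side to $I/(d-1)!$ with $I := \int_0^1 t^{d-1}((1-t)g_0 + t\bar g)^{-r}\,\d{t}$; hence $\nu(\Delta)\geq d\,\lambda_d(\Delta)\,I$.

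The key one-dimensional step is to show $d\,I \geq \bar g^{-r}/(1 + (r/d)(\alpha-1))$, where $\alpha := g_0/\bar g \geq 1$ by the standing hypothesis. The substitution $s = (1-t)g_0 + t\bar g$ recasts this as
\[
T(d,r) \;:=\; \int_1^\alpha (\alpha-s)^{d-1} s^{-r}\,\d{s} \;\geq\; \frac{(\alpha-1)^d}{d+r(\alpha-1)}.
\]
I would establish this via two pieces. The identity
\[
d\,T(d,r) + r\,J(d,r) \;=\; (\alpha-1)^d,\qquad J(d,r) := \int_1^\alpha (\alpha-s)^d s^{-r-1}\,\d{s},
\]
follows by noting that $-(\alpha-s)^d s^{-r}$ is an antiderivative of the combination $d(\alpha-s)^{d-1}s^{-r} + r(\alpha-s)^d s^{-r-1}$ and evaluating at the endpoints. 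The companion inequality $(\alpha-1)\,T(d,r)\geq J(d,r)$ uses the algebraic identity $(\alpha-1)s - (\alpha-s) = \alpha(s-1)$ to reduce to
\[
(\alpha-1)T(d,r) - J(d,r) \;=\; \alpha\int_1^\alpha (s-1)(\alpha-s)^{d-1} s^{-r-1}\,\d{s} \;\geq\; 0,
\]
which is immediate since the integrand is non-negative on $[1,\alpha]$. The two pieces combine to $(d+r(\alpha-1))\,T(d,r) \geq (\alpha-1)^d$.

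Chaining the bounds gives $\nu(\Delta) \geq \lambda_d(\Delta)\,\bar g^{-r}/(1+(r/d)(\alpha-1))$; solving for $\alpha$ yields $g_0 \geq \bar g\bigl(1 - d/r + (d/r)\lambda_d(\Delta)\,\bar g^{-r}/\nu(\Delta)\bigr)$; and raising both sides to the $-r$ power (which flips the inequality since $r>0$) produces the stated bound on $f(x_0) = g_0^{-r}$. The only genuinely technical ingredient is the one-dimensional inequality $T(d,r)\geq (\alpha-1)^d/(d+r(\alpha-1))$, and this is where I expect a naive approach (Jensen, Bernoulli, or tangent-line estimates) to fail for large $\alpha$ and $r>1$; the trick is the substitution $s = (1-t)g_0 + t\bar g$, after which both the endpoint identity and the sign check are short.
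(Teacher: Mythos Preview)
Your proof is correct. The reduction to the one-dimensional integral is essentially the paper's argument in different clothing: the paper phrases it probabilistically, writing $\nu(\Delta)/\lambda_d(\Delta) = \mathbb{E}\,g\!\left(\sum_j B_j x_j\right)^{-r}$ with $B_j = E_j/\sum_k E_k$ for i.i.d.\ exponentials, then conditions on $B_0$ and applies Jensen to $\tilde B_i = E_i/\sum_{j\ge 1} E_j$ (using $\mathbb{E}[\tilde B_i]=1/d$) to replace $(g_1,\dots,g_d)$ by $\bar g$; you do the same thing deterministically via barycentric coordinates, permutation averaging, and slicing. Both land on $dI$ with $I=\int_0^1 t^{d-1}\bigl((1-t)g_0+t\bar g\bigr)^{-r}\,dt$.

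Where you genuinely diverge is in bounding $dI$. The paper proves the \emph{pointwise} inequality $(1+yt/r)^{-r}\ge (1-t)^y$ on $[0,1]$ (equivalently $(1+bt)\le (1-t)^{-b}$, checked by two differentiations) and integrates against $d(1-t)^{d-1}$ to get $J_{d,s}(y)\ge d/(d+y)$. You instead substitute $s=(1-t)g_0+t\bar g$ and prove the integrated bound $T(d,r)\ge (\alpha-1)^d/(d+r(\alpha-1))$ directly, via the exact identity $dT+rJ=(\alpha-1)^d$ (from the antiderivative $-(\alpha-s)^d s^{-r}$) together with $(\alpha-1)T-J=\alpha\int_1^\alpha (s-1)(\alpha-s)^{d-1}s^{-r-1}\,ds\ge 0$. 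Your route is a bit cleaner and sidesteps the auxiliary calculus lemma; the paper's pointwise bound is slightly stronger in principle but that extra strength is not used anywhere.
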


\begin{proof}[Proof of Lemma \ref{volumeratiobound}]
	For any point $x \in \Delta$, we can find some $u=(u_1,\ldots,u_d)\in \Delta_d=\{u:\sum_{i=1}^d u_i\leq 1\}$ such that $x(u)=\sum_{i=0}^d u_ix_i$. Here $u_0:=1-\sum_{i=1}^d u_i\geq 0$. We use the following representation
	of integration on the unit simplex $\Delta_d$: For any measurable function $h:\Delta_d\to [0,\infty)$, we have
	$\int_{\Delta_d}h(u)\ \d{u}={1 \over d!}\mathbb{E}h(B_1,\ldots,B_d),$
	where $B_i=E_i/\sum_{j=0}^d E_j$ with independent, standard exponentially distributed random variables $E_0,\ldots,E_d$. 
	\begin{equation*}
	\begin{split}
	\frac{\nu(\Delta)}{\lambda_d(\Delta)}&=\frac{1}{\lambda_d(\Delta_d)}\int_{\Delta_d} g\big(x(u)\big)^{-r}\ \d{u}=\mathbb{E}g\bigg(\sum_{j=0}^d B_j x_j\bigg)^{-r}\\
	&\geq \mathbb{E}\bigg(\sum_{j=0}^d B_j g(x_j)\bigg)^{-r}=\mathbb{E}\bigg(B_0g_0+(1-B_0)\sum_{i=1}^d \tilde{B}_i g(x_i)\bigg)^{-r},
	\end{split}
	\end{equation*}
	where $\tilde{B}_i:=E_i/\sum_{j=1}^d E_j$ for $1\leq i \leq d$. Following \cite{cule2008auxiliary}, it is known that $B_0$ and $\{\tilde{B}_i\}_{i=1}^d$ are independent, and $\mathbb{E}[\tilde{B}_i]=1/d$. Hence it follows from Jensen's inequality that
	\begin{equation*}
	\begin{split}
	\frac{\nu(\Delta)}{\lambda_d(\Delta)}&\geq \mathbb{E}\left[\mathbb{E}\bigg(B_0g_0+(1-B_0)\sum_{i=1}^d \tilde{B}_i g(x_i)\bigg)^{-r}\bigg\vert B_0\right]\\
	&\geq \mathbb{E}\bigg(B_0 g_0+(1-B_0){1 \over d}\sum_{i=1}^d g(x_i)\bigg)^{-r}\\
	&=\mathbb{E}\left(B_0 g_0+(1-B_0)\bar{g}\right)^{-r}\\
	&=\int_0^1 d(1-t)^{d-1} \big(tg_0+(1-t)\bar{g}\big)^{-r}\ \d{t}\\
	&=\bar{g}^{-r}\int_0^1 d(1-t)^{d-1} \left(1-st\left((-1/s)\left(\frac{g_0}{\bar{g}}-1\right)\right)\right)\ \d{t}\\
	&=\bar{g}^{-r}J_{d,s}\bigg(-{1 \over s}\left(\frac{g_0}{\bar{g}}-1\right)\bigg),
	\end{split}
	\end{equation*}
	where
	\[J_{d,s}(y)=\int_0^1 d(1-t)^{d-1}(1-syt)^{1/s}\ \d{t}.\]
	We claim that
	\[J_{d,s}(y)\geq \int_0^1 d(1-t)^{d-1}(1-t)^y \d{t}=\frac{d}{d+y},\]
	holds for $s<0,y>0$. To see this, we write $(1-syt)^{1/s}=(1+yt/r)^{-(r/y)y}.$
	Then we only have to show $(1+yt/r)^{-r/y}\geq (1-t)$ for $0\leq t\leq 1$, or equivalently $(1+bt)\leq (1-t)^{-b}$ where we let $b=y/r$. Let $g(t):=(1-t)^{-b}-(1+bt)$. It is easy to verify that $g(0)=0$, $g'(t)=b(1-t)^{-b-1}-b$ with $g'(0)=0$, and $g''(t)=b(b+1)(1-t)^{-b-2}\geq 0$. Integrating $g''$ twice yields $g(t)\geq 0$, and hence we have verified the claim. Now we proceed with the calculation
	\[
	\frac{\nu(\Delta)}{\lambda_d(\Delta)}\geq \bar{g}^{-r}J_{d,s}\bigg(-{1 \over s}\left(\frac{g_0}{\bar{g}}-1\right)\bigg)\geq \bar{g}^{-r}\frac{d}{d-{1 \over s}\big(\frac{g_0}{\bar{g}}-1\big)}.\\
	\]
	Solving for $g_0$ and replacing $-1/s=r$ proves the desired inequality.
\end{proof}

\begin{proof}[Proof of Lemma \ref{tailsconcave}]
	For fixed $j \in \{0,\ldots,d\}$, note $\abs{\det(x_i-x_j):i\neq j}=\abs{\det X}$
	where
	$X=
	\begin{pmatrix}
	x_0 &\ldots &x_d\\
	1 &\ldots & 1\\
	\end{pmatrix}
	$
	. Also for each $ y\in \R^d$, since $\Delta=\mathrm{conv}(\{x_0,\ldots,x_d\})$ is non-void, $y$ must be in the affine hull of $\Delta$ and hence we can write $y=\sum_{i=0}^d\lambda_i x_i$ with $\sum_{i=0}^d \lambda_i=1$ (not necessary non-negative), i.e. $\lambda=X^{-1}\binom{y}{1}$. Let $\Delta_j(y):=\mathrm{conv}(\{x_i:i\neq j\}\cup\{y\})$. Then
	\begin{equation*}
	\begin{split}
	\lambda_d(\Delta_j(y))&={1 \over d!}\abs{\det
		\begin{pmatrix}
		x_0 &\ldots & x_{j-1} & y & x_{j+1} &\ldots & x_d\\
		1   &\ldots & 1       & 1 & 1       &\ldots & 1\\
		\end{pmatrix}
	}\\
	&={1 \over d!}\abs{\lambda_j}\abs{\det X}=\abs{\lambda_j}\lambda_d(\Delta).
	\end{split}
	\end{equation*}
	Hence,
	\begin{equation*}
	\begin{split}
	\max_{0\leq j \leq d} \lambda_d(\Delta_j(y))&\geq \lambda_d(\Delta)\max_j \abs{\lambda_j}= \lambda_d(\Delta)\pnorm{X^{-1}\binom{y}{1}}{\infty}\\
	&\geq \lambda_d(\Delta)(d+1)^{-1/2}\pnorm{X^{-1}\binom{y}{1}}{}\\
	&\geq \lambda_d(\Delta)(d+1)^{-1/2}\sigma_{\mathrm{max}}(X)^{-1}(1+\pnorm{y}{}^2)^{1/2}=C(1+\pnorm{y}{}^2)^{1/2}.
	\end{split}
	\end{equation*}
	
	Now the conclusion follows from Lemma \ref{volumeratiobound} by noting
	\begin{equation*}
	\begin{split}
	f(y)&\leq \bar{g}_j^{-r}\bigg(1-{d \over r}+{d \over r}\frac{\lambda_d(\Delta_j(y))\bar{g}_j^{-r}}{\nu(\Delta_j(y))}\bigg)^{-r}\leq f_\mathrm{max}\bigg(1-{d \over r}+{d \over r}f_\mathrm{min} C(1+\pnorm{y}{}^2)^{1/2}\bigg)^{-r},
	\end{split}
	\end{equation*}
	since $\bar{g}_j^{-r}=\big({1 \over d}\sum_{i \neq j}f^s(x_i)\big)^{1/s}$ and hence $f_\mathrm{min}\leq \bar{g}_j^{-r}\leq f_\mathrm{max}$, and the index $j$ is chosen such that $\lambda_d(\Delta_j(y))$ is maximized.
\end{proof}

\begin{proof}[Proof of Lemma \ref{shiftballcontrol}]
	The key point that for any point $x \in B(y,\delta_t)$
	\[B(ty,\delta_t)\subset (1-t)B(0,\delta)+tx\]
	can be shown in the same way as in the proof of Lemma 4.2 \cite{schuhmacher2011multivariate}. Namely, pick any $w \in B(ty,\delta_t)$, let $v:=(1-t)^{-1}(w-tx)$, then since
	\[\pnorm{v}{}=(1-t)^{-1}\pnorm{w-tx}{}=(1-t)^{-1}\pnorm{w-ty+t(y-x)}{}\leq (1-t)^{-1}(\delta_t+t\delta_t)=\delta,\]
	and hence $v \in B(0,\delta)$. This implies that $w=(1-t)v+tx \in (1-t)B(0,\delta)+tx$, as desired. By $s$-concavity of $f$, we have
	\begin{equation*}
	\begin{split}
	f(w) &\geq \big((1-t)f(v)^s+t f(x)^s\big)^{1/s}\\
	&\geq \big((1-t)J_0^s+tf(x)^s\big)^{1/s}\\
	&=J_0\left(1-t+t\left(\frac{f(x)}{J_0}\right)^s\right)^{1/s}.
	\end{split}
	\end{equation*}
	Averaging over $w \in B(ty,\delta_t)$ yields
	\[\frac{\nu(B(ty,\delta_t))}{\lambda_d(B(ty,\delta_t))}\geq J_0\left(1-t+t\left(\frac{f(x)}{J_0}\right)^s\right)^{1/s}.\]
	Solving for $f(x)$ completes the proof.
\end{proof}

\subsection{Proof of Theorem \ref{uniquetight}}\label{appendix:proof_tightness}

We first observe that
\begin{lemma}\label{lem:k_even}
	$k$ is an even integer and $g_0^{(k)}(x_0)>0$.
\end{lemma}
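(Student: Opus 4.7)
The plan is to exploit the convexity of $g_0$ combined with a Taylor expansion of $g_0''$ around $x_0$. Since $g_0 \in \mathcal{G}$ is convex and locally $C^k$ around $x_0$ by (A3)--(A4), its second derivative $g_0''$ exists and is continuous in a neighborhood of $x_0$, and convexity forces $g_0''(x) \geq 0$ on that neighborhood.

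First, I would treat the main case $k \geq 3$. By the definition of $k$ in (A4), $g_0^{(j)}(x_0) = 0$ for $2 \leq j \leq k-1$ and $g_0^{(k)}(x_0) \neq 0$, with $g_0^{(k)}$ continuous at $x_0$. Taylor expansion of $g_0''$ around $x_0$ then gives
\[
g_0''(x) = \frac{g_0^{(k)}(x_0)}{(k-2)!}(x-x_0)^{k-2} + o\bigl((x-x_0)^{k-2}\bigr).
\]
Now I would argue by contradiction: if $k$ were odd, then $k-2$ would be odd, so $(x-x_0)^{k-2}$ changes sign as $x$ crosses $x_0$. Combined with the sign constraint $g_0''(x) \geq 0$ on both sides of $x_0$, this would force $g_0^{(k)}(x_0) = 0$, contradicting the definition of $k$. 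Hence $k$ is even. Once $k$ is known to be even, $(x-x_0)^{k-2} \geq 0$, and the same non-negativity of $g_0''$ near $x_0$ yields $g_0^{(k)}(x_0) \geq 0$; together with $g_0^{(k)}(x_0) \neq 0$ this gives $g_0^{(k)}(x_0) > 0$.

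For the edge case $k = 2$, the conclusion is immediate: convexity gives $g_0''(x_0) \geq 0$, and the defining property $g_0^{(k)}(x_0) \neq 0$ upgrades this to $g_0''(x_0) > 0$, while $k = 2$ is trivially even.

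There is no real obstacle here; the lemma is essentially a bookkeeping consequence of convexity and Taylor's theorem. The only subtle point is to make sure that $g_0''(x) \geq 0$ can be invoked pointwise in a neighborhood of $x_0$ (not only in an almost-everywhere or distributional sense), which is guaranteed by the local $C^k$ assumption in (A3)--(A4).
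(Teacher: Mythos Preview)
Your proposal is correct and follows essentially the same approach as the paper: Taylor-expand $g_0''$ around $x_0$ and use the pointwise inequality $g_0''(x)\geq 0$ (from convexity plus local $C^k$ smoothness) to force $k-2$ even and $g_0^{(k)}(x_0)>0$. Your treatment is in fact a bit more careful than the paper's, explicitly separating the $k=2$ case and spelling out the contradiction argument.
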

\begin{proof}[Proof of Lemma \ref{lem:k_even}]
By Taylor expansion of $g_0''$ around $x_0$, we find that locally for $x\approx x_0$,
\[g_0''(x)=\frac{g_0^{(k)}(x_0)}{(k-2)!}(x-x_0)^{k-2}+o\big((x-x_0)^{k-2}\big).\]
Also note $g''_0(x)\geq 0$ by convexity and local smoothness assumed in (A3). This gives that $k-2$ is even and $g_0^{(k)}(x_0)>0$.
\end{proof}

For further technical discussions, we denote throughout this subsection that for fixed $k$, $r_n:=n^{\frac{k+2}{2k+1}}; s_n:=n^{-\frac{1}{2k+1}}; x_n(t):=x_0+s_n t; \bm{l}_{n,x_0}:=[x_0, x_n(t)].$
Let $\tau_n^+:=\inf\{t \in \mathcal{S}_n(\hat{g}_n):t>x_0\}$, and $\tau_n^-:=\sup\{t \in \mathcal{S}_n(\hat{g}_n): t<x_0\}$. The key step in establishing the limit theory, is to establish a stochastic bound for the gap $\tau_n^+-\tau_n^-$ as follows.
\begin{theorem}\label{gapthm}
Assume (A1)-(A4) hold. Then
\[	\tau_n^+-\tau_n^-=O_p(s_n).\]
\end{theorem}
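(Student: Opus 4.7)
The plan is to pass to the local scale $t = (x-x_0)/s_n$, setting $t_n^\pm := (\tau_n^\pm - x_0)/s_n$, and to prove by contradiction that $\max\{t_n^+, -t_n^-\} = O_p(1)$. Suppose, along some subsequence, $t_n^+ \to \infty$ in probability (the symmetric case $t_n^- \to -\infty$ is parallel). The key structural observation is that since $\hat{g}_n$ is linear on $[\tau_n^-, \tau_n^+]$, direct substitution into the definition \eqref{eqn:modified_loc_proc} makes $\mathbb{H}_n^{\mathrm{locmod}}$ restrict on $[t_n^-, t_n^+]$ to the cubic polynomial
\[
P_n(t) = -\frac{r}{2g_0(x_0)} n^{\frac{k}{2k+1}} \hat{a}_n\, t^2 - \frac{r}{6 g_0(x_0)} n^{\frac{k-1}{2k+1}} \hat{b}_n\, t^3 + \frac{\hat{A}_n t + \hat{B}_n}{f_0(x_0)},
\]
where $\hat{a}_n := \hat{g}_n(x_0) - g_0(x_0)$ and $\hat{b}_n$ denotes the slope of the linear piece of $\hat{g}_n$ on $[\tau_n^-, \tau_n^+]$ minus $g_0'(x_0)$. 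Theorem \ref{uniquetight}(2) then provides the invelope ordering $\mathbb{Y}_n^{\mathrm{locmod}}(t) \ge P_n(t)$ on $[t_n^-, t_n^+]$ with equality at $t = t_n^\pm$, and Theorem \ref{uniquetight}(3) yields $P_n(0), P_n'(0) = O_p(1)$.

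The contradiction will be obtained by a polynomial-degree comparison in the rescaled variable $u := t/t_n^+$. By Theorem \ref{uniquetight}(1) together with a uniform tail estimate on $\mathbb{Y}_n^{\mathrm{locmod}}$ (see obstacle below), one has $\mathbb{Y}_n^{\mathrm{locmod}}(t) = -Ct^{k+2}(1 + o_p(1))$ for $t$ of order $t_n^+$, where $C := r g_0^{(k)}(x_0)/(g_0(x_0)(k+2)!) > 0$ by Lemma \ref{lem:k_even} and (A4). The endpoint equality forces $P_n(t_n^+) = -C(t_n^+)^{k+2}(1 + o_p(1))$, and after extracting a further subsequence, the rescaled cubic $q_n(u) := P_n(u\, t_n^+)/(t_n^+)^{k+2}$ converges to a polynomial $q_\infty$ of degree at most three satisfying $q_\infty(1) = -C$, $q_\infty(0) = q_\infty'(0) = 0$, and $q_\infty(u) \le -Cu^{k+2}$ on the limiting interval (this is the invelope inequality after dividing by the positive quantity $(t_n^+)^{k+2}$). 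The final contradiction comes from combining this with the global convexity of $(\mathbb{H}_n^{\mathrm{locmod}})''$ inherited from convexity of $\hat{g}_n$, which constrains how the successive cubic pieces of $\mathbb{H}_n^{\mathrm{locmod}}$ join at $t_n^\pm$ (slopes of the piecewise-linear $(\mathbb{H}_n^{\mathrm{locmod}})''$ are monotone nondecreasing), and in the limit forces $q_\infty$ to agree with $-Cu^{k+2}$ to an order incompatible with $q_\infty$ being a polynomial of degree at most three when $k \ge 2$.

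The principal technical obstacle is the uniform tail estimate $\mathbb{Y}_n^{\mathrm{locmod}}(t_n^+) \le -(C/2)(t_n^+)^{k+2}$, which reaches beyond the compact weak convergence delivered by Theorem \ref{uniquetight}(1). I will derive it directly: the deterministic contribution to $\mathbb{Y}_n^{\mathrm{locmod}}$ is, up to rescaling, the twice-integrated Taylor remainder of $f_0$ around $x_0$, whose leading coefficient equals $-C$ on a full neighborhood of $x_0$ by the local smoothness assumption (A3); the stochastic contribution is a twice-integrated empirical-process increment on $[x_0, x_0 + s_n t]$, whose modulus of continuity can be controlled via standard Bernstein or Orlicz-norm maximal inequalities and yields fluctuations of strictly smaller order than the drift at scale $t_n^+$. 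Combining this tail bound with the polynomial-degree contradiction above establishes $t_n^+ = O_p(1)$; the symmetric argument on the left yields $-t_n^- = O_p(1)$, and together they give $\tau_n^+ - \tau_n^- = O_p(s_n)$.
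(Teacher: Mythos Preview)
Your proposal has a genuine circularity problem. You invoke Theorem~\ref{uniquetight} parts (1) and (3), but in the paper's logical order those are proved \emph{after} and \emph{using} Theorem~\ref{gapthm}. Concretely: the tightness of $\hat{A}_n,\hat{B}_n$ in part~(3) is obtained by first locating a knot $\tau\in\mathcal{S}(\hat{g}_n)$ with $|\tau-x_0|\leq Ms_n$, which is precisely what the gap theorem supplies. Likewise, the weak limit in part~(1) requires controlling the modification term $r_n\iint\hat{\Psi}_{k,n,2}$, and that computation rests on Lemma~\ref{basiclem} (local rates for $\hat{g}_n$), whose proof in turn needs the gap bound. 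So your appeals to $P_n(0),P_n'(0)=O_p(1)$ and to the asymptotic form of $\mathbb{Y}_n^{\mathrm{locmod}}$ at scale $t_n^+$ both presuppose the conclusion. Even your direct tail estimate for $\mathbb{Y}_n^{\mathrm{locmod}}$ cannot avoid this: the modification $\hat{\Psi}_{k,n,2}$ contains $\hat{g}_n$ evaluated over the entire gap interval, so bounding it already requires information about $\hat{g}_n$ there.

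The paper's route is self-contained and avoids the localized processes entirely. It tests the variational characterization (Theorem~\ref{projchar} and Corollary~\ref{distchar}) against a specific perturbation $\Delta_1$ supported on $[\tau_n^-,\tau_n^+]$, chosen so that $\int\Delta_1\,dx=\int\Delta_1(x)(x-\bar\tau)\,dx=0$. This kills the contributions of the linear part of $\hat{g}_n$ and isolates, after Taylor-expanding $g_0$ about $\bar\tau$, a deterministic term of exact order $(\tau_n^+-\tau_n^-)^{k+2}$ with a nonvanishing constant (using $g_0^{(k)}(x_0)>0$ from Lemma~\ref{lem:k_even}). The same test function against $d(\mathbb{F}_n-F_0)$ is $O_p(r_n^{-1})$ by an elementary empirical-process bound, and Corollary~\ref{distchar} links the two sides. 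Comparing orders gives $(\tau_n^+-\tau_n^-)^{k+2}=O_p(n^{-(k+2)/(2k+1)})$, hence the claim. If you want to repair your approach, you would need independent proofs of the tightness of $\hat{A}_n,\hat{B}_n$ and of Lemma~\ref{basiclem} that do not go through the gap; the paper's perturbation argument is the standard way to break this dependency.
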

\begin{proof}
Define $\Delta_0(x):=(\tau_n^- - x){\bf 1}_{[\tau_n^-,\bar{\tau}]}(x)+(x-\tau_n^+){\bf 1}_{[\bar{\tau},\tau_n^+]}(x)$, and
$\Delta_1:= \Delta_0+\frac{\tau_n^+-\tau_n^-}{4}{\bf 1}_{[\tau_n^-,\tau_n^+]}$, where $\bar{\tau}=:\frac{\tau_n^-+\tau_n^+}{2}$. 
Thus we find that
\begin{equation*}
\begin{split}
\int \Delta_1\ \d{(\mathbb{F}_n-F_0)}&=\int \Delta_1\ \d{(\mathbb{F}_n-\hat{F}_n)}+\int \Delta_1\ \d{(\hat{F}_n-F_0)}\\
&\geq -\frac{\tau_n^+ -\tau_n^-}{4}\abs{\int_{\tau_n^-}^{\tau_n^+} \d{(\mathbb{F}_n-\hat{F}_n)}}+\int \Delta_1(\hat{f}_n-f_0)\ \d{\lambda}\\
&\geq -\frac{\tau_n^+ -\tau_n^-}{2n}+\int \Delta_1(\hat{f}_n-f_0)\ \d{\lambda},
\end{split}
\end{equation*}
where the last line follows from Corollary \ref{distchar}. Now let $R_{1n}:=\int \Delta_1(\hat{f}_n-f_0)\ \d{\lambda},R_{2n}:=\int \Delta_1\ \d{(\mathbb{F}_n-F_0)}.$
The conclusion follows directly from the following lemma.
\end{proof}
\begin{lemma}\label{gaperror}
	Suppose (A1)-(A4) hold. Then $R_{1n} =O_p(\tau_n^+-\tau_n^-)^{k+2}$ and $R_{2n} =O_p(r_n^{-1}).$
\end{lemma}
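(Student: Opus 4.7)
The first step is to exploit the precise structure of $\Delta_1$: it is supported on $[\tau_n^-,\tau_n^+]$, continuous there, symmetric about $\bar\tau$, and satisfies $\int\Delta_1(x)\,\d x=0$ together with $\int(x-\bar\tau)^j\Delta_1(x)\,\d x=0$ for all odd $j$, while for even $j$ the integral has exact order $L^{j+2}$, where $L:=\tau_n^+-\tau_n^-$. Thus $\Delta_1$ acts as a high-order cancellation kernel centered at $\bar\tau$.

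\textbf{Analysis of $R_{1n}$.} Since $\tau_n^\pm$ are consecutive knots of $\hat g_n$, the convex function $\hat g_n$ is affine on $[\tau_n^-,\tau_n^+]$, so write $\hat g_n(x)=\hat a_n+\hat b_n(x-\bar\tau)$; then $\hat f_n=\hat g_n^{-r}$ admits the exact power series $\sum_j\binom{-r}{j}\hat a_n^{-r-j}\hat b_n^j(x-\bar\tau)^j$. In parallel, I will Taylor-expand $f_0=g_0^{-r}$ around $\bar\tau$ using Fa\`a di Bruno; by the vanishing of intermediate derivatives in (A4), each coefficient $f_0^{(j)}(\bar\tau)/j!$ depends at leading order only on $g_0(x_0),g_0'(x_0)$ for $j\leq k-1$ and first involves $g_0^{(k)}(x_0)$ at order $j=k$. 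The local consistency $(\hat g_n(\bar\tau),\hat g_n'(\bar\tau))\to_p(g_0(x_0),g_0'(x_0))$, granted by Corollary~\ref{cor:on-the-model-consistency} and Theorem~\ref{mainthm1}, will force the $j$-th Taylor coefficients of $\hat f_n$ and $f_0$ to agree modulo $o_p(L^{k-j})$ for every $j\leq k-1$. After integrating the difference against $\Delta_1$, all terms of degree $\leq k-1$ contribute $o_p(L^{k+2})$ thanks to the symmetry-driven cancellations above, and the surviving $k$-th-order term reduces to a positive multiple of $g_0^{(k)}(x_0)\int(x-\bar\tau)^k\Delta_1(x)\,\d x\asymp L^{k+2}$ (positivity by $k$ even and $g_0^{(k)}(x_0)>0$ from Lemma~\ref{lem:k_even}). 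This simultaneously yields $|R_{1n}|=O_p(L^{k+2})$ and the matching positive lower bound $R_{1n}\geq cL^{k+2}(1+o_p(1))$ that is implicitly needed in the proof of Theorem~\ref{gapthm}.

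\textbf{Analysis of $R_{2n}$ and combination.} For $R_{2n}$, I will view $\Delta_1=\Delta_{\tau_n^-,\tau_n^+}$ as a random element of the family $\{\Delta_{a,b}:a<b\text{ in a neighborhood of }x_0\}$, which is VC-subgraph (only two real parameters) and uniformly bounded, with each $\Delta_{a,b}$ having $L_2(F_0)$-norm of order $(b-a)^{3/2}$. A standard maximal inequality for empirical processes (e.g.\ Theorem~2.14.1 of \cite{van1996weak}) then gives
$$
\sup_{0<b-a\leq L_0}\Bigl|\int\Delta_{a,b}\,\d(\mathbb F_n-F_0)\Bigr|=O_p\bigl(L_0^{3/2}n^{-1/2}\bigr),
$$
and evaluating at the random $L$ yields $|R_{2n}|=O_p(L^{3/2}n^{-1/2})$. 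Substituting both bounds into the inequality $R_{1n}\leq R_{2n}+L/(2n)$ established during the proof of Theorem~\ref{gapthm} produces $cL^{k+2}\lesssim L^{3/2}n^{-1/2}+L/n$, which forces $L=O_p(n^{-1/(2k+1)})=O_p(s_n)$. Plugging this back into the $R_{2n}$ estimate produces $R_{2n}=O_p(s_n^{3/2}n^{-1/2})=O_p(r_n^{-1})$, completing both claims.

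\textbf{Main obstacle.} The hard part will be the coefficient-matching step inside $R_{1n}$: making the agreement $\hat f_n^{(j)}(\bar\tau)/j!=f_0^{(j)}(\bar\tau)/j!+o_p(L^{k-j})$ precise requires uniform versions of the local consistency of $\hat g_n$ and $\hat g_n'$ over a shrinking neighborhood of $x_0$, and correctly identifying the sign and size of the surviving $k$-th-order coefficient hinges on the fine use of (A4) and Lemma~\ref{lem:k_even}.
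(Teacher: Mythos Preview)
Your treatment of $R_{2n}$ via a VC-type maximal inequality is essentially what the paper does (it defers to the argument of Lemma~4.4 in \cite{balabdaoui2009limit}), and the bootstrapping you describe---first obtaining $|R_{2n}|=O_p(L^{3/2}n^{-1/2})$, then solving for $L$, then feeding back---is the standard route.

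The gap is in your handling of $R_{1n}$. Your plan hinges on the claim that the $j$-th Taylor coefficients of $\hat f_n$ and $f_0$ at $\bar\tau$ agree modulo $o_p(L^{k-j})$ for $j\le k-1$. For $j=0,1$ this is irrelevant because $\int\Delta_1(x)(x-\bar\tau)^j\,\d x=0$. But for even $j$ with $2\le j\le k-2$ you genuinely need $c_j^{\hat f_n}-c_j^{f_0}=O_p(L^{k-j})$, and this does \emph{not} follow from the consistency results you cite. Concretely, $c_j^{\hat f_n}=\binom{-r}{j}\hat a_n^{-r-j}\hat b_n^j$ and $c_j^{f_0}=\binom{-r}{j}g_0(\bar\tau)^{-r-j}g_0'(\bar\tau)^{\,j}+O(L^{k-j})$ (the second piece coming from $g_0''(\bar\tau),\dots,g_0^{(k)}(\bar\tau)$ via (A4)). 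The difference of the first pieces is of order $|\hat a_n-g_0(\bar\tau)|+|\hat b_n-g_0'(\bar\tau)|$, which Corollary~\ref{cor:on-the-model-consistency} and Theorem~\ref{mainthm1} only guarantee to be $o_p(1)$, not $O_p(L^{k-j})$. Any rate on $(\hat g_n(\bar\tau),\hat g_n'(\bar\tau))$ of the required strength would come from Lemma~\ref{basiclem}, but that lemma is stated on the scale $s_n$ and is proved \emph{after} Theorem~\ref{gapthm}; invoking it here is circular.

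The paper sidesteps this entirely by a different decomposition. Writing $\hat f_n-f_0=f_0\big[(\hat g_n/g_0)^{-r}-1\big]$ and expanding in powers of $p_n-1=(\hat g_n-g_0)/g_0$, the leading term is proportional to $\int\Delta_1(x)(\hat g_n(x)-g_0(x))\,\d x$ (after pulling out the smooth factor $f_0/g_0\approx f_0(x_0)/g_0(x_0)$). Because $\hat g_n$ is \emph{linear} on $[\tau_n^-,\tau_n^+]$ and $\Delta_1$ annihilates affine functions, the contribution of $\hat g_n$ vanishes exactly, regardless of the values $\hat g_n(\bar\tau),\hat g_n'(\bar\tau)$. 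What remains involves only $g_0^{(j)}(\bar\tau)$ for $j\ge2$, and (A4) gives $g_0^{(j)}(\bar\tau)=O((\bar\tau-x_0)^{k-j})=O(L^{k-j})$ deterministically. That is the step your expansion of $\hat f_n$ as a power series cannot replicate: the nonlinearity $\hat g_n\mapsto\hat g_n^{-r}$ destroys the exact annihilation, and you are left needing rates on $\hat g_n$ that are not yet available.
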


\begin{proof}[Proof of Lemma \ref{gaperror}]
	Define $p_n:=\hat{g}_n/g_0$ on $[\tau_n^+,\tau_n^-]$. It is easy to see that $\tau_n^+-\tau_n^-=o_p(1)$, so with large probability, for all $n \in \N$ large enough, $\inf_{x\in[\tau_n^+,\tau_n^-]}f_0(x)>0$ by (A2).
	\begin{equation*}
	\begin{split}
	R_{1n}& = \int_{\tau_n^-}^{\tau_n^+} \Delta_1(x)\big(\hat{f}_n(x)-f_0(x)\big)\ \d{x}= \int_{\tau_n^-}^{\tau_n^+} \Delta_1(x)f_0(x)\bigg(\frac{\hat{f}_n(x)}{f_0(x)}-1\bigg)\ \d{x}\\
	& = \int_{\tau_n^-}^{\tau_n^+} \Delta_1(x)f_0(x)\bigg(\sum_{j=1}^{k-1} \binom{-r}{j}(p_n(x)-1)^j+\binom{-r}{k}\theta_{x,n}^{-r-k}(p_n(x)-1)^k\bigg)\ \d{x},
	\end{split}
	\end{equation*}
	where $\theta_{x,n}\in[1\wedge\frac{\hat{g}_n(x)}{g_0(x)}, 1\vee\frac{\hat{g}_n(x)}{g_0(x)}]$. Now define
	\begin{equation*}
	\begin{split}
	S_{nj}&=\int_{\tau_n^-}^{\tau_n^+} \Delta_1(x)f_0(x)\binom{-r}{j}(p_n(x)-1)^j\ \d{x}, 1\leq j \leq k-1,\\
	S_{nk}&=\int_{\tau_n^-}^{\tau_n^+} \Delta_1(x)f_0(x)\binom{-r}{k}\theta_{x,n}^{-r-k}(p_n(x)-1)^k\ \d{x}.\\
	\end{split}
	\end{equation*}
	Expand $f_0$ around $\bar{\tau}$, then we have
	\begin{equation*}
	\begin{split}
	S_{nj}&= \sum_{l=0}^{k-1} \int_{\tau_n^-}^{\tau_n^+} \Delta_1(x)\frac{f_0^{(l)}(\bar{\tau})}{l!}(x-\bar{\tau})^l\binom{-r}{j}(p_n(x)-1)^j\ \d{x}\\
	&\quad\quad +\int_{\tau_n^-}^{\tau_n^+} \Delta_1(x)\frac{f_0^{(l)}(\eta_{n,x,k})}{k!}(x-\bar{\tau})^k\binom{-r}{k}(p_n(x)-1)^k\ \d{x},\\
	S_{nk}&=\sum_{l=0}^{k-1} \int_{\tau_n^-}^{\tau_n^+} \Delta_1(x)\frac{f_0^{(l)}(\bar{\tau})}{l!}\theta_{x,n}^{-r-k}(x-\bar{\tau})^l\binom{-r}{j}(p_n(x)-1)^k\ \d{x}\\
	&\quad\quad +\int_{\tau_n^-}^{\tau_n^+} \Delta_1(x)\frac{f_0^{(l)}(\eta_{n,x,k})}{k!}\theta_{x,n}^{-r-k}(x-\bar{\tau})^k\binom{-r}{k}(p_n(x)-1)^k\ \d{x}.
	\end{split}
	\end{equation*}
	Now we see the dominating term is the first term in $S_{n1}$ since all other terms are of higher orders, and $\abs{\theta_{x,n}-1}=o_p(1)$ uniformly locally in $x$ in view of Theorem \ref{localconv}. We denote this term $Q_{n1}$. Note that $1/g_0(x_0)=1/g_0(\tau)+o_p(1)$ uniformly in $\tau$ around $x_0$, and that $\hat{g}_n$ is piecewise linear, yielding
	\begin{equation*}
	\begin{split}
	\frac{Q_{n1}}{-r f_0(\bar{\tau})}&=\int_{\tau_n^-}^{\tau_n^+}\Delta_1(x)\frac{1}{g_0(x)}\big(\hat{g}_n(x)-g_0(x)\big)\ \d{x}\\
	&=\bigg(\frac{1}{g_0(x_0)}+o_p(1)\bigg)\int_{\tau_n^-}^{\tau_n^+} \Delta_1(x)\big(\hat{g}_n(x)-g_0(x)\big)\ \d{x}\\
	&=\bigg(\frac{1}{g_0(x_0)}+o_p(1)\bigg)\bigg[\big(\hat{g}_n(\bar{\tau})-g_0(\bar{\tau})\big)\int_{\tau_n^-}^{\tau_n^+} \Delta_1(x)\ \d{x}\\
	&\qquad\qquad+\big(\hat{g}'_n(\bar{\tau})-g'_0(\bar{\tau})\big)\int_{\tau_n^-}^{\tau_n^+} \Delta_1(x)(x-\bar{\tau})\ \d{x}\\
	&\qquad\qquad\qquad -\sum_{j=2}^k \frac{g_0^{(j)}(\bar{\tau})}{j!}\int_{\tau_n^-}^{\tau_n^+} \Delta_1(x)(x-\bar{\tau})^j\ \d{x}\\
	&\qquad\qquad\qquad\qquad-\int_{\tau_n^-}^{\tau_n^+} \epsilon_n(x)\Delta_1(x)(x-\bar{\tau})^k\ \d{x}\bigg],\\
	\end{split}
	\end{equation*}
	where the first two terms in the bracket is zero by construction of $\Delta_1$. Now note that
	\[
	\int_{\tau_n^-}^{\tau_n^+} \Delta_1(x)(x-\bar{\tau})^j\ \d{x}=
	\begin{cases}
	0 & j=0, \textrm { or } j\textrm{ is odd};\\
	\frac{j}{2^{j+2}(j+1)(j+2)}\big(\tau_n^+-\tau_n^-\big)^{j+2} & j>0, \textrm{ and }j \textrm{ is even},\\
	\end{cases}
	\]
	and that $g_0^{(j)}(\bar{\tau})=\frac{1}{(k-j)!}(g_0^{(k)}(x_0)+o_p(1))\big(\bar{\tau}-x_0)^{k-j}$. This means that for $j\geq 2$ and $j$ even,
	\begin{equation*}
	\begin{split}
	\frac{g_0^{(j)}(\bar{\tau})}{j!}\int_{\tau_n^-}^{\tau_n^+} \Delta_1(x)(x-\bar{\tau})^j\ \d{x}&=\frac{j(g_0^{(k)}(x_0)+o_p(1))}{(k-j)!(j+2)!2^{j+2}}(\bar{\tau}-x_0)^{k-j}(\tau_n^+-\tau_n^-)^{j+2}\\
	&=\frac{j(g_0^{(k)}(x_0)+o_p(1))}{(k-j)!(j+2)!2^{j+2}} O_p(1)(\tau_n^+-\tau_n^-)^{k+2}.\\
	\end{split}
	\end{equation*}
	Further note that $\pnorm{\epsilon_n}{\infty}=o_p(1)$ as $\tau_n^+-\tau_n^-\to_p 0$, we get $Q_{n1}=O_p(\tau_n^+-\tau_n^-)^{k+2}.$
	This establishes the first claim. The proof for $R_{2n}$ follows the same line as in the proof of Lemma 4.4 \cite{balabdaoui2009limit} p1318-1319.
	\end{proof}

\begin{lemma}\label{derivative}
	We have the following:
	\begin{equation*}
	\begin{split}
	f_0^{(j)}(x_0)&=j!\binom{-r}{j}g_0(x_0)^{-r-j}\big(g_0'(x_0)\big)^j, 1\leq j \leq k-1;\\
	f_0^{(k)}(x_0)&=k!\binom{-r}{k}g_0(x_0)^{-r-k}\big(g_0'(x_0)\big)^k-rg_0(x_0)^{-r-1}g_0^{(k)}(x_0).
	\end{split}
	\end{equation*}
\end{lemma}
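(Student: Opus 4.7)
The plan is to regard $f_0 = h \circ g_0$ with $h(t) = t^{-r}$ and apply Fa\`a di Bruno's formula, exploiting assumption (A4) that $g_0^{(i)}(x_0) = 0$ for $2 \le i \le k-1$ to collapse the combinatorial sum to a handful of surviving terms. By (A3), this derivative computation is legitimate in a neighborhood of $x_0$.

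First I would record $h^{(m)}(t) = m!\binom{-r}{m}\, t^{-r-m}$ and write, by Fa\`a di Bruno,
\[
f_0^{(j)}(x_0) = \sum_{(m_1,\ldots,m_j)} \frac{j!}{m_1!\cdots m_j!}\, h^{(m_1+\cdots+m_j)}\!\bigl(g_0(x_0)\bigr) \prod_{i=1}^{j}\left(\frac{g_0^{(i)}(x_0)}{i!}\right)^{m_i},
\]
where the sum is over tuples of nonnegative integers with $\sum_{i=1}^{j} i\, m_i = j$. The key observation is that by (A4), any tuple with $m_i > 0$ for some $i \in \{2,\ldots,k-1\}$ contributes zero.

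For $1 \le j \le k-1$, the constraint $\sum i m_i = j$ together with $m_2 = \cdots = m_j = 0$ forces $m_1 = j$, so exactly one term survives and yields $f_0^{(j)}(x_0) = h^{(j)}(g_0(x_0))\,(g_0'(x_0))^j = j!\binom{-r}{j} g_0(x_0)^{-r-j} (g_0'(x_0))^j$, which is the first claim. For $j = k$, the same vanishing leaves only two admissible tuples: $(m_1,\ldots,m_k) = (k,0,\ldots,0)$ and $(0,\ldots,0,1)$. The first contributes $k!\binom{-r}{k} g_0(x_0)^{-r-k}(g_0'(x_0))^k$ and the second contributes $\frac{k!}{1!}\cdot h'(g_0(x_0))\cdot \frac{g_0^{(k)}(x_0)}{k!} = -r\, g_0(x_0)^{-r-1}\, g_0^{(k)}(x_0)$; summing gives the second formula.

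There is no serious obstacle: the argument is purely combinatorial bookkeeping, and the only care needed is to verify that no other partitions of $j$ or $k$ survive the vanishing imposed by (A4).
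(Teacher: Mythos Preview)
Your proof is correct; the Fa\`a di Bruno bookkeeping is accurate, and the identification of the surviving partitions under (A4) is exactly right. The paper itself dismisses this lemma with ``This follows from direct calculation,'' so your argument simply supplies the details the authors omit, and by the same route one would naturally take for such a computation.
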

\begin{proof}
	This follows from direct calculation.
\end{proof}
\begin{lemma}\label{basiclem}
	For any $M>0$, we have
	\begin{equation*}
	\begin{split}
	\sup_{\abs{t}\leq M}\abs{\hat{g}_n'(x_0+s_n t)-\hat{g}_0'(x_0)}&=O_p(s_n^{k-1});\\
	\sup_{\abs{t}\leq M}\abs{\hat{g}_n(x_0+s_n t)-g_0(x_0)-s_n tg_0'(x_0)}&=O_p(s_n^k).
	\end{split}
	\end{equation*}
\end{lemma}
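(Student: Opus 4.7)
The plan combines three ingredients: the knot-spacing bound of Theorem \ref{gapthm}, the distribution-function characterizations of Theorem \ref{secondintegralchar} and Corollary \ref{distchar}, and a local empirical-process bound. The key arithmetic is that a Bernstein inequality applied to the empirical distribution on an interval of length $O_p(s_n)$ gives fluctuations of order $\sqrt{s_n/n}$; since $s_n=n^{-1/(2k+1)}$, this equals $s_n^{k+1}$, precisely one factor of $s_n$ finer than the target rate $s_n^k$.

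I would first apply Theorem \ref{gapthm} at $x_0$ and at a finite collection of shifted reference points $\{x_0+j s_n\}_{|j|\leq K}$ covering $[x_0-M s_n,x_0+M s_n]$. This shows that the window $[x_0-M s_n,x_0+M s_n]$ contains only $O_p(1)$ knots of $\hat g_n$, with consecutive knot gaps of order $O_p(s_n)$. Since $\hat g_n$ is piecewise linear between consecutive knots, it suffices to control $\hat g_n$ and $\hat g_n'$ at the (finitely many) knots lying in this window.

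Next, for any two consecutive knots $\tau<\tau'$ in this window, Theorem \ref{secondintegralchar} gives $\int_\tau^{\tau'}(\hat F_n-\mathbb{F}_n)(x)\,dx = 0$, while Corollary \ref{distchar} gives $\hat F_n(\tau) = \mathbb{F}_n(\tau)+O(1/n)$, so
\[
\int_\tau^{\tau'}\hat g_n(x)^{-r}\,dx = \mathbb{F}_n(\tau')-\mathbb{F}_n(\tau)+O(1/n).
\]
A local Bernstein bound on $[\tau,\tau']$ together with a Taylor expansion of $F_0$ to order $k+1$ around $x_0$ then yields
\[
\int_\tau^{\tau'}\hat g_n(x)^{-r}\,dx = \sum_{j=0}^{k}\frac{f_0^{(j)}(x_0)}{(j+1)!}\Bigl[(\tau'-x_0)^{j+1}-(\tau-x_0)^{j+1}\Bigr]+O_p(s_n^{k+1}).
\]
Because $\hat g_n$ is linear on $[\tau,\tau']$, the left-hand side can be computed exactly in terms of $\hat g_n(\tau)$ and $\hat g_n(\tau')$. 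Writing this identity for two adjacent knot pairs $(\tau^-,\tau)$ and $(\tau,\tau^+)$ straddling $x_0+s_n t$ produces a small linear system whose inversion (using Theorem \ref{localconv} to keep $\hat g_n$ uniformly bounded away from zero on the window, since $f_0(x_0)>0$) yields $\hat g_n(\tau)-g_0(x_0)-g_0'(x_0)(\tau-x_0)=O_p(s_n^k)$. Piecewise linearity on each of the $O_p(1)$ pieces covering the window upgrades this to the uniform first assertion of the lemma, and differencing the knot values and dividing by the knot gap $O_p(s_n)$ produces the slope bound $O_p(s_n^{k-1})$.

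\textbf{Main obstacle.} The delicate point is that the empirical-process error $O_p(s_n^{k+1})$ is only one factor of $s_n$ smaller than the target $O_p(s_n^k)$; after dividing by $s_n$ to form the slope, it becomes of the precise order $O_p(s_n^{k-1})$ required. Consequently the inversion of the small linear system must be performed carefully: its determinant (after rescaling by suitable powers of $s_n$) is proportional to the leading Taylor coefficient $g_0^{(k)}(x_0)/k!$, which by Lemma \ref{lem:k_even} is strictly positive, ensuring uniform non-degeneracy over $|t|\leq M$. This is what makes the error bookkeeping close exactly at the desired rates.
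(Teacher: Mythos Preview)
The paper does not give its own proof; it simply states that the argument is identical to Lemma~4.4 of \cite{groeneboom2001estimation}. Your proposal follows the same template as that reference: invoke the gap theorem (Theorem~\ref{gapthm}) to produce knots within $O_p(s_n)$ of every point of the window, use the characterizations at knots (Theorem~\ref{secondintegralchar} and Corollary~\ref{distchar}) to pin down local moments of $\hat f_n$, and combine with a local empirical-process bound to close at the rates $O_p(s_n^k)$ and $O_p(s_n^{k-1})$. So the overall route matches.

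Two technical points deserve tightening. First, the gap theorem bounds knot gaps \emph{from above}, not below, so it does not by itself limit the number of knots in the window to $O_p(1)$; what you actually need (and what the argument uses) is only that any point of the window lies within $O_p(s_n)$ of a knot, which is exactly what tiling by shifted reference points delivers. Second, your description of the ``small linear system'' is slightly off: with one integral identity per knot interval you have fewer equations than unknowns (three endpoint values, two intervals). The usual fix is to use \emph{both} pieces of the characterization on a single knot interval---the equality $\int_\tau^{\tau'}(\hat F_n-\mathbb F_n)=0$ from Theorem~\ref{secondintegralchar} together with $\hat F_n(\tau)=\mathbb F_n(\tau)+O(1/n)$ from Corollary~\ref{distchar}---which yields the zeroth and first local moments of $\hat f_n$ on $[\tau,\tau']$, hence two equations for the two unknowns $\hat g_n(\tau),\hat g_n(\tau')$. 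The resulting (linearized) coefficient matrix has determinant of order $(\tau'-\tau)^3 f_0(x_0)^2$, driven by $f_0(x_0)>0$, \emph{not} by $g_0^{(k)}(x_0)$ as you wrote; the quantity $g_0^{(k)}(x_0)$ enters only upstream, through the gap theorem, to guarantee $\tau'-\tau\asymp s_n$. With these corrections your argument goes through and coincides with the one the paper defers to.
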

The proof is identical to Lemma 4.4 in \cite{groeneboom2001estimation} so we shall omit it.
\begin{lemma}\label{en}
Let
\[\hat{e}_n(u):=\hat{f}_n(u)-\sum_{j=0}^{k-1}\frac{f_0^{(j)}(x_0)}{j!}(u-x_0)^j-f_0(x_0)\binom{-r}{k}\bigg(\frac{g_0'(x_0)}{g_0(x_0)}\bigg)^k(u-x_0)^k.\]
Then for any $M>0$, we have	$\sup_{\abs{t}\leq M}\abs{\hat{e}_n(x_0+s_n t)}=O_p(s_n^k).$
\end{lemma}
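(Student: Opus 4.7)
The plan is to express $\hat f_n = \hat g_n^{-r}$ in terms of the linear function $\ell(u) := g_0(x_0) + g_0'(x_0)(u-x_0)$, then expand $\ell(u)^{-r}$ as a polynomial in $(u-x_0)$ using the binomial series, and match coefficients with the definition of $\hat{e}_n$ via Lemma \ref{derivative}.

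First, by (A2) and consistency of $\hat g_n$ on compact neighborhoods of $x_0$ (Corollary \ref{cor:on-the-model-consistency}), together with the continuity of $g_0$ from (A3), the quantities $\hat g_n(u)$ and $\ell(u)$ both lie in a deterministic interval $[c_0, C_0] \subset (0,\infty)$ with probability tending to $1$, uniformly in $u = x_0 + s_n t$, $|t|\le M$. The second statement of Lemma \ref{basiclem} gives
\[
\sup_{|t|\le M}\bigl|\hat g_n(x_0+s_n t) - \ell(x_0+s_n t)\bigr| = O_p(s_n^k).
\]
Applying the mean value theorem to $x \mapsto x^{-r}$ on $[c_0,C_0]$ then yields
\[
\sup_{|t|\le M}\bigl|\hat f_n(x_0+s_n t) - \ell(x_0+s_n t)^{-r}\bigr| = O_p(s_n^k).
\]

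Next, write $\ell(x_0+s_n t) = g_0(x_0)\bigl[1 + (g_0'(x_0)/g_0(x_0))\, s_n t\bigr]$. Since the argument is $O(s_n)$, the truncated binomial series gives
\[
\ell(u)^{-r} = g_0(x_0)^{-r} \sum_{j=0}^{k} \binom{-r}{j}\left(\frac{g_0'(x_0)}{g_0(x_0)}\right)^{\!j}(u-x_0)^j + O(s_n^{k+1})
\]
uniformly for $|t|\le M$. By Lemma \ref{derivative}, for $0 \le j \le k-1$ the $j$-th coefficient equals $f_0^{(j)}(x_0)/j!$, while the $k$-th coefficient equals exactly $f_0(x_0)\binom{-r}{k}(g_0'(x_0)/g_0(x_0))^k$, matching the last subtracted term in the definition of $\hat{e}_n$. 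Combining with the previous display gives
\[
\sup_{|t|\le M}|\hat e_n(x_0+s_n t)| \le O_p(s_n^k) + O(s_n^{k+1}) = O_p(s_n^k),
\]
as desired.

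The only subtle step is the propagation of the $O_p(s_n^k)$ control of $\hat g_n - \ell$ through the nonlinear map $x\mapsto x^{-r}$; this is where one uses that $\hat g_n$ remains bounded away from $0$ and $\infty$ locally near $x_0$ with high probability. All remaining manipulations are deterministic polynomial identities via Lemma \ref{derivative} and the vanishing of $g_0^{(j)}(x_0)$ for $2\le j\le k-1$ from (A4); note that the $k$-th coefficient in the expansion of $\ell(u)^{-r}$ captures only the part of $f_0^{(k)}(x_0)/k!$ coming from the first derivative of $g_0$, which is precisely why the second term of $f_0^{(k)}(x_0)/k!$ involving $g_0^{(k)}(x_0)$ is \emph{not} subtracted in $\hat e_n$.
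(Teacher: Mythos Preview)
Your proof is correct and proceeds by a slightly different, arguably cleaner route than the paper. The paper expands $\hat f_n(u)=f_0(x_0)\bigl(\hat g_n(u)/g_0(x_0)\bigr)^{-r}$ directly as a binomial series in $\bigl(\hat g_n(u)/g_0(x_0)-1\bigr)$, then further decomposes each power $\bigl(\hat g_n(u)-g_0(x_0)\bigr)^j$ binomially into products of $\bigl(\hat g_n(u)-\ell(u)\bigr)^l$ and $\bigl((u-x_0)g_0'(x_0)\bigr)^{j-l}$, using Lemma~\ref{basiclem} on each factor. This produces an explicit remainder term $\hat\Psi_{k,n,1}(u)$ whose order is also recorded. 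You instead separate the stochastic and deterministic pieces at the outset: first control $\lvert \hat f_n-\ell^{-r}\rvert$ via the mean value theorem and Lemma~\ref{basiclem}, then expand the deterministic function $\ell^{-r}$ as a finite Taylor/binomial series and match coefficients through Lemma~\ref{derivative}. Both arguments draw on exactly the same two lemmas; your decomposition is more transparent, while the paper's has the side benefit that the quantity $\hat\Psi_{k,n,1}$ introduced along the way is reused verbatim in the proof of Theorem~\ref{uniquetight}.
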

\begin{proof}
	Note that
	\begin{equation}\label{en1}
	\begin{split}
	\hat{f}_n(u)-f_0(x_0)&=f_0(x_0)\bigg[\frac{\hat{f}_n(u)}{f_0(x_0)}-1\bigg]=f_0(x_0)\bigg[\bigg(\frac{\hat{g}_n(u)}{g_0(x_0)}\bigg)^{-r}-1\bigg]\\
	&=f_0(x_0)\bigg(\sum_{j=1}^k\binom{-r}{j}\bigg(\frac{\hat{g}_n(u)}{g_0(x_0)}-1\bigg)^j+\underbrace{\sum_{j\geq k+1}\binom{-r}{j}\bigg(\frac{\hat{g}_n(u)}{g_0(x_0)}-1\bigg)^j}_{=:\hat{\Psi}_{k,n,1}(u)}\bigg).
	\end{split}
	\end{equation}
	Define $
	\hat{\Psi}_{k,n,1}(u):=\sum_{j\geq k+1}\binom{-r}{j}\left(\frac{\hat{g}_n(u)}{g_0(x_0)}-1\right)^j=\sum_{j\geq k+1}\binom{-r}{j}\frac{1}{g_0(x_0)^j}\big(\hat{g}_n(u)-g_0(x_0)\big)^j. $
	Note that
	\begin{equation*}
	\begin{split}
	\big(\hat{g}_n(u)-g_0(x_0)\big)^j&=\big(\hat{g}_n(u)-g_0(x_0)-(u-x_0)g_0'(x_0)+(u-x_0)g_0'(x_0)\big)^j\\
	&=\sum_{l=1}^j \binom{j}{l}\big[\hat{g}_n(u)-g_0(x_0)-(u-x_0)g_0'(x_0)\big]^l(u-x_0)^{j-l}g_0'(x_0)^{j-l}\\
	&\quad\quad+(u-x_0)^j g_0'(x_0)^j \\
	&=O_p(s_n^{kl}\cdot s_n^{j-l})+O_p(s_n^j)\\
	&\qquad\qquad \textrm{ uniformly on }\{u:\abs{u-x_0}\leq M n^{-1/(2k+1)}\}\\
	&=O_p(n^{-\frac{j}{2k+1}}),
	\end{split}
	\end{equation*}
	if $j\geq k+1$. Here the third line follows from Lemma \ref{basiclem}. This implies
	$\hat{\Psi}_{k,n,1}(u)=o_p(n^{-\frac{k}{2k+1}}),$
	uniformly on $\{u:\abs{u-x_0}\leq M n^{-1/(2k+1)}\}$. Using the same expansion in the first term on the right hand side of (\ref{en1}), we arrive at
	\begin{equation*}
	\begin{split}
	&\underbrace{\hat{f}_n(u)-f_0(x_0)}_{(1)}\\
	=&\underbrace{f_0(x_0)\sum_{j=1}^k\binom{-r}{j}\frac{1}{[g_0(x_0)]^j}\sum_{r=1}^j \binom{j}{r}\big[\hat{g}_n(u)-g_0(x_0)-(u-x_0)g_0'(x_0)\big]^r(u-x_0)^{j-r}g_0(x_0)^{j-r}}_{(2)}\\
	&\quad\quad +\underbrace{f_0(x_0)\sum_{j=1}^k\binom{-r}{j}\bigg(\frac{g_0'(x_0)}{g_0(x_0)}\bigg)^j(u-x_0)^j}_{(3)}+\underbrace{f_0(x_0)\hat{\Psi}_{k,n,1}(u)}_{(4)}.\\
	\end{split}
	\end{equation*}
	By Lemma \ref{derivative}, we see that $\hat{e}_n(u)=(1)-(3)=(2)+(4)=O_p(s_n^k)$ 
	uniformly on $\{u:\abs{u-x_0}\leq M n^{-1/(2k+1)}\}$. This yields the desired result.
\end{proof}
We are now ready for the proof of Theorem \ref{uniquetight}.
\begin{proof}[Proof of Theorem \ref{uniquetight}]
	For the first assertion, note that
	\begin{equation*}
	\begin{split}
	&[f_0(x_0)]^{-1}\bigg(\hat{f}_n(u)-\sum_{j=0}^{k-1}\frac{f_0^{(j)}(x_0)}{j!}(u-x_0)^j\bigg)\\
	=&[f_0(x_0)]^{-1}\bigg(\hat{f}_n(u)-f_0(x_0)-\sum_{j=1}^{k-1}\frac{f_0^{(j)}(x_0)}{j!}(u-x_0)^j\bigg)\\
	=&[f_0(x_0)]^{-1}\bigg(f_0(x_0)\bigg(\sum_{j=1}^k\binom{-r}{j}\bigg(\frac{\hat{g}_n(u)}{g_0(x_0)}-1\bigg)^j+\hat{\Psi}_{k,n,1}(u)\bigg)-\sum_{j=1}^{k-1}\frac{f_0^{(j)}(x_0)}{j!}(u-x_0)^j\bigg)\\
	&\qquad\qquad\textrm{ by } (\ref{en1})\\
	=&\hat{\Psi}_{k,n,1}(u)+\sum_{j=1}^k\binom{-r}{j}\bigg(\frac{\hat{g}_n(u)}{g_0(x_0)}-1\bigg)^j-[f_0(x_0)]^{-1}\sum_{j=1}^{k-1}\frac{f_0^{(j)}(x_0)}{j!}(u-x_0)^j\\
	=&\hat{\Psi}_{k,n,1}(u)+\binom{-r}{1}\bigg(\frac{\hat{g}_n(u)}{g_0(x_0)}-1\bigg)-\frac{1}{f_0(x_0)}f_0'(x_0)(u-x_0)\\
	&\quad\quad +\sum_{j=2}^k\binom{-r}{j}\bigg(\frac{\hat{g}_n(u)}{g_0(x_0)}-1\bigg)^j-[f_0(x_0)]^{-1}\sum_{j=2}^{k-1}\frac{f_0^{(j)}(x_0)}{j!}(u-x_0)^j\\
	=& \hat{\Psi}_{k,n,1}(u)-\frac{r}{g_0(x_0)}\bigg(\hat{g}_n(u)-g_0(x_0)-g_0'(x_0)(u-x_0)\bigg)+\sum_{j=2}^k\binom{-r}{j}\bigg(\frac{\hat{g}_n(u)}{g_0(x_0)}-1\bigg)^j\\
	&\quad\quad-[f_0(x_0)]^{-1}\sum_{j=2}^{k-1}\frac{f_0^{(j)}(x_0)}{j!}(u-x_0)^j\\
	=& -\frac{r}{g_0(x_0)}\bigg(\hat{g}_n(u)-g_0(x_0)-g_0'(x_0)(u-x_0)\bigg)+\hat{\Psi}_{k,n,2}(u),
	\end{split}
	\end{equation*}
	where
	\[\hat{\Psi}_{k,n,2}(u):=\hat{\Psi}_{k,n,1}(u)+\sum_{j=2}^k\binom{-r}{j}\bigg(\frac{\hat{g}_n(u)}{g_0(x_0)}-1\bigg)^j-[f_0(x_0)]^{-1}\sum_{j=2}^{k-1}\frac{f_0^{(j)}(x_0)}{j!}(u-x_0)^j.\]
	Now we calculate
	\begin{equation*}
	\begin{split}
	&\int_{\bm{l}_{n,x_0}}\int_{x_0}^v \hat{\Psi}_{k,n,2}(u)\d{u}\d{v}\\
	=&{1 \over 2}t^2n^{-{2 \over {2k+1}}}\sup_{u\in \bm{l}_{n,x_0}}\abs{\hat{\Psi}_{k,n,1}(u)}+\sum_{j=2}^k\binom{-r}{j}\int_{\bm{l}_{n,x_0}}\int_{x_0}^v \bigg(\frac{\hat{g}_n(u)}{g_0(x_0)}-1\bigg)^j\ \d{u}\d{v}\\
	&\quad\quad-[f_0(x_0)]^{-1}\sum_{j=2}^{k-1}\frac{f_0^{(j)}(x_0)}{j!}\int_{\bm{l}_{n,x_0}}\int_{x_0}^v (u-x_0)^j\ \d{u}\d{v}\\
	=& o_p(r_n^{-1})+\sum_{j=2}^k\binom{-r}{j}\bigg(\frac{g_0'(x_0)}{g_0(x_0)}\bigg)^j \int_{\bm{l}_{n,x_0}}\int_{x_0}^v (u-x_0)^j\ \d{u}\d{v}\\
	&\qquad-\sum_{j=2}^{k-1}\binom{-r}{j}\bigg(\frac{g_0'(x_0)}{g_0(x_0)}\bigg)^j \int_{\bm{l}_{n,x_0}}\int_{x_0}^v (u-x_0)^j\ \d{u}\d{v}\\
	&\qquad+\bigg(\sum_{j=2}^k\binom{-r}{j}\frac{1}{[g_0(x_0)]^j}\\
	&\qquad\qquad\times\int_{\bm{l}_{n,x_0}}\int_{x_0}^v\sum_{l=1}^j \binom{j}{l}\big(\hat{g}_n(u)-g_0(x_0)-g_0'(x_0)(u-x_0)\big)^l(u-x_0)^{j-l}[g_0'(x_0)]^{j-l}\ \d{u}\d{v}\bigg)\\
	=& o_p(r_n^{-1})+\binom{-r}{k}\bigg(\frac{g_0'(x_0)}{g_0(x_0)}\bigg)^k \int_{\bm{l}_{n,x_0}}\int_{x_0}^v (u-x_0)^k\ \d{u}\d{v}\\
	& +\bigg(\sum_{j=2}^k\binom{-r}{j}\frac{1}{[g_0(x_0)]^j}\\
	&\qquad\times\int_{\bm{l}_{n,x_0}}\int_{x_0}^v\sum_{l=1}^j \binom{j}{l}\big(\hat{g}_n(u)-g_0(x_0)-g_0'(x_0)(u-x_0)\big)^l(u-x_0)^{j-l}[g_0'(x_0)]^{j-l}\ \d{u}\d{v}\bigg)\\
	:=& o_p(r_n^{-1})+(2)+(1).
	\end{split}
	\end{equation*}
	Consider (1): for each $(j,l)$ satisfying $1\leq l\leq j\leq k$ and $j\geq 2$, we have
	\begin{equation*}
	\begin{split}
	(1)&: r_n \int_{\bm{l}_{n,x_0}}\int_{x_0}^v \big(\hat{g}_n(u)-g_0(x_0)-g_0'(x_0)(u-x_0)\big)^l(u-x_0)^{j-l}[g_0'(x_0)]^{j-l}\ \d{u}\d{v}\\
	&=n^{\frac{k+2}{2k+1}}\cdot O(n^{-\frac{2}{2k+1}})\cdot O_p(n^{-\frac{kl}{2k+1}})\cdot O_p(n^{-\frac{j-l}{2k+1}})= O_p(n^{-\frac{k(l-1)+(j-l)}{2k+1}})=o_p(1).
	\end{split}
	\end{equation*}
	Consider (2) as follows:
	\begin{equation*}
	\begin{split}
	(2)&= \binom{-r}{k}\bigg(\frac{g_0'(x_0)}{g_0(x_0)}\bigg)^k \int_{\bm{l}_{n,x_0}}\int_{x_0}^v (u-x_0)^k\ \d{u}\d{v}\\
	&={1 \over {(k+1)(k+2)}}\binom{-r}{k}\bigg(\frac{g_0'(x_0)}{g_0(x_0)}\bigg)^k t^{k+2}r_n^{-1}.
	\end{split}
	\end{equation*}
	Hence we have
	\[
	r_n \int_{\bm{l}_{n,x_0}}\int_{x_0}^v \hat{\Psi}_{k,n,2}(u)\d{u}\d{v}={1 \over {(k+1)(k+2)}}\binom{-r}{k}\bigg(\frac{g_0'(x_0)}{g_0(x_0)}\bigg)^k t^{k+2}+o_p(1).
	\]
	Note by definition we have
	\begin{equation}\label{ymod}
	\mathbb{Y}^{\mathrm{locmod}}_n(t)=\frac{\mathbb{Y}^{\mathrm{loc}}_n(t)}{f_0(x_0)}-r_n \int_{\bm{l}_{n,x_0}}\int_{x_0}^v \hat{\Psi}_{k,n,2}(u)\d{u}\d{v}.
	\end{equation}
	Let $n \to \infty$, by the same calculation in the proof of Theorem 6.2 \cite{groeneboom2001estimation}, we have
	\begin{equation*}
	\begin{split}
	\mathbb{Y}^{\mathrm{locmod}}_n(t)& \to_d \frac{1}{\sqrt{f_0(x_0)}}\int_0^t W(s)\ \d{s}\\
	&\quad\quad+\bigg[\frac{f_0^{(k)}(x_0)}{(k+2)!f_0(x_0)}-{1 \over {(k+1)(k+2)}}\binom{-r}{k}\bigg(\frac{g_0'(x_0)}{g_0(x_0)}\bigg)^k\bigg] t^{k+2}\\
	&=\frac{1}{\sqrt{f_0(x_0)}}\int_0^t W(s)\ \d{s}-\frac{rg_0^{(k)}(x_0)}{g_0(x_0)(k+2)!}t^{k+2},
	\end{split}
	\end{equation*}
	where the last line follows from Lemma \ref{derivative}. Now we turn to the second assertion. It is easy to check by the definition of $\hat{\Psi}_{k,n,2}(\cdot)$ that
	\begin{equation}\label{hmod}
	\mathbb{H}^{\mathrm{locmod}}_n(t)=\frac{\mathbb{H}^{\mathrm{loc}}_n(t)}{f_0(x_0)}-r_n \int_{\bm{l}_{n,x_0}}\int_{x_0}^v \hat{\Psi}_{k,n,2}(u)\d{u}\d{v}.
	\end{equation}
	On the other hand, simple calculation yields that $\mathbb{Y}^{\mathrm{loc}}_n(t)-\mathbb{H}^{\mathrm{loc}}_n(t)=r_n\big(\mathbb{H}_n(x_0+s_nt)-\hat{H}_n(x_0+s_nt)\big)\geq 0$
	where the inequality follows from Theorem \ref{secondintegralchar}. Combined with (\ref{ymod}) and (\ref{hmod}) we have shown the second assertion. Finally we show tightness of $\{\hat{A}_n\}$ and $\{\hat{B}_n\}$. By Theorem \ref{gapthm}, we can find $M>0$ and $\tau \in\mathcal{S}(\hat{g}_n)$ such that $0\leq \tau-x_0\leq M n^{-1/(2k+1)}$ with large probability. Now note
	\begin{equation*}
	\begin{split}
	\abs{\hat{A}_n}&\leq r_ns_n\abs{\big(\hat{F}_n(x_0)-\hat{F}_n(\tau)\big)-\big(\mathbb{F}_n(x_0)-\mathbb{F}_n(\tau)\big)}+\frac{r_ns_n}{n}\\
	&\leq r_ns_n\abs{\int^\tau_{x_0} \bigg(\hat{f}_n(u)-\sum_{j=0}^{k-1}\frac{f_0^{(j)}(x_0)}{j!}(u-x_0)^j\bigg)\ \d{u}}\\
	&\quad\quad+r_ns_n\abs{\int^\tau_{x_0} \bigg(\sum_{j=0}^{k-1}\frac{f_0^{(j)}(x_0)}{j!}(u-x_0)^j-f_0(u)\bigg)\ \d{u}}\\
	&\quad\quad\quad +r_ns_n\abs{\int^\tau_{x_0}\d{(\mathbb{F}_n-F_0)}}+n^{-k/(2k+1)}\\
	&=:\hat{A}_{n1}+\hat{A}_{n2}+\hat{A}_{n3}+n^{-k/(2k+1)}.
	\end{split}
	\end{equation*}
	We calculate three terms respectively.
	\begin{equation*}
	\begin{split}
	\hat{A}_{n1}&\leq r_ns_n\abs{\int^\tau_{x_0}\hat{e}_n(u)\ \d{u}}+r_ns_n\abs{\int^\tau_{x_0}f_0(x_0)\binom{-r}{k}\bigg(\frac{g_0'(x_0)}{g_0(x_0)}\bigg)^{k}(u-x_0)^k\ \d{u}}\\
	&=O_p(r_ns_n\cdot s_n^{k+1})+o_p(r_ns_n\cdot s_n^{k+1})=O_p(1),\quad \textrm{ by Lemma }\ref{en}\\
	\hat{A}_{n2}&\leq r_ns_n\abs{\int^\tau_{x_0} \frac{f_0^{(k)}(x_0)}{k!}(u-x_0)^k\ \d{u}}+r_ns_n\abs{\int^\tau_{x_0}(u-x_0)^k \epsilon_n(u)\ \d{u}}\\
	&=O_p(1),\quad\textrm{ since }\pnorm{\epsilon_n}{\infty}\to_p 0\textrm{ as }x_0-\tau \to_p 0.
	\end{split}
	\end{equation*}
	For $\hat{A}_{n3}$, we follow the lines of Lemma 4.1 \cite{balabdaoui2009limit} again to conclude. Fix $R>0$, and consider the function class
	$\mathcal{F}_{x_0,R}:=\{{\bf 1}_{[x_0,y]}:x_0\leq y\leq x_0+R\}.$
	Then $F_{x_0,R}(z):={\bf 1}_{[x_0,x_0+R]}(z)$ is an envelop function for $\mathcal{F}_{x_0,R}$, and
	$\mathbb{E}F_{x_0,R}^2=\int_{x_0}^{x_0+R}\ \d{z}= R.$
	Now let $s=k,d=1$ in Lemma 4.1 \cite{balabdaoui2009limit}, we have
	\[\hat{A}_{n3}=\abs{\int_{x_0}^\tau \d{(\mathbb{F}_n-F_0)}(z)}\leq \abs{\tau-x_0}^{k+1}+O_p(1)n^{-\frac{k+1}{2k+1}}=O_p(1).\]
	This completes the proof for tightness for $\{A_n\}$. $\{B_n\}$ follows from similar argument so we omit the details.
\end{proof}

\subsection{Auxiliary convex analysis}
\begin{lemma}[Lemma 4.3, \cite{dumbgen2011approximation}]\label{Lip}
	For any $\varphi(\cdot)\in \mathcal{G}$ with non-empty domain, and $\epsilon>0$, define
	\[\varphi^{(\epsilon)}(x):=\sup_{(v,c)}(v^T x+c)\]
	where the supremum is taken over all pairs of $(v,c)\in \R^d\times \R$ such that
	\begin{enumerate}
		\item $\pnorm{v}{}\leq {1 \over \epsilon}$;
		\item $\varphi(y)\geq v^T y +c$ holds for all $ y \in \R^d$.
	\end{enumerate}
	Then $\varphi^{(\epsilon)} \in \mathcal{G}$ with Lipschitz constant ${1 \over \epsilon}$. Furthermore,
	$$\varphi^{(\epsilon)}\nearrow \varphi,\textrm{ as }\epsilon \searrow 0,$$
	where the convergence is pointwise for all $x \in \R^d$.
\end{lemma}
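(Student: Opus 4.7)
The plan is to recognize $\varphi^{(\epsilon)}$ as the standard Lipschitz (Pasch--Hausdorff) regularization of $\varphi$ at scale $1/\epsilon$, and to derive the three claimed conclusions from elementary convex analysis together with the Fenchel--Moreau biconjugate theorem, the latter being the clean route to the pointwise limit identification.

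First I would handle the structural properties in one pass. Since $\varphi^{(\epsilon)}$ is a pointwise supremum of affine functions whose slopes satisfy $\pnorm{v}{}\leq 1/\epsilon$, it is automatically convex and lower semi-continuous, and the uniform slope bound yields Lipschitz continuity with constant $1/\epsilon$ via the elementary estimate $\abs{v^T x+c-v^T y-c}\leq\pnorm{x-y}{}/\epsilon$ applied to each candidate functional, followed by taking the supremum. Two further observations complete the structural picture: every feasible affine function is by construction a global minorant of $\varphi$, so $\varphi^{(\epsilon)}\leq\varphi$; and shrinking $\epsilon$ enlarges the feasible set of pairs $(v,c)$, so $\varphi^{(\epsilon)}$ is non-decreasing as $\epsilon\searrow 0$. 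Finiteness at any point in $\intdom{\varphi}$ is secured by any subgradient support there (which becomes feasible once $\epsilon$ is small), and the Lipschitz bound propagates finiteness everywhere; coercivity of $\varphi^{(\epsilon)}$, needed for membership in $\mathcal{G}$, then follows from coercivity of $\varphi$ by combining interior-subgradient supports with far-field truncations of slopes at norm $1/\epsilon$.

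For the pointwise convergence $\varphi^{(\epsilon)}\nearrow\varphi$, the cleanest argument is to observe that the monotone limit $\sup_{\epsilon>0}\varphi^{(\epsilon)}$ equals the supremum of all affine minorants of $\varphi$ on $\R^d$, since any such minorant is feasible for all sufficiently small $\epsilon$. By the Fenchel--Moreau biconjugate theorem applied to the proper convex function $\varphi$, this supremum is exactly the lower semi-continuous hull $\mathrm{cl}(\varphi)$, which in turn equals $\varphi$ because $\varphi\in\mathcal{G}$ is already closed. Alternatively, one may verify the pointwise limit directly: at $x\in\intdom{\varphi}$ fix any $v\in\partial\varphi(x)$; the affine function $y\mapsto\varphi(x)+v^T(y-x)$ is feasible as soon as $1/\epsilon\geq\pnorm{v}{}$, which gives $\varphi^{(\epsilon)}(x)\geq\varphi(x)$ and hence equality via the reverse inequality from domination. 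On the boundary of $\mathrm{dom}(\varphi)$ one uses an approximating sequence from the interior combined with lower semi-continuity of $\varphi$ and continuity of $\varphi^{(\epsilon)}$, and for $x\notin\mathrm{dom}(\varphi)$ one combines separation of $x$ from the closed convex set $\mathrm{dom}(\varphi)$ with coercivity of $\varphi$ to produce admissible affine minorants with arbitrarily large values at $x$.

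The main obstacle is precisely the limit identification at boundary and non-domain points. For interior points the subgradient argument is one line, but on the boundary $\varphi$ need not admit a subgradient realizing its value there, and outside the domain one must actively manufacture affine minorants attaining arbitrarily large values at $x$ while globally preserving $v^T y+c\leq\varphi(y)$ for every $y\in\R^d$. Appealing to the biconjugate theorem bypasses this bookkeeping by encapsulating it in the single statement that, for closed proper convex functions, the supremum of all affine minorants recovers the function itself.
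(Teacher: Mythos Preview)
Your argument is sound, but there is nothing in the paper to compare it against: this lemma is simply quoted from \cite{dumbgen2011approximation} (their Lemma~4.3) and no proof is given here. What you have written is the standard route via the Pasch--Hausdorff regularization and the Fenchel--Moreau biconjugate identity, and it is correct. The one place where your sketch is thin is the verification that $\varphi^{(\epsilon)}\in\mathcal{G}$ for \emph{every} $\epsilon>0$, in particular non-negativity and coercivity. Non-negativity is immediate because the zero affine function $(v,c)=(0,0)$ is feasible since $\varphi\geq 0$. For coercivity you can be more explicit than ``far-field truncations'': by Lemma~\ref{convlb} there exist $a,b>0$ with $\varphi(y)\geq a\pnorm{y}{}-b$; setting $a'=\min(a,1/\epsilon)$, every affine function $y\mapsto a'\,u^T y-b$ with $\pnorm{u}{}=1$ is feasible, so $\varphi^{(\epsilon)}(x)\geq a'\pnorm{x}{}-b$, which gives coercivity directly.
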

\begin{lemma}[Lemma 2.13, \cite{dumbgen2011approximation}]\label{intpoint}
	Given $Q \in \mathcal{Q}_0$, a point $x \in \R^d$ is an interior point of $\textrm{csupp}(Q)$ if and only if
	\[h(Q,x)\equiv \sup \{Q(C):C\subset \R^d \textrm{ closed and convex}, x\notin \mathrm{int}(C)\}<1.\]
	Moreover, if $\{Q_n\}\subset \mathcal{Q}$ converges weakly to $Q$, then
	\[\limsup_{n \to \infty}h(Q_n,x)\leq h(Q,x)\]
	holds for all $ x \in \R^d$.
\end{lemma}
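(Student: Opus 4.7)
The plan is to reduce both parts of the lemma to the study of the closed half-spaces through $x$. By a standard separation argument, whenever $C$ is closed convex with $x \notin \mathrm{int}(C)$, there exists a unit vector $a$ with $C \subseteq H_a := \{y \in \R^d : a^T(y-x) \ge 0\}$, and conversely every such $H_a$ is admissible in the definition of $h$. Thus I will rewrite
\[
h(Q,x) = \sup_{\|a\|=1} Q(H_a),
\]
and work throughout with this parametrization.

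For the characterization, I will first verify that $a \mapsto Q(H_a)$ is upper semicontinuous on the compact unit sphere: if $a_n \to a$, then $\mathbf{1}_{H_{a_n}}(y) \to \mathbf{1}_{H_a}(y)$ whenever $a^T(y-x) \ne 0$, while on the limiting boundary hyperplane the limsup is trivially bounded by $1 = \mathbf{1}_{H_a}$; reverse Fatou then yields $\limsup_n Q(H_{a_n}) \le Q(H_a)$. Hence the supremum in the definition of $h(Q,x)$ is attained on $\{\|a\|=1\}$. For the ``only if'' direction, assume $x \in \mathrm{int}(\mathrm{csupp}(Q))$, pick $\delta > 0$ with $B(x,\delta) \subseteq \mathrm{csupp}(Q)$, and for each unit $a$ apply the dual description of the convex support (every open half-space containing a support point has positive $Q$-mass) at the point $x - \tfrac{\delta}{2}a$ to conclude $Q(H_a) < 1$. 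Attainment then forces $h(Q,x) < 1$. The converse is contrapositive: if $x \notin \mathrm{int}(\mathrm{csupp}(Q))$, then $\mathrm{csupp}(Q)$ is itself an admissible $C$ with $Q(C) = 1$, so $h(Q,x) = 1$.

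For the semicontinuity in $Q$, I will pass to a subsequence along which $h(Q_n, x)$ attains $\limsup_n h(Q_n,x)$, use attainment to choose unit vectors $a_n^\ast$ with $Q_n(H_{a_n^\ast}) = h(Q_n,x)$, and then extract a further subsequence with $a_n^\ast \to a^\ast$ by compactness of the sphere. The main obstacle is that the sets $H_{a_n^\ast}$ are unbounded and vary with $n$, so one cannot feed them directly into Portmanteau. The remedy is truncation via tightness: fix $\epsilon > 0$ and choose $R$ with $\sup_n Q_n(B(x,R)^c) < \epsilon$. On the bounded portion, the elementary estimate $a^{\ast T}(y-x) \ge a_n^{\ast T}(y-x) - R\|a^\ast - a_n^\ast\|$ implies that $H_{a_n^\ast} \cap B(x,R) \subseteq (H_{a^\ast})^\delta$ eventually, for any fixed $\delta > 0$, where $(H_{a^\ast})^\delta = \{y : a^{\ast T}(y-x) \ge -\delta\}$ is the closed $\delta$-enlargement. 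Portmanteau applied to this fixed closed set then gives
\[
\limsup_n Q_n(H_{a_n^\ast} \cap B(x,R)) \le Q((H_{a^\ast})^\delta),
\]
and letting $\delta \searrow 0$ (using continuity from above of $Q$) followed by $\epsilon \searrow 0$ yields $\limsup_n h(Q_n,x) \le Q(H_{a^\ast}) \le h(Q,x)$, completing the proof.
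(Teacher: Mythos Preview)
Your proof is correct. The reduction to half-spaces via separation, the upper semicontinuity of $a\mapsto Q(H_a)$ on the sphere to obtain attainment, the contrapositive for the characterization, and the tightness-plus-enlargement argument for the stability in $Q$ all go through as written. One small point worth making explicit: the ``dual description'' you invoke (any open half-space containing a point of $\textrm{csupp}(Q)$ has positive $Q$-mass) is indeed an immediate consequence of the definition of $\textrm{csupp}(Q)$, since otherwise the complementary closed half-space would have full mass and hence contain $\textrm{csupp}(Q)$.

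As for comparison with the paper: the present paper does not give its own proof of this lemma. It is quoted verbatim as Lemma 2.13 of \cite{dumbgen2011approximation} and used as an auxiliary tool. Your argument is self-contained and matches in spirit what one finds in that reference, namely the same parametrization by half-spaces through $x$ together with compactness of the unit sphere; the main difference is that you handle the weak-convergence step via an explicit tightness truncation and $\delta$-enlargement rather than appealing to a general result on weak convergence over classes of closed convex sets. Your route is slightly more hands-on but entirely adequate.
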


\begin{lemma}\label{convlb}
	If $g \in \mathcal{G}$, then there exists $a,b>0$ such that for all $x \in \R^d$, 
	$ g(x)\geq a\pnorm{x}{}-b.$
\end{lemma}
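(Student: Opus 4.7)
My proof plan is to use the coercivity of $g \in \mathcal{G}$ together with convexity along rays, via a sublevel-set argument. The linear lower bound will come from comparing $g$ at a far-out point to its values on a fixed sphere around a minimizer.

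First I would verify that $g$ attains its infimum. Since $g$ is closed, proper (it is finite on its nonempty interior of domain and never equals $-\infty$ since $g \geq 0$) and coercive, every sublevel set $\{g \leq c\}$ is closed and bounded, hence compact, and a standard argument gives a minimizer $x^* \in \mathrm{dom}(g)$ with $m := g(x^*) = \inf g \geq 0$. Next, by coercivity, $R := \sup\{\pnorm{y-x^*}{} : g(y) \leq m+1\} < \infty$, so any $R' > R$ has the property that $g(y) \geq m+1$ whenever $\pnorm{y - x^*}{} = R'$.

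The main computation is then a one-dimensional convexity argument along the ray from $x^*$ through $x$. For $x$ with $\pnorm{x-x^*}{} > R'$, set $\lambda := R'/\pnorm{x-x^*}{} \in (0,1)$ and $y := x^* + \lambda(x - x^*)$, so $\pnorm{y-x^*}{} = R'$. Convexity gives
\[
m + 1 \leq g(y) \leq \lambda g(x) + (1-\lambda) g(x^*) = \lambda g(x) + (1-\lambda) m,
\]
which rearranges to $g(x) \geq m + 1/\lambda = m + \pnorm{x-x^*}{}/R'$. For $\pnorm{x-x^*}{} \leq R'$ the trivial bound $g(x) \geq 0$ combined with $\pnorm{x-x^*}{}/R' \leq 1$ already yields $g(x) \geq \pnorm{x-x^*}{}/R' - 1$. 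So setting $a_0 := 1/R'$, we obtain a uniform bound $g(x) \geq a_0 \pnorm{x-x^*}{} - b_0$ for some $b_0 > 0$. Finally, the triangle inequality $\pnorm{x-x^*}{} \geq \pnorm{x}{} - \pnorm{x^*}{}$ gives $g(x) \geq a_0 \pnorm{x}{} - (a_0 \pnorm{x^*}{} + b_0)$, so the lemma holds with $a := a_0$ and $b := a_0\pnorm{x^*}{} + b_0$.

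There is no real obstacle here; the only subtle point is making sure the minimum is attained and the sublevel set is bounded, which both follow immediately from coercivity plus closedness of $g$. Once those are in hand, the ray argument is automatic.
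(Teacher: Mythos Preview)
Your argument is correct. The paper does not give its own proof but simply refers to Lemma 1 of \cite{cule2010theoretical}; your sublevel-set plus convexity-along-rays argument is exactly the standard way to extract a linear lower bound from coercivity and is what one finds in that reference (up to the trivial sign change between concave and convex setups).
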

\begin{proof}
	The proof is essentially the same as for Lemma 1, \cite{cule2010theoretical}, so we shall omit it.
\end{proof}

Consider the class of functions
\[\mathcal{G}_M:=\left\{g \in \mathcal{G}:\int g^{\beta}\ \d{x}\leq M\right\}.\]
\begin{lemma}\label{controllevelset}
	For a given $g \in \mathcal{G}_M$, denote $D_r:=D(g,r):=\{g\leq r\}$ to be the level set of $g(\cdot)$ at level $r$, and $\epsilon:=\inf g$. Then for $r> \epsilon$, we have
	\[
	\lambda(D_r)\leq\frac{M(-s)(r-\epsilon)^d}{(s+1)\int_0^{r-\epsilon} v^d (v+\epsilon)^{1/s}\ \d{v}},
	\]
	where $\beta=1+1/s$, and $-1<s<0$.
\end{lemma}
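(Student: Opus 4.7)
The plan is to relate the level-set measure $\rho(v) := \lambda(\{g \leq v\})$ to the integral $\int g^\beta\,\mathrm{d} x$ via a layer-cake formula, and then exploit convexity of $g$ to obtain a lower bound on $\rho(v)$ in terms of $\rho(r)$ for $v \in [\epsilon, r]$.

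For the first step, I would observe that since $\beta < 0$, for any $t > 0$ the super-level set $\{g^\beta > t\}$ equals $\{g < t^{1/\beta}\}$, and this is empty when $t \geq \epsilon^\beta$. A change of variables $u = t^{1/\beta}$ in the layer-cake identity yields
\[
\int_{\R^d} g^\beta \,\mathrm{d} x = \abs{\beta}\int_\epsilon^\infty \rho(u)\, u^{\beta-1}\,\mathrm{d} u.
\]
Since $g \in \mathcal{G}_M$, the left-hand side is at most $M$, so we obtain an upper bound on $\int_\epsilon^r \rho(u)\, u^{\beta-1}\,\mathrm{d} u$ by truncation.

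For the second step, pick $x^\ast$ attaining (or nearly attaining) $\epsilon = \inf g$. For any $x \in D_r$ and $t \in [0,1]$, convexity gives $g\big((1-t)x^\ast + tx\big) \leq (1-t)\epsilon + tr$. Setting $v = (1-t)\epsilon + tr$, i.e.\ $t = (v-\epsilon)/(r-\epsilon)$, the image of $D_r$ under the contraction $x \mapsto (1-t)x^\ast + tx$ is contained in $D_v$ and has Lebesgue measure $t^d \lambda(D_r)$. Hence
\[
\rho(v) \geq \left(\frac{v-\epsilon}{r-\epsilon}\right)^d \rho(r), \qquad v \in [\epsilon, r].
\]
(If the infimum is not attained, apply this with $\epsilon+\eta$ in place of $\epsilon$ using an $\eta$-minimizer, and let $\eta\searrow 0$.)

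Combining the two steps,
\[
M \;\geq\; \abs{\beta}\int_\epsilon^r \rho(u)\, u^{\beta-1}\,\mathrm{d} u \;\geq\; \abs{\beta}\,\frac{\rho(r)}{(r-\epsilon)^d}\int_\epsilon^r (u-\epsilon)^d u^{\beta-1}\,\mathrm{d} u,
\]
and substituting $v = u - \epsilon$ together with the identities $\beta - 1 = 1/s$ and $\abs{\beta} = -(s+1)/s = (s+1)/(-s)$ (which hold since $-1<s<0$) yields exactly the claimed inequality. The only mildly delicate point is the case when $\inf g$ is not attained, handled by the $\eta$-approximation above; the rest is routine bookkeeping.
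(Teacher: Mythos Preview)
Your proof is correct and follows essentially the same approach as the paper: both arguments use the convexity lower bound $\lambda(D_u)\geq\big((u-\epsilon)/(r-\epsilon)\big)^d\lambda(D_r)$ together with a Fubini/layer-cake identity relating $\int g^\beta$ to $\int_\epsilon^r \lambda(D_u)\,u^{1/s}\,\mathrm{d}u$. The only cosmetic difference is that the paper restricts to $D_r$ and subtracts $r^\beta$ before applying Fubini, whereas you use the full layer-cake on $\R^d$ and then truncate the $u$-integral to $[\epsilon,r]$; also, since $g\in\mathcal{G}$ is closed and coercive the infimum is always attained, so your $\eta$-approximation step is unnecessary (though harmless).
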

\begin{proof}
	For  $u \in [\epsilon, r]$, by convexity of $g(\cdot)$, we have
	\[\lambda(D_u)\geq \left(\frac{u-\epsilon}{r-\epsilon}\right)^d\lambda(D_r).\]
	This can be seen as follows: Consider the epigraph $\Gamma_g$ of $g(\cdot)$, where
	$\Gamma_g=\{(t,x)\in \R^d\times\R:x\geq g(t)\}.$
	Let $x_0 \in \R^d$ be a minimizer of $g$. Consider the convex set $C_r=\mathrm{conv}\big(\Gamma_g\cap\{g=r\},(x_0,\epsilon)\big)\subset \Gamma_g \cap \{g\leq r\}.$
	where the inclusion follows from the convexity of $\Gamma_g$ as a subset of $\R^{d+1}$. The claimed inequality follows from
	\[\lambda_d(D_u)=\lambda_d\big(\pi_d(\Gamma_g\cap\{g=u\})\big)\geq \lambda_d\big(\pi_d(C_r\cap\{g=u\})\big)=\left(\frac{u-\epsilon}{r-\epsilon}\right)^d\lambda_d(D_r),\]
	where $\pi_d:\R^d\times \R\to \R^d$ is the natural projection onto the first component. Now we do the calculation as follows:
	\begin{equation*}
	\begin{split}
	M&\geq \int_{D_r} \big(g(x)^{1/s+1}-r^{1/s+1}\big)\ \d{x}\\
	&=-\left({1 \over s}+1\right)\int_{D_r}\bigg(\int_{\epsilon}^r {\bf 1}(u\geq g(x))u^{1/s}\ \d{u}\bigg)\ \d{x}\\
	&=-\left({1 \over s}+1\right)\int_{\epsilon}^r u^{1/s}\ \d{u}\int_{D_r}{\bf 1}(u\geq g(x))\ \d{x}\\
	&=-\left({1 \over s}+1\right)\int_\epsilon^r \lambda(D_u)u^{1/s}\ \d{u}\\
	&\geq -\left({1 \over s}+1\right)\int_\epsilon^r \left(\frac{u-\epsilon}{r-\epsilon}\right)^d\lambda(D_r) u^{1/s}\ \d{u}\\
	&=\lambda(D_r)\cdot \frac{(s+1)\int_\epsilon^r(u-\epsilon)^d u^{1/s}\ \d{u}}{(-s)(r-\epsilon)^d}.\\
	\end{split}
	\end{equation*}
	By a change of variable in the integral we get the desired inequality.
\end{proof}
\begin{lemma}\label{inftymeasure}
	Let $G$ be a convex set in $\R^d$ with non-empty interior, and a sequence $\{y_n\}_{n \in \N}$ with $\pnorm{y_n}{} \to \infty$ as $n \to \infty$. Then there exists $\{x_1,\ldots,x_d\}\subset G$ such that
	\[\lambda_d\big(\conv{x_1,\ldots,x_d,y_{n(k)}}\big)\to \infty,\]
	as $k \to \infty$ where $\{y_{n(k)}\}_{k \in \N}$ is a suitable subsequence of $\{y_n\}_{n \in \N}$.
\end{lemma}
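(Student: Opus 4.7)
The plan is to reduce the statement to a question about the distance from $y_{n(k)}$ to the affine hull of $\{x_1,\ldots,x_d\}$, then choose the $x_i$ so that this affine hull is transverse to the escape direction of a suitable subsequence.

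First, since $\{y_n/\pnorm{y_n}{}\}$ lives in the unit sphere of $\R^d$, compactness yields a subsequence (which I relabel $\{y_{n(k)}\}$) and a unit vector $e\in\R^d$ with $y_{n(k)}/\pnorm{y_{n(k)}}{}\to e$. Because $\pnorm{y_{n(k)}}{}\to\infty$, it follows that $\iprod{e}{y_{n(k)}}=\pnorm{y_{n(k)}}{}\cdot \iprod{e}{y_{n(k)}/\pnorm{y_{n(k)}}{}}\to+\infty$.

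Next, pick $x_0\in\mathrm{int}(G)$ and $\delta>0$ with $B(x_0,\delta)\subset G$. Let $H$ denote the affine hyperplane through $x_0$ with normal $e$, i.e.\ $H=\{x\in\R^d:\iprod{e}{x-x_0}=0\}$. The slice $H\cap B(x_0,\delta)$ is a $(d-1)$-dimensional open ball and therefore contains $d$ points $x_1,\ldots,x_d$ in affinely general position; all of them lie in $G$. Set $A:=\lambda_{d-1}\bigl(\mathrm{conv}(x_1,\ldots,x_d)\bigr)>0$, the $(d-1)$-dimensional volume of this base simplex inside $H$.

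Finally, a standard formula for the Lebesgue volume of a $d$-simplex in $\R^d$ with base $\mathrm{conv}(x_1,\ldots,x_d)\subset H$ and apex $y$ gives
\[
\lambda_d\bigl(\mathrm{conv}(x_1,\ldots,x_d,y)\bigr)=\tfrac{1}{d}\cdot A\cdot \mathrm{dist}(y,H).
\]
Since $\mathrm{dist}(y_{n(k)},H)=\abs{\iprod{e}{y_{n(k)}-x_0}}=\abs{\iprod{e}{y_{n(k)}}-\iprod{e}{x_0}}\to\infty$ by the first paragraph, we conclude $\lambda_d\bigl(\mathrm{conv}(x_1,\ldots,x_d,y_{n(k)})\bigr)\to\infty$ as $k\to\infty$.

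The only subtlety, rather than an obstacle, is the transversality choice in the second step: one must pick the base hyperplane $H$ so that $e$ is not parallel to it, and the cleanest way is to take $H$ normal to $e$ through an interior point of $G$, which is available precisely because $G$ has non-empty interior.
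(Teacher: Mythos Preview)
Your proof is correct and follows essentially the same approach as the paper: extract a convergent direction $e$ from $y_n/\pnorm{y_n}{}$ by compactness of the sphere, choose the base simplex $\mathrm{conv}(x_1,\ldots,x_d)$ inside $G$ on an affine hyperplane orthogonal to $e$, and conclude via the base-times-height volume formula since the height $\mathrm{dist}(y_{n(k)},H)\to\infty$. The only cosmetic difference is that the paper first translates so that $0\in\mathrm{int}(G)$ and works with the linear hyperplane $\{e^Tx=0\}$, whereas you pick an interior point $x_0$ and the affine hyperplane through it; your version is arguably cleaner and you also identify the constant $c(d)=1/d$ explicitly.
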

\begin{proof}
	Without loss of generality we assume $0 \in \intdom{G}$, and we first choose a convergence subsequence $\{y_{n(k)}\}_{k \in \N}$ from $\{y_n/\pnorm{y_n}{}\}_{n \in \N}$. Now if we let $a:=\lim_{k \to \infty} y_{n(k)}/\pnorm{y_{n(k)}}{}$, then $\pnorm{a}{}=1$. Since $G$ has non-empty interior, $\{a^T x=0\}\cap G$ has non-empty relative interior. Thus we can choose $x_1,\ldots,x_d \subset \{a^T x=0\}\cap G$ such that $\lambda_{d-1}(K)\equiv\lambda_{d-1}\big(\conv{x_1,\ldots,x_d}\big)>0$. Note that
	\[\dist{y_{n(k)}}{\mathrm{aff}(K)}=\dist{y_{n(k)}}{\{a^T x=0\}}=\langle y_{n(k)},a\rangle=\pnorm{y_{n(k)}}{}\langle y_{n(k)}/\pnorm{y_{n(k)}}{},a\rangle\to \infty,\]
	as $k \to \infty$. Since
	\[\lambda_d\big(\conv{x_1,\ldots,x_d,y_{n(k)}}\big)=\lambda_d\big(\conv{K,y_{n(k)}}\big)= c \lambda_{d-1}(K)\cdot\dist{y_{n(k)}}{\mathrm{aff}(K)},\]
	for some constant $c=c(d)>0$, the proof is complete as we let $k \to \infty$.
\end{proof}
\begin{lemma}[Lemma 4.2, \cite{dumbgen2011approximation}]\label{convsubsequence}
	Let $\bar{g}$ and $\{g_n\}_{n \in \N}$ be functions in $\mathcal{G}$ such that $g_n\geq \bar{g}$, for all $ n \in \N$. Suppose the set
	$C:=\{x \in \R^d:\limsup_{n \to \infty} g_n(x)<\infty\}$
	is non-empty. Then there exist a subsequence $\{g_{n(k)}\}_{k \in \N}$ of $\{g_n\}_{n \in \N}$, and a function $g\in \mathcal{G}$ such that $C \subset \mathrm{dom}(g)$ and
	\begin{equation}
	\begin{split}
	&\lim_{k \to \infty,x \to y}g_{n(k)}(x)=g(y),\quad \textrm{for all } y \in \mathrm{int(dom}(g)),\\
	&\liminf_{k \to \infty, x \to y}g_{n(k)}(x)\geq g(y),\quad \textrm{for all } y \in \R^d.\\
	\end{split}
	\end{equation}
\end{lemma}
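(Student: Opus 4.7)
The plan is to use the classical compactness argument for sequences of convex functions: extract a pointwise convergent subsequence on a dense set, extend by convexity and continuity, then close up to obtain a lower semicontinuous proper convex function. First I would observe that $C$ is convex (intersection of sublevel-type conditions inherited from convexity of each $g_n$), so if $C$ is nonempty but has empty interior, I would carry the argument out on the relative interior of the affine hull of $C$; for clarity I describe the case $\mathrm{int}(C)\neq\emptyset$. On any compact set $K\subset\mathrm{int}(C)$, the sequence $\{g_n\}$ is uniformly bounded above (by finiteness of $\limsup g_n$ at finitely many affinely independent points of $K$ together with convexity) and uniformly bounded below (by $\bar g$), and hence is equi-Lipschitz by the standard fact that a bounded convex function on an enlargement of $K$ is Lipschitz on $K$.

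Next I would fix a countable dense subset $D\subset\mathrm{int}(C)$ and use a diagonal extraction to pass to a subsequence $\{g_{n(k)}\}$ along which $g_{n(k)}(x)\to g_\infty(x)\in\R$ for every $x\in D$. The equi-Lipschitz property on compacta forces this pointwise convergence on $D$ to upgrade to locally uniform convergence on all of $\mathrm{int}(C)$, and the limit $\tilde g$ is a finite convex (hence continuous) function on $\mathrm{int}(C)$ with $\tilde g\geq \bar g$. I would then define $g$ to be the closure (lower semicontinuous hull) of $\tilde g$, with the convention $g\equiv+\infty$ off $\overline{C}$. Because $g\geq \bar g$ and $\bar g\in\mathcal{G}$ is coercive, $g$ inherits coercivity, so $g\in\mathcal{G}$ and $C\subset\mathrm{dom}(g)$.

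To verify the first convergence statement, fix $y\in\mathrm{int}(\mathrm{dom}(g))=\mathrm{int}(C)$. Since $g$ agrees with $\tilde g$ on $\mathrm{int}(C)$ and $g_{n(k)}\to g$ locally uniformly there, I can combine the uniform convergence on a small ball around $y$ with continuity of $g$ at $y$ to conclude $\lim_{k\to\infty,\,x\to y} g_{n(k)}(x)=g(y)$. For the second (liminf) inequality, consider an arbitrary $y\in\R^d$. If $g(y)=+\infty$, I must show any sequence $x_k\to y$ with $g_{n(k)}(x_k)$ bounded leads to a contradiction: by convexity, boundedness of $g_{n(k)}(x_k)$ together with finite values at a single interior point of $\mathrm{int}(C)$ would force $y\in C\subset\mathrm{dom}(g)$, contradicting $g(y)=\infty$. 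If $g(y)<\infty$, the definition of closure gives a sequence $z_j\to y$ in $\mathrm{int}(C)$ with $\tilde g(z_j)\to g(y)$, and applying the first convergence statement at each $z_j$ together with convexity-based comparisons produces the desired liminf bound.

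The main obstacle is precisely the boundary case in the last step: when $y\in\partial(\mathrm{dom}(g))$, the subsequence values $g_{n(k)}(x)$ for $x$ near $y$ may behave wildly, and one must carefully use the closure construction of $g$ together with a convex-combination argument (taking $x$ of the form $(1-\lambda)z+\lambda y$ with $z\in\mathrm{int}(C)$ and $\lambda\nearrow 1$) to transfer the known interior convergence to the correct one-sided bound at the boundary. A secondary technical point is the degenerate case $\mathrm{int}(C)=\emptyset$, handled by replacing $\R^d$ throughout by the affine hull of $C$ and working with relative interiors.
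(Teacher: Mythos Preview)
The paper does not actually supply a proof of this lemma: it is quoted verbatim as Lemma~4.2 of D\"umbgen, Samworth and Schuhmacher (2011) and used as a black box throughout Section~\ref{sec:proofs}. So there is no ``paper's own proof'' to compare against; the relevant comparison is with the original source, whose argument is precisely the classical convex-function compactness scheme you outline (diagonal extraction on a dense set, local uniform convergence via equi-Lipschitz bounds, then closure). In that sense your approach is the right one and matches the cited proof.

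Two places in your sketch deserve tightening. First, the sentence ``the sequence $\{g_n\}$ is uniformly bounded above (by finiteness of $\limsup g_n$ at finitely many affinely independent points)'' is not quite correct as written: $\limsup_n g_n(x_i)<\infty$ at vertices $x_0,\dots,x_d$ only gives $g_n(x_i)\le M_i$ for all $n$ \emph{sufficiently large}, not for all $n$. The clean way is to do the diagonal extraction on $D$ \emph{first}, so that along the extracted subsequence you have $\sup_k g_{n(k)}(x)<\infty$ for every $x\in D$, and then invoke Rockafellar's Theorem~10.6 to get local uniform boundedness on $\mathrm{int}(C)$. Second, your liminf argument when $g(y)=+\infty$ is incomplete: having $g_{n(k)}(x_k)\le M$ and $g_{n(k)}(z)$ bounded at one interior point $z$ does not immediately force $y\in C$. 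The correct step is to observe that for any $\lambda\in(0,1)$ the points $(1-\lambda)z+\lambda x_k$ converge to $w_\lambda:=(1-\lambda)z+\lambda y$, and convexity gives $\limsup_k g_{n(k)}((1-\lambda)z+\lambda x_k)\le (1-\lambda)\sup_k g_{n(k)}(z)+\lambda M<\infty$; this shows $w_\lambda\in C\subset\mathrm{dom}(g)$ for every $\lambda\in(0,1)$, whence $y\in\overline{\mathrm{dom}(g)}$, and since $g$ is closed this forces $g(y)<\infty$, a contradiction. With these two repairs your proof goes through.
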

\begin{lemma}\label{globallowerbound}
	Let $\{g_n\}$ be a sequence of non-negative convex functions satisfying the following conditions:
	\begin{enumerate}
		\item[(A1).] There exists a convex set $G$ with non-empty interior such that for all $x_0 \in \mathrm{int}(G)$, we have
		$\sup_{n \in \N} g_n(x_0)<\infty.$
		\item[(A2).] There exists some $M>0$ such that
		$\sup_{n \in \N} \int \big(g_n(x)\big)^\beta\ \d{x}\leq M<\infty.$
	\end{enumerate}
	Then there exists $a,b>0$ such that for all $x \in \R^d$ and $k \in \N$
	\[g_{n(k)}(x)\geq a\pnorm{x}{}-b,\]
	where $\{g_{n(k)}\}_{k \in \N}$ is a suitable subsequence of $\{g_n\}_{n \in \N}$.
\end{lemma}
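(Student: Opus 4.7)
The plan is to establish the conclusion for the \emph{entire} sequence $\{g_n\}$, so the ``suitable subsequence'' in the statement may be taken to be the whole sequence: conditions (A1) and (A2) together yield a uniform quantitative control on the diameter of a fixed sublevel set of each $g_n$, and this combined with convexity of $g_n$ produces a linear minorant with constants independent of $n$.

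First, using (A1), I would pick $d+1$ points $x_0, \ldots, x_d \in \mathrm{int}(G)$ in general position so that their convex hull $T$ is a simplex with nonempty interior, and set $C := \sup_{n \in \N} \max_{0 \leq i \leq d} g_n(x_i)$, which is finite by (A1). Convexity of each $g_n$ then gives $g_n \leq C$ on $T$; fix once and for all a Euclidean ball $B(x^*, \delta) \subset T$, so that $B(x^*, \delta) \subset \{g_n \leq C\} \subset \{g_n \leq 2C\}$ uniformly in $n$. Next, since $\beta < 0$ one has $g_n^\beta \geq r^\beta$ pointwise on $\{g_n \leq r\}$, and (A2) yields the Markov-type bound $\lambda_d(\{g_n \leq r\}) \leq M r^{-\beta}$ for every $r > 0$ and every $n$; specializing to $r = 2C$ gives the uniform volume bound $\lambda_d(\{g_n \leq 2C\}) \leq V_1 := M(2C)^{-\beta}$. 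For any $y \in \{g_n \leq 2C\}$, convexity of the sublevel set forces $\mathrm{conv}(B(x^*, \delta), y) \subset \{g_n \leq 2C\}$, and an elementary cone computation (the hull contains a cone with apex $y$ and base an equatorial $(d-1)$-disk of $B(x^*, \delta)$) gives $\lambda_d\bigl(\mathrm{conv}(B(x^*, \delta), y)\bigr) \geq c_d \delta^{d-1} \pnorm{y - x^*}{}$ for a positive dimensional constant $c_d$. Hence $\{g_n \leq 2C\} \subset \overline{B(x^*, D_1)}$ with $D_1 := V_1 / (c_d \delta^{d-1})$, uniformly in $n$.

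Finally, for any unit vector $v$, the convex function $t \mapsto g_n(x^* + tv)$ on $[0, \infty)$ satisfies $g_n(x^*) \leq C$ at $t = 0$ while $g_n(x^* + tv) > 2C$ (or $= +\infty$) for every $t > D_1$ by the preceding diameter bound. Continuity of convex functions on the interior of their domain produces some $t_0 \in (0, D_1]$ at which $g_n(x^* + t_0 v) \geq 2C$, and the secant slope from $(0, g_n(x^*))$ to $(t_0, g_n(x^* + t_0 v))$ is at least $C / D_1$; monotonicity of secant slopes for one-dimensional convex functions then upgrades this to $g_n(x^* + tv) \geq (C/D_1) t - C$ for every $t \geq 0$ (the bound is automatic on $[0, D_1]$ since $g_n \geq 0$). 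Since $v$ was arbitrary and $\pnorm{x - x^*}{} \geq \pnorm{x}{} - \pnorm{x^*}{}$, one obtains $g_n(x) \geq a \pnorm{x}{} - b$ with $a := C/D_1$ and $b := C + C\pnorm{x^*}{}/D_1$, uniformly in $n$. The only genuinely geometric step is the volume-to-diameter passage; the remainder is essentially a Markov-type inequality applied to $g_n^\beta$ together with slope monotonicity for convex functions, so I do not anticipate any serious obstacle.
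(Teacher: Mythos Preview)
Your proof is correct and, like the paper's, actually establishes the bound for the entire sequence rather than a subsequence. The route, however, is genuinely different and more direct. The paper proceeds in several stages: it first shows by contradiction (via a simplex-volume lemma) that the minimizers $\hat{x}_n$ stay bounded; it then invokes a dedicated lemma (Lemma~\ref{controllevelset}) to bound $\lambda_d(\{g_n\leq r\})$ in terms of $r$ and $\epsilon_n=\inf g_n$; a further two-case contradiction argument using the same simplex-volume lemma shows $g_n\geq\eta M_2$ outside a fixed ball; finally it passes to the convex hull of $\inf_n g_n$ and applies Lemma~\ref{convlb} to extract the linear minorant. Your argument collapses all of this into two quantitative steps: the Markov-type bound $\lambda_d(\{g_n\leq r\})\leq M r^{-\beta}$ replaces Lemma~\ref{controllevelset}, and the cone-volume estimate converts this directly into a uniform diameter bound on $\{g_n\leq 2C\}$, after which slope monotonicity along rays from $x^\ast$ gives the linear minorant with explicit constants. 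What you gain is brevity, explicit constants, and independence from the auxiliary Lemmas~\ref{controllevelset}, \ref{inftymeasure}, and~\ref{convlb}; what the paper's modular approach buys is that those lemmas are reused elsewhere (e.g.\ Lemma~\ref{controllevelset} tracks the dependence on $\inf g_n$, and Lemma~\ref{inftymeasure} reappears in related arguments), so its overhead is amortized across the paper.
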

\begin{proof}
	Without loss of generality we may assume $G$ is contained in all $\intdom{g_n}$. We first note (A1)-(A2) implies that $\{\widehat{x}_n \in \textrm{Arg}\min_{x \in \R^d}g_n(x)\}_{n=1}^\infty$ is a bounded sequence, i.e.
	\begin{equation}\label{boundedmin}
	\sup_{n \in \N}\pnorm{\widehat{x}_n}{}<\infty,
	\end{equation}
	Suppose not, then without loss of generality we may assume $\pnorm{\widehat{x}_n}{}\to \infty$ as $n \to \infty$. By Lemma \ref{inftymeasure}, we can choose $\{x_1,\ldots,x_d\}\subset G$ such that
	$\lambda_d\big(\conv{x_1,\ldots,x_d,\widehat{x}_{n(k)}}\big)\to\infty,$
	as $k \to \infty$ for some subsequence $\{\widehat{x}_{n(k)}\}\subset\{\widehat{x}_n\}$. For simplicity of notation we think of $\{\widehat{x}_n\}$ as such an appropriate subsequence. Denote $\epsilon_n:=\inf_{x \in \R^d} g_n(x)$, and $M_2:=\sup_{n \in \N}\epsilon_n\leq \sup_{n \in \N} g_n(x_0)<\infty$ by (A1). Again by (A1) and convexity we may assume that
	\[\sup_{x\in\conv{x_1,\ldots,x_d,\widehat{x}_n}} g_{n}(x)\leq M_1,\]
	holds for some $M_1>0$ and all $n \in \N$. This implies that
	\[\int g_{n}^\beta (x)\ \d{x}\geq M_1^\beta \lambda_d\big(\conv{x_1,\ldots,x_d,\hat{x}_{n}}\big) \to \infty,\]
	as $n \to \infty$, which gives a contradiction to (A2). This shows (\ref{boundedmin}).
	
	Now we define $\underline{g}(\cdot)$ be the convex hull of $\tilde{g}(x):=\inf_{n \in \N}g_n(x)$, then $\underline{g}\leq g_n$ holds for all $n \in \N$. We claim that $\underline{g}(x) \to \infty$ as $\pnorm{x}{} \to \infty$. By Lemma \ref{controllevelset}, for fixed $\eta>1$, we have
	\begin{equation*}
	\begin{split}
	\lambda_d\big(D(g_n,\eta M_2)\big)&\leq \frac{M(-s)(\eta M_2-\epsilon_n)^d}{(s+1)\int_0^{\eta M_2-\epsilon_n}v^d(v+\epsilon_n)^{1/s}\ \d{v}}\\
	&\leq \frac{M(-s)(\eta M_2)^d}{(s+1)\int_0^{(\eta-1)M_2} v^d(v+M_2)^{1/s}\ \d{v}}<\infty,
	\end{split}
	\end{equation*}
	where $D(g_n,\eta M_2):=\{g_n\leq \eta M_2\}$. Hence
	\begin{equation}\label{finitemeasure}
	\sup_{n \in \N} \lambda_d \big(D(g_n,\eta M_2)\big)<\infty.
	\end{equation}
	holds for every $\eta>1$. Now combining (\ref{boundedmin}) and (\ref{finitemeasure}), we claim that, for fixed $\eta$ large enough, it is possible to find $R=R(\eta)>0$ such that
	\begin{equation}\label{globalblowup}
	g_n(x)\geq \eta M_2,
	\end{equation}
	holds for all $x \geq R(\eta)$ and $ n \in \N$. If this is not true, then for all $ k \in \N$, we can find $n(k) \in \N$ and $\bar{x}_k \in \R^d$ with $\pnorm{\bar{x}_k}{}\geq k$ such that $g_{n(k)}(\bar{x}_k)\leq \eta M_2$. We consider two cases to derive a contradiction. 
	
	\noindent \textbf{[Case 1.]} If for some $ n_0 \in \N$ there exists infinitely many $k \in \N$ with $n(k)=n_0$, then we may assume without loss of generality that we can find some a sequence $\{\bar{x}_k\}_{k \in \N}$ with $\pnorm{\bar{x}_k}{} \to \infty$ as $k \to \infty$, and $g_{n_0}(\bar{x}_k)\leq \eta M_2$. Since the support $g_{n_0}$ has non-empty interior, by Lemma \ref{inftymeasure}, we can find $x_1,\ldots,x_d \in \mathrm{supp}(g_{n_0})$ such that $\lambda_d\big(\mathrm{conv}(x_1,\ldots,x_d,\bar{x}_{k(j)})\big)\to \infty$ as $j \to \infty$ holds for some subsequence $\{\bar{x}_{k(j)}\}_{j \in \N}$ of $\{\bar{x}_k\}_{k \in \N}$. Let $\bar{M}:=\max_{1\leq i \leq d} g_{n_0}(x_i)$, then we find
	$\lambda_d\big(D(g_{n_0},\bar{M}\vee \eta M_2)\big)=\infty.$ This contradicts with (\ref{finitemeasure}).
	
	\noindent \textbf{[Case 2.]} If $\#\{k \in \N: n=n(k)\}<\infty$ for all $ n \in \N$, then without loss of generality we may assume that for all $ k \in \N$, we can find $\bar{x}_k \in \R^d$ with $\pnorm{\bar{x}_k}{}\geq k$ such that
	$g_k(x_k)\leq \eta M_2.$
	Recall by assumption (A1) convex set $G$ has non-empty interior, and is contained in the support of $g_n$ for all $n \in \N$. Again by Lemma \ref{inftymeasure}, we may take $x_1,\ldots,x_d \in C$ such that
	$\lambda_d\big(\mathrm{conv}(x_1,\ldots,x_d,\bar{x}_{k(j)})\big)\to \infty$ as $j \to \infty$
	holds for some subsequence $\{\bar{x}_{k(j)}\}_{j \in \N}$ of $\{\bar{x}_k\}_{k \in \N}$. In view of (A1), we conclude by convexity that
	$\bar{M}:=\max_{1\leq i \leq d} \sup_{j \in \N}g_{{k(j)}}(x_i)<\infty.$
	This implies
	\[\lambda_d\big(D(g_{n_{k(j)}},\bar{M}\vee \eta M_2)\big)\geq \lambda_d\big(\mathrm{conv}(x_1,\ldots,x_d,\bar{x}_{k(j)})\big)\to \infty,\quad j \to \infty,\]
	which gives a contradiction.
	
	Combining these two cases we have proved (\ref{globalblowup}). This implies that $\tilde{g}(x)\to \infty$ as $\pnorm{x}{} \to \infty$, whence verifying the claim that $\underline{g}(x) \to \infty$ as $\pnorm{x}{} \to \infty$. Hence in view of Lemma \ref{convlb}, we find that there exists $a,b>0$ such that $g_n(x)\geq a\pnorm{x}{}-b $
	holds for all $x \in \R^d$ and $n \in \N$.
\end{proof}
\begin{lemma}\label{generalposition}
	Assume $x_0,\ldots,x_d\in \R^d$ are in general position. If $g(\cdot)$ is a non-negative function with $\Delta\equiv\mathrm{conv}(x_0,\ldots,x_d)\subset\mathrm{dom}(g)$, and $g(x_0)=0$. Then for $r\geq d$, we have
	$\int_\Delta \big(g(x)\big)^{-r}\ \d{x}=\infty.$
\end{lemma}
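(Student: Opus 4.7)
The plan is to parametrize $\Delta$ by a ``cone'' change of variables centered at $x_0$, use convexity of $g$ to produce a linear upper bound along every ray from $x_0$, and thereby reduce the problem to a one-dimensional radial integral that diverges precisely when $r \geq d$.

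First, since $\Delta \subset \mathrm{dom}(g)$ I have $M := \max_{0 \leq i \leq d} g(x_i) < \infty$, and since $x_0,\ldots,x_d$ are in general position the vectors $v_i := x_i - x_0$ ($i=1,\ldots,d$) are linearly independent, so $|\det(v_1,\ldots,v_d)| > 0$. Parametrize $\Delta$ via $x = x_0 + \sum_{i=1}^d \alpha_i v_i$ with $\alpha_i \geq 0$ and $\sum_i \alpha_i \leq 1$, so that $dx$ equals a nonzero constant times $d\alpha$. I then pass to ``polar'' coordinates $(t,\lambda)$ with $\alpha_i = t\lambda_i$, where $t \in [0,1]$ and $\lambda$ lies in the standard $(d-1)$-simplex; the standard Jacobian computation yields the factor $t^{d-1}$. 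Under this substitution $x = (1-t)x_0 + t y$, where $y = \sum_{i=1}^d \lambda_i x_i$ ranges over the face $F = \conv{\{x_1,\ldots,x_d\}}$ opposite to $x_0$.

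Next I bring in convexity of $g$ (implicit from the context in which this lemma is invoked in the proof of Theorem~\ref{existence}). Since $g(x_0) = 0$, for any $y \in F$
\[
g\bigl((1-t)x_0 + ty\bigr) \leq (1-t)g(x_0) + t g(y) \leq tM,
\]
the last step using convexity on $F$ together with $g(x_i) \leq M$ for all $i$. (If $M=0$, convexity forces $g \equiv 0$ on $\Delta$ and the conclusion is trivial.) Consequently $g(x)^{-r} \geq (tM)^{-r}$ along every ray, and after integrating out the $\lambda$ coordinate
\[
\int_\Delta g(x)^{-r}\,\d{x} \geq C \int_0^1 t^{d-1-r}\,\d{t}
\]
for a positive constant $C$ depending only on $M$, $|\det V|$, and the volume of the $(d-1)$-simplex. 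Since $d-1-r \leq -1$ whenever $r \geq d$, the radial integral diverges, giving $\int_\Delta g^{-r} = \infty$.

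There is no real obstacle here; the only care required is verifying the $t^{d-1}$ Jacobian of the polar substitution and observing that the upper bound $g(x) \leq tM$ needs only convexity of $g$ together with finiteness of $g$ at the vertices, both of which are delivered by the hypotheses $g(x_0) = 0$ and $\Delta \subset \mathrm{dom}(g)$.
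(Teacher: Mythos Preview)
Your proof is correct and follows essentially the same strategy as the paper's: both use convexity (implicitly assumed, since the lemma sits in the ``Auxiliary convex analysis'' subsection and is only invoked for convex $g$) to bound $g$ linearly in the distance from $x_0$, then reduce to a divergent radial integral near $x_0$. The paper normalizes to $x_0=0$, $x_i=e_i$, bounds $g(x)\le (\max_i a_i)\pnorm{x}{1}$, and then passes through $\pnorm{x}{1}\le\sqrt{d}\pnorm{x}{2}$ to invoke $\int_{C_0}\pnorm{x}{2}^{-r}\,\d{x}=\infty$; your cone/polar substitution with $t=\sum_i\alpha_i$ achieves the same bound $g\le Mt$ and arrives directly at $\int_0^1 t^{d-1-r}\,\d t=\infty$, which is a slightly cleaner endgame but not a genuinely different route.
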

\begin{proof}
	We may assume without loss of generality that $x_0=0,x_i={\bf e}_i \in \R^d$, where ${\bf e}_i$ is the unit directional vector with $1$ in its $i$-th coordinate and $0$ otherwise. Then $\Delta=\Delta_0:=\{x \in \R^d:\sum_{i=1}^d x_i\leq 1,x_i\geq 0,\forall i=1,\ldots,d\}$. Denote $a_i=g(x_i)\geq 0$. We may assume there is at least one $a_i\neq 0$. Then by convexity of $g$ we find
	$g(x)\leq \sum_{i=1}^d a_i x_i$
	for all $x \in \Delta_0$. This gives
	\begin{equation*}
	\begin{split}
	\int_{\Delta_0}\big(g(x)\big)^{-r}\ \d{x}&\geq \int_{\Delta_0}\big(\sum_{i=1}^d a_ix_i\big)^{-r}\ \d{x}\geq \int_{\Delta_0}\frac{1}{(\max_{i=1,\ldots,d} a_i)^{r}\pnorm{x}{1}^r}\ \d{x}\\
	&\geq \frac{1}{(\max_{i=1,\ldots,d} a_i)^{r}d^{r/2}}\int_{C_0}{1 \over \pnorm{x}{2}^r}\ \d{x}=\infty,\\
	\end{split}
	\end{equation*}
	where $C_0:={\{\pnorm{x}{2}\leq{1 \over \sqrt{d}}\}}\cap\{x_i\geq 0,i=1,\ldots,d\}$. Note we used the fact that $\pnorm{x}{1}\leq \sqrt{d}\pnorm{x}{2}$.
\end{proof}
\begin{lemma}[Theorem 1.11, \cite{bhattacharya2010normal}]\label{unifconv}
	Let $f_n \to_d f$, and $\mathcal{D}$ be the class of all Borel measurable, convex subsets in $\R^d$. Then
	$\lim_{n \to \infty}\sup_{D\in\mathcal{D}}\abs{\int_D (f_n-f)}=0.$
\end{lemma}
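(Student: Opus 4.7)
The statement is the classical Bhattacharya--Ranga Rao uniformity-class theorem: weak convergence of probability measures with an absolutely continuous limit (automatic here since $f$ is a density) yields uniform convergence over the class of Borel convex sets. My plan combines tightness to reduce to a compact region, a uniform boundary-regularity estimate for convex sets under $P$, and a finite Hausdorff-net argument via the Blaschke selection theorem.

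First, since $P_n \to P$ weakly with $P$ a probability measure, $\{P_n\}$ is tight. Given $\epsilon > 0$, pick $R$ with $P(B(0,R)^c) < \epsilon$; Portmanteau then gives $P_n(B(0,R)^c) \leq 2\epsilon$ for $n$ large. This reduces the problem to convex sets $D \subset B(0,R)$ modulo an additive error $O(\epsilon)$ uniformly in $D$. Second, I would establish a uniform annular bound: for every $\eta > 0$ there is $\delta > 0$ with $P(D^\delta \setminus D^{-\delta}) < \eta$ uniformly over convex $D \subset B(0,R)$, where $D^\delta$ and $D^{-\delta}$ denote the outer and inner $\delta$-parallel bodies. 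This follows from the Steiner-type bound $\lambda_d(D^\delta \setminus D^{-\delta}) \leq C_R \delta$ for convex bodies in $B(0,R)$, combined with uniform integrability of $f$: for any $\eta>0$ there is $M$ with $\int_{\{f > M\}} f \,\d{\lambda} < \eta/2$, hence $P(A) \leq \eta/2 + M \lambda_d(A)$ for every measurable $A$.

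Third, by Blaschke's selection theorem the space of convex subsets of $B(0,R)$ is compact in the Hausdorff metric, so it admits a finite $\delta$-net $\{D_1, \ldots, D_N\}$. For any convex $D \subset B(0,R)$, choose $D_i$ within Hausdorff distance $\delta$ so that $D_i^{-\delta} \subseteq D \subseteq D_i^\delta$. Since each $\partial D_i$ is $\lambda_d$-null and $P \ll \lambda_d$, Portmanteau gives $P_n(D_i) \to P(D_i)$ for every $i$, hence uniformly over the finite net for $n$ large. Assembling the pieces yields
\[
|P_n(D) - P(D)| \leq |P_n(D_i) - P(D_i)| + P_n(D_i^\delta \setminus D_i^{-\delta}) + P(D_i^\delta \setminus D_i^{-\delta}),
\]
uniformly over convex $D \subset B(0,R)$.

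The main obstacle is that the annular estimate in Step~2 is only for $P$, while we need an analogous bound for $P_n$ uniformly in $n$ and $i$. To handle this, I would apply Step~2 at scale $\delta/2$ to produce regions $D_i^{\delta/2}$ and $D_i^{-\delta/2}$ whose boundaries are $\lambda_d$-null (hence $P$-null), and then invoke Portmanteau on the closed annular region $D_i^{\delta/2} \setminus \mathrm{int}(D_i^{-\delta/2})$ to transfer the $P$-bound to $P_n$ for $n$ large, uniformly over the finite net. Combining everything then gives $\sup_{D \in \mathcal{D}} |P_n(D) - P(D)| = O(\epsilon + \eta)$, and letting $\epsilon, \eta \to 0$ completes the proof.
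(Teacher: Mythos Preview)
The paper does not prove this lemma; it is quoted as Theorem~1.11 of \cite{bhattacharya2010normal} and used as a black box in the proof of Lemma~\ref{weakconvbound}. Your outline is a correct reconstruction of the classical argument that convex sets form a $P$-uniformity class whenever $P$ is absolutely continuous with respect to $\lambda_d$: the route via tightness, a Steiner-type annular bound, Blaschke compactness of $\{K\subset B(0,R):K\text{ compact convex}\}$ in the Hausdorff metric, a finite net, and Portmanteau on $P$-continuity sets is standard and sound. Two small remarks. First, the inclusion $D_i^{-\delta}\subseteq D$ when $d_H(D,D_i)\le\delta$ is not automatic from the definition of Hausdorff distance; it uses convexity of $D$ (separate a putative $x\in D_i^{-\delta}\setminus D$ from $D$ by a hyperplane and push $x$ by $\delta$ in the normal direction to contradict $B(x,\delta)\subset D_i\subset D^\delta$). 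Second, your last step can be simplified: the annulus $D_i^{\delta}\setminus D_i^{-\delta}$ is itself a $P$-continuity set, since its topological boundary lies in $\partial(D_i^{\delta})\cup\partial(D_i^{-\delta})$, a union of two convex boundaries of $\lambda_d$-measure zero. Hence Portmanteau gives $P_n(D_i^{\delta}\setminus D_i^{-\delta})\to P(D_i^{\delta}\setminus D_i^{-\delta})$ directly, uniformly over the finite net, without the auxiliary $\delta/2$ layer.
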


\section*{Acknowledgements}
The authors owe thanks to Charles Doss, Roger Koenker and 
Richard Samworth, as well as two referees and an 
Associate Editor for helpful comments, suggestions and minor corrections.




\end{document}